\documentclass[10pt]{article}
\usepackage{amsmath}
\usepackage{amsfonts}
\usepackage{amssymb}
\usepackage{amsthm}
\usepackage{enumerate}
\usepackage[totalwidth=450pt, totalheight=650pt]{geometry}

\begin{document}

\newcommand{\C}{{\mathbb{C}}}
\newcommand{\R}{{\mathbb{R}}}
\newcommand{\Z}{{\mathbb{Z}}}
\newcommand{\N}{{\mathbb{N}}}
\newcommand{\Q}{B}
\newcommand{\q}{\left}
\newcommand{\w}{\right}
\newcommand{\Vol}[1]{\mathrm{Vol}\q(#1\w)}
\newcommand{\B}[4]{B_{\q(#1,#2\w)}\q(#3,#4\w)}
\newcommand{\CjN}[3]{\q\|#1\w\|_{C^{#2}\q(#3\w)}}
\newcommand{\Cj}[2]{C^{#1}\q( #2\w)}
\newcommand{\grad}{\bigtriangledown}
\newcommand{\sI}[2]{\mathcal{I}\q(#1,#2 \w)}
\newcommand{\Det}[1]{\det_{#1\times #1}}
\newcommand{\sK}{\mathcal{K}}
\newcommand{\sKt}{\widetilde{\mathcal{K}}}
\newcommand{\sA}{\mathcal{A}}
\newcommand{\sB}{\mathcal{B}}
\newcommand{\sC}{\mathcal{C}}
\newcommand{\sD}{\mathcal{D}}
\newcommand{\sS}{\mathcal{S}}
\newcommand{\sF}{\mathcal{F}}
\newcommand{\sQ}{\mathcal{Q}}
\newcommand{\sV}{\mathcal{V}}
\newcommand{\cV}{\q( \sV\w)}
\newcommand{\vsig}{\varsigma}
\newcommand{\vsigt}{\widetilde{\vsig}}
\newcommand{\dil}[2]{#1^{\q(#2\w)}}
\newcommand{\lA}{-\log_2 \sA}
\newcommand{\eh}{\widehat{e}}
\newcommand{\Ho}{\mathbb{H}^1}
\newcommand{\sd}{\sum d}
\newcommand{\dt}{\tilde{d}}
\newcommand{\dhc}{\hat{d}}
\newcommand{\Span}[1]{\mathrm{span}\q\{ #1 \w\}}
\newcommand{\dspan}[1]{\dim \Span{#1}}
\newcommand{\K}{K_0}
\newcommand{\ad}[1]{\mathrm{ad}\q( #1 \w)}
\newcommand{\LtOpN}[1]{\q\|#1\w\|_{L^2\rightarrow L^2}}
\newcommand{\LpOpN}[2]{\q\|#2\w\|_{L^{#1}\rightarrow L^{#1}}}
\newcommand{\LpN}[2]{\q\|#2\w\|_{L^{#1}}}
\newcommand{\Jac}{\mathrm{Jac}\:}
\newcommand{\kapt}{\widetilde{\kappa}}
\newcommand{\gt}{\widetilde{\gamma}}
\newcommand{\gtt}{\widetilde{\widetilde{\gamma}}}
\newcommand{\gh}{\widehat{\gamma}}
\newcommand{\Sh}{\widehat{S}}
\newcommand{\Wh}{\widehat{W}}
\newcommand{\Ih}{\widehat{I}}
\newcommand{\Wt}{\widetilde{W}}
\newcommand{\Xt}{\widetilde{X}}
\newcommand{\Tt}{\widetilde{T}}
\newcommand{\Nt}{\widetilde{N}}
\newcommand{\Phit}{\widetilde{\Phi}}
\newcommand{\Vh}{\widehat{V}}
\newcommand{\Xh}{\widehat{X}}
\newcommand{\sSh}{\widehat{\mathcal{S}}}
\newcommand{\sFh}{\widehat{\mathcal{F}}}
\newcommand{\thetah}{\widehat{\theta}}
\newcommand{\ct}{\widetilde{c}}
\newcommand{\at}{\tilde{a}}
\newcommand{\bt}{\tilde{b}}
\newcommand{\fg}{\mathfrak{g}}
\newcommand{\cC}{\q( \sC\w)}
\newcommand{\cG}{\q( \sC_{\fg}\w)}
\newcommand{\cJ}{\q( \sC_{J}\w)}
\newcommand{\cY}{\q( \sC_{Y}\w)}
\newcommand{\cZ}{\q( \sC_{Z}\w)}
\newcommand{\cYu}{\q( \sC_{Y}^u\w)}
\newcommand{\cGu}{\q( \sC_{\fg}^u\w)}
\newcommand{\cJu}{\q( \sC_{J}^u\w)}
\newcommand{\cZu}{\q( \sC_{Z}^u\w)}
\newcommand{\cYs}{\q( \sC_{Y}^s\w)}
\newcommand{\cGs}{\q( \sC_{\fg}^s\w)}
\newcommand{\cJs}{\q( \sC_{J}^s\w)}
\newcommand{\cZs}{\q( \sC_{Z}^s\w)}
\newcommand{\Bb}{\overline{B}}
\newcommand{\Qb}{\overline{\Q}}
\newcommand{\sP}{\mathcal{P}}
\newcommand{\sY}{\mathcal{Y}}
\newcommand{\sN}{\mathcal{N}}
\newcommand{\cH}{\q(\mathcal{H}\w)}
\newcommand{\Omegat}{\widetilde{\Omega}}
\newcommand{\Lie}{\mathrm{Lie}}
\newcommand{\thetat}{\widetilde{\theta}}
\newcommand{\psit}{\widetilde{\psi}}
\newcommand{\supp}[1]{\mathrm{supp}\q(#1\w)}
\newcommand{\sM}{\mathcal{M}}

\newcommand{\Lpp}[2]{L^{#1}\q(#2\w)}
\newcommand{\Lppn}[3]{\q\|#3\w\|_{\Lpp{#1}{#2}}}

\newtheorem{thm}{Theorem}[section]
\newtheorem{cor}[thm]{Corollary}
\newtheorem{prop}[thm]{Proposition}
\newtheorem{lemma}[thm]{Lemma}
\newtheorem{conj}[thm]{Conjecture}

\theoremstyle{remark}
\newtheorem{rmk}[thm]{Remark}

\theoremstyle{definition}
\newtheorem{defn}[thm]{Definition}

\theoremstyle{remark}
\newtheorem{example}[thm]{Example}

\numberwithin{equation}{section}

\title{Multi-parameter singular Radon transforms I: the $L^2$ theory}
\author{Brian Street\footnote{The author was partially supported by NSF DMS-0802587.}}
\date{}

\maketitle

\begin{abstract}
The purpose of this paper is to study the $L^2$ boundedness of operators of the form
\[
f\mapsto \psi(x) \int f(\gamma_t(x)) K(t)\: dt,
\]
where $\gamma_t(x)$ is a $C^\infty$ function defined on a neighborhood
of the origin in $(t,x)\in \R^N\times \R^n$, satisfying
$\gamma_0(x)\equiv x$, $\psi$ is a $C^\infty$ cutoff function
supported on a small neighborhood of $0\in \R^n$, and $K$
is a ``multi-parameter singular kernel'' supported on a small neighborhood
of $0\in \R^N$.
The goal is, given an appropriate class of kernels $K$, to give
conditions on $\gamma$ such that every operator of the
above form is bounded on $L^2$.
The case when $K$ is a Calder\'on-Zygmund kernel was studied
by Christ, Nagel, Stein, and Wainger; we generalize their conditions to the case when $K$ has a ``multi-parameter''
structure.  For example, when $K$ is given by a ``product kernel.''
Even when $K$ is a Calder\'on-Zygmund kernel, our methods yield
some new results.
This is the first paper in a three part series, the later two
of which are joint with E. M. Stein.  The second paper
deals with the related question of $L^p$ boundedness, while the third
paper deals with the special case when $\gamma$ is real analytic.

\end{abstract}

\tableofcontents

\section{Introduction}\label{SectionIntro}
The purpose of this paper is to prove the $L^2$ boundedness
of multi-parameter singular Radon transforms.  These operators
generalize the well-studied (single-parameter) singular Radon
transforms.  The study of the $L^p$ boundedness of
these single-parameter singular Radon transforms cumulated
in the work of Christ, Nagel, Stein, and Wainger
\cite{ChristNagelSteinWaingerSingularAndMaximalRadonTransforms}.
They studied operators of the form,
\begin{equation}\label{EqnIntroOp}
T f\q( x\w) = \psi\q( x\w) \int f\q( \gamma_t\q( x\w)\w) K\q( t\w)\: dt,
\end{equation}
where $\psi$ is a $C_0^\infty$ cut-off function, supported near $0\in \R^n$,
$\gamma_t\q( x\w) = \gamma\q( t,x\w)$ is a $C^\infty$ map defined
in a neighborhood of the origin in $\R^N\times \R^n$ satisfying
$\gamma_0\q( x\w) \equiv x$, and $K$ is a standard Calder\'on-Zygmund
kernel on $\R^N$ supported for $t$ near $0$.
This means that $K$ satisfies
\begin{equation*}
\q|\partial_t^{\alpha} K\q( t\w) \w|\lesssim \q|t\w|^{-N-\q|\alpha\w|}, \quad t\ne 0,
\end{equation*}
along with a certain ``cancellation condition'' which we will make precise
later (see Definition \ref{DefnProdKer}).\footnote{Actually, 
\cite{ChristNagelSteinWaingerSingularAndMaximalRadonTransforms} 
restricts attention to homogeneous kernels $K$ (and replaces the
integral in \eqref{EqnIntroOp} with $\int_{\q|t\w|<a}$, where $a>0$
is some small number).  This restriction to homogeneous kernels
is not essential to their work.}
In \cite{ChristNagelSteinWaingerSingularAndMaximalRadonTransforms},
appropriate ``curvature conditions'' on
$\gamma$ were assumed to guarantee $T$ extends to a bounded map
$L^p\rightarrow L^p$ ($1<p<\infty$).
See Section \ref{SectionCNSW}  for a further discussion of these curvature conditions.
  The goal of
this paper is to replace $K$ in \eqref{EqnIntroOp} with an appropriate
multi-parameter kernel and develop conditions on $\gamma$
which will allow us to show $T$ is bounded on $L^2$.

For instance, one might consider the case when $K$ is a so-called ``product kernel''
supported near $0$.  To define this notion, suppose we have
decomposed $\R^N=\R^{N_1}\times \cdots \times \R^{N_\nu}$,
and write $t=\q( t_1,\ldots, t_\nu\w)\in \R^{N_1}\times \cdots \times \R^{N_\nu}$.
A product kernel
satisfies
\begin{equation*}
\q|\partial_{t_1}^{\alpha_1} \cdots \partial_{t_\nu}^{\alpha_\nu} K\q( t\w)\w| \lesssim \q|t_1\w|^{-N_1-\q|\alpha_1\w|}\cdots \q|t_\nu\w|^{-N_\nu-\q|\alpha_\nu\w|},
\end{equation*}
again along with certain ``cancellation conditions'' (see Definition \ref{DefnProdKer}).\footnote{The simplest example of a product kernel
is $K\q(t_1,\ldots, t_\nu\w)= K_1\q(t_1\w)\otimes \cdots\otimes K_\nu\q(t_\nu\w)$, where
$K_1,\ldots, K_\nu$ are Calder\'on-Zygmund kernels.}
We will develop conditions on $\gamma$ so that the operator
$T$ given by \eqref{EqnIntroOp} extends to a bounded operator
on $L^2$ for every such product kernel (relative to a fixed
decomposition of $\R^N$).
In addition, we will study kernels more general than these
product kernels.  When restricted to the single-parameter
case, our results will imply the $L^2$ results of
\cite{ChristNagelSteinWaingerSingularAndMaximalRadonTransforms}--and,
in fact, generalize the $L^2$ results of \cite{ChristNagelSteinWaingerSingularAndMaximalRadonTransforms},
even in the single-parameter case (see Section \ref{SectionSpecialCase} for a discussion of this).

This paper is the first in a three part series, the second
two of which are joint with Elias Stein.  The second paper,
\cite{SteinStreetMultiParameterSingRadonLp}
will
investigate the corresponding $L^p$ theory
($1<p<\infty$).  The third paper, \cite{StreetMultiParameterSingRadonAnal},
will discuss, in detail, the special case when $\gamma$ is a real
analytic function--a situation where more can be said.
We give an overview of the results of \cite{StreetMultiParameterSingRadonAnal}
in Section \ref{SectionOverview}, to give the reader a view
of this entire project.

The outline of this paper is as follows.  
In Section \ref{SectionOverview} we discuss
the sequel to this work \cite{StreetMultiParameterSingRadonAnal}.
In Section \ref{SectionSpecialCase}
we discuss some special cases which motivate our main definitions and theorem.
Sections \ref{SectionKernels}-\ref{SectionCurves} give all
the definitions and notation necessary to state our main theorem.
Section \ref{SectionResults} includes the statement of the main theorem.
Section \ref{SectionCNSW} gives an overview of some of the main
aspects of \cite{ChristNagelSteinWaingerSingularAndMaximalRadonTransforms}
we will use, along with some minor extensions that will be necessary
for our proofs.
Section \ref{SectionProofOutline} gives a high-level overview
of the proof.
Sections \ref{SectionCCII}-\ref{SectionL1delta} are devoted
to stating the technical results and definitions necessary for our proof.
In Section \ref{SectionGenThm} a general $L^2$ theorem is stated and proved,
which will imply our main theorem.  In Section \ref{SectionProof}
the general theorem from Section \ref{SectionGenThm} is used to prove
the main theorem of the paper.  In Section \ref{SectionKernelsII}
we offer a deeper study of the class of kernels we use.
Finally, in Section \ref{SectionExamples}, a number of examples are given
where the main theorem applies.

\par\noindent \textbf{Acknowledgements}
I would like to thank Eli Stein.  This project was born from
a discussion in his office, where
upon hearing the results of \cite{StreetMultiParameterCCBalls},
he 
suggested that perhaps they could be used
to answer questions like the ones investigated in this paper.
In addition, he explained to me some of the surrounding theory.
Once the main results for this paper were completed, he and
I combined efforts to complete the project
in \cite{SteinStreetMultiParameterSingRadonLp,StreetMultiParameterSingRadonAnal}.
I would also like to thank Alex Nagel who informed me of the paper
\cite{CarberyWaingerWrightSingularIntegralsAndTheNewtonDiagram},
and Steve Wainger who described it (and related results) to me in detail.
These discussions helped shape my thoughts on pure powers and non-pure
powers as described in this paper.
Finally, I found it helpful to know some of the recent 
theory
developed by Nagel, Ricci, Stein, and Wainger 
\cite{NagelRicciSteinWaingerSingularIntegralWithFlagKernelsOnHomogeneousGroupsI}
regarding
convolution operators given by flag kernels on nilpotent groups.
I thank all four of them for describing various aspects of it to me.

	\subsection{Informal statement of the main result}\label{SectionIntroRes}
	In this section, we informally state the special case of our main theorem
when $K\q(t_1,\ldots, t_\nu\w)$ is a product kernel
relative to the decomposition $\R^N=\R^{N_1}\times \cdots\times \R^{N_\nu}$ (see
the introduction and Definition \ref{DefnProdKer} for this notion).
We suppose we are given a $C^\infty$ function, $\gamma\q(t,x\w)=\gamma_t\q(x\w)\in \R^n$ defined
on a small neighborhood of the origin in $\q(t,x\w)\in \R^N\times \R^n$,
satisfying $\gamma_0\q(x\w)\equiv x$.
For $t$ sufficiently small, $\gamma_t$ is a diffeomorhpism onto
its image.  Thus, it makes sense to write $\gamma_t^{-1}$, the inverse mapping. 
We define the vector field
\begin{equation*}
W\q(t,x\w)= \frac{d}{d\epsilon}\bigg|_{\epsilon=1} \gamma_{\epsilon t}\circ \gamma_t^{-1} \q(x\w)\in T_x \R^n.
\end{equation*}

For a collection of vector fields $\sV$, let $\sD\q(\sV\w)$ denote
the involutive distribution generated by $\sV$.  I.e., the smallest
$C^\infty$ module containing $\sV$ and such that if $X,Y\in \sD\q(\sV\w)$
then $\q[X,Y\w]\in \sD\q(\sV\w)$.
For a multi-index $\alpha\in \N^N$, write $\alpha=\q(\alpha_1,\ldots, \alpha_\nu\w)$, with $\alpha_\mu\in \N^{N_\mu}$.

Decompose $W$ into a Taylor series in the $t$ variable,
\begin{equation*}
W\q(t,x\w)\sim \sum_{\alpha} t^{\alpha}X_\alpha.
\end{equation*}
We call $\alpha=\q(\alpha_1,\ldots, \alpha_\nu\w)\in \N^N=\N^{N_1}\times \cdots\times \N^{N_\nu}$ a pure
power if $\alpha_\mu\ne 0$ for precisely one $\mu$.  Otherwise,
we call it a non-pure power.

We assume that the following conditions hold ``uniformly'' for
$\delta=\q(\delta_1,\ldots, \delta_\nu\w)\in \q(0,1\w]^\nu$,
though we defer making this notion of uniform
precise to Section \ref{SectionCurves}.
\begin{itemize}
\item For every $\delta\in \q(0,1\w]^\nu$,
\begin{equation*}
\sD_{\delta}:=\sD\q(\q\{\delta_1^{\q|\alpha_1\w|}\cdots\delta_\nu^{\q|\alpha_\nu\w|}X_{\alpha_1,\ldots,\alpha_\nu} : \q(\alpha_1,\ldots,\alpha_\nu\w)\text{ is a pure power}\w\}\w)
\end{equation*}
is finitely generated as a $C^\infty$ module, uniformly in $\delta$.

\item For every $\delta\in \q(0,1\w]^\nu$,
\begin{equation*}
W\q(\delta_1 t_1,\ldots, \delta_\nu t_\nu\w)\in \sD_\delta,
\end{equation*}
uniformly in $\delta$.
\end{itemize}

\begin{rmk}
If it were not for the ``uniform'' aspect of the above assumptions, they
would be independent of $\delta$.  Thus it is the uniform part,
which we have not made precise, that is the heart of the above
assumptions.
\end{rmk}

Our main theorem is,
\begin{thm}
Under the above assumptions (which are made
precise in Section \ref{SectionCurves}), the operator given by
\begin{equation*}
f\mapsto \psi\q(x\w) \int f\q(\gamma_t\q(x\w)\w) K\q(t\w)\: dt,
\end{equation*}
is bounded on $L^2$, for every product kernel $K\q(t_1,\ldots, t_\nu\w)$,
with sufficiently small support, provided $\psi$ has sufficiently small support.
\end{thm}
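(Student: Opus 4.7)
The plan is to prove $L^2$ boundedness via the classical strategy of decomposing the product kernel $K$ into dyadic pieces with cancellation and then applying a Cotlar--Stein type almost-orthogonality argument. Using the product-kernel structure, I would write $K = \sum_{j \in \N^\nu} K^{(j)}$, where $K^{(j)}$ is a bump essentially supported in a region of multi-radius $(2^{-j_1},\ldots,2^{-j_\nu})$, with the usual scaling of derivatives and with cancellation in each factor $\R^{N_\mu}$ inherited from the product-kernel cancellation conditions. Setting $T_j f(x) = \psi(x) \int f(\gamma_t(x))\, K^{(j)}(t)\,dt$, we have $T = \sum_j T_j$, and it suffices to prove
\[
\|T_j^* T_k\|_{L^2\to L^2} + \|T_j T_k^*\|_{L^2\to L^2} \lesssim \prod_{\mu=1}^{\nu} 2^{-\epsilon|j_\mu - k_\mu|}
\]
for some $\epsilon > 0$; a product-indexed Cotlar--Stein lemma then yields the theorem.

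For the diagonal bound $\|T_j\|_{L^2\to L^2} \lesssim 1$ uniformly in $j$, I would set $\delta_\mu = 2^{-j_\mu}$ and rescale $t_\mu \leadsto \delta_\mu s_\mu$ to bring the operator to unit scale. After rescaling the relevant vector field is $W(\delta_1 s_1,\ldots,\delta_\nu s_\nu)$, which by the second hypothesis lies in $\sD_\delta$ uniformly in $\delta$, while the pure-power hypothesis says $\sD_\delta$ is uniformly finitely generated. This uniformity is exactly what the single-parameter CNSW theory of Section~\ref{SectionCNSW} needs in order to produce a uniform $L^2$ bound for a single-scale smooth-with-cancellation kernel, the pure-power generators in each direction $\mu$ playing the role of the CNSW family for that parameter. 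This yields the diagonal bound with a constant independent of $j$.

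The almost-orthogonality estimate for $j \ne k$ is the core of the argument. Forming $T_j^* T_k$ and changing variables via $y \mapsto \gamma_t^{-1}\gamma_s(x)$, the composition reduces to an integral against $\overline{K^{(j)}(s)}\,K^{(k)}(t)$ of a smooth density involving $W$ and its flow. To obtain decay in $|j_\mu - k_\mu|$ for a single direction $\mu$, I would integrate by parts in $t_\mu$ (or $s_\mu$) using the cancellation or extra smoothness of the finer-scale kernel in that factor. The derivatives that appear act through the Taylor coefficients $X_\alpha$, and the pure-power hypothesis at the finer multi-scale guarantees that the vector fields produced remain tangent to the appropriate $\sD_\delta$, so integration by parts does not destroy the uniform geometric control. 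Iterating across all $\nu$ directions produces the required product decay factor. As the paper's own outline suggests, I would cleanly isolate these two ingredients into a general abstract $L^2$ theorem (Section~\ref{SectionGenThm}) and then verify its hypotheses under the stated assumptions.

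The main obstacle will be the multi-parameter almost-orthogonality. In the one-parameter CNSW setting a single gain $2^{-\epsilon|j-k|}$ suffices and the geometry is controlled by a single family of scaled Carnot--Carath\'eodory balls; here one must produce a gain in each of the $\nu$ directions \emph{simultaneously}, while cross-parameter commutators of scaled vector fields threaten to mix scales and destroy the uniform control. The entire role of the pure-power hypothesis is to confine the effective distribution at any multi-scale $\delta$, and translating this qualitative confinement into the quantitative decay estimates above --- essentially developing a usable multi-parameter Carnot--Carath\'eodory geometry --- is where I expect the bulk of the technical work (the content of the preparatory Sections~\ref{SectionCCII}--\ref{SectionL1delta}) to lie.
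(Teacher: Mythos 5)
Your overall architecture --- dyadic decomposition of the product kernel, Cotlar--Stein, reduction to a general $L^2$ theorem, and rescaling to unit scale via the multi-parameter Carnot--Carath\'eodory geometry generated by the pure powers --- matches the paper's. (Two side remarks: your target decay $\prod_\mu 2^{-\epsilon|j_\mu-k_\mu|}$ is equivalent, up to adjusting $\epsilon$, to the paper's $2^{-\epsilon|j-k|}$; and the diagonal bound needs no curvature at all, since $\|T_j\|_{L^2\to L^2}\lesssim 1$ already follows from $\|\vsig_j\|_{L^1}\lesssim 1$ and the fact that $\gamma_t$ is a diffeomorphism with Jacobian near $1$.)

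The genuine gap is in your mechanism for the off-diagonal gain. You propose to change variables in $T_j^*T_k$ so that the composition becomes an integral of a ``smooth density'' against $\overline{K^{(j)}(s)}K^{(k)}(t)$, and then to integrate by parts in $t_\mu$ using the cancellation of the finer-scale factor. But the Schwartz kernel of $T_j^*T_k$ is the pushforward of a smooth measure under $(s,t)\mapsto \gamma_t\circ\gamma_s^{-1}(x)$: a singular measure carried by a parametrized surface, with no smooth density onto which derivatives can be transferred; nor can the test function $f$ (merely $L^2$, or $L^\infty$ after interpolating against the trivial $L^1$ bound) absorb them. Absolute continuity is obtained only after composing on the order of $n$ copies of $S^*S$ and invoking the Jacobian curvature condition for the $n$-fold composition $\Gamma_\tau$ --- which is exactly what the uniform implication $\cZu\Rightarrow\cJu$ (Theorem \ref{ThmUnifEquivCond}) provides, applied after pulling back by the scaling map $\Phi$ at a frozen base point $x_0$. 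Even then the resulting density $h$ lies only in $L^1_\delta$ for some small $\delta>0$ (Proposition \ref{PropTransportL1delta}), so integration by parts remains unavailable. The gain is instead extracted with no differentiation at all: the cancellation $\int\vsig_j\,dt_1=0$ in the coordinates scaled by at least $2^{c|j-k|_\infty}=\zeta^{-1}$ lets one write $T_j=R^{\zeta}-R^{0}$, a difference of operators whose defining maps differ by $O(\zeta)$, and then $\int|h(\thetat_{t,\zeta}^{-1}(v))-h(\thetat_{t,0}^{-1}(v))|\,dv\lesssim\zeta^{\epsilon}$ follows from the $L^1_\delta$ modulus of continuity of $h$ (Proposition \ref{PropIntL1delta}). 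Your proposal is missing this $L^1$--$L^\infty$ interpolation and $L^1_\delta$-smoothing pipeline, which is the analytic heart of the proof; the Carnot--Carath\'eodory machinery you correctly anticipate serves to make that pipeline uniform in $j$, $k$, and $x_0$, not to replace it. A smaller omission: since in the multi-parameter setting neither $j\leq k$ nor $k\leq j$ need hold coordinatewise, one must bound the symmetric expression $\|(T_j^*T_kT_k^*T_j)^2\|_{L^2\to L^2}$ rather than $\|T_j^*T_k\|_{L^2\to L^2}$ directly, though the paper notes this is chiefly a notational matter.
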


The precise statement of our main result (where more general
kernels are considered) can be
found in Theorems \ref{ThmMainThmFirstPass} and \ref{ThmMainThmSecondPass}.

\section{Overview of the series}\label{SectionOverview}
One way to view this paper is as the foundation for the theory
developed in the sequels where (in joint works with E. M. Stein) the corresponding
$L^p$ theory is developed \cite{SteinStreetMultiParameterSingRadonLp},
and the special case when $\gamma$ is real analytic is
investigated \cite{StreetMultiParameterSingRadonAnal}.
In this section, we discuss the main results of \cite{StreetMultiParameterSingRadonAnal}, where $\gamma$ is assumed to be real analytic.
For the purposes of this overview, we restrict our attention to the case when $K$ is a ``product kernel,''
though more general cases will be covered in
the later sections and in
\cite{SteinStreetMultiParameterSingRadonLp,StreetMultiParameterSingRadonAnal}.
All of the results discussed here will be contained 
in \cite{StreetMultiParameterSingRadonAnal}.

The setting is as follows.  
We decompose $\R^N$ into $\nu$ factors,
$\R^N=\R^{N_1}\times\cdots\times \R^{N_\nu}$,
and we write $t=\q(t_1,\ldots, t_\nu\w)\in \R^N$.
We consider product kernels, $K\q(t_1,\ldots, t_\nu\w)$,
relative to this decomposition (see the introduction
and Definition \ref{DefnProdKer} for the notion of
a product kernel).  In particular, when $\nu=1$,
$K$ is standard Calder\'on-Zygmund kernel.
We suppose that we are given a germ
of a real analytic function defined on a neighborhood of the
origin in $\R^N\times \R^n$:\footnote{Here, we have used the notation
$f:\R^n_0\rightarrow \R^m$ to denote that $f$ is a germ of a function
defined on a neighborhood of $0\in \R^n$.}
\begin{equation*}
\gamma\q(t,x\w)=\gamma_t\q(x\w):\R^N_0\times \R^n_0\rightarrow \R^n,
\end{equation*}
satisfying $\gamma_0\q(x\w)\equiv x$.

There are two, closely related, operators which are of interest to us:
\begin{itemize}
\item $Tf\q(x\w)=\psi\q(x\w)\int f\q(\gamma_{t_1,\ldots, t_\nu}\q(x\w)\w) K\q(t_1,\ldots, t_\nu\w) \: dt_1\cdots dt_\nu$,
where $K$ product kernel supported near $t=0$ and $\psi$ is a $C^\infty$
cut-off function supported near $x=0$.
\item $\sM f\q(x\w) = \psi\q(x\w) \sup_{0<\delta_1,\ldots, \delta_\nu<<1} \int_{\q|t\w|\leq 1} \q|f\q(\gamma_{\delta_1 t_1,\ldots, \delta_\nu t_\nu }\q(x\w)\w)\w|\: dt_1\cdots dt_\nu$, where $\psi$ is a $C^\infty$ cut-off function supported near
$x=0$, and the supremum is taken over $\delta_1,\ldots, \delta_\nu$ small.
\end{itemize}
Note that the above operators only make sense if $K$ and $\psi$ are supported
sufficiently close to $0$, since $\gamma$ is only defined on a neighborhood
of $0$.
Our goal is two-fold:
\begin{enumerate}[(I)]
\item\label{ItemTGoal} Give conditions on $\gamma$ such that $T$ is bounded $L^p$ ($1<p<\infty$)
for {\it every} such product kernel.
\item\label{ItemMGoal} Give conditions on $\gamma$ such that $\sM$ is bounded on $L^p$ ($1<p\leq \infty$).
\end{enumerate}
This paper is only concerned with Goal \ref{ItemTGoal} in the case $p=2$, though
Goal \ref{ItemMGoal} is an essential part of 
\cite{SteinStreetMultiParameterSingRadonLp,StreetMultiParameterSingRadonAnal}.

In this paper and \cite{SteinStreetMultiParameterSingRadonLp}, we consider
the more general case when $\gamma$ is $C^\infty$.  When $\gamma$ is $C^\infty$, we
put two types\footnote{We do not make the dichotomy between these
two types of conditions explicit in this paper:  it is only
particularly relevant when $\gamma$ is real analytic, and
we defer a precise discussion of these
issues to
\cite{StreetMultiParameterSingRadonAnal}.}
 of assumptions on $\gamma$ to ensure that $T$ and $\sM$ are bounded
on $L^p$:
\begin{itemize}
\item A ``finite type'' condition.  This condition can be seen
as a generalization of the condition in \cite{ChristNagelSteinWaingerSingularAndMaximalRadonTransforms}, where the case $\nu=1$ (i.e., when $K$ is a Calder\'on-Zygmund kernel) is treated.
\item An ``algebraic'' condition.  This condition holds
automatically in the case $\nu=1$ (i.e., when $K$ is a Calder\'on-Zygmund kernel).
\end{itemize}
For a discussion of these two conditions in an easy to understand
special case, we refer the reader to Section \ref{SectionSpecialCase}.
When we restrict attention to the case
that $\gamma$ is real analytic in \cite{StreetMultiParameterSingRadonAnal},
we will use a Weierstrass-type preparation theorem (from \cite{GalligoTheoremeDeDivisionEtStabiliteEnGeometrieAnalytiqueLocale}) to help show:
\begin{itemize}
\item The ``finite-type'' condition holds automatically when $\gamma$
is real analytic.
\item The ``algebraic'' condition is not necessary for the $L^p$
boundedness of $\sM$ when $\gamma$ is real analytic (though it is necessary, in some cases, for
the $L^p$ boundedness of $T$).
\end{itemize}
Because of this, our results take a much simpler form when
$\gamma$ is assumed to be real analytic.
We state them here.

\begin{thm}[\cite{StreetMultiParameterSingRadonAnal}]\label{ThmRealAnalM}
When $\gamma$ is real analytic, $\sM$ is bounded on $L^p$ ($1<p\leq \infty$)
under no additional assumptions. 
\end{thm}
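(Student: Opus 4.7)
The plan is to leverage two features of the real analytic setting. First, a Weierstrass-type preparation theorem---Galligo's division theorem \cite{GalligoTheoremeDeDivisionEtStabiliteEnGeometrieAnalytiqueLocale}, as alluded to in the overview---should force the ``finite type'' hypothesis of the multi-parameter $L^p$ maximal theorem that we are to develop in \cite{SteinStreetMultiParameterSingRadonLp} to hold automatically after a finite decomposition of a neighborhood of the origin. Second, because $\sM$ is a positive operator acting on $|f|$, no cancellation is ever exploited, and the ``algebraic'' condition that is essential for the singular integral operator $T$ should be replaceable by direct pointwise majorization.

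More concretely, I would proceed as follows. Using Galligo's division theorem applied to the components of $\gamma$ (or equivalently to the Taylor coefficients of $W$), partition a neighborhood of $0\in\R^N\times\R^n$ into finitely many semi-analytic pieces on each of which the coefficient vector fields $X_\alpha$ in $W(t,x)\sim\sum_\alpha t^\alpha X_\alpha$ acquire a normal form. The Noetherian property of the ring of germs of real analytic functions should then force only finitely many pure-power coefficients $X_\alpha$ to be needed to generate the involutive distributions $\sD_\delta$, yielding uniform finite generation and hence the finite-type hypothesis for the $C^\infty$ theory. Real analyticity is crucial here: in the merely $C^\infty$ setting one can construct $\gamma$ whose Taylor coefficients flatten at the origin in a way that forces infinitely many brackets, and this pathology is precisely what Noetherianity rules out.

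To bypass the algebraic hypothesis for $\sM$, I would exploit positivity. Since $\sM$ sees only $|f\circ\gamma_{\delta t}|$, one may freely split the normal form from the previous paragraph into a finite sum of simpler ``monomial'' model pieces without worrying whether the decomposition respects the cancellation built into product kernels. Each model piece should then be dominated, after a suitable dyadic decomposition in $(\delta,t)$, by an iterated single-parameter maximal Radon transform along real analytic curves of the type treated in \cite{ChristNagelSteinWaingerSingularAndMaximalRadonTransforms}, with each stage bounded on $L^p$ for $1<p\leq\infty$ by the real analytic case of that theorem. The case $p=\infty$ is trivial since $\sM f\leq C\|f\|_{L^\infty}$.

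The hard part will be making the preparation-theorem reduction uniform in the scaling parameters $\delta=(\delta_1,\ldots,\delta_\nu)$. Galligo's normal forms live naturally at a fixed scale, whereas uniform finite generation of $\sD_\delta$ must respect the anisotropic dilations $t_\mu\mapsto\delta_\mu t_\mu$. Extracting a bounded list of generators which works simultaneously for every $\delta\in(0,1]^\nu$---so that the maximal operator really is controlled by a \emph{finite} family of smooth model averages, uniformly down to the origin---is the real content of the theorem, and is where one expects the full strength of analytic Noetherianity, combined with curve selection, to be expended.
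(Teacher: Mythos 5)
This theorem is not proved in the present paper at all: it is quoted from the third paper in the series, \cite{StreetMultiParameterSingRadonAnal}, and the only proof-relevant material here is the two-item outline in Section \ref{SectionOverview} (Galligo's division theorem forces the ``finite type'' condition automatically for real analytic $\gamma$; the ``algebraic'' condition is unnecessary for $\sM$). Your proposal reproduces exactly that outline, so you have correctly identified the intended strategy, but what you have written is a plan rather than a proof, and you yourself flag the two places where the actual content lives.

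Concretely, the gaps are these. First, the step ``each model piece should then be dominated \ldots by an iterated single-parameter maximal Radon transform'' is asserted, not established. Positivity of $\sM$ lets you discard cancellation, but it does not by itself produce a pointwise domination of the multi-parameter averages at scale $\delta=(\delta_1,\ldots,\delta_\nu)$ by compositions of single-parameter averages: that requires comparing the multi-parameter Carnot--Carath\'eodory balls $\B{X}{d}{x}{\delta}$ with products of single-parameter ones, uniformly in $\delta$ and across the (possibly singular) leaves of the foliation, which is precisely the machinery of \cite{StreetMultiParameterCCBalls} that this paper spends Sections \ref{SectionCC} and \ref{SectionCCII} setting up. Second, your argument presupposes the $C^\infty$ multi-parameter $L^p$ maximal theorem of \cite{SteinStreetMultiParameterSingRadonLp}, which is not available as a black box; and the uniform-in-$\delta$ finite generation extracted from Noetherianity is, as you say, ``the real content of the theorem'' --- Section \ref{SectionMultiParamDifficult} of this paper shows that in the $C^\infty$ category the naive multi-parameter analogue of Lemma \ref{LemmaSpecialCaseEquivFiniteGen} genuinely fails, so the analytic argument must do real work there rather than merely invoke Noetherianity. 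Acknowledging where the difficulty lies is not the same as resolving it, so the proposal does not constitute a proof.
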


\begin{rmk}
There are at least two special cases of Theorem \ref{ThmRealAnalM}
in the literature.
The first is due to Bourgain \cite{BourgainARemarkOnTheMaximalFunctionAssociatedToAnAnalyticVectorField}
in the case $\nu=1$.  He studied the case
$\gamma\q(t,x\w):\R^1_0\times \R^2_0\rightarrow \R^2$ given by
$\gamma\q(t,x\w)= x-tv\q(x\w)$, where $v:\R^2_0\rightarrow \R^2$ is a real
analytic vector field.
Another special case of Theorem \ref{ThmRealAnalT} is due to Christ
\cite{ChristTheStrongMaximalFunctionOnANilpotentGroup}, where 
left
invariant maximal functions (of a certain form) on a nilpotent Lie group 
are studied.
\end{rmk}

We now turn to $T$.  We begin by stating the corollary of our general
result in the special case $\nu=1$,

\begin{cor}[\cite{StreetMultiParameterSingRadonAnal}]\label{CorRealAnalT}
When $\nu=1$ (i.e., when $K$ is a Calder\'on-Zygmund kernel) and
when $\gamma$ is real analytic, $T:L^p\rightarrow L^p$ ($1<p<\infty$)
under no additional assumptions.
This result is a special case of Theorem \ref{ThmRealAnalT} below.
\end{cor}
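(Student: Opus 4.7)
The plan is to deduce this corollary as a direct specialization of Theorem \ref{ThmRealAnalT}, which in turn rests on the general $C^\infty$ $L^p$ theorem of \cite{SteinStreetMultiParameterSingRadonLp}. That general theorem requires verifying two families of hypotheses on $\gamma$: a \emph{finite-type} condition (a Christ--Nagel--Stein--Wainger-style curvature condition on the Taylor coefficients $X_\alpha$ of the vector field $W$) and an \emph{algebraic} condition coupling the factors $\R^{N_1},\ldots,\R^{N_\nu}$. In the single-parameter case $\nu=1$ the excerpt notes the algebraic condition is vacuous, so the entire task reduces to checking that real analyticity forces the finite-type condition.

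First I would recall that, for $\nu=1$, the finite-type condition reduces to asking that the distribution $\mathcal{D}(\{X_\alpha\})$ spanned by the Taylor coefficients $X_\alpha$ of $W(t,x)\sim\sum_\alpha t^\alpha X_\alpha$ be finitely generated (uniformly in the scaling parameter $\delta\in (0,1]$) and contain a suitable scaled version of the tail. Once this holds, the $L^p$ bound for $T$ is already contained in the main theorem of \cite{ChristNagelSteinWaingerSingularAndMaximalRadonTransforms}, or in the strengthened version proved earlier in this paper.

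Second, to show the finite-type condition holds for every real analytic $\gamma$, I would use the Galligo division/preparation theorem from \cite{GalligoTheoremeDeDivisionEtStabiliteEnGeometrieAnalytiqueLocale}. The ring of real analytic germs at the origin is Noetherian, so the ascending chain of $C^\omega$ modules generated by $\{X_\alpha:|\alpha|\le M\}$ stabilizes at some finite $M_0$. For $|\alpha|>M_0$ one can then write $X_\alpha$ as a finite combination of the $X_\beta$ with $|\beta|\le M_0$, with analytic coefficients, and substitute back into the Taylor series to express the rescaled tail as a convergent combination of the finitely many generators. The $\delta$-weights on the generators match the $t$-weights on $W(\delta t,x)$, giving the required membership and finite generation.

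The main obstacle is the word \emph{uniformly}: one must produce bounds on the coefficients in the finite-generation identity that do not deteriorate as $\delta\downarrow 0$. This is precisely where analyticity (as opposed to mere $C^\infty$ regularity) is used, since the Galligo division theorem provides convergent power-series remainders whose norms can be controlled on a fixed polydisc, and a rescaling argument in the $t$ variable then converts this into uniform control in $\delta\in(0,1]$. Everything else — passing from uniform finite generation back to the actual $L^p$ inequality — is already packaged in the $\nu=1$ case of the main theorem of the paper and its $L^p$ sequel.
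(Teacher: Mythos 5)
Your proposal follows the same route as the paper: Corollary \ref{CorRealAnalT} is obtained by specializing Theorem \ref{ThmRealAnalT} to $\nu=1$, where $\sN=\emptyset$ so the hypotheses on the non-pure powers hold vacuously, and the remaining input — that real analyticity forces the finite-type condition, via the Galligo preparation theorem and Noetherianity (equivalently, Lobry's local finite generation of the involutive distribution, with the uniformity in $\delta$ free in the single-parameter case by Lemma \ref{LemmaSpecialCaseEquivFiniteGen}) — is exactly what the paper defers to \cite{StreetMultiParameterSingRadonAnal}. Your sketch of that deferred verification is consistent with what the sequel does, so the argument is correct and essentially identical in structure.
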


Unfortunately, when we move to the case $\nu>1$, it is necessary
to put additional hypotheses on $\gamma$ to ensure that $T$ is bounded
on $L^p$ (or even $L^2$), {\it even when $\gamma$ is real analytic}.
Examples demonstrating this can be found in Section \ref{SectionExTransInv}.

To explain our assumptions on $\gamma$, we must first introduce some notation.
In \cite{ChristNagelSteinWaingerSingularAndMaximalRadonTransforms}
it was shown that every $\gamma$ could be written in the form,
\begin{equation*}
\gamma_{t_1,\ldots, t_\nu}\q(x\w) \sim \exp\q(\sum_{\q|\alpha_1\w|+\cdots+\q|\alpha_\nu\w|>0} t_1^{\alpha_1}\cdots t_\nu^{\alpha_\nu} X_{\alpha_1,\ldots, \alpha_\nu} \w)x,
\end{equation*}
where the $X_{\alpha_1,\ldots, \alpha_{\nu}}$ are vector fields
and we write
$\gamma_t\q(x\w)\sim \exp\q(\sum_{\alpha} t^{\alpha} X_\alpha\w)x$ if
$\gamma_t\q(x\w) = \exp\q(\sum_{\q|\alpha\w|<L} t^{\alpha} X_\alpha\w)x+O\q(\q|t\w|^L\w)$.

To each $X_{\alpha_1,\ldots, \alpha_\nu}$ we assign the formal degree
$d_{\alpha_1,\ldots, \alpha_\nu}= \q(\q|\alpha_1\w|,\ldots, \q|\alpha_\nu\w|\w)$.
We say that $d_{\alpha}$ is a {\it pure power} if $d_{\alpha}$
is nonzero in precisely one component, otherwise we say it is a {\it non-pure power}.
Define 
two sets
$$\sP=\q\{\q(X_\alpha, d_\alpha\w): d_\alpha\text{ is a pure power}\w\},$$
$$\sN=\q\{\q(X_\alpha, d_\alpha\w): d_\alpha\text{ is a non-pure power}\w\},$$
Let $\sS$ be the smallest set of vector fields with formal degrees such that,
\begin{itemize}
\item $\sP\subseteq \sS$,
\item If $\q(X_1,d_1\w),\q(X_2,d_2\w)\in \sS$, then $\q(\q[X_1,X_2\w],d_1+d_2\w)\in \sS$.
\end{itemize}

\begin{thm}[\cite{StreetMultiParameterSingRadonAnal}]\label{ThmRealAnalT}
Let $\gamma$ be real analytic.  Suppose that, for every $\q(Y,e\w)\in \sN$,
there is a neighborhood $U=U\q(Y,e\w)$ of $0$ 
and vector fields
$\q(X_1,d_1\w),\ldots, \q(X_q,d_q\w)\in \sS$
such that for every $\delta\in \q[0,1\w]^\nu$,
we have\footnote{We write $\delta^{d_j}=\prod_{\mu=1}^\nu \delta_{\mu}^{d_j^{\mu}}$.}
\begin{equation*}
\delta^e Y\big|_U=\sum_{j=1}^q c_j^\delta \delta^{d_j} X_j\big|_U,
\end{equation*}
where $\q\{c_j^\delta:\delta\in \q[0,1\w]^\nu\w\}\subset C^\infty\q(U\w)$
is a bounded set.
Then $T$ is bounded on $L^p$ ($1<p<\infty$) for every product
kernel $K$ (with sufficiently small support), provided
$\psi$ has sufficiently small support.
\end{thm}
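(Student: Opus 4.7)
The strategy is to reduce Theorem \ref{ThmRealAnalT} to the main $L^2$ theorem of this paper together with the $L^p$ extension developed in \cite{SteinStreetMultiParameterSingRadonLp}. Concretely, I must verify the two assumptions described in Section \ref{SectionIntroRes} uniformly in $\delta=(\delta_1,\ldots,\delta_\nu)\in (0,1]^\nu$: first, that
$$\sD_\delta := \sD\left(\left\{\delta_1^{|\alpha_1|}\cdots \delta_\nu^{|\alpha_\nu|} X_\alpha : \alpha \text{ is a pure power}\right\}\right)$$
is finitely generated as a $C^\infty$ module uniformly in $\delta$ (the \emph{finite-type} condition), and second, that $W(\delta_1 t_1,\ldots,\delta_\nu t_\nu)\in \sD_\delta$ uniformly in $\delta$ (the \emph{algebraic} condition).

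For the finite-type condition I would use the real-analytic hypothesis on $\gamma$ in an essential way. Each coefficient $X_\alpha$ in the Taylor expansion of $W$ is then real analytic, and Galligo's Weierstrass-type division theorem \cite{GalligoTheoremeDeDivisionEtStabiliteEnGeometrieAnalytiqueLocale} applied to the (a priori infinite) family of pure-power $X_\alpha$'s produces a finite generating set together with quotient estimates that are stable under the anisotropic dilations $X_\alpha\mapsto \delta^{|\alpha|}X_\alpha$. This is precisely the point at which the real-analytic hypothesis allows us to dispense with the explicit finite-type assumption needed in the general $C^\infty$ theorem.

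For the algebraic condition I expand
$$W(\delta_1 t_1,\ldots,\delta_\nu t_\nu)\sim \sum_\alpha t^\alpha\, \delta_1^{|\alpha_1|}\cdots\delta_\nu^{|\alpha_\nu|}\, X_\alpha,$$
and split the sum into pure and non-pure contributions. Pure contributions lie in $\sD_\delta$ by definition. For a non-pure-power term corresponding to $(Y,e)=(X_\alpha,d_\alpha)\in \sN$, the hypothesis supplies a local identity
$$\delta^e Y\big|_U = \sum_{j=1}^q c_j^\delta\, \delta^{d_j} X_j\big|_U,\qquad (X_j,d_j)\in \sS,$$
with $\{c_j^\delta:\delta\in[0,1]^\nu\}$ a bounded subset of $C^\infty(U)$. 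It therefore suffices to check that $\delta^{d} X\in \sD_\delta$ uniformly whenever $(X,d)\in \sS$. This follows by induction on the bracket construction of $\sS$ out of $\sP$, via the scaling identity
$$\bigl[\delta^{d_1} X_1,\,\delta^{d_2} X_2\bigr] = \delta^{d_1+d_2}[X_1,X_2],$$
together with the fact that $\sD_\delta$ is involutive by its very definition.

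Once both conditions are verified uniformly, the $L^2$ bound follows from the main theorem of the present paper and the full range $1<p<\infty$ from its extension in \cite{SteinStreetMultiParameterSingRadonLp}. The main obstacle I anticipate is the uniformity in $\delta$: finite generation in the analytic category is a priori a pointwise statement, and extracting $\delta$-uniform bounds from Galligo's theorem requires tracking how the Weierstrass data (generators, division quotients, and their $C^\infty$ norms) transform under the anisotropic multiparameter dilations. A secondary subtlety is that $\sS$ is defined by an a priori infinite iterated bracketing of $\sP$, so one must argue that for each fixed $(Y,e)\in\sN$ only finitely many elements of $\sS$ are actually needed, with $\delta$-independent bounds on their coefficients $c_j^\delta$.
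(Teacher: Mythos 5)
First, a point of reference: this paper does not itself prove Theorem \ref{ThmRealAnalT}. The theorem is quoted from the third paper in the series, and the present text only records the strategy, namely that the real-analytic hypotheses should be shown to imply the assumptions of Theorem \ref{ThmMainThmFirstPass} (for $p=2$), with the full range $1<p<\infty$ supplied by the second paper. Your outline follows exactly that announced reduction, so at the level of strategy you and the author agree.

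The gap is that the reduction's central step is asserted rather than proved. The entire difficulty of the theorem is the ``finite type'' condition: that the pure-power Taylor coefficients generate a finite list in the sense of Definition \ref{DefnGeneratesAFiniteList}, i.e.\ a finite subset of $\sS$ satisfying $\sD\q(\K,\sA\w)$ \emph{uniformly in} $\delta$, and that this list controls $\gamma$ in the sense of Definition \ref{DefnControlEveryScale}. Section \ref{SectionMultiParamDifficult} exhibits smooth (non-analytic) data for which the naive multi-parameter analogue of Lemma \ref{LemmaSpecialCaseEquivFiniteGen} fails, so the uniformity in $\delta$ genuinely does not come for free; it must be extracted from real analyticity by a quantitative argument. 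Your sentence that Galligo's division theorem ``produces a finite generating set together with quotient estimates that are stable under the anisotropic dilations'' is precisely the statement requiring proof, and you then re-list it under anticipated obstacles, conceding it is not established. Two secondary issues: (i) the theorem is phrased via the exponential representation (the vector fields of $\cGs$), whereas the hypotheses of Section \ref{SectionCurves} are phrased via the Taylor coefficients of $W$ (condition $\cZs$); passing between the two requires Proposition \ref{PropCurvatureEquivScaleInv}, which you never invoke; (ii) for the algebraic condition it is not enough that $\delta^{e}Y$ lie in $\sD_\delta$ as a module element --- one must verify that each $\q(X_j,d_j\w)\in\sS$ occurring in the hypothesis is \emph{controlled} by the chosen finite list in the sense of Definition \ref{DefnControlVectorFieldEveryScale}, with structure functions bounded in $C^\infty$ uniformly in $\delta$ and in the base point; your bracket-scaling identity handles the algebra but not these uniform estimates. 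As written, the proposal is a correct roadmap coinciding with the paper's announced plan, but it is not a proof.
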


\begin{rmk}\label{RmkMoreGeneralA}
Actually, the result in \cite{StreetMultiParameterSingRadonAnal}
requires less than is outlined in Theorem \ref{ThmRealAnalT}:
the set $U$ can depend on $\delta$ in a specific way,
and one requires less than $\q\{c_j^\delta\w\}$ forming a bounded set.
However, while this stronger result is important for applications,
it requires a good deal of notation to state precisely.  We, therefore,
defer the discussion to \cite{StreetMultiParameterSingRadonAnal}.
\end{rmk}

\begin{rmk}
Note that if $Y=0$ for every $\q(Y,e\w)\in \sN$, then $T$ is bounded on
$L^p$, as the conditions of Theorem \ref{ThmRealAnalT} hold trivially.
In particular, in the case $\nu=1$, $\sN=\emptyset$ and 
Corollary \ref{CorRealAnalT} follows.
\end{rmk}

\begin{rmk}
One of the main aspects of \cite{StreetMultiParameterSingRadonAnal}
is showing that Theorem \ref{ThmRealAnalT} (and its more general
analog alluded to in Remark \ref{RmkMoreGeneralA})
are a special case of Theorem \ref{ThmMainThmFirstPass}, below (at
least when $p=2$; more general $p$ are studied in \cite{SteinStreetMultiParameterSingRadonLp}).
\end{rmk}

\section{A Special Case}\label{SectionSpecialCase}
Before we enter into the rather lengthy statement of our main result,
we discuss a special case.  Namely, the case when,
\begin{equation}\label{EqnSpecialCaseGamma}
\gamma_t\q( x\w) = e^{\sum_{0<\q|\alpha\w|\leq M} t^{\alpha} X_\alpha} x,
\end{equation}
where each $X_\alpha$ is a $C^\infty$ vector field--note that this
is an exponential of a {\it finite} sum of vector fields.
Later in the paper, we will deal with more general $\gamma$;
see Remark \ref{RmkMoreGeneralGamma} for some comments on how we will
do this.
The goal in this section is to discuss conditions we may place
on the $X_\alpha$ so that the operator given by
\eqref{EqnIntroOp} is bounded on $L^2$.  The conditions
will depend on the type of kernel $K$ which is used
in \eqref{EqnIntroOp}.

We begin by discussing the work in 
\cite{ChristNagelSteinWaingerSingularAndMaximalRadonTransforms}
in this special case.  Then, we discuss a way in which
our results generalize the results in
\cite{ChristNagelSteinWaingerSingularAndMaximalRadonTransforms}
in the single-parameter setting.  We then close by informally
discussing a special case of the multi-parameter setting.

In \cite{ChristNagelSteinWaingerSingularAndMaximalRadonTransforms}, operators
of the form,
\begin{equation}\label{EqnSpecilCaseCNSW}
f\mapsto \psi\q( x\w) \int f\q( \gamma_t\q( x\w) \w) K\q( t\w) \: dt,
\end{equation}
were studied,\footnote{\cite{ChristNagelSteinWaingerSingularAndMaximalRadonTransforms} did not restrict attention to $\gamma$ of the form \eqref{EqnSpecialCaseGamma}; nor will we.  However, it is easier to understand the basic
ideas in this special case.  
For the connection between \cite{ChristNagelSteinWaingerSingularAndMaximalRadonTransforms}
and ours when
 $\gamma$ is more general,
see Section \ref{SectionEXCNSW} where a there is a
proof of how the framework of \cite{ChristNagelSteinWaingerSingularAndMaximalRadonTransforms} fits into our more general setting.}
where $\psi$ is an appropriate cut-off functions,
and $K$ is a Calder\'on-Zygmund kernel with small support
(how small depends on $\gamma$).
One of the main aspects of \cite{ChristNagelSteinWaingerSingularAndMaximalRadonTransforms}
was exhibiting conditions on $\gamma$ for which the operator
given by \eqref{EqnSpecilCaseCNSW} is bounded on $L^p$ ($1<p<\infty$).
In the special case when $\gamma$
is given by \eqref{EqnSpecialCaseGamma}, this condition
is that $\q\{ X_\alpha\w\}$ satisfy H\"ormander's condition.
I.e., that the Lie algebra generated by the vector fields $\q\{X_\alpha\w\}$
span the tangent space at every point of the support of $\psi$.

Now suppose we are no longer in the situation where $\q\{X_\alpha\w\}$
satisfy H\"ormander's condition.  Instead, let $\sD$ be the
involutive distribution generated by the $X_\alpha$.
I.e., let $\sD$ be the $C^\infty$ module generated by the $X_\alpha$
and the commutators of the $X_\alpha$ of all orders.
Suppose that $\sD$ is finitely generated\footnote{Usually, one
sees the condition that $\sD$ is {\it locally} finitely generated.  However,
our results are local in nature, and so it suffices to assume that $\sD$
is finitely generated.}
 as a $C^\infty$ module.\footnote{This is a generalization of H\"ormander's
condition.  Indeed, H\"ormander's condition states that the involutive
distribution generated by the $X_\alpha$ is the entire space of vector fields
(on a neighborhood of the support of $\psi$).  This distribution
is clearly finitely generated:  it is generated by the coordinate
vector fields.}
Note, if $\sD_x$ is the vector space given by evaluating all of
the vector fields in $\sD$ at the point $x$, then we are
not assuming $\dim \sD_x$ is constant in $x$.
The classical Frobenius theorem applies to show that $\R^n$ is locally
foliated into leaves, where the space of smooth sections of the
tangent bundle of each leaf is given by $\sD$.
Since the dimension of $\dim \sD_x$ is not necessarily
constant, this could be a singular foliation; i.e., the dimension
of the leaf may vary from point to point.  
It is clear that,
when restricted to each leaf, $\q\{ X_\alpha\w\}$ satisfy H\"ormander's
condition.

One might hope, given the above discussion, that one could apply
the theory
of \cite{ChristNagelSteinWaingerSingularAndMaximalRadonTransforms}
to each leaf, and then put this all together to obtain
the $L^p$ (or, in our case, $L^2$) boundedness of
\eqref{EqnSpecilCaseCNSW}.  Unfortunately, one
runs into
a technical difficulty:  near a singular\footnote{A point $x$ is said
to be singular, if $\dim \sD_x$ is not constant on any neighborhood of $x$.} 
point of the involutive distribution $\sD$, the classical proofs
of the Frobenius theorem do not yield uniform
enough control of the coordinate charts defining the leaves to be able
to apply the theory of \cite{ChristNagelSteinWaingerSingularAndMaximalRadonTransforms}
uniformly on each leaf.

Fortunately, one {\it can} obtain uniform control of these coordinate charts.
Indeed, this was one of the main aspects of
\cite{StreetMultiParameterCCBalls}.
In fact, the theory in \cite{StreetMultiParameterCCBalls}
will be the main technical tool on which we will base all of
our ``scaling'' arguments (see Sections \ref{SectionCC} and \ref{SectionCCII} for
more on \cite{StreetMultiParameterCCBalls}).\footnote{In particular, the results
in \cite{StreetMultiParameterCCBalls} allow us to use multi-parameter Carnot-Carth\'eodory
balls, without resorting to the weakly-comparable hypotheses in Section 4 of
\cite{TaoWrightLpImprovingBoundsForAverages}, by introducing
some extra assumptions on the relevant vector fields.  This
is an essential point of our analysis, and might also be useful in
problems related to the ones in \cite{TaoWrightLpImprovingBoundsForAverages}.}
Because of this, in the above
case, we will obtain the $L^2$ boundedness of \eqref{EqnSpecilCaseCNSW}
in this paper.\footnote{The corresponding $L^p$ ($1<p<\infty$) boundedness
holds as well, and will be covered in the sequel to this
paper \cite{SteinStreetMultiParameterSingRadonLp}.}

\begin{rmk}
In the special case when the $X_\alpha$ are real analytic,
the distribution $\sD$ is automatically finitely generated as
a $C^\infty$ module (when restricted to a sufficiently small neighborhood).\footnote{This is a result of Lobry \cite{LobryControlabiliteDesSystemesNonLinearies},
and can be seen as a consequence of the Weierstrass preparation theorem.
See \cite{LobryControlabiliteDesSystemesNonLinearies,StreetMultiParameterSingRadonAnal} for a further discussion.}
This gives intuition as to why Corollary \ref{CorRealAnalT} is true.
\end{rmk}

Let us rephrase the above single-parameter theory in a slightly
more complicated way, which will
lead us naturally to the multi-parameter theory.
In what follows, we take the standard dilations on $t=\q( t_1,\ldots, t_N\w)\in \R^N$:
\begin{equation*}
\delta t = \q( \delta t_1,\ldots, \delta t_N\w), \quad \delta\in \q(0,1\w].
\end{equation*}
In later sections of the paper, we will raise $\delta$ to different
powers in each coordinate, but we ignore such generalizations for the moment.
One of the main aspects of Calder\'on-Zygmund kernels is that the class
of kernels is dilation invariant:
\begin{equation*}
\delta^{-N} K\q( \delta^{-1} t\w)
\end{equation*}
is again a Calder\'on-Zygmund kernel if $K$ is (and this is true uniformly in
$\delta$).
To take advantage of this dilation invariance, we wish to restate the
above condition that the involutive distribution, $\sD$, generated
by $\q\{ X_\alpha\w\}$ is finitely
generated as a $C^\infty$ module, in a naturally dilation invariant way.

Note that,
\begin{equation*}
\gamma_{\delta t}\q( x\w) = e^{\sum_{0<\q|\alpha\w|\leq M} t^{\alpha} \delta^{\q|\alpha\w|} X_\alpha}x.
\end{equation*}
Our dilation invariant version of the above assumption is that the
involutive distribution generated by
$\q\{\delta^{\q|\alpha\w|} X_\alpha\w\}$ is finitely
generated ``uniformly'' for $\delta\in\q(0,1\w]$ (of
course for every $\delta$ the involutive distribution so generated is always
equal to $\sD$, so it is the uniform aspect that is the point).
We make this notion of uniform precise in a moment.
We will see, in the single-parameter case (which we are presently considering),
this uniformity follows for free (Lemma \ref{LemmaSpecialCaseEquivFiniteGen}), but this is not the case
in the multi-parameter setting (see Section \ref{SectionMultiParamDifficult}).

Assign to each vector field $X_\alpha$ the formal degree $\q|\alpha\w|$.
Recursively, assign formal degrees as follows:
if $X_1$ has formal degree $d_1$ and $X_2$ has formal degree $d_2$,
then we assign $\q[X_1,X_2\w]$ the formal degree $d_1+d_2$ (it is possible
that the same vector field may have more than one formal degree).
Scaling $X_\alpha$ by $\delta^{\q|\alpha\w|}$ induces a
scaling $\delta^{d_1}X_1$ where $X_1$ has formal degree $d_1$.

Our ``uniform'' assumption in $\delta$ is as follows.  We assume
that there is a finite list of the above vector fields
$X_1,\ldots, X_q$, $X_j$ having formal degree $d_j$,
each $\q( X_\alpha, \q|\alpha\w|\w)$ appearing as some $\q( X_j,d_j\w)$,
and this list satisfying,
\begin{equation}\label{EqnSpecialCaseInteg}
\q[ \delta^{d_j} X_j, \delta^{d_k} X_k\w] = \sum_{l=1}^q c_{j,k}^{l,\delta} \delta^{d_l} X_l, \quad \delta\in \q(0,1\w],
\end{equation}
where $c_{j,k}^{l,\delta}$ is a $C^\infty$ function\footnote{When
we present our assumptions on the vector fields in more detail
(see Section \ref{SectionCC}) we will only require \eqref{EqnSpecialCaseInteg}
to hold locally near each point in a precise sense, and will
require less than the functions being $C^{\infty}$.  We defer
such details to Section \ref{SectionCurves}.}
uniformly in $\delta$.  Note that, by taking $\delta=1$, \eqref{EqnSpecialCaseInteg}
implies that $\sD$ is a finitely generated distribution (it is generated
by $X_1,\ldots, X_q$ as a $C^\infty$ module).  We also have the converse,
\begin{lemma}\label{LemmaSpecialCaseEquivFiniteGen}
One may choose $\q(X_1,d_1\w),\ldots, \q(X_q,d_q\w)$ as above, so that \eqref{EqnSpecialCaseInteg}
holds, if and only if $\sD$ is finitely generated as a $C^\infty$ module.
\end{lemma}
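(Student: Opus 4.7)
The plan is to prove both directions. For the easier $(\Leftarrow)$ direction, suppose \eqref{EqnSpecialCaseInteg} holds for a list $(X_1,d_1),\ldots,(X_q,d_q)$. Setting $\delta=1$ shows that $[X_j,X_k]$ lies in the $C^\infty$-module $\sM$ generated by $X_1,\ldots,X_q$, so $\sM$ is involutive. Each $X_j$ was obtained as an iterated commutator of the original $X_\alpha$'s (this is precisely how it acquired its formal degree), hence $\sM \subseteq \sD$; conversely, $\sM$ contains every $X_\alpha$ by hypothesis and is involutive, so $\sM \supseteq \sD$. Thus $\sM = \sD$ is finitely generated.

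For the harder $(\Rightarrow)$ direction, suppose $\sD$ is finitely generated. The key device is to filter $\sD$ by formal degree: for $L \geq 1$ let $V_L$ denote the $C^\infty$-module generated by all iterated commutators of the $X_\alpha$'s of total formal degree at most $L$. Then $V_1 \subseteq V_2 \subseteq \cdots \subseteq \sD$ and $\bigcup_L V_L = \sD$ by the very definition of $\sD$. Since $\sD$ is finitely generated, each of its generators is a finite $C^\infty$-linear combination of iterated commutators and so lies in some $V_{L_i}$; taking $L_0$ to be the maximum of these $L_i$ together with $M$ gives $V_{L_0} = \sD$. Now let $(X_1,d_1),\ldots,(X_q,d_q)$ be a (finite) enumeration of all iterated commutators of the $X_\alpha$'s of total formal degree at most $L_0$, tagged with their natural formal degrees; since $L_0 \geq M$, each $(X_\alpha,|\alpha|)$ appears in this list, as required.

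It remains to verify \eqref{EqnSpecialCaseInteg} for this choice. Fix $j,k$ and set $e = d_j + d_k$. Then $[X_j,X_k]$ is itself an iterated commutator of formal degree $e$ lying in $V_{L_0} = \sD$. If $e \leq L_0$, then $[X_j,X_k]$ is already in our list, say equal to $X_l$ with $d_l = e$, and $[\delta^{d_j}X_j,\delta^{d_k}X_k] = \delta^e X_l = \delta^{d_l} X_l$, so the constant coefficient $c_{j,k}^{l,\delta}=1$ suffices. If instead $e > L_0$, then since $V_{L_0} = \sD$ we may write $[X_j,X_k] = \sum_l c_{j,k}^l X_l$ with $c_{j,k}^l \in C^\infty$ and each $d_l \leq L_0 < e$, whence
\begin{equation*}
[\delta^{d_j}X_j, \delta^{d_k}X_k] \;=\; \sum_l \bigl(\delta^{e-d_l}\, c_{j,k}^l\bigr)\, \delta^{d_l} X_l,
\end{equation*}
and the coefficients $c_{j,k}^{l,\delta} := \delta^{e-d_l}\, c_{j,k}^l$ form a bounded family in $C^\infty$ uniformly in $\delta \in (0,1]$ because $e - d_l \geq 1$. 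The main obstacle is precisely the bookkeeping in this last case: one must commit to a list large enough (governed by generators of $\sD$) so that every commutator of degree exceeding $L_0$ rewrites in strictly lower-degree pieces, forcing the exponent $e-d_l$ to be nonnegative. Any stricter notion of ``uniform in $\delta$'' alluded to in Section \ref{SectionCurves} should be absorbed by the same mechanism, since multiplying a fixed smooth function by $\delta^{\geq 0}$ yields a bounded family in any reasonable sense.
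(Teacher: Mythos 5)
Your proof is correct and follows essentially the same route as the paper: both directions proceed identically, and your degree filtration $V_L$ with cutoff $L_0$ is just a repackaging of the paper's choice of enumerating all iterated commutators of formal degree at most $\max_l d_l$, followed by the same two-case verification (commutator already in the list versus rewriting over the generators with the factor $\delta^{e-d_l}$, $e-d_l>0$). No gaps.
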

\begin{proof}
As noted above, the only if part is clear.  To prove the converse,
we will prove a stronger form of \eqref{EqnSpecialCaseInteg}.
Indeed, we will show that we may select
$\q( X_1,d_1\w),\ldots, \q( X_q,d_q\w)$ as above so that,
\begin{equation}\label{EqnSpecialCaseCNSWInteg}
\q[ X_j, X_k\w] = \sum_{d_{l}\leq d_j+d_k} c_{j,k}^l X_l,
\end{equation}
where $c_{j,k}^l$ are $C^\infty$ functions.
Note that \eqref{EqnSpecialCaseCNSWInteg} implies \eqref{EqnSpecialCaseInteg},
as one may take $c_{j,k}^{l,\delta}= \delta^{d_j+d_k-d_l} c_{j,k}^l$,
when $d_j+d_k\geq d_l$, and $0$ otherwise.
Incidentally, \eqref{EqnSpecialCaseCNSWInteg} was first introduced
in \cite{NagelSteinWaingerBallsAndMetricsDefinedByVectorFields}
to study vector fields which satisfy H\"ormander's condition.
Suppose $X_1,\ldots, X_r$ are generators for $\sD$
as a $C^{\infty}$ module.  We may assume that each $X_j$
arises as an iterated commutator of the $X_\alpha$.
Thus, each $X_j$, $j=1,\ldots, r$, has assigned to it
a formal degree $d_j$.  We may also assume that each
$X_\alpha$ appears in the list $X_1,\ldots, X_r$.
We let $\q( X_1,d_1\w),\ldots, \q( X_q,d_q\w)$ be an enumeration
of all vector fields with formal degree $d_k$ such that
$d_k\leq \max_{1\leq l\leq r} d_l$ (here we have taken
$X_1,\ldots, X_r$ to be the first $r$ elements of $X_1,\ldots, X_q$).
We claim that this list: $\q( X_1,d_1\w),\ldots, \q( X_q,d_q\w)$
satisfies \eqref{EqnSpecialCaseCNSWInteg}.  Indeed, if we take
the commutator $\q[X_j,X_k\w]$ ($1\leq j,k\leq q$), there
are two possibilities.  The first possibility is that
$d_j+d_k\leq \max_{1\leq l\leq r} d_l$.  In this case
$\q( \q[X_j,X_k\w], d_j+d_k\w)$ already appears in the list
$\q( X_1,d_1\w),\ldots, \q( X_q,d_q\w)$ and so \eqref{EqnSpecialCaseCNSWInteg}
is trivial.
On the other hand, if $d_j+d_k> \max_{1\leq l \leq r} d_l$, then we use the fact
that,
\begin{equation*}
\q[X_j,X_k\w] = \sum_{l=1}^r c_{j,k}^l X_l,
\end{equation*}
as this immediately follows from the fact that $X_1,\ldots, X_r$
generate $\sD$ as a $C^\infty$ module.
Now \eqref{EqnSpecialCaseCNSWInteg} follows immediately.
\end{proof}

Now that we have stated our single-parameter assumptions
in a dilation invariant manner, we are prepared to
state a multi-parameter result.
For the purposes of this introduction, we restrict our
attention to the two-parameter situation, but
we will see later in the paper that all of these
ideas extend to any number of parameters.

We will be considering operators of the form
\begin{equation}\label{EqnSpecialCaseMulti}
f\mapsto \psi\q( x\w) \int f\q( \gamma_{\q( s,t\w)}\q( x\w)\w) K\q( s,t\w)\: ds\: dt,
\end{equation}
where $\q( s,t\w) \in \R^{N_1}\times \R^{N_2}$.  Here $K\q( s,t\w)$ is a
product kernel\footnote{For our main theorem, we will discuss kernels
more general than product kernels.  See Sections \ref{SectionKernels}
and \ref{SectionKernelsII}.} (supported near $s=t=0$).
That is, $K$ satisfies,
\begin{equation*}
\q|\partial_s^{\alpha} \partial_t^{\beta} K\q( s,t\w)\w| \lesssim \q|s\w|^{-N_1-\q|\alpha\w|}\q|t\w|^{-N_2-\q|\beta\w|},
\end{equation*}
along with certain cancellation conditions.  See Section \ref{SectionKernelsII}
for more precise details.  The relevant dilations in this situation
are two-parameter.  For $\q( \delta_1,\delta_2\w) \in \q[0,1\w]^2$,
we define,
\begin{equation*}
\q( \delta_1, \delta_2\w) \q( s,t\w) = \q( \delta_1 s, \delta_2 t\w).
\end{equation*}

We consider $\gamma$ of the form,
\begin{equation*}
\gamma_{\q( s,t\w)}\q( x\w) = e^{\sum_{0<\q|\alpha\w|+\q|\beta\w|\leq M} s^{\alpha} t^{\beta} X_{\alpha,\beta}} x,
\end{equation*}
where, as before, the $X_{\alpha,\beta}$ are $C^\infty$ vector fields.
Dilating gives,
\begin{equation*}
\gamma_{\q( \delta_1 s, \delta_2 t\w)} \q( x\w) = e^{\sum_{0<\q|\alpha\w|+\q|\beta\w|\leq M} \delta_1^{\q|\alpha\w|}\delta_2^{\q|\beta\w|} s^{\alpha} t^{\beta} X_{\alpha,\beta} } x.
\end{equation*}
In analogy with the single parameter case, we assign to the vector field
$X_{\alpha,\beta}$ the two-parameter formal degree $d_{\alpha,\beta}=\q( \q|\alpha\w|,\q|\beta\w|\w)$.  Thus, scaling $X_{\alpha, \beta}$ by $\delta_1^{\q|\alpha\w|}\delta_2^{\q|\beta\w|}$ is the same as scaling it by $\q( \delta_1,\delta_2\w)^{d_{\alpha,\beta}}$, where we think of $d_{\alpha,\beta}$ as a 
multi-index.

In the two-parameter situation, a new phenomenon appears.  We must separate
our vector fields into two sets and view these two sets differently.
To define this, we call $d_{\alpha,\beta}$ a {\it pure power} if either
$\q|\alpha\w|$ or $\q|\beta\w|$ is $0$.  Otherwise we call it
a non-pure power.  We define two sets,
\begin{equation*}
\begin{split}
\sP&=\q\{\q(X_{\alpha,\beta},d_{\alpha,\beta} \w): d_{\alpha,\beta}\text{ is a pure power}\w\},\\
\sN&=\q\{\q(X_{\alpha,\beta},d_{\alpha,\beta} \w): d_{\alpha,\beta}\text{ is a non-pure power}\w\}.
\end{split}
\end{equation*}
In the single-parameter case, every power was a pure power, but in the
multi-parameter case, we must deal with the set $\sN$ in a different manner.

First we discuss the case when $X_{\alpha,\beta}=0$ for every $\q(X_{\alpha,\beta},d_{\alpha,\beta}\w)\in\sN$.  This case works in
much the same way as the single-parameter case.  We recursively
define formal degrees
on the iterated commutators of the elements of $\sP$ as follows.
If $X_1$ has formal degree $d_1\in \N^2$ and $X_2$ has formal
degree $d_2\in \N^2$, we assign to $\q[X_1,X_2\w]$ the formal degree
$d_1+d_2$.  Our assumption is that there is a finite
list
$\q( X_1,d_1\w),\ldots, \q( X_q,d_q\w)$ of these vector fields,
containing the set $\sP$, such that for every
$\delta\in \q[0,1\w]^2$, we can write,
\begin{equation}\label{EqnSpecialCaseMultiInteg}
\q[\delta^{d_j}X_j,\delta^{d_k}X_k\w]=\sum_{l=1}^q c_{j,k}^{l,\delta} \delta^{d_l} X_l,
\end{equation}
where $c_{j,k}^{l,\delta}\in C^\infty$ uniformly\footnote{Again,
we have just stated a special case of our assumptions
on
$c_{j,k}^{l,\delta}$ for simplicity.}
in $\delta$.
Note, by taking $\delta_2=0$, \eqref{EqnSpecialCaseMultiInteg} implies
that the vector fields $\q\{X_{\alpha,0}\w\}$ satisfy
the single-parameter assumptions.  Similarly, by
taking $\delta_1=0$, \eqref{EqnSpecialCaseMultiInteg} implies that
the vector fields $\q\{X_{0,\beta}\w\}$ also satisfy the single
parameter assumptions.  Moreover, \eqref{EqnSpecialCaseMultiInteg}
implies more:  it implies a condition on the commutators between
the $\q\{X_{\alpha,0}\w\}$ and the $\q\{ X_{0,\beta}\w\}$.

More generally, when there is some nonzero vector field in $\sN$,
we proceed as above, obtaining
the list $\q( X_1,d_1\w) ,\ldots, \q( X_q,d_q\w)$ from
$\sP$ and assuming it satisfies \eqref{EqnSpecialCaseMultiInteg}.
We further assume that for every $\q( Y,e\w)\in \sN$
and every $\delta\in \q[0,1\w]^2$, we have,
\begin{equation}\label{EqnSpecialCaseNonPureControl}
\delta^e Y=\sum_{l=1}^q c_Y^{l,\delta} \delta^{d_l} X_l,
\end{equation}
with $c_Y^{l,\delta}\in C^\infty$ uniformly in $\delta$.
See Section \ref{SectionExTransInv} for an example of how
$L^2$ boundedness can fail if we do not have
\eqref{EqnSpecialCaseNonPureControl} and Remarks \ref{RmkCommentsPurePowers} and \ref{RmkPureIsUsed} for how
\eqref{EqnSpecialCaseNonPureControl} is used in the proof.

Our main theorem is a generalization of the following:  under the
above hypotheses, the operator given by
\eqref{EqnSpecialCaseMulti} is bounded on $L^2$.

\begin{rmk}
It may not be obvious that our main theorem generalizes the above.
The fact that it does is discussed in Section \ref{SectionExExpOfVect}.
\end{rmk}

\begin{rmk}
As mentioned before, the obvious analog of Lemma \ref{LemmaSpecialCaseEquivFiniteGen}
in the multi-parameter setting does not hold.  This is
discussed in Section \ref{SectionMultiParamDifficult}.
\end{rmk}

\begin{rmk}
Our assumptions above on the set $\sP$ (i.e., the existence of $\q(X_1,d_1\w),\ldots, \q(X_q,d_q\w)$) can be thought of as the ``finite type''
assumptions discussed in Section \ref{SectionOverview}.
The assumptions on the set $\sN$ can be thought of as the
``algebraic'' assumptions.
As mentioned in Section \ref{SectionOverview}, the ``finite type''
assumptions hold automatically when the $X_\alpha$
are real analytic.  This is proven in
\cite{StreetMultiParameterSingRadonAnal}.
\end{rmk}

\begin{rmk}\label{RmkMoreGeneralGamma}
In the rest of the paper, we work with more general functions
$\gamma_t\q(x\w)$, not necessarily assuming
that $\gamma_t\q(x\w)=e^{\sum t^{\alpha} X_\alpha}x$.
If one were to assume that
$\gamma_t\q(x\w) = e^{A\q(t\w)}x$ for some $C^\infty$ vector field
depending smoothly on $t$ with $A\q(0\w)=0$, then it would not be difficult
the generalize the above assumptions.  Unfortunately, not every $\gamma$
is of this form.  Fortunately, there is a convenient alternative.
Since $\gamma_0\q(x\w)\equiv x$, $\gamma_t$ is a diffeomorphism
onto its image, for $t$ sufficiently small.  Thus, we may define,
\begin{equation*}
W\q(t,x\w)=\frac{d}{d\epsilon}\bigg|_{\epsilon=1} \gamma_{\epsilon t}\circ \gamma_t^{-1}\q(x\w)\in T_x\R^n.
\end{equation*}
Note that $W\q(t\w)$ is a vector field, depending smoothly on $t$,
satisfying $W\q(0\w)=0$.  Moreover, it turns out that the map
$\gamma\mapsto W$ is a {\it bijection} (see Proposition \ref{PropGetGammaFromW}).
The inverse mapping is given by solving an ODE.  This ODE is similar
to the one which is used to define the exponential map.  We will see
that everywhere we wish to use the exponential map, it can be replaced
by the inverse mapping $W\mapsto \gamma$.  Thus, in what follows,
we will exhibit analogs of the above conditions applied
to the vector field $W\q(t\w)$.
\end{rmk}


\section{Kernels}\label{SectionKernels}
In this section, we define the kernels $K$ for which we will
study operators of the form \eqref{EqnIntroOp}.
The kernels we study will be distributions on $\R^N$, and they
will be supported in $\Q^N\q( a\w)=\q\{x\in \R^N: \q|x\w|<a\w\}$, where $a>0$ is a small
number to be chosen later.\footnote{For one thing, we will
always choose $a>0$ so small that for $\q|t\w|\leq a$, $\gamma_t$
is a diffeomorphism onto its image; so that $\gamma_t^{-1}$
makes sense.}
Fix $\nu \in \N$.  We will be studying $\nu$-parameter
operators.
Fix a subset $\sA\subseteq \q[0,1\w]^\nu$ such that
if $\delta_1,\delta_2\in \sA$, then $\delta_1\vee \delta_2\in \sA$.

We suppose that we are given $\nu$-parameter dilations on $\R^N$.
That is, we are given $e=\q( e_1,\ldots, e_N\w)$ with each
$0\ne e_j=\q(e_j^1,\ldots, e_j^\nu\w)\in \q[0,\infty\w)^\nu$.  For $\delta\in \q[0,\infty\w)^\nu$
and $t=\q( t_1,\ldots, t_N\w)\in \R^N$,
we define
\begin{equation}\label{EqnDefndeltat}
\delta t = \q(\delta^{e_1}t_1,\ldots, \delta^{e_N}t_N \w),
\end{equation}
thereby obtaining $\nu$ parameter dilations on $\R^N$.

The class of distributions we will define will depend on
$N$, $a$, $\sA$, and $e$.  At a first reading, the reader may
wish to take $\sA=\q[0,1\w]^\nu$, as this case contains all the
main ideas.

We will make two passes at the definition of the class of kernels.
First, we will define a class of kernels which we will denote by
$\sKt=\sKt\q(N,e,a,\sA \w)$.  This class will be simple to understand,
and contains all of the main ideas.  Our proof, however, will
works for a slightly larger class of kernels $\sK\q( N,e,a,\sA\w)$.
This class of kernels is important for applications, and so
we state our results in this context.  On a first pass,
the reader may wish to just understand the proof
in the case of $\sKt$, however.

\begin{rmk}
Actually, the class $\sK$ is much simpler to understand in the
case when $\sA=\q[0,1\w]^\nu$.  See Lemma \ref{LemmaCancelInFullCase} for details.
\end{rmk}

	\subsection{The class $\sKt$}\label{SectionClasssKt}
	For reach $\mu$, $1\leq \mu\leq \nu$, and given $t=\q( t_1,\ldots, t_N\w)\in \R^N$, let
$t^\mu$ denote those coordinates $t_j$ of $t$ such that $e_j^\mu\ne 0$. 

Given a function $\vsig$ on $\R^N$, and $j\in \N^\nu$, define
\begin{equation*}
\dil{\vsig}{2^j}\q( t\w) = 2^{j\cdot e_1+ \cdots + j\cdot e_N} \vsig\q( 2^jt\w).
\end{equation*}
Note that $\dil{\vsig}{2^j}$ is defined in such a way that
\begin{equation*}
\int \dil{\vsig}{2^j} \q( t\w) \: dt= \int \vsig\q( t\w) \: dt.
\end{equation*}
Let $\lA=\q\{j\in \N^\nu:2^{-j}\in \sA\w\}$.  Note that if
$\sA=\q[0,1\w]^\nu$, then $\lA=\N^\nu$.

\begin{defn}
We define $\sKt=\sKt\q( N,e,a,\sA\w)$ to be the set of all distributions, $K$,
of the form
\begin{equation}\label{EqnDefnOfsKt}
K=\sum_{j\in \lA} \dil{\vsig_j}{2^j}
\end{equation}
where $\q\{\vsig_j\w\}\subset C_0^\infty\q( \Q^N\q( a\w)\w)$ is a bounded set,
satisfying
\begin{equation}\label{EqnCancelsKt}
\int \vsig_j\q( t\w) \: dt^\mu =0.
\end{equation}
The convergence in \eqref{EqnDefnOfsKt} is taken in the sense
of distributions.  We will see in Lemma \ref{LemmaSumsConvg} that every such sum converges
in distribution.
\end{defn}

\begin{rmk}\label{RmkBiggerA}
Given $N$, $e$, and $\sA$, we will define a class of functions $\gamma_t$
such that the operator given by \eqref{EqnIntroOp} is bounded on
$L^2$ for every $K\in \sKt\q( N,e,a,\sA\w)$ for $a>0$ sufficiently small.
Note that if $\sA\subseteq \sB$, then $\sKt\q( N,e,a,\sA\w)\subseteq \sKt\q(N,e,a,\sB\w)$.
However, our theorem will apply to a larger class of $\gamma$ corresponding
to $\sA$ than it will corresponding to $\sB$.
\end{rmk}

	\subsection{The class $\sK$}\label{SectionClasssK}
	While the class $\sKt$ contains all of the main ideas, in many applications
the kernels that arise are in a slightly larger class, which we denote
$\sK\q( N,e,a,\sA\w)$.  Indeed, we will see the benefits of working
with this class in Section \ref{SectionKernelsII}.
The main point, here, is that in our proofs we do not use the full
cancellation condition \eqref{EqnCancelsKt}.
Thus, the definition of the class $\sK$ will be the same
as that of $\sKt$, but we will weaken the condition \eqref{EqnCancelsKt}.

Before we can define $\sK$, we need some new notation.
Fix a constant $C\geq 1$, and $j=\q( j_1,\ldots, j_\nu\w)\in \lA$.
For $1\leq \mu\leq \nu$, we say $\mu$ is $j,\sA$-minimal, if there does 
not exist a $k\in \lA$ such that $j_\mu>k_\mu$.
For $1\leq \mu_1,\mu_2\leq \nu$, we write
$$ \mu_1\preceq_{j,C,\sA} \mu_2 $$
if for every $k=\q( k_1,\ldots, k_\nu\w)\in \lA$ with $j_{\mu_1}>k_{\mu_1}$,
we have $j_{\mu_1}-k_{\mu_1}\leq C\q( j_{\mu_2}-k_{\mu_2}\w)$.
We define
\begin{equation*}
\q[\mu_1\w]_{j,C,\sA} = \q\{\mu_2: \mu_1\preceq_{j,C,\sA} \mu_2 \text{ and } \mu_2 \preceq_{j,C,\sA} \mu_1\w\}.
\end{equation*}
Note that $\mu_1\in \q[\mu_1\w]_{j,C,\sA}$.
Also, we define $t_1^{\q[\mu_1\w]_{j,C,\sA}} = \q(t^\mu\w)_{\mu\in \q[\mu_1\w]_{j,C,\sA}}$, where repeated coordinates are only included once.
I.e., $t_1^{\q[\mu_1\w]_{j,C,\sA}}$ consists of those coordinates $t_j$ such
that $e^\mu_j\ne 0$ for some $\mu\in \q[\mu_1\w]_{j,C,\sA}$.
We let $t_2^{\q[\mu_1\w]_{j,C,\sA}}$ be the rest of the coordinates, so that
$t=\q(t_1^{\q[\mu_1\w]_{j,C,\sA}},t_2^{\q[\mu_1\w]_{j,C,\sA}} \w)$.
Finally, we say $\mu_1\prec_{j,C,\sA} \mu_2$ if $\mu_1\preceq_{j,C,\sA} \mu_2$
but we do not have $\mu_2\preceq_{j,C,\sA} \mu_1$.

\begin{defn}\label{DefnsK}
We define $\sK=\sK\q( N,e,a,\sA\w)$ to be the set of all distributions, $K$,
such that there exists a $C\geq 1$ such that $K$ can be written in the form
\begin{equation}\label{EqnDefsK}
K=\sum_{j\in \lA} \dil{\vsig_j}{2^j},
\end{equation}
where $\q\{\vsig_j\w\}\subset C_0^\infty\q( \Q^N\q( a\w)\w)$ is a bounded set,
satisfying the following cancellation conditions:  for every $j\in \lA$,
and every $\mu$ such that $\mu$ is not $j,\sA$-minimal, and there does
not exist a $\mu'$ with $\mu\prec_{j,C,\sA} \mu'$, then we assume
\begin{equation}\label{EqnDefnsKCancel}
\int \vsig_j\q( t\w) \: dt_1^{\q[\mu\w]_{j,C,\sA}} =0;
\end{equation}
otherwise, we allow the integral to be possibly non-zero.
\end{defn}

\begin{rmk}
Note that it is evident that $\sKt\q( N,e,a,\sA\w)\subseteq \sK\q( N,e,a,\sA\w)$.
\end{rmk}

In Section \ref{SectionKernelsII} we further discuss the 
class $\sK$, though the results there will not be used
in the rest of the paper.  In particular, we discuss
following facts:
\begin{enumerate}
\item Every sum of the form \eqref{EqnDefsK} converges in the sense of
distributions.
\item $\delta_0\in \sK\q( N,e,a,\q[0,1\w]^\nu\w)$.
\item Product kernels (in particular Calder\'on-Zygmund kernels) and flag kernels (see Definitions \ref{DefnProdKer} and \ref{DefnFlagKer} and 
\cite{NagelRicciSteinSingularIntegralsWithFlagKernels}) are special
cases of $\sK$.  Indeed, product kernels (with support in $\Q^N\q(a\w)$) are the case
when each $e_j$ is nonzero in precisely one component, and $\sA=\q[0,1\w]^\nu$
(this is a result of \cite{NagelRicciSteinSingularIntegralsWithFlagKernels}).
Similarly, flag kernels are the kernels with the same $e$ but with $\sA=\q\{\delta\in \q[0,1\w]^\nu: \delta_1\geq \delta_2\geq\cdots\geq \delta_\nu\w\}$.
Note that, as in Remark \ref{RmkBiggerA}, our main theorem will apply
to a larger class of $\gamma$ when considering $K$ a flag kernel,
than it will when considering $K$ a product kernel.  See
Section \ref{SectionFlagVsProduct} for more details on this.
\item There are interesting kernels which are neither
product kernels nor flag kernels which are of the
form $\sK\q( N,e,a,\sA\w)$--see the end of Section \ref{SectionKernelsII}
for an example.
\end{enumerate}

\section{Multi-parameter Carnot-Carath\'eodory geometry}\label{SectionCC}
At the heart of the 
conditions we will assume on $\gamma$
lies multi-parameter Carnot-Carath\'eodory
geometry.  Thus, before we can even define the class
of $\gamma$ we will study, it is necessary to review the relevant
definitions of multi-parameter Carnot-Carath\'eodory balls.
We defer the theorems we will use to deal with these balls
to Section \ref{SectionCCII}.
Our main reference for Carnot-Carath\'eodory geometry is \cite{StreetMultiParameterCCBalls},
and we refer the reader there for a more detailed discussion.

Let $\Omega\subseteq \R^n$ be a fixed open set, and suppose
$X_1,\ldots, X_q$ are $C^\infty$ vector fields on $\Omega$.
We define the Carnot-Carath\'eodory ball of unit radius, centered
at $x_0\in \Omega$, with respect to the list $X$ by
\begin{equation*}
\begin{split}
B_X\q( x_0\w):=\bigg\{y\in \Omega \:\bigg|\: &\exists \gamma:\q[0,1\w]\rightarrow \Omega, \gamma\q( 0\w) =x_0, \gamma\q( 1\w) =y,  \\
&\gamma'\q( t\w) = \sum_{j=1}^q a_j\q( t\w) X_j\q( \gamma\q( t\w)\w),
 a_j\in L^\infty\q( \q[0,1\w]\w), \\
&\Lppn{\infty}{\q[0,1\w]}{\q(\sum_{1\leq j\leq q} \q|a_j\w|^2\w)^{\frac{1}{2}}}<1\bigg\}.
\end{split}
\end{equation*}
Now that we have the definition of balls with unit radius, we may define
(multi-parameter) balls of any radius merely by scaling the vector fields.
To do so, we assign to each vector field, $X_j$, a (multi-parameter) formal
degree $0\ne d_j=\q(d_j^1,\ldots, d_j^\nu \w)\in \q[0,\infty\w)^\nu$.  For $\delta=\q(\delta_1,\ldots, \delta_\nu \w)\in \q[0,\infty\w)^\nu$,
we define the list of vector fields $\delta X$ to be the list
$\q( \delta^{d_1} X_1,\ldots, \delta^{d_q}X_q\w)$.  Here, $\delta^{d_j}$
is defined by the standard multi-index notation:  $\delta^{d_j} =\prod_{\mu=1}^\nu \delta_\mu^{d_j^\mu}$.
We define the ball of radius $\delta$ centered at $x_0\in \Omega$ by
$$\B{X}{d}{x_0}{\delta} := B_{\delta X}\q( x_0\w). $$

At times, it will be convenient to assume that the ball $\B{X}{d}{x_0}{\delta}$
lies ``inside'' of $\Omega$.  To this end, we make the following
definition.
\begin{defn}
Given $x_0\in \Omega$ and $\Omega'\subseteq \Omega$, we say the list of vector fields $X$ satisfies $\sC\q( x_0,\Omega'\w)$
if for every $a=\q( a_1,\ldots, a_q\w) \in \q( L^{\infty}\q( \q[0,1\w]\w)\w)^q$,
with
$$\Lppn{\infty}{\q[0,1\w]}{\q|a\w|}=\Lppn{\infty}{\q[0,1\w]}{\q(\sum_{j=1}^q \q|a_j\w|\w)^{\frac{1}{2}}}<1,$$
there exists a solution $\gamma:\q[0,1\w]\rightarrow \Omega'$ to the
ODE
\begin{equation*}
\gamma'\q( t\w) =\sum_{j=1}^q a_j\q( t\w) X_j\q( \gamma\q( t\w)\w), \quad \gamma\q( 0\w) =x_0.
\end{equation*}
Note, by Gronwall's inequality, when this solution exists, it is unique.
Similarly, we say $\q( X,d\w)$ satisfies $\sC\q( x_0,\delta,\Omega'\w)$
if $\delta X$ satisfies $\sC\q(x_0,\Omega'\w)$.
\end{defn}

One of the main points of \cite{StreetMultiParameterCCBalls} was to
provide a detailed study of the balls $\B{X}{d}{x_0}{\delta}$, under
appropriate conditions on the list $\q( X,d\w)$.  To do this, we first
need to pick a subset $\sA\subseteq \q[0,1\w]^\nu$, and
a compact set $\K\Subset \Omega$.\footnote{One should think of $\K$
as the closure of a small open set, on which the operator of
study is supported; i.e., the Schwartz kernel of the operator
we are studying will be supported in $\K\times\K$.}  
We will
(essentially) be restricting our attention to those balls
$\B{X}{d}{x_0}{\delta}$ such that $x_0\in \K$ and $\delta\in \sA$ (this $\sA$ is the
{\it same} set as in Section \ref{SectionKernels}).

\begin{defn}\label{DefnsD}
We say $\q( X,d\w)$ satisfies $\sD\q(\K,\sA\w)$ if the following holds:
\begin{itemize}
\item Take
$\Omega'$ with $\K\Subset \Omega'\Subset \Omega$ and
$\xi>0$ such that for every $\delta\in \sA$ and $x\in \K$, $\q( X,d\w)$ satisfies
$\sC\q( x, \xi\delta, \Omega'\w)$.  

\item For every $\delta\in \sA$
and $x\in \K$, we assume
\begin{equation*}
\q[\delta^{d_i} X_i, \delta^{d_j} X_j\w] = \sum_k c_{i,j}^{k,\delta,x} \delta^{d_k} X_k, \text{ on } \B{X}{d}{x}{\xi\delta}.
\end{equation*}

\item For every ordered multi-index $\alpha$ we assume\footnote{We write $\CjN{f}{0}{U}:=\sup_{x\in U}\q|f\q(x\w)\w|$, and if we say the norm is finite,
we mean (in addition) that $f$ is continuous on $U$.}
\begin{equation*}
\sup_{\substack{\delta\in \sA\\ x\in \K } } \CjN{\q(\delta X\w)^\alpha c_{i,j}^{k,x,\delta}}{0}{\B{X}{d}{x}{\xi\delta}}<\infty.
\end{equation*}
\end{itemize}
If we wish to be explicit about $\Omega'$ and $\xi$, we write $\sD\q( \K,\sA, \Omega', \xi\w)$.
\end{defn}

It is under condition $\sD\q( \K, \sA\w)$ that the balls $\B{X}{d}{x}{\delta}$
were studied in \cite{StreetMultiParameterCCBalls}.
We refer the reader to Section \ref{SectionCCII} for an overview of
the theorems from \cite{StreetMultiParameterCCBalls} that
we shall use.

In what follows, we will not be directly given a list of
vector fields with formal degrees satisfying $\sD\q( \K, \sA\w)$,
\begin{equation*}
\q( X_1,d_1\w), \ldots, \q( X_q,d_q\w),
\end{equation*}
but, rather,  we will be given a list of $C^\infty$ vector fields with
formal degrees which we will assume to ``generate'' such a list.

To understand this, let $\q( X_1,d_1\w),\ldots, \q( X_r,d_r\w)$ be
$C^\infty$ vector fields with associated formal degrees
$0\ne d_j\in \q[0,\infty\w)^\nu$.
For a list $L=\q( l_1,\ldots, l_m\w)$ where $1\leq l_j\leq r$, we define,
\begin{equation*}
\begin{split}
X_L &= \ad{X_{l_1}}\ad{X_{l_2}}\cdots \ad{X_{l_{m-1}}} X_{l_m},\\
d_L &= d_{l_1}+d_{l_2}+\cdots +d_{l_m}.
\end{split}
\end{equation*}
We define 
$\sS=\q\{\q( X_L,d_L\w) : L \text{ is any such list}\w\}.$

\begin{defn}\label{DefnGeneratesAFiniteList}
We say $\sS$ is {\it finitely generated}
 or that $\q( X_1,d_1\w), \ldots, \q( X_r,d_r\w)$ {\it generates a finite list} if there exists finite subset,
$\sF\subseteq \sS$, such that $\sF$ satisfies $\sD\q( \K,\sA\w)$\footnote{Here,
we are thinking of $\K$ and $\sA$ fixed.}
 and
\begin{equation*}
\q( X_j, d_j\w)\in \sF, \quad 1\leq j\leq r.
\end{equation*}
If we enumerate the vector fields in $\sF$,
\begin{equation*}
\sF=\q\{\q(X_1,d_1\w),\ldots, \q( X_q,d_q\w) \w\},
\end{equation*}
we say that $\q( X_1,d_1\w),\ldots, \q( X_r,d_r\w)$ {\it generates
the finite list} $\q( X_1,d_1\w) ,\ldots, \q( X_q,d_q\w)$.
Note that, if $\sS$ is finitely generated, $\q( X_1,d_1\w),\ldots \q( X_r,d_r\w)$ could generate
many different finite lists.
However, if we let $\q( X,d\w)$ and $\q( X',d'\w)$ be two different such lists
then
either choice will work for our purposes.
In fact, it is easy to see that $\q( X,d\w)$ and $\q( X',d'\w)$ are
{\it equivalent} in the sense discussed in Section \ref{SectionControlOfVFs}. 
It follows from the discussion in Section \ref{SectionControlOfVFs}, in every place we use these notions, it will not make
a difference which finite list we use.
Thus, we will unambiguously
say ``$\q( X_1,d_1\w) ,\ldots, \q(X_r,d_r\w)$ generates
the finite list $\q( X_1,d_1\w) ,\ldots, \q( X_q,d_q\w)$,''
to mean that $\q( X_1,d_1\w), \ldots, \q(X_r,d_r\w)$ generates
a finite list and $\q( X_1,d_1\w),\ldots, \q( X_q,d_q\w)$ can
be any such list.
\end{defn}

\begin{rmk}
We can rephrase the above in the following way.  We iteratively
take commutators of the $X_j$s and assign formal degrees as above.
The assumption that $\q(X_1,d_1\w),\ldots, \q(X_r,d_r\w)$ generates
a
finite list says that after a finite number of steps,
one ``does not get anything new.''
For an example of how a list $\q( X_1,d_1\w),\ldots, \q( X_r,d_r\w)$
might not generate a finite list, see Section \ref{SectionFlagVsProduct}.
\end{rmk}

\section{Surfaces}\label{SectionCurves}
In this section, we define the class $\gamma$ for which
we will study operators of the form \eqref{EqnIntroOp}.
The functions we study generalize the ones studied in
\cite{ChristNagelSteinWaingerSingularAndMaximalRadonTransforms}:
even in the single parameter case, when $K$ is a Calder\'on-Zygmund
kernel, our class of $\gamma$ is somewhat wider that those
studied in \cite{ChristNagelSteinWaingerSingularAndMaximalRadonTransforms}
(cf. the special case in Section \ref{SectionSpecialCase}).

In \cite{ChristNagelSteinWaingerSingularAndMaximalRadonTransforms}
a number of equivalent ``curvature conditions'' on $\gamma$ are defined under
which the single-parameter version of operators
of the form \eqref{EqnIntroOp} are studied.
We discuss some of these conditions in Section \ref{SectionCNSW}.
Unfortunately, the most convenient way (for our purposes) to look
at these conditions was not directly studied in
\cite{ChristNagelSteinWaingerSingularAndMaximalRadonTransforms}.
In Section \ref{SectionCNSW} we define and discuss a new curvature condition,
$\cZ$, which is equivalent to the other curvature conditions
in \cite{ChristNagelSteinWaingerSingularAndMaximalRadonTransforms}.
In this section, we jump straight to the generalization of $\cZ$
to the multi-parameter situation.\footnote{Strictly speaking,
what follows is not a straight generalization of the curvature
conditions in \cite{ChristNagelSteinWaingerSingularAndMaximalRadonTransforms}.
As mentioned in Section \ref{SectionSpecialCase}, our results
will hold for a larger class of $\gamma$ in the single-parameter case.
However, it is reasonable to think of what follows
as the multi-parameter ``generalization'' of the curvature
conditions in \cite{ChristNagelSteinWaingerSingularAndMaximalRadonTransforms}.}

We assume that we are given an open subset $\Omega\subseteq \R^n$
and a subset $\sA\subseteq \q[0,1\w]^\nu$ (such that
if $\delta_1,\delta_2\in \sA$, then $\delta_1\vee \delta_2\in \sA$),
and $\nu$-parameter dilations $e$ as in Section \ref{SectionKernels}.

\begin{defn}
Given a multi-index $\alpha\in \N^N$, we define,
\begin{equation*}
\deg\q( \alpha\w) = \sum_{j=1}^N \alpha_j e_j\in \q[0,\infty\w)^\nu.
\end{equation*}
\end{defn}

Let $\K\Subset \Omega'\Subset \Omega''\Subset \Omega$ be subsets of $\Omega$
with $\K$ compact and $\Omega'$ and $\Omega''$ open by relatively
compact in $\Omega$.  Our goal in this section is to define
a class of $C^\infty$ functions
\begin{equation*}
\gamma\q( t,x\w):\Q^N\q( \rho\w)\times \Omega''\rightarrow \Omega
\end{equation*}
such that $\gamma\q( 0,x\w) \equiv x$.  Here $\rho>0$ is a small
number.
This class of functions will depend on $\sA$, $N$, and $e$ (it 
will also depend on $\K$, $\Omega$, $\Omega'$, and $\Omega''$).
This class will be such that if $\psi$ is a $C_0^\infty$
function supported on the interior of $\K$, then there is
an $a>0$, sufficiently small, such that the operator
given by \eqref{EqnIntroOp} is bounded on $L^2$ for every
$K\in \sK\q( N,e,a,\sA\w)$.

As in Section \ref{SectionIntro}, we will write $\gamma_t\q(x\w) = \gamma\q( t,x\w)$.  Note, by possibly shrinking $\rho>0$ we may assume that for
every $t\in \Q^N\q( \rho\w)$, $\gamma_t$ is a diffeomorphism onto
its image.  From now on we assume this, so that it makes
sense to write $\gamma_t^{-1}$ throughout the paper.

Unlike the work in \cite{ChristNagelSteinWaingerSingularAndMaximalRadonTransforms},
we separate the conditions on $\gamma_t$ into two aspects.
For the first, suppose we are given a list of $C^\infty$ vector
fields on $\Omega''$, $X_1,\ldots, X_q$
with associated $\nu$-parameter formal degrees $d_1,\ldots,d_q$
satisfying $\sD\q( \K,\sA,\Omega',\xi\w)$ for some $\xi>0$ (we
will see later where these vector fields come from).

\begin{defn}\label{DefnControlWEveryScale}
Suppose we are given a $C^\infty$ vector field on $\Omega''$,
depending smoothly on $t\in \Q^N\q( \rho\w)$,
$W\q( t,x\w)\in T_x \Omega''$.  We say that $\q( X,d\w)$
{\it controls} $W\q( t,x\w)$ if there exists $0<\rho_1\leq \rho$
and $0<\tau_1\leq \xi$ such that
for every $x_0\in \K$, $\delta\in \sA$, there exists
functions $c_l^{x_0,\delta}$ on $\Q^{N}\q( \rho_1\w)\times \B{X}{d}{x_0}{\tau_1 \delta}$ satisfying,
\begin{itemize}
\item $W\q( \delta t, x\w) = \sum_{l=1}^q c_l^{x_0,\delta} \q( t,x\w) \delta^{d_l} X_l\q( x\w)$ on $\Q^{N}\q( \rho_1\w)\times \B{X}{d}{x_0}{\tau_1\delta}$, where $\delta t$ is defined as in \eqref{EqnDefndeltat}.
\item $\sup_{\substack{x_0\in \K \\ \delta\in \sA}} \sum_{\q|\alpha\w|+\q|\beta\w|\leq m} \CjN{\q(\delta X \w)^\alpha \partial_t^{\beta} c_l^{x_0,\delta} }{0}{\Q^{N}\q( \rho_1\w) \times \B{X}{d}{x_0}{\tau_1\delta}}<\infty,$ for every $m$.
\end{itemize}
\end{defn}

\begin{defn}\label{DefnControlEveryScale}
We say $\q( X,d\w)$ {\it controls} $\gamma_t\q( x\w)$ if $\q( X,d\w)$
controls $W\q( t,x\w)$, where,
\begin{equation*}
W\q( t,x\w) = \frac{d}{d\epsilon}\bigg|_{\epsilon=1} \gamma_{\epsilon t}\circ \gamma_t^{-1}\q( x\w);
\end{equation*}
here, $\epsilon t=\q( \epsilon t_1,\ldots, \epsilon t_n\w)$ and so
has nothing to do with the dilations $e$.
\end{defn}

Part of our assumption on $\gamma$ will be that a particular list of vector
fields $\q( X,d\w)$ controls $\gamma_t$.  Where these vector fields
come from constitutes the other part of our assumption.

Let $W$ be as in Definition \ref{DefnControlEveryScale}.
Let $X_\alpha$ be the Taylor coefficients of $W$
when the Taylor series is taken in the $t$ variable:
\begin{equation}\label{EqnDefnXalpha}
W\q( t\w) \sim \sum_{\alpha} t^{\alpha} X_\alpha,
\end{equation}
so that $X_\alpha$ is a $C^\infty$ vector field on $\Omega''$.

Our assumption on $\gamma$ is that if we take the set
of vector fields with formal degrees:
\begin{equation}\label{EqnCurvesDefnS}
\sV=\q\{\q(X_\alpha, \deg\q( \alpha\w) \w): \deg\q( \alpha\w)\text{ is nonzero in only one component}\w\},
\end{equation}
then there is a finite subset $\sF\subseteq \sV$ such that $\sF$
generates a finite list $\q( X,d\w)=\q( X_1,d_1\w),\ldots, \q(X_q,d_q\w)$,
and this finite list controls $\gamma$.

\begin{rmk}
The list of vector fields $\q( X,d\w)$ depends on a few choices we have made in the above:
it depends on the chosen subset $\sF$ and it depends on
the chosen list generated by $\sF$.  However, none of these
choices affects $\q( X,d\w)$ in an essential way.
This is discussed in Section \ref{SectionMoreOnXd}.
\end{rmk}

\begin{rmk}
Note that the above assumptions are local.  Indeed,
if $C$ is a large compact set, and if for each point
$x_0\in C$, $\gamma$ satisfies the above assumptions
with $\K$ some compact neighborhood of $x_0$, then
$\gamma$ satisfies the above assumptions
with $\K$ replaced by $C$.
Thus, one should think of $\K$
as a small neighborhood of a point.  However, we find it more
straightforward to fix $\K$ in what follows, as opposed to letting
it (possibly) shrink from line to line.
We took the opposite perspective in Sections \ref{SectionIntroRes} and \ref{SectionOverview}.
\end{rmk}

\begin{rmk}
The ``curvature conditions'' in
\cite{ChristNagelSteinWaingerSingularAndMaximalRadonTransforms}
are equivalent to saying $\q\{ X_\alpha: \alpha\in \N^{N}\w\}$ satisfies H\"ormander's
condition (see $\cZ$ in Section \ref{SectionCNSW}).  We will
see in Section \ref{SectionEXCNSW} that this is a special case
of the above assumptions, when $\nu=1$.
\end{rmk}

\begin{rmk}
Note that our conditions on $\gamma$ only involve the
vector field $W$.  It is not hard to see, though, that
$W$ uniquely determines $\gamma$.  This is discussed
further in Section \ref{SectionCurvesII}.
\end{rmk}

\begin{rmk}
Notice the scale invariance of Definition \ref{DefnControlEveryScale}.
Indeed, for $\delta_1,\delta_2\in \sA$, the condition
imposed on $W\q( \delta_1 t, x\w)$ is formally the same
as the condition imposed on $W\q( \delta_2 t, x\w)$ provided
one replaces $\delta_1 X$ with $\delta_2 X$.  This scale invariance
will play an essential role in our proofs.  This also
explains the entrance of Carnot-Carath\'eodory geometry
in our assumptions:  the Carnot-Carath\'eodory balls
scale in the same way that our other assumptions scale.
\end{rmk}

\begin{rmk}
Unlike in Section \ref{SectionOverview}, we have not separated our
assumptions on $\gamma$ into a ``finite type'' condition
and an ``algebraic'' condition.  This distinction only has
real significance when $\gamma$ is real analytic,
and we, therefore, differ discussion of it
to 
\cite{StreetMultiParameterSingRadonAnal}.
\end{rmk}

\section{Statement of results}\label{SectionResults}
Fix $\Omega\subseteq \R^n$ open, and $\K\Subset\Omega'\Subset\Omega''\Subset \Omega$ with $\K$ compact (with nonempty interior) and $\Omega'$ and $\Omega''$ open but relatively compact
in $\Omega$.
Let
$$\gamma\q( t,x\w) : \Q^N\q( \rho\w)\times \Omega''\rightarrow \Omega$$
be a $C^\infty$ function such that $\gamma\q( 0,x\w)\equiv x$.  Here,
$\rho>0$ is a small number.

Fix $\nu\in \N$ positive, and $\sA\subseteq \q[0,1\w]^\nu$ such that
for $\delta_1,\delta_2\in \sA$, $\delta_1\vee \delta_2\in \sA$.
Furthermore, let $e=\q( e_1,\ldots, e_N\w)$ be given,
with $0\ne e_j\in \q[0,\infty\w)^\nu$.
We suppose $\gamma$ satisfies the assumptions of
Section \ref{SectionCurves} with this $\K$, $\sA$, and $e$.

\begin{thm}\label{ThmMainThmFirstPass}
For every $\psi\in C_0^\infty\q( \R^n\w)$ supported in the
interior of $\K$, there
exists $a>0$ such that
for every $K\in \sK\q( N,e,a,\sA\w)$
the operator
\begin{equation}\label{EqnMainThmFirst}
Tf\q(x\w)= \psi\q( x\w) \int f\q( \gamma_t\q( x\w)\w)K\q( t\w) \: dt
\end{equation}
extends to a bounded operator $L^2\q( \R^n\w)\rightarrow L^2\q( \R^n\w)$.
\end{thm}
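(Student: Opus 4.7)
My plan is to prove the theorem via a dyadic decomposition of the kernel combined with Cotlar--Stein almost orthogonality, with the scaling being implemented through the multi-parameter Carnot--Carath\'eodory balls and the control condition on $\gamma$.

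First, using the decomposition $K=\sum_{j\in \lA}\dil{\vsig_j}{2^j}$ from Definition \ref{DefnsK}, I would write $T=\sum_{j\in \lA} T_j$ where
\begin{equation*}
T_j f(x) = \psi(x) \int f(\gamma_{2^{-j}t}(x)) \vsig_j(t)\: dt,
\end{equation*}
after the change of variables $t\mapsto 2^{-j}t$ (the rescaled $\vsig_j$ lives in a bounded subset of $C_0^\infty(\Q^N(a))$). The control assumption from Definition \ref{DefnControlEveryScale} says that $W(2^{-j}t,x)$ is a uniformly bounded $C^\infty$ combination of the scaled vector fields $2^{-j\cdot d_l}X_l$ on the ball $\B{X}{d}{x_0}{\tau_1 2^{-j}}$. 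Consequently, each $T_j$ is morally an operator at scale $2^{-j}$ whose integration is supported on (and whose ``flow'' stays inside) such a Carnot--Carath\'eodory ball.

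Next I would use the scaling maps (from \cite{StreetMultiParameterCCBalls}, to be recalled in Section \ref{SectionCCII}) to transfer each $T_j$ to a uniform ``unit scale'' operator. Precisely: for each $x_0\in \K$ and $\delta=2^{-j}\in \sA$, there should be a diffeomorphism $\Phi_{x_0,\delta}$ from a fixed unit neighborhood onto $\B{X}{d}{x_0}{\tau_1\delta}$ whose pullback turns $\delta^{d_l}X_l$ into a vector field with $C^m$ norms bounded uniformly in $x_0$ and $\delta$. Under this pullback, $T_j$ becomes (essentially) an averaging operator with a smooth cut-off and a unit-scale bump in $t$; these are trivially bounded on $L^2$ with a uniform bound. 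This yields $\sup_j \|T_j\|_{L^2\to L^2}\lesssim 1$, as well as a kernel-type estimate on $T_j T_k^*$ and $T_j^* T_k$ before cancellation.

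The heart of the argument is almost orthogonality. For $j,k\in \lA$, the goal is
\begin{equation*}
\|T_j T_k^*\|_{L^2\to L^2}+\|T_j^* T_k\|_{L^2\to L^2} \lesssim 2^{-\varepsilon\, \rho(j,k)},
\end{equation*}
for some $\varepsilon>0$ and a suitable multi-parameter distance $\rho(j,k)$ on $\lA$ such that $\sum_{k\in \lA} 2^{-\varepsilon\rho(j,k)/2}$ is bounded uniformly in $j$. The way to earn decay is to exploit the cancellation \eqref{EqnDefnsKCancel}: whenever a component $\mu$ is not $j,\sA$-minimal and is a ``maximal element'' in the preorder $\preceq_{j,C,\sA}$, the mean-zero property in $t_1^{[\mu]_{j,C,\sA}}$ lets me write $\vsig_j=\partial_{t_1^{[\mu]_{j,C,\sA}}}\widetilde{\vsig}_j$ for a bounded family $\widetilde{\vsig}_j$, and then an integration by parts in the composition $T_j T_k^*$ against the ``other'' operator's parameters transfers to derivatives of $\gamma$ and of $\psi$, each of which the control hypothesis turns into a factor of the appropriate power of $2^{-(j_\mu-k_\mu)_+}$ (using scale invariance of the Carnot--Carath\'eodory balls and the fact that $\delta X$-derivatives of the structure functions $c_l^{x_0,\delta}$ are bounded). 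Inserting a Littlewood--Paley-type resolution adapted to $\lA$ (so that sums over $\lA$ are bounded by a geometric series) converts these pointwise gains into the required operator-norm estimate. Finally, Cotlar--Stein closes the argument.

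The main obstacle will be the almost-orthogonality step in the multi-parameter setting with the weak cancellation of Definition \ref{DefnsK}. Two things make this delicate: first, when $j$ and $k$ are far apart only in some components, one must correctly identify which components carry usable cancellation (this is exactly what the preorder $\preceq_{j,C,\sA}$ encodes, hence the design of \eqref{EqnDefnsKCancel}); second, the scaling maps and Carnot--Carath\'eodory balls at scales $2^{-j}$ and $2^{-k}$ must be compared, which requires a quantitative ``containment'' statement for balls of different multi-radii. I would expect the bulk of the proof in Section \ref{SectionGenThm} to be devoted to formalizing these two points, with the reduction from Theorem \ref{ThmMainThmFirstPass} to that general $L^2$ theorem being (comparatively) mechanical once the control conditions of Section \ref{SectionCurves} have been recast in the language suitable for Cotlar--Stein.
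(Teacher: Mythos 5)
Your overall skeleton (dyadic decomposition of $K$, uniform $L^2$ bounds on the pieces via the Carnot--Carath\'eodory scaling maps, and Cotlar--Stein) matches the paper's, but the mechanism you propose for the almost-orthogonality gain is the step that fails, and it is precisely where the paper does something different. Writing $\vsig_j$ as a $t_1$-derivative and integrating by parts in $T_jT_k^{*}$ only transfers the derivative onto $f(\gamma_{2^{-j}t}(x))$, i.e., onto a derivative of $f$ \emph{along the surface}; since each $T_k^{*}$ at a single scale is still an averaging operator over a set of dimension at most $N$ (generically $<n$), there is no absolutely continuous, differentiable kernel available to absorb that derivative, and no factor of $2^{-(j_\mu-k_\mu)}$ is produced. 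This is exactly the obstruction that forces the curvature hypothesis into the argument: the assumption that the pure-power coefficients of $W$ generate a finite list is never invoked in your outline, yet without it the conclusion is false (see Section \ref{SectionExTransInv}).

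The paper's route is instead: pass to $\q(T_j^{*}T_kT_k^{*}T_j\w)^2$ (squaring rather than ordering $j$ and $k$, since in the multi-parameter setting neither need dominate the other), interpolate with the trivial $L^1$ bound so that only an $L^\infty$ estimate at a fixed base point $x_0$ is needed, rescale by the map $\Phi$ at scale $2^{-(j\wedge k)}$, and use Lemma \ref{LemmaEachZjAppearsIngammaScaled} together with Theorem \ref{ThmUnifEquivCond} to conclude that the many-fold composed map $\Theta_\tau$ satisfies $\cJ$ uniformly. Proposition \ref{PropTransportL1delta} then converts the composed averages into integration against a density $h\in L^1_\delta$. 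Only at that point is the cancellation used, and not by integration by parts: the mean-zero condition in $t_1$ lets one write $T_j=R^{\zeta}-R^{0}$ with $\zeta=2^{-c|j-k|_\infty}$, and Proposition \ref{PropIntL1delta} (the $L^1_\delta$ modulus of continuity of $h$) yields the gain $\zeta^{\epsilon}$. So the two difficulties you flag at the end are real, but the essential missing ingredient is the smoothing produced by composition plus the curvature condition; the integration-by-parts step you propose cannot substitute for it.
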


Actually, Theorem \ref{ThmMainThmFirstPass} follows directly
from the following, slightly more general theorem.
\begin{thm}\label{ThmMainThmSecondPass}
There exists $a>0$ such that for every $\psi_1,\psi_2\in C_0^\infty\q( \R^n\w)$
supported on the interior of $\K$, every $K\in \sK\q( N,e,a,\sA\w)$ and
every $C^\infty$ function
$$\kappa\q( t,x\w): \Q^N\q( a\w)\times \Omega''\rightarrow \C$$
the operator
\begin{equation}\label{EqnMainThmSecond}
T\q(f\w) \q( x\w)= \psi_1\q( x\w) \int f\q( \gamma_t\q( x\w)\w)\psi_2\q( \gamma_t\q( x\w)\w)\kappa\q( t,x\w)K\q( t\w) \: dt
\end{equation}
extends to a bounded operator $L^2\q( \R^n\w) \rightarrow L^2\q( \R^n\w)$.
\end{thm}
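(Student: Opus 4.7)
My plan is to prove Theorem \ref{ThmMainThmSecondPass} via a multi-parameter Littlewood--Paley decomposition, followed by a Cotlar--Stein almost-orthogonality argument whose pieces are analyzed using the scaling properties of Carnot--Carath\'eodory balls from \cite{StreetMultiParameterCCBalls}.

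First, using the representation \eqref{EqnDefsK} with $K=\sum_{j\in\lA}\dil{\vsig_j}{2^j}$, I would decompose $T=\sum_{j\in\lA} T_j$ where
\begin{equation*}
T_j f\q(x\w)=\psi_1\q(x\w)\int f\q(\gamma_t\q(x\w)\w)\psi_2\q(\gamma_t\q(x\w)\w)\kappa\q(t,x\w)\dil{\vsig_j}{2^j}\q(t\w)\:dt.
\end{equation*}
After changing variables $t\mapsto 2^{-j}t$, each $T_j$ becomes an average of $f$ over the mapping $x\mapsto \gamma_{2^{-j}t}\q(x\w)$, with $t$ supported in a fixed box and $\vsig_j$ a bounded family of bumps. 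The key structural input is that, by Definition \ref{DefnControlEveryScale}, the controlling list $\q(X,d\w)$ satisfies $W\q(2^{-j}t,x\w)=\sum_l c_l^{x_0,2^{-j}}\q(t,x\w)2^{-j\cdot d_l}X_l\q(x\w)$ uniformly in $j$. Hence $\gamma_{2^{-j}t}\q(x_0\w)$ stays inside the Carnot--Carath\'eodory ball $\B{X}{d}{x_0}{c\cdot 2^{-j}}$. Using the scaling/rectification results from \cite{StreetMultiParameterCCBalls} summarized in Section \ref{SectionCCII}, I can pull back $T_j$ onto a ball of ``unit size'' and obtain $\LtOpN{T_j}\lesssim 1$ uniformly in $j\in \lA$, via the trivial Schur-test bound for a smooth averaging operator.

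The heart of the proof is the almost-orthogonality estimate
\begin{equation*}
\LtOpN{T_j T_k^{\ast}}+\LtOpN{T_j^{\ast} T_k}\lesssim 2^{-\epsilon\q|j-k\w|_{\infty}},\qquad j,k\in \lA,
\end{equation*}
for some $\epsilon>0$. After composition, $T_j T_k^{\ast}$ has a Schwartz kernel that is an integral involving both $\vsig_j$ and $\vsig_k$, and the diffeomorphism $\gamma_{t'}\circ \gamma_t^{-1}$. When $j_\mu$ is much larger than $k_\mu$ in some coordinate $\mu$, I want to integrate by parts against $\vsig_j$ in the $t^{\mu}$-variable (or rather in the block $t_1^{\q[\mu\w]_{j,C,\sA}}$) after a suitable Taylor expansion, exploiting the cancellation condition \eqref{EqnDefnsKCancel}. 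Concretely, I Taylor-expand the rest of the integrand in these variables around the origin; the zeroth-order term is killed by the cancellation condition, while higher-order terms yield powers of $2^{-\q(j_\mu-k_\mu\w)}$ after rescaling. The index gymnastics of Definition \ref{DefnsK} is designed precisely to ensure that whenever $j_\mu-k_\mu$ is large and $\mu$ is not $j,\sA$-minimal in a ``dominated'' way, the corresponding cancellation is actually available --- this is the point of the partial order $\preceq_{j,C,\sA}$. Combined with the uniform estimates from the first step, this yields the geometric decay.

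Finally, I would apply the Cotlar--Stein lemma on the countable set $\lA\subseteq \N^\nu$. The summability $\sum_{k}\LtOpN{T_j^{\ast}T_k}^{1/2}\lesssim 1$ follows from the almost-orthogonality because the decay is in $\q|j-k\w|_{\infty}$ and we retain the single-parameter summation structure (or, equivalently, use a multi-parameter version of Cotlar--Stein with decay in each coordinate separately). The main obstacle, as I see it, is organizing the cancellation in the almost-orthogonality step: the condition \eqref{EqnDefnsKCancel} is weaker than full Calder\'on--Zygmund cancellation, and one has to carefully match the available cancellation in $\vsig_j$ to the direction of largest scale separation between $j$ and $k$, while simultaneously using the Carnot--Carath\'eodory geometry of $\q(X,d\w)$ to keep everything scale-invariant. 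I expect this to be packaged as a general $L^2$ theorem (promised in Section \ref{SectionGenThm}) whose hypotheses are verified here using the controllability assumption of Section \ref{SectionCurves} and the cancellation structure of $\sK$.
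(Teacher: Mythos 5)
Your overall architecture (decompose $K=\sum_{j\in\lA}\dil{\vsig_j}{2^j}$, prove $\LtOpN{T_jT_k^{*}}+\LtOpN{T_j^{*}T_k}\lesssim 2^{-\epsilon|j-k|}$, conclude by Cotlar--Stein) matches the paper, and your guess that the decay estimate is packaged as a general $L^2$ theorem is right. But the concrete mechanism you propose for extracting decay from the cancellation condition --- Taylor-expanding the rest of the integrand in the block $t_1^{[\mu]_{j,C,\sA}}$ and integrating by parts against $\vsig_j$ in the single composition $T_jT_k^{*}$ --- has a genuine gap. For a singular Radon transform, $\gamma_t(x)$ sweeps out an $N$-dimensional surface in $\R^n$ (and in the degenerate case only a leaf of lower dimension), so the Schwartz kernel of $T_jT_k^{*}$ is in general a \emph{singular measure}, not a function with any smoothness in the directions you need. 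The "higher-order terms yield powers of $2^{-(j_\mu-k_\mu)}$" step presupposes regularity of the companion factor that is simply not there after one composition. This is precisely why the paper (following \cite{ChristNagelSteinWaingerSingularAndMaximalRadonTransforms}) iterates: it bounds $\LpOpN{2}{(T_j^{*}T_kT_k^{*}T_j)^2}$, interpolates with the trivial $L^1$ bound to reduce to a pointwise $L^\infty$ estimate at a fixed $x_0$, rescales by the Carnot--Carath\'eodory map $\Phi$ at scale $2^{-j\wedge k}$, and only \emph{then} --- because the composite map $\Theta_\tau$ satisfies the curvature condition $\cJ$ uniformly (Theorem \ref{ThmUnifEquivCond}, Corollary \ref{CorCompSatisfycJ}) --- obtains via Proposition \ref{PropTransportL1delta} an absolutely continuous measure with density $h\in L^1_\delta$. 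The cancellation is then used not through integration by parts but by writing $T_j=R^{\zeta}-R^{0}$ with $\zeta=2^{-c|j-k|_\infty}$ (the two terms differ by freezing the fast block $t_1$ at $\zeta t_1$ versus $0$) and invoking Proposition \ref{PropIntL1delta} to get $\int|h(\thetat_{t,\zeta}^{-1}v)-h(\thetat_{t,0}^{-1}v)|\,dv\lesssim\zeta^{\delta'}\|h\|_{L^1_\delta}$.

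Two further points your sketch leaves unaddressed. First, after rescaling by $2^{-j\wedge k}$ the only vector fields that survive unscaled in the expansion of $W(2^{-j}t,\cdot)$ and $W(2^{-k}t,\cdot)$ are those attached to \emph{pure powers} (Lemma \ref{LemmaEachZjAppearsIngammaScaled}); the hypothesis that the pure-power coefficients generate the controlling list $(X,d)$ is exactly what guarantees the rescaled maps still satisfy H\"ormander's condition, hence $\cJ$. Your argument never uses the pure/non-pure dichotomy, yet without it the curvature input fails (cf.\ Section \ref{SectionExTransInv}). Second, in the multi-parameter setting one cannot reduce to "$k\le j$": some coordinates of $j$ may exceed those of $k$ and vice versa, which is why the symmetric quantity $(T_j^{*}T_kT_k^{*}T_j)^{2}$ is estimated rather than $(T_kT_k^{*})^{M}T_j$. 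Your Cotlar--Stein summation step is fine once the decay in $|j-k|_\infty$ is in hand.
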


\begin{proof}[Proof of Theorem \ref{ThmMainThmFirstPass} given Theorem \ref{ThmMainThmSecondPass}]
Given $\psi$ take $\psi_2$ to be equal to $1$ on a neighborhood of the support of $\psi$.
It is easy to see that if $K$ has sufficiently small support, then
$\psi_2\q( \gamma_t\q( x\w)\w)=1$ on the domain of integration
in \eqref{EqnMainThmSecond} (with $\psi$ in place of $\psi_1$).
Recall that $K$ is supported in $\Q^N\q( a\w)$, and so this
may be achieved by shrinking $a$.
Taking $\kappa\equiv 1$ yields Theorem \ref{ThmMainThmFirstPass}.
\end{proof}

\begin{rmk}
The reason we work with Theorem \ref{ThmMainThmSecondPass} is that
the class of operators given by \eqref{EqnMainThmSecond} is
closed under adjoints, while the class of operators
given by \eqref{EqnMainThmFirst} is not.
This may not be immediately obvious from our assumptions.
See Section \ref{SectionAdjoint} for details.
\end{rmk}

\begin{rmk}
In the second paper in this series (joint with Elias Stein \cite{SteinStreetMultiParameterSingRadonLp}), we will
prove a corresponding $L^p$ theorem ($1<p<\infty$).  The operators
for the $L^p$ theorem will not be as general as those covered by
Theorem \ref{ThmMainThmSecondPass}:  we will need to assume
a special form for the set $\sA$.  However, $\sA=\q[0,1\w]^\nu$
is of this form, and so in that special case, the results
of Theorem \ref{ThmMainThmSecondPass} do extend to
$L^p$.
\end{rmk}

\section{Basic Notation}\label{SectionNotation}
Throughout the paper, for $v=\q(v_1,\ldots, v_n\w)\in \R^n$, we write 
$\q|v\w|$ for $\q( \sum_j \q|v_j\w|^2 \w)^{\frac{1}{2}}$, and we write
$\q|v\w|_\infty$ for $\sup_j \q|v_j\w|$.
$B^{n}\q( \eta\w)$ will denote
the ball of radius $\eta>0$ in the $\q|\cdot\w|$ norm.
For two numbers $a,b\in \R$ we write
$a\vee b$ for the maximum of $a$ and $b$ and $a\wedge b$ for the minimum.
If instead, $a=\q( a_1,\ldots, a_n\w), b=\q( b_1,\ldots, b_n\w)\in \R^n$, we write $a\vee b$ (respectively, $a\wedge b$)
for $\q(a_1\vee b_1,\ldots, a_n \vee b_n \w)$ (respectively, $\q(a_1\wedge b_1,\ldots, a_n \wedge b_n\w)$).

For a vectors $\delta=\q(  \delta_1,\ldots, \delta_\nu\w), d=\q( d_1,\ldots, d_\nu\w)\in \R^\nu$, we define $\delta^d$ by the standard multi-index notation.
I.e., $\delta^d=\prod_{\mu=1}^{\nu} \delta_\mu^{d_\mu}$.  Also we will
write $2^d = \q(2^{d_1},\ldots,2^{d_\nu}\w)$.  

Given a, possibly arbitrary, set $U\subseteq \R^n$ and a continuous
function $f$ defined on a neighborhood of $U$, we write
$$\CjN{f}{j}{U} = \sum_{\q|\alpha\w|\leq j}\sup_{x\in U} \q|\partial_x^\alpha f\q( x\w)\w|,$$
and if we state that $\CjN{f}{j}{U}$ is finite, we mean that the partial
derivatives up to order $j$ of $f$ exist on $U$, are continuous, and the above norm
is finite.
If $f$ is replaced by a vector field $Y=\sum_k a_k\q( x\w)\partial_{x_k}$, then we write,
$$\CjN{Y}{j}{U} = \sum_{k}\CjN{a_k}{j}{U}.$$

Given two integers $1\leq m\leq n$, we let $\sI{m}{n}$ denote the set
of all lists of integers $\q( i_1,\ldots, i_m\w)$ such that
$$1\leq i_1<i_2<\cdots<i_m\leq n.$$
Furthermore, suppose $A$ is an $n\times q$ matrix, and suppose
$1\leq n_0\leq n\wedge q$.
For $I\in \sI{n_0}{n}$, $J\in \sI{n_0}{q}$ we let $A_{I,J}$ denote
the $n_0\times n_0$ matrix given by taking the rows from $A$ which
are listed in $I$ and the columns from $A$ which are listen in $J$.
We define
$$\Det{n_0} A = \q( \det A_{I,J} \w)_{\substack{I\in \sI{n_0}{n}\\J\in \sI{n_0}{q}}},$$
so that, in particular, $\Det{n_0} A$ is a {\it vector} (it will not
be important to us in which order the coordinates are arranged).
$\Det{n_0} A$ comes up when one changes variables.  Indeed, suppose
$\Phi$ is a $C^1$ diffeomorphism from an open subset $U\subset \R^{n_0}$
mapping to an $n_0$ dimensional submanifold of $\R^n$, where
this submanifold is given the induced Lebesgue measure $dx$.  Then, we have
$$\int_{\Phi\q( U\w)} f\q( x\w) \: dx = \int_U f\q( \Phi\q( t\w)\w) \q|\Det{n_0} d\Phi\q( t\w)\w|\: dt.$$

If $A=\q( A_1,\ldots, A_q\w)$ is a list of, possibly non-commuting, operators,
we will use ordered multi-index notation to define $A^\alpha$, where
$\alpha$ is a list of numbers $1,\ldots, q$.  $\q|\alpha\w|$ will
denote the length of the list.  For instance, if
$\alpha=\q( 1,4,4,2,1\w)$, then $\q| \alpha\w|=5$ and $A^\alpha = A_1A_4A_4A_2A_1$.  Thus, if $A_1,\ldots A_q$ are vector fields, then $A^\alpha$ is an
$\q|\alpha\w|$ order partial differential operator.

If $f:\R^{n}\rightarrow \R^m$ is a map, then we write,
\begin{equation*}
d f\q( x\w) \q( \frac{\partial}{\partial x_j}\w),
\end{equation*}
for the differential of $f$ at the point $x$ applied to the vector field
$\frac{\partial}{\partial x_j}$.  If $f$ is a function of two variables,
$f\q( t,x\w): \R^N\times \R^n\rightarrow \R^m$, and we wish to view $d f$ as a linear transformation
acting on the vector space spanned by $\frac{\partial}{\partial_{t_j}}$ ($1\leq j\leq N$),
then we instead write,
\begin{equation*}
\frac{\partial f}{\partial t} \q( t,x\w)
\end{equation*}
to denote this linear transformation.  Hence, it makes sense to write,
\begin{equation*}
\det_{n_0\times n_0} 
\frac{\partial f}{\partial t} \q( t,x\w),
\end{equation*}
where $n_0\leq m\wedge N$.

Finally, we will devote a good deal of notation to multi-parameter
Carnot-Carath\'eodory geometry. See 
Sections \ref{SectionCC} and \ref{SectionCCII}.

\section{The work of Christ, Nagel, Stein, and Wainger}\label{SectionCNSW}
In this section we review some of the aspects
of the fundamental work
\cite{ChristNagelSteinWaingerSingularAndMaximalRadonTransforms},
and we also discuss some further results that follow easily
from that work, which will be useful in the multi-parameter
setting.
The results of this section will be of use to us in two ways.
First, the work in \cite{ChristNagelSteinWaingerSingularAndMaximalRadonTransforms}
offered the primary inspiration for the results in this paper, and a review
of these results will offer the reader
much better context in which to understand this paper.
Second, we will be able to save quite a bit of effort by
``transferring'' some of the single parameter results
from \cite{ChristNagelSteinWaingerSingularAndMaximalRadonTransforms}
to the multi-parameter setting.  For an example of
this ``transferring'' in a simpler context,
see Section 5.2.4 of \cite{StreetMultiParameterCCBalls}.

In \cite{ChristNagelSteinWaingerSingularAndMaximalRadonTransforms},
operators of the form,
\begin{equation}\label{EqnCNSWOp}
T f\q( x\w) = \psi\q( x\w) \int f\q( \gamma_t\q( x\w)\w) K\q( t\w) \: dt
\end{equation}
were studied, where $\psi$ is a cutoff function supported near
a fixed point $x_0\in \R^n$, $K$ is a Calder\'on-Zygmund kernel
supported near $t=0$, and $\gamma: \R^N\times \R^n\rightarrow \R^n$
is a $C^\infty$ function\footnote{Actually, we only need $\gamma$ defined
near $0\in \R^N$ and $x_0\in \R^n$.}
with $\gamma_0\q( x\w) \equiv x$ and satisfying an additional condition,
denoted by $\cC$ in \cite{ChristNagelSteinWaingerSingularAndMaximalRadonTransforms}.

Moreover, \cite{ChristNagelSteinWaingerSingularAndMaximalRadonTransforms}
showed that $\cC$ could be viewed as any one of a number of
equivalent conditions.  Three of their conditions are useful for our
purposes.  We review these three below, along with a fourth condition
(not directly studied in \cite{ChristNagelSteinWaingerSingularAndMaximalRadonTransforms}) which is also equivalent to the other three.

\begin{itemize}
\item $\cG$:  \cite{ChristNagelSteinWaingerSingularAndMaximalRadonTransforms}
showed that $\gamma_t$ can be written asymptoticly as:\footnote{\eqref{EqnExpRepn} means that, for every $M$, $\gamma_t\q( x\w) = \exp\q(\sum_{0<\q|\alpha\w|<M} t^{\alpha} \Xh_\alpha \w)+ O\q( \q|t\w|^M\w)$, as $t\rightarrow 0$.}
\begin{equation}\label{EqnExpRepn}
\gamma_t\q( x\w) \sim \exp\q(\sum_{\q|\alpha\w|>0} t^{\alpha} \Xh_\alpha \w)x,
\end{equation}
where each $\Xh_\alpha$ is a $C^\infty$ vector field on $\R^n$ (see
Theorem 8.5 of \cite{ChristNagelSteinWaingerSingularAndMaximalRadonTransforms}).
Condition $\cG$ then states that the vector fields $\Xh_\alpha$ satisfy
H\"ormander's condition.  I.e., that the Lie algebra generated by the $\Xh_\alpha$
spans the tangent space to $\R^n$ at $x_0$.

\item $\cJ$:  One defines $\Gamma: \R^{Nn}\times \R^{n}\rightarrow \R^n$ by
\begin{equation}\label{EqnDefnGammacJ}
\Gamma\q( \tau, x\w) = \gamma_{t^1}\circ\gamma_{t^2}\circ\cdots \circ\gamma_{t^n}\q( x\w),
\end{equation}
where $\tau=\q( t^1,\ldots, t^n\w)\in \R^{Nn}$.  $\cJ$ then states
that for some multi-index $\beta$, we have,
\begin{equation*}
\q| \q(\frac{\partial}{\partial\tau} \w)^\beta \det_{n\times n} \frac{\partial \Gamma }{\partial\tau} \q( \tau,x_0\w)\bigg|_{\tau=0} \w|\ne 0.
\end{equation*}

\item $\cY$:\footnote{See the bottom of page 521 of \cite{ChristNagelSteinWaingerSingularAndMaximalRadonTransforms}
to find $\cY$.}  For $1\leq j\leq N$, define the $C^\infty$ vector fields,
\begin{equation*}
W_j\q( t,x\w) = d\gamma\q( t,\gamma_t^{-1}\q( x\w)\w) \q(\frac{\partial}{\partial t_j}\w) = \frac{\partial}{\partial s_j}\bigg|_{s=0} \gamma_{t+s}\circ \gamma_t^{-1}\q( x\w).
\end{equation*}
We now express $W_j$ as a Taylor series in the $t$ variable,
\begin{equation*}
W_j\q( t\w) \sim \sum_{\alpha} t^{\alpha} \frac{X_{\alpha,j}}{\alpha!},
\end{equation*}
where $X_{\alpha,j}$ is a $C^\infty$ vector field.  $\cY$ then
states that the vector fields $\q\{X_{\alpha,j}: \q|\alpha\w|\geq 0, 1\leq j\leq N \w\}$ satisfy H\"ormander's condition at $x_0$.

\item $\cZ$:  Define the $C^\infty$ vector field,\footnote{Once again,
$\epsilon t=\q( \epsilon t_1,\ldots, \epsilon t_n\w)$ and so does not
use any non-standard dilations as in \eqref{EqnDefndeltat}.}
\begin{equation*}
W\q( t,x\w) = \frac{\partial}{\partial \epsilon}\bigg|_{\epsilon=1} \gamma_{\epsilon t}\circ\gamma_t^{-1}\q( x\w).
\end{equation*}
Express $W$ as a Taylor series in the $t$ variable,
\begin{equation*}
W\q( t\w) \sim \sum_{\alpha} t^{\alpha} X_\alpha,
\end{equation*}
where $X_\alpha$ is a $C^\infty$ vector field.  $\cZ$ then states that
the vector fields $\q\{X_\alpha\w\}$ satisfy H\"ormander's condition
at $x_0$.
\end{itemize}
The following result is mostly contained in
\cite{ChristNagelSteinWaingerSingularAndMaximalRadonTransforms}:
\begin{thm}\label{ThmEquivConds}
$\cG\Leftrightarrow \cJ\Leftrightarrow \cY\Leftrightarrow \cZ$.
We denote by $\cC$ any of the above equivalent conditions.
\end{thm}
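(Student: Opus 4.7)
Since the equivalences $\cG \Leftrightarrow \cJ \Leftrightarrow \cY$ are established in \cite{ChristNagelSteinWaingerSingularAndMaximalRadonTransforms}, it suffices to verify $\cZ \Leftrightarrow \cG$. The plan is to relate the Taylor coefficients $X_\beta$ of $W$ directly to the vector fields $\widehat X_\alpha$ of \eqref{EqnExpRepn} by a formal Baker--Campbell--Hausdorff calculation, and then to read off that the two families generate the same Lie algebra at every point.

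Writing $V(t)=\sum_{|\alpha|\geq 1} t^\alpha \widehat X_\alpha$, the asymptotic $\gamma_t(x)\sim \exp(V(t))x$ yields, modulo $O(|t|^\infty)$,
\begin{equation*}
\gamma_{\epsilon t}\circ\gamma_t^{-1}(x)\sim \exp\bigl(V(\epsilon t)\bigr)\exp\bigl(-V(t)\bigr)x \sim \exp\bigl(B(\epsilon,t)\bigr)x,
\end{equation*}
where $B(\epsilon,t)=V(\epsilon t)-V(t)-\tfrac{1}{2}[V(\epsilon t),V(t)]+\cdots$ in the formal BCH sense. Since $B(1,t)=0$, all higher-order-in-$B$ contributions from $\exp$ vanish upon differentiating in $\epsilon$ at $\epsilon=1$, so $W(t,x)=\partial_\epsilon B(\epsilon,t)\big|_{\epsilon=1}(x)$. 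Each BCH term is a fixed iterated Lie bracket of $V(\epsilon t)$ and $V(t)$ with $k+1$ factors ($k\geq 0$); after applying $\partial_\epsilon$ and setting $\epsilon=1$, exactly one factor becomes $(t\cdot\nabla_t)V(t)=\sum_\alpha|\alpha|\,t^\alpha \widehat X_\alpha$ and the rest remain $V(t)$. Because every factor carries a $t$-degree of at least one, only finitely many bracket terms contribute to the coefficient of $t^\beta$, and when $k\geq 1$ every $\widehat X_\alpha$ appearing has $|\alpha|<|\beta|$. Matching coefficients of $t^\beta$ yields the key identity
\begin{equation*}
X_\beta = |\beta|\,\widehat X_\beta + R_\beta,
\end{equation*}
where $R_\beta$ is a polynomial in iterated Lie brackets of $\widehat X_\alpha$ with $|\alpha|<|\beta|$.

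Given this identity, a straightforward induction on $|\beta|$ shows that every $\widehat X_\beta$ lies in the Lie algebra generated by $\{X_\alpha:|\alpha|\leq|\beta|\}$, while the identity itself gives the reverse inclusion. Hence the Lie algebras generated by $\{X_\alpha\}$ and by $\{\widehat X_\alpha\}$ coincide, so H\"ormander's condition at $x_0$ holds for one family if and only if it holds for the other, proving $\cZ\Leftrightarrow \cG$. The principal obstacle I anticipate is the careful bookkeeping of the formal BCH expansion: one must verify that $\partial_\epsilon$ legitimately commutes with the formally infinite bracket sum at $\epsilon=1$, that only finitely many bracket terms contribute to each Taylor coefficient, and that the asymptotic statement $\gamma_t\sim\exp(V(t))$ from $\cG$ is strong enough to justify these manipulations modulo $O(|t|^\infty)$. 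Once the identity $X_\beta=|\beta|\widehat X_\beta+R_\beta$ is in hand, the inversion and conclusion should be routine.
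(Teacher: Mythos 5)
Your argument is correct, but it takes a genuinely different route from the paper's. The paper also imports $\cG\Leftrightarrow\cJ\Leftrightarrow\cY$ from \cite{ChristNagelSteinWaingerSingularAndMaximalRadonTransforms}, but then closes the loop by proving $\cY\Leftrightarrow\cZ$ rather than $\cZ\Leftrightarrow\cG$: it never invokes Baker--Campbell--Hausdorff, instead using the elementary identity $W(t,x)=\sum_j t_j W_j(t,x)$ together with the ODE integrability condition $\partial_{t_k}W_j-\partial_{t_j}W_k=[W_j,W_k]$ to run an induction showing $\Lie\{X_{\alpha,j}:|\alpha|\leq M\}=\Lie\{X_\alpha:|\alpha|\leq M+1\}$. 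That argument stays entirely within the $W$, $W_j$ framework and requires no manipulation of formal exponentials, which is why the paper can afford to be completely rigorous in a page. Your BCH route is the natural ``direct'' comparison of $\cZ$ with $\cG$, and the key identity $X_\beta=|\beta|\Xh_\beta+R_\beta$ (with $R_\beta$ a bracket polynomial in the $\Xh_\alpha$, $|\alpha|<|\beta|$) is exactly the kind of relation the paper itself sketches later when proving the multi-parameter analogue, Proposition \ref{PropCurvatureEquivScaleInv} ($\cGs\Leftrightarrow\cZs$); so your approach has the advantage of generalizing more transparently to the scale-invariant setting. The bookkeeping you flag is real but routine: you need $\gamma_t^{-1}(x)\sim\exp(-V(t))x$ (Lemma 9.3 of \cite{ChristNagelSteinWaingerSingularAndMaximalRadonTransforms}), care with the order convention in $e^{X}\circ e^{Y}=e^{\mathrm{BCH}}$ for flows acting on points (which only affects signs of bracket terms, not the structure of $R_\beta$), and the observation that since every factor carries $t$-degree at least one, each Taylor coefficient receives only finitely many BCH contributions, so all manipulations are legitimate modulo $O(|t|^L)$ for each $L$. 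With those details filled in, your induction on $|\beta|$, using $|\beta|\neq 0$ to invert the identity, does establish that the two filtered Lie algebras coincide, giving $\cZ\Leftrightarrow\cG$.
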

\begin{proof}
$\cG\Leftrightarrow \cJ$ is contained in Theorem 8.8
of \cite{ChristNagelSteinWaingerSingularAndMaximalRadonTransforms}.
$\cG\Leftrightarrow \cY$ is Proposition 9.6 of
\cite{ChristNagelSteinWaingerSingularAndMaximalRadonTransforms}.
We are therefore left with showing only $\cY\Leftrightarrow \cZ$
and the proof will be complete.

For a set of $C^\infty$ vector fields $\sV$, let 
$\Lie\q(\sV\w)$ denote the Lie algebra generated by $\sV$.
Let $W_j$, $X_{\alpha,j}$, $W$, and $X_\alpha$ be as in
the definitions of $\cY$ and $\cZ$.
We will show,
\begin{equation*}
\Lie\q\{X_{\alpha,j}: 1\leq j\leq N, \alpha\in \N^{N}\w\}=\Lie \q\{X_\alpha: \alpha\in \N^N\w\},
\end{equation*}
which will complete the proof.  In fact, we will show for every $M\in \N$,
\begin{equation}\label{EqnToShowLie}
\Lie\q\{X_{\alpha,j}: 1\leq j\leq N, \q|\alpha\w|\leq M\w\}=\Lie \q\{X_\alpha: \q|\alpha\w|\leq M+1\w\}.
\end{equation}

We proceed by induction on $M$.
It follows directly from the definition of $W$ and $W_j$ that,
\begin{equation}\label{EqnRelWWj}
W\q( t,x\w) = \sum_{j=1}^N t_j W_j\q( t,x\w).
\end{equation}
In light of \eqref{EqnRelWWj}, the base case ($M=0$) of \eqref{EqnToShowLie}
is trivial.
We assume we have \eqref{EqnToShowLie} for $M-1$ and prove it for $M$.
From \eqref{EqnRelWWj}, the containment $\supseteq$ is trivial,
and so we prove only the containment $\subseteq$.
To do so, we use the fact that $\gamma$ satisfies the ODE in each variable,
\begin{equation}\label{EqnWjDiffEq}
\frac{\partial}{\partial t_j} \gamma_t\q( x\w) = W_j \q( t, \gamma_t\q( x\w)\w).
\end{equation}
Hence, the $W_j$ must satisfy the integrability condition:
\begin{equation}\label{EqnWjInteg}
\frac{\partial W_j}{\partial t_k} - \frac{\partial W_k}{\partial t_j} = \q[W_j,W_k\w].
\end{equation}
See, for example,\footnote{That \eqref{EqnWjInteg} must hold follows
simply by using the equality $\partial_{t_k} \partial_{t_j} \gamma_t = \partial_{t_j} \partial_{t_k} \gamma_t$ and applying \eqref{EqnWjDiffEq}.} 
Theorem 10.9.4 of \cite{DieudonneFoundationsOfModernAnalysis}.
Let $f_j$ be the multi-index which is $1$ in the $j$th coordinate
and $0$ in all other coordinates.  From, \eqref{EqnWjInteg},
we obtain,
\begin{equation*}
\frac{X_{\alpha+f_k,j}}{\alpha!}- \frac{X_{\alpha+f_j,k}}{\alpha!} = \sum_{\gamma+\delta=\alpha} \q[\frac{X_{\gamma,j}}{\gamma!}, \frac{X_{\delta,k}}{\delta!}\w].
\end{equation*}
As a consequence, if $\q|\alpha\w|=M-1$, our inductive hypothesis shows,
\begin{equation}\label{Eqn0ModLie}
X_{\alpha+f_k,j} - X_{\alpha+f_j,k} \equiv 0 \mod \Lie\q\{X_\alpha: \q|\alpha\w|\leq M\w\}.
\end{equation}
Fix $\beta$ with $\q|\beta\w|=M$.  We wish to show $X_{\beta,j}\in \Lie\q\{X_\alpha: \q|\alpha\w|\leq M+1\w\}$.
For every $k$ such that $\beta-f_k\in \N^\nu$, we have
by \eqref{Eqn0ModLie}
 (since $\q|\beta-f_k\w|=M-1$),
\begin{equation*}
X_{\beta+f_j-f_k,k}\equiv X_{\beta,j} \mod \Lie\q\{X_\alpha: \q|\alpha\w|\leq M\w\}.
\end{equation*}

Applying \eqref{EqnRelWWj}, we see that the coefficient of $t^{\beta+f_j}$
in the Taylor series for $W$ is given by,
\begin{equation*}
X_{\beta+f_j}=\sum_{\substack{1\leq k\leq N\\ \beta-f_k\in \N^N}} \frac{X_{\beta+f_j-f_k,k} }{\q( \beta+f_j-f_k\w)!} \equiv C X_{\beta,j}\mod \Lie\q\{X_\alpha: \q|\alpha\w|\leq M\w\},
\end{equation*}
where $C$ is a nonzero constant.  Hence,
$X_{\beta,j}\in \Lie \q\{X_\alpha: \q|\alpha\w|\leq M+1\w\}$,
completing the proof.
\end{proof}

One of the main theorems of
\cite{ChristNagelSteinWaingerSingularAndMaximalRadonTransforms}
is the following:
\begin{thm}[Theorem 11.1 of \cite{ChristNagelSteinWaingerSingularAndMaximalRadonTransforms}]\label{ThmCNSWBound}
Suppose $\gamma$ satisfies $\cC$ at some point $x_0$, and
suppose $\psi$ is a $C_0^\infty$ function supported on a
sufficiently small neighborhood of $x_0$ and $K$ is
a standard Calder\'on-Zygmund kernel supported
sufficiently close to $0$.  Then,
the operator given by \eqref{EqnCNSWOp} extends
to a bounded operator on $L^p$ ($1<p<\infty$).
\end{thm}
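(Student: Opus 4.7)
The plan is to reduce the $L^p$ bound to an almost-orthogonal decomposition combined with a Carnot--Carath\'eodory rescaling dictated by condition $\cC$. First I would perform a dyadic decomposition of $K$. Since $K$ is a Calder\'on--Zygmund kernel supported near $0$, it can be written $K = \sum_{j \geq 0} 2^{jN} \vsig_j(2^j t)$ with $\{\vsig_j\}$ a bounded subset of $C_0^\infty$, each $\vsig_j$ having mean zero (the single-parameter analogue of the class $\sKt$ from Section \ref{SectionClasssKt}). Setting
\[
T_j f(x) = \psi(x) \int f\q(\gamma_t(x)\w)\: 2^{jN} \vsig_j(2^j t)\: dt,
\]
the problem reduces to bounding $\sum_j T_j$ on $L^p$ uniformly in the cutoff.

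Next I would use condition $\cG$ (equivalently $\cZ$) to introduce the correct scaling. By Theorem \ref{ThmEquivConds}, the H\"ormander condition on $\{\hat X_\alpha\}$ produces, via iterated commutators with assigned formal degrees, a finite list $\q(X_1,d_1\w),\ldots,\q(X_q,d_q\w)$ spanning the tangent space at $x_0$, together with the associated Nagel--Stein--Wainger balls $\B{X}{d}{x}{2^{-j}}$. After the change of variables $t = 2^{-j} s$, the operator $T_j$ becomes an average of $f$ over the image of the unit $s$-ball under the map $s \mapsto \gamma_{2^{-j}s}(x) \approx \exp\q(\sum_\alpha s^\alpha 2^{-j|\alpha|} \hat X_\alpha\w)x$, which up to this rescaling is a uniform family in $j$ (condition $\cJ$ ensures the rescaled map is a diffeomorphism onto a CC-ball of volume $\asymp 2^{-j\sum d_l}$).

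The main step is almost orthogonality: I would prove
\[
\LtOpN{T_j T_k^*} + \LtOpN{T_j^* T_k} \leq C\: 2^{-\epsilon |j-k|}
\]
for some $\epsilon > 0$, then invoke Cotlar--Stein to conclude $L^2$-boundedness of $\sum_j T_j$. To obtain the off-diagonal decay one integrates by parts in the Schwartz kernel of $T_j T_k^*$, exploiting the mean-zero cancellation of $\vsig_j,\vsig_k$ against smoothness bounds on the composition $\gamma_s\circ\gamma_t^{-1}$; these smoothness bounds must be formulated relative to the CC geometry, so that they remain uniform in $j$. The on-diagonal ($j=k$) piece then reduces to the volume bound for the support of $T_j T_j^*$ combined with Schur's test. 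For the extension to general $1<p<\infty$ I would combine two ingredients: the maximal Radon transform associated with $\gamma_t$, whose $L^p$ boundedness follows from $\cJ$ and the Nagel--Stein--Wainger theory of maximal functions on CC-balls, and a vector-valued Littlewood--Paley square-function bound for the dyadic pieces $T_j$; alternatively, Rothschild--Stein lifting to a free nilpotent group converts the problem to a convolution by a standard Calder\'on--Zygmund kernel on the group, from which $L^p$-boundedness descends.

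The hardest step will be the almost-orthogonality estimate. Turning the abstract hypothesis $\cC$ into genuinely quantitative, scale-invariant $C^\infty$ bounds on $\gamma_s\circ\gamma_t^{-1}$ is nontrivial because the vector fields $\hat X_\alpha$ may degenerate and there is no a~priori coordinate system adapted to the CC balls of radius $2^{-j}$. The canonical remedy is the Rothschild--Stein lifting to a free nilpotent Lie algebra, in which the lifted vector fields generate translations on a homogeneous group and the rescaling becomes a group dilation; alternatively, in the spirit of the present paper, the scaling machinery of \cite{StreetMultiParameterCCBalls} supplies uniform charts on $\B{X}{d}{x}{2^{-j}}$ in which the required estimates become visibly $j$-independent. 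Either route makes the integration by parts rigorous and closes the argument.
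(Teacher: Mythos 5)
First, note that the paper does not actually prove this statement: Theorem \ref{ThmCNSWBound} is quoted as Theorem 11.1 of \cite{ChristNagelSteinWaingerSingularAndMaximalRadonTransforms}, so the only ``proof'' in the paper is the citation together with the surrounding machinery (Sections \ref{SectionL1delta}--\ref{SectionProof}) that adapts the CNSW argument to the multi-parameter setting. Measured against the actual CNSW proof (and against the single-parameter specialization of Sections \ref{SectionGenThm}--\ref{SectionProof}), your sketch gets the architecture right --- dyadic decomposition $K=\sum_j \dil{\vsig_j}{2^j}$, Cotlar--Stein, rescaling via Carnot--Carath\'eodory geometry or Rothschild--Stein lifting, and a maximal-function bootstrap for $L^p$ --- but it misidentifies the mechanism that produces the off-diagonal decay, and that mechanism is the heart of the proof.

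The gain $2^{-\epsilon|j-k|}$ is not obtained by ``integrating by parts in the Schwartz kernel of $T_jT_k^*$, exploiting the mean-zero cancellation against smoothness bounds on $\gamma_s\circ\gamma_t^{-1}$.'' The hypothesis $\cC$ supplies only a finite-order nondegeneracy ($\cJ$: some $\tau$-derivative of $\det \partial\Gamma/\partial\tau$ is nonzero, where $\Gamma_\tau=\gamma_{t^1}\circ\cdots\circ\gamma_{t^n}$ is the $n$-fold composition), and the single rescaled map $s\mapsto\gamma_{2^{-j}s}(x)$ goes from $\R^N$ to $\R^n$ and is in general not a diffeomorphism onto a ball of any volume, so there is no Jacobian factor against which a naive integration by parts could be run. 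What actually happens (CNSW Sections 7, 11, 13; Propositions \ref{PropTransportL1delta} and \ref{PropIntL1delta} here) is this: one raises $T_j^*T_kT_k^*T_j$ to a high power, interpolates with the trivial $L^1\to L^1$ bound, and proves an $L^\infty$ bound pointwise at each $x_0$; after rescaling, the iterated composition $\Gamma_\tau$ transports the measure $\kappa\vsig\,d\tau$ to an absolutely continuous measure whose density $h$ lies in $L^1_\delta$ for some $\delta>0$ determined by the order of nondegeneracy in $\cJ$; the cancellation $\int\vsig_j=0$ is then used to replace $h$ evaluated along the finer-scale map by a difference $h(\theta_{t,\zeta}^{-1}(v))-h(\theta_{t,0}^{-1}(v))$ with $|\theta_{t,\zeta}^{-1}(v)-\theta_{t,0}^{-1}(v)|\lesssim\zeta=2^{-c|j-k|}$, and the $L^1_\delta$ modulus of continuity yields the factor $\zeta^{\epsilon}$. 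Without this fractional-smoothing step your argument does not close, because the curvature hypothesis never provides the full transversality an integration by parts would require. Your account of the $L^p$ extension (maximal operator plus square functions, or lifting) is consistent with CNSW's actual route, which bootstraps from $L^2$ using the associated maximal Radon transform.
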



\begin{rmk}
We will use multi-parameter analogs of the concepts
in the above four conditions.  For example,
the assumptions placed on $\gamma$ in Section \ref{SectionCurves}
is the relevant multi-parameter analog of $\cZ$.
We will discuss multi-parameter analogs of the other
conditions described above, and a multi-parameter
analog of Theorem \ref{ThmEquivConds},
in Section \ref{SectionMultiParamCurvature}.
\end{rmk}

As is discussed in Section 5.2.4 of \cite{StreetMultiParameterCCBalls},
results in the single-parameter setting (like Theorem \ref{ThmEquivConds})
can often be lifted to the multi-parameter setting,
provided the results are uniform, in an appropriate sense.
Fortunately, in this instance, the desired uniformity
follows from Theorem \ref{ThmEquivConds}
via a compactness argument.
We now state and prove the main consequence 
of Theorem \ref{ThmEquivConds} which we will use.

For notational convenience, we take $x_0=0$ in what follows.
We wish to define ``uniform'' versions of $\cG$, $\cJ$, $\cY$, and $\cZ$.
Fix $\rho>0$, $\eta>0$, and let $\sS$ be a set of $C^\infty$
functions $\gamma:\Q^N\q( \rho\w) \times B^{n}\q( \eta\w)\rightarrow \R^n$,
satisfying $\gamma_0\q( x\w)\equiv x$.  We wish to define
the notion of 
the above conditions holding
uniformly for $\gamma\in \sS$.  We denote this by
$\cGu$, $\cJu$, $\cYu$, and $\cZu$ respectively.

\begin{itemize}
\item $\cGu$: For $\gamma\in \sS$, define $\Xh_\alpha$
as in $\cG$.  $\cGu$ states that there exists $M\in \N$,
independent of $\gamma\in \sS$,
such that $\q\{\Xh_\alpha: \q|\alpha\w|\leq M\w\}$
satisfies H\"ormander's condition at $0$, uniformly
for $\gamma\in \sS$.  More precisely, that there exists
$M'\in \N$, $c>0$, independent of $\gamma\in \sS$
such that if we let $V_1,\ldots, V_L$ denote the list
of vector fields containing $\q\{\Xh_\alpha: \q|\alpha\w|\leq M\w\}$,
along with all commutators of the vector fields in $\q\{\Xh_\alpha: \q|\alpha\w|\leq M\w\}$
up to order $M'$, then we have,
\begin{equation*}
\q|\det_{n\times n} V\q( 0\w)\w|\geq c,
\end{equation*}
where we have written $V$ to denote the matrix whose columns are
$V_1,\ldots, V_L$.
\item $\cJu$: For $\gamma\in \sS$, define $\Gamma$ as in
$\cJ$.  $\cJu$ then states that there is an $M\in \N$ and
a $c>0$, both independent of $\gamma\in \sS$ such that
for every $\gamma\in \sS$ there exists $\beta$ with $\q|\beta\w|\leq M$,
and
\begin{equation*}
\q| \q(\frac{\partial}{\partial\tau} \w)^\beta \det_{n\times n} \frac{\partial \Gamma }{\partial\tau} \q( \tau,0\w)\bigg|_{\tau=0} \w|\geq c.
\end{equation*}

\item $\cYu$: For $\gamma\in \sS$, define $X_{\alpha,j}$ as
in $\cY$.  $\cYu$ states that there exists $M\in \N$, independent
of $\gamma\in\sS$ such that
$\q\{X_{\alpha,j}:\q|\alpha\w|\leq M, 1\leq j\leq N\w\}$
satisfies H\"ormander's condition at $0$, uniformly
for $\gamma \in \sS$ (as in $\cGu$).

\item $\cZu$: For $\gamma\in \sS$, define the vector fields $\q\{X_\alpha\w\}$
as in $\cZ$.  $\cZu$ states that there exist $M\in \N$, independent
of $\gamma\in \sS$, such 
$\q\{X_\alpha: \q|\alpha\w|\leq M\w\}$
satisfies H\"ormander's condition at $0$, uniformly for $\gamma\in \sS$ (as in
$\cGu$).
\end{itemize}

\begin{thm}\label{ThmUnifEquivCond}
Let $\sS$ be as above, and suppose $\sS\subset C^{\infty}\q(\Q^N\q( \rho\w)\times B^n\q( \eta\w); \R^n \w)$ is a bounded set.
Then, $\cGu\Leftrightarrow\cJu\Leftrightarrow \cYu\Leftrightarrow \cZu$.
\end{thm}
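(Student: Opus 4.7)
The plan is to bootstrap the non-uniform equivalence of Theorem \ref{ThmEquivConds} to the uniform setting via a compactness argument. Two preliminaries drive everything. First, since $\sS$ is bounded in $C^{\infty}\q(\Q^N\q(\rho\w) \times B^n\q(\eta\w); \R^n\w)$, Arzel\`a--Ascoli together with a diagonal extraction shows that every sequence $\q\{\gamma_n\w\} \subset \sS$ has a subsequence converging in $C^k$ for every $k$ to some $\gamma_\infty \in C^{\infty}$ satisfying $\gamma_{\infty,0}\q(x\w) \equiv x$. The limit need not lie in $\sS$, but Theorem \ref{ThmEquivConds} does apply to it. Second, all the data entering the four conditions --- the Taylor coefficients $\Xh_\alpha$, $X_{\alpha,j}$, $X_\alpha$, the map $\Gamma$ from $\cJ$, and iterated Lie brackets of these up to any fixed order --- are polynomial expressions in finitely many derivatives of $\gamma$, together with (for $\Xh_\alpha$ and $\Gamma$) compositions and inversions of $\gamma_t$ near the identity. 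Consequently each of these objects, as well as the minors of the associated matrices of vector fields, depends continuously on $\gamma$ in some fixed $C^k$.

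The core argument runs by contradiction, and since the template is identical for each of the six needed implications I describe only $\cZu \Rightarrow \cGu$. For each $(M, M') \in \N^2$ let $\phi_{M, M'}\q(\gamma\w)$ denote the Euclidean norm of the vector $\Det{n} V\q(0\w)$, where $V$ is the matrix whose columns are the list of vector fields appearing in the definition of $\cG$ at order $(M, M')$; by the second preliminary, $\phi_{M, M'}$ is continuous in $\gamma$. Assume $\cZu$ holds with uniform constants $(M_Z, M_Z', c_Z)$ but that $\cGu$ fails. Then for each $n \in \N$ there exists $\gamma_n \in \sS$ with $\phi_{n, n}\q(\gamma_n\w) < 1/n$. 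Extract a $C^{\infty}$-convergent subsequence $\gamma_n \to \gamma_\infty$. For any fixed $(M, M')$ and $n \geq \max\q(M, M'\w)$, the minors contributing to $\phi_{M, M'}$ form a subset of those contributing to $\phi_{n, n}$, hence $\phi_{M, M'}\q(\gamma_n\w) \leq \phi_{n, n}\q(\gamma_n\w) < 1/n$; letting $n \to \infty$ gives $\phi_{M, M'}\q(\gamma_\infty\w) = 0$ for every fixed $(M, M')$, so $\gamma_\infty$ fails $\cG$ at every order. On the other hand, the lower bound in $\cZu$ is closed under $C^{\infty}$ limits, so $\gamma_\infty$ satisfies $\cZ$ with the same constants $(M_Z, M_Z', c_Z)$; Theorem \ref{ThmEquivConds} then forces $\gamma_\infty$ to satisfy $\cG$, a contradiction.

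The main obstacle will be bookkeeping rather than any conceptual difficulty: one must confirm $C^k$-continuity of the data underlying each of the four conditions and verify the monotonicity statement that enlarging $(M, M')$ only appends coordinates to $\Det{n} V\q(0\w)$, so that a uniform failure at growing order genuinely persists in the $C^{\infty}$ limit. Once these two points are in hand, the remaining five implications follow by rerunning the template, with $\phi_{M, M'}$ replaced by the analogous failure quantity for the target condition --- for $\cJu$ this is the sup over $\q|\beta\w| \leq M$ of $\q|\partial_\tau^\beta \Det{n} \partial_\tau \Gamma\q(\tau,0\w)\big|_{\tau=0}\w|$, while for $\cYu$ it is again a norm of minors of a matrix of vector fields --- and Theorem \ref{ThmEquivConds} invoked in the appropriate direction.
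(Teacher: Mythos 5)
Your proposal is correct and follows essentially the same route as the paper's proof: extract a $C^\infty$-convergent subsequence from a putative sequence of counterexamples (using precompactness of bounded subsets of $C^\infty$), observe that the limit inherits the hypothesis condition but fails the target condition at every order, and contradict Theorem \ref{ThmEquivConds}. The only cosmetic difference is that you detail $\cZu\Rightarrow\cGu$ while the paper details $\cZu\Rightarrow\cJu$; both treat the remaining implications as instances of the same template.
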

\begin{proof}
We prove only $\cZu\Rightarrow \cJu$ as that is the most important implication
for our purposes.
All of the other implications follow in a completely analogous manner.
Suppose $\sS$ satisfies $\cZu$ but not $\cJu$.
Let $\gamma^j\in \sS$ be a sequence satisfying the following.
Define $\Gamma^j$ in terms of $\gamma^j$ as in \eqref{EqnDefnGammacJ}.
We choose $\gamma^j$ so that for every multi-index $\beta$
with $\q|\beta\w|\leq j$, we have,
\begin{equation*}
\q|\q(\frac{\partial}{\partial \tau} \w)^\beta \det_{n\times n} \frac{\partial \Gamma^j}{\partial \tau} \q( \tau ,0\w)\w|\leq \frac{1}{j}.
\end{equation*}
This is clearly possible since $\sS$ does not satisfy $\cJu$.
Since bounded subsets of $C^\infty$ are precompact, we may
select a convergent subsequence $\gamma^j\rightarrow \gamma^{\infty}$.
It is easy to verify (since $\sS$ satisfies $\cZu$) that $\gamma^{\infty}$
satisfies $\cZ$.  It is also easy to see, by our choice of $\gamma^j$,
that $\gamma^\infty$ does not satisfy $\cJ$.  This contradicts
Theorem \ref{ThmEquivConds} and completes the proof.
\end{proof}


\begin{rmk}
The reader might suspect that one could improve
Theorem \ref{ThmUnifEquivCond} by tracing through
the proof in \cite{ChristNagelSteinWaingerSingularAndMaximalRadonTransforms},
and determining in precisely which way $M$ and $c$ (from $\cJu$)
depend on the $C^j$ norms of the elements of $\sS$
and $M_1$, $M_2$, and $c$ (from $\cZu$).  In fact,
this is the case (at least if one replaces
$\cJ$ with $\cJ'$ as defined in Definition 10.9 of
\cite{ChristNagelSteinWaingerSingularAndMaximalRadonTransforms}).
One can use this to improve the way in which various constants depend
on each other in this paper.  However, this improvement
would not change the main thrust of our results.
Moreover, even if one does proceed in this manner,
all of our results would still be far from optimal in this sense.
We, therefore, leave such details to the interested reader.
\end{rmk}

\section{Informal outline of the proof}\label{SectionProofOutline}
Before we begin the rigorous proof of
Theorem \ref{ThmMainThmSecondPass}, in this section we offer
an informal outline to help the reader see the big picture of
what follows.
We also highlight the difficulties that arise
in the present work, that did not arise in previous
works, such as \cite{ChristNagelSteinWaingerSingularAndMaximalRadonTransforms}.
We take all the same notation as in Theorem \ref{ThmMainThmSecondPass}.
Furthermore, for the purposes of this section, we imagine that
the vector fields $X_1,\ldots, X_q$ span the tangent space at every point.\footnote{These are the vector fields from Section \ref{SectionCurves}, obtained
from the map $\gamma$.}
This will not be used in later sections, but it makes the
informal description of the argument simpler.
Letting $K\in \sK\q( N,e,a,\sA\w)$ we may decompose $K$,
\begin{equation*}
K=\sum_{j\in \lA} \dil{\vsig_j}{2^j}.
\end{equation*}
Defining,
\begin{equation}\label{EqnDefnTj}
T_j f\q( x\w) = \psi_1\q( x\w) \int f\q( \gamma_t\q( x\w) \w) \psi_2\q( \gamma_t\q( x\w)\w) \kappa\q( t,x\w)\dil{\vsig_j}{2^j}\q( t\w) \: dt,
\end{equation}
we wish to study the operator,
\begin{equation}\label{EqnCommentsSumOp}
\sum_{j\in \lA} T_j.
\end{equation}

The goal is to show that \eqref{EqnCommentsSumOp} converges in the
strong operator topology, as bounded operators on $L^2$.
In fact, we will show that the family $\q\{T_j\w\}_{j\in \sA}$
is an almost orthogonal family, and the result will then
follow from the Cotlar-Stein lemma.
More specifically, we will show,
\begin{equation}\label{EqnCommentsAlmostOrtho}
\LpOpN{2}{T_k^{*} T_j}, \LpOpN{2}{T_j T_k^{*}}\lesssim 2^{-\epsilon\q|j-k\w|},
\end{equation}
for some $\epsilon>0$, for every $j,k\in \lA$.
Theorem \ref{ThmMainThmSecondPass} follows directly from \eqref{EqnCommentsAlmostOrtho}.
In this section, we focus just on
\begin{equation}\label{EqnCommentsAlmostOrtho2}
\LpOpN{2}{T_k^{*} T_j}\lesssim 2^{-\epsilon\q|j-k\w|},
\end{equation}
the other inequality being similar.  In what follows, we let 
$\epsilon>0$ be a number that may change from line to line,
but will always be independent of $j,k\in \lA$.

\eqref{EqnCommentsAlmostOrtho2} follows directly from the inequality,
\begin{equation}\label{EqnCommentsAlmostOrtho3}
\LpOpN{2}{\q( T_j^{*} T_k T_k^{*} T_j\w)^{n+1}}\lesssim 2^{-\epsilon\q|j-k\w|},
\end{equation}
where $n$ is the dimension of the $x$-space.
One has the trivial inequality,
\begin{equation}\label{EqnCommentsAlmostOrtho4}
\LpOpN{1}{\q( T_j^{*} T_k T_k^{*} T_j\w)^{n+1}}\lesssim 1, 
\end{equation}
and so interpolation with \eqref{EqnCommentsAlmostOrtho4}
shows that to prove \eqref{EqnCommentsAlmostOrtho3} we need only
prove,
\begin{equation}\label{EqnCommentsAlmostOrtho5}
\LpOpN{\infty}{\q( T_j^{*} T_k T_k^{*} T_j\w)^{n+1}}\lesssim 2^{-\epsilon\q|j-k\w|}.
\end{equation}

We will see, for a fixed point $x_0$,
\begin{equation}\label{EqnCommentsAlmostOrtho6}
\q( T_j^{*} T_k T_k^{*} T_j\w)^{n+1} f\q( x_0\w)
\end{equation}
depends only on the values of $f$ on a small neighborhood, $U=U\q( j,k,x_0\w)$, of $x_0$ (later in the paper,
we will have to be precise about how small a neighborhood).\footnote{If the
vector fields $X_1,\ldots, X_q$ do not span the tangent space
at $x_0$, $U$ would instead be a small neighborhood of $x_0$ on the
leaf passing through $x_0$, generated by $X_1,\ldots, X_q$.}
Because of this, given $j$, $k$, and $x_0$, we will be able to construct
a diffeomorphism,
\begin{equation*}
\Phi=\Phi_{j,k,x_0}: B^{n}\q( \eta\w) \rightarrow U,
\end{equation*}
with $\Phi\q( 0\w) = x_0$ and $\eta$ is independent of $j$, $k$, and $x_0$.  
The point of this diffeomorphism will be to ``rescale'' 
\eqref{EqnCommentsAlmostOrtho5} near the point $x_0$ so that it
will follow from elementary considerations.

Let $\Phi^{\#} f \q( u\w) = f\q( \Phi\q( u\w)\w)$.
Then, to prove \eqref{EqnCommentsAlmostOrtho5}, it suffices to show,
\begin{equation}\label{EqnCommentsAlmostOrtho7}
\q| \Phi^{\#} \q( T_j^{*} T_k T_k^{*} T_j\w)^{n+1} \q( \Phi^{\#} \w)^{-1} g\q( 0\w) \w| \lesssim 2^{-\epsilon \q|j-k\w|} \LpN{\infty}{g},
\end{equation}
where $g=\Phi^{\#} f$.  Here, the implicit constants in \eqref{EqnCommentsAlmostOrtho7}
cannot depend on the point $x_0$.

\begin{rmk}
Later in the paper, we will not explicitly conjugate our operators by
the pullback $\Phi^{\#}$.  However, the way in which we proceed is equivalent
to the above.
\end{rmk}

The map $\Phi$ will be defined in such a way that the effect
of conjugating $T_j$ by $\Phi^{\#}$ leaves one with an operator
of the same form as $T_j$ (though, with a different $\gamma$) but with $j$ replaced by
$j'$, where $j'=j-j\wedge k$.  With an abuse of notation,
we call this new operator $T_{j'}$.  Similarly, $T_k$ is replaced by
$T_{k'}$ where $k'=k-j\wedge k$.\footnote{Moreover, even in the case
when $X_1,\ldots, X_q$ do not span the tangent space, the analogous vector fields
associated to $T_{j'}$ and $T_{k'}$ 
do span the tangent space (uniformly in any relevant parameters).}

In this way, we can reduce
\eqref{EqnCommentsAlmostOrtho5} to the case when $j\wedge k=0$ (provided
one has good enough control of the implicit constant
in \eqref{EqnCommentsAlmostOrtho5}).
More precisely, if we let,\footnote{$\Phi$ will be defined in such a
way that the map $\theta$ defined above is $C^\infty$, uniformly
in any relevant parameters.}
\begin{equation*}
\theta_t\q( u \w) = \Phi^{-1}\circ \gamma_{2^{-j\wedge k} t}\circ \Phi\q( u\w),
\end{equation*}
then we are interested in the operator,
\begin{equation*}
T_{j'} g \q( u\w) = \int g\q( \theta_{2^{-j'}t}\q( u\w)\w) \kappa\q( t,u\w) \vsig_j\q( t\w)\: dt,
\end{equation*}
where $\kappa$ is a $C^\infty$ function (uniformly in any relevant parameters)
and we have suppressed $\psi_1$ and $\psi_2$ since we are working locally.
A similar formula holds for $T_{k'}$.

The diffeomorphism $\Phi$ will be defined in such a way that if we
formally write,
\begin{equation*}
\theta_t\q( u\w) \sim e^{\sum_{0<\q|\alpha\w|} t^{\alpha} Y_\alpha}u,
\end{equation*}
(see Section \ref{SectionCNSW}) then the vector fields
\begin{equation}\label{EqnCommentsHormander1}
\q\{Y_\alpha: \deg\q( \alpha\w)\text{ is nonzero in precisely one component}\w\}
\end{equation}
satisfy H\"ormander's condition uniformly in any relevant parameters.
Rewriting \eqref{EqnCommentsHormander1} using the fact that
$j'\wedge k'=0$, we see,
\begin{equation}\label{EqnCommentsHormander2}
\q\{Y_{\alpha} : j'\cdot\deg\q( \alpha\w)=0 \text{ or } k'\cdot \deg\q( \alpha\w) =0\w\}
\end{equation}
satisfy H\"ormander's condition uniformly in any relevant parameters.
Hence, writing,
\begin{equation}\label{EqnCommentsHormader3}
\begin{split}
\theta_{2^{-j'}t}\q( u\w) &\sim e^{\sum_{0<\q|\alpha\w|} t^{\alpha} 2^{-j'\cdot \deg\q( \alpha\w)} Y_\alpha} u,\\
\theta_{2^{-k'}t}\q( u\w) &\sim e^{\sum_{0<\q|\alpha\w|} t^{\alpha} 2^{-k'\cdot \deg\q( \alpha\w)} Y_\alpha} u,
\end{split}
\end{equation}
we see the set of vector fields in \eqref{EqnCommentsHormader3} which
are not scaled satisfy H\"ormander's condition.
Because of this, it will be possible to apply the methods
of \cite{ChristNagelSteinWaingerSingularAndMaximalRadonTransforms}
to the operator,
\begin{equation*}
T_{j'}^{*}T_{k'}T_{k'}^{*}T_{j'},
\end{equation*}
to prove \eqref{EqnCommentsAlmostOrtho7} and complete the proof.

\begin{rmk}
In short, the above says that the map $\theta_{2^{-k'}t_1}\circ\theta_{2^{-j'}t_2}$ satisfies
$\cG$ uniformly in any relevant parameters.  When we
turn to the rigorous proof, it will be more convenient
to use the (equivalent) fact that as $j,k$ vary, the set of all
such $\theta_{2^{-k'}t_1}\circ \theta_{2^{-j't_2}}$ satisfies
$\cZu$.  We will then be able to apply Theorem \ref{ThmUnifEquivCond}.
\end{rmk}

\begin{rmk}\label{RmkCommentsPurePowers}
Note that we have used, in an essential way, that we are only
considering those vector fields $Y_\alpha$ such that
$j'\cdot \deg\q( \alpha\w)=0$ or $k'\cdot \deg\q( \alpha\w)=0$.  This highlights the difference
between the pure powers and non-pure powers
discussed in Section \ref{SectionSpecialCase}.
\end{rmk}

We close this section by discussing some difficulties that
arise in the multi-parameter situation, that
did not arise in the single parameter
situation of 
\cite{ChristNagelSteinWaingerSingularAndMaximalRadonTransforms}.

In \cite{ChristNagelSteinWaingerSingularAndMaximalRadonTransforms}
the Rothschild-Stein lifting procedure
(see \cite{RothschildSteinHypoellipticDifferentialOperatorsAndNilpotentGroups})
was used to ``lift'' the problem to a higher dimensional
setting; in fact the problem was lifted to a high dimensional stratified
Lie group.  In this higher dimensional setting, the scaling
map $\Phi$ had a very simple form:  it was given by the standard
dilations on this stratified Lie group.
In the multi-parameter setting, this lifting argument creates
more difficulties than it alleviates.  In particular, it is unclear
how to consider the non-pure powers, nor how to naturally
deal with the multi-parameter dilations.
Because of this, we must work directly with a more complicated scaling
map $\Phi$.  This map was defined and studied in \cite{StreetMultiParameterCCBalls},
and we review the relevant theory in Section \ref{SectionCCII}.
As a consequence, the lifting procedure is not necessary to prove the
results of \cite{ChristNagelSteinWaingerSingularAndMaximalRadonTransforms}.
The single-parameter version of the results in
\cite{StreetMultiParameterCCBalls}
were first obtained in \cite{NagelSteinWaingerBallsAndMetricsDefinedByVectorFields}.
Using the scaling maps from \cite{NagelSteinWaingerBallsAndMetricsDefinedByVectorFields},
one can (with some reordering of the proof) recreate the entire theory
of \cite{ChristNagelSteinWaingerSingularAndMaximalRadonTransforms}
without resorting to the lifting procedure.

In the single-parameter case, one is given two numbers $j,k\in \N$,
and one wishes to show, for instance,
\begin{equation*}
\LpOpN{2}{T_k^{*}T_j}\lesssim 2^{-\epsilon\q|j-k\w|}.
\end{equation*}
When, for instance, $k\leq j$, it suffices to show,
\begin{equation*}
\LpOpN{2}{\q(T_kT_k^{*}\w)^M T_j}\lesssim 2^{-\epsilon\q|j-k\w|};
\end{equation*}
for some large, but fixed, $M$.  In the multi-parameter situation,
however, it could be that some coordinates of
$j$ are greater than those of $k$, while the opposite
is true for other coordinates.  Because of this, we must instead
show,
\begin{equation*}
\LpOpN{2}{\q(T_j^{*} T_kT_k^{*}T_j\w)^M}\lesssim 2^{-\epsilon\q|j-k\w|}.
\end{equation*}
This turns out to be more of a notational difficulty than anything else.

	\subsection{Diffeomorphism invariance}
	The assumptions of Theorem \ref{ThmMainThmSecondPass} are
invariant under diffeomorphisms.
More precisely, if we let $T$ be the operator in Theorem \ref{ThmMainThmSecondPass},
and if we let $\Psi:\Omega\rightarrow \Omegat$ be a diffeomorphism,
then the operator $\q(\Psi^{\#}\w)^{-1}T\Psi^{\#}$ satisfies
the assumptions of Theorem \ref{ThmMainThmSecondPass}.
This is straightforward to check.

It turns out that more is true.  
Denote by $\Tt$ the map $\q(\Psi^{\#}\w)^{-1}T\Psi^{\#}$,
and by $\Tt_j$ the map $\q(\Psi^{\#}\w)^{-1}T_j\Psi^{\#}$,
where $T_j$ is as in Section \ref{SectionProofOutline}.
Note that $\Tt=\sum_{j\in \lA} \Tt_j$, and in fact if we had
applied the proof in Section \ref{SectionProofOutline} to $\Tt$,
this is the decomposition of $\Tt$ we would have used.
Fix $j$, $k$, and $x_0$
as in Section \ref{SectionProofOutline}.
Let $\Phi$ be the scaling map associated to $T_j, T_k$ at $x_0$,
and let $\Phit$ denote the scaling map associated to $\Tt_j, \Tt_k$
at $\Psi\q(x_0\w)$.  It is not hard to see, from the definition
of $\Phi$ in Section \ref{SectionCCII} that,
\begin{equation*}
\Phit = \Psi\circ \Phi.
\end{equation*}

Thus, we have,
\begin{equation*}
\Phit^{\#} \Tt_j \q(\Phit^{\#}\w)^{-1} = \Phi^{\#} \Psi^{\#} \q( \Psi^{\#}\w)^{-1} T_j \Psi^{\#}\q(\Psi^{\#} \w)^{-1} \q(\Phi^{\#}\w)^{-1} = \Phi^{\#} T_j \q(\Phi^{\#}\w)^{-1};
\end{equation*}
with a similar result for $T_k$ and $\Tt_k$.  Thus, one obtains
the {\it same} operators when pulling back $T_j$ and $T_k$
via $\Phi$ as when pulling back $\Tt_j$ and $\Tt_k$ via $\Phit$.
Hence, not only are the assumptions of Theorem \ref{ThmMainThmSecondPass}
invariant under diffeomorphisms,
but a large part of the proof actually remains completely unchanged when
conjugated by a diffeomorphism.

\section{Multi-parameter Carnot-Carath\'eodory geometry revisited}\label{SectionCCII}
	In this section, we present the results that allow us to deal
with Carnot-Carath\'eodory geometry.  The results we
outline here are contained in Section 4 of
\cite{StreetMultiParameterCCBalls}.
The heart of this theory is the ability to ``rescale'' vector fields.
This rescaling is obtained by 
pulling back via
a particular diffeomorphism,
which will be denoted by $\Phi$ in what follows.

Before we can enter into details, we must explain the connection
between multi-parameter balls and single-parameter balls.
We assume we are given $C^\infty$ vector fields
$X_1,\ldots, X_q$ with associated formal degrees $0\ne d_1,\ldots,d_q\in \q[0,\infty\w)^\nu$.  Here, $\nu\in \N$ is the number of parameters.
Given the multi-parameter degrees, we obtain corresponding
single parameter degrees, which we denote
by $\sd$ and are defined by $\q( \sd\w)_j:=\sum_{\mu=1}^\nu d_j^\mu=\q|d_j\w|_1$.
Let $\delta\in \q[0,\infty\w)^\nu$, and suppose we wish to study
the ball
\begin{equation*}
\B{X}{d}{x_0}{\delta}.
\end{equation*}
Decompose $\delta=\delta_0\delta_1$ where $\delta_0\in \q[0,\infty\w)$
and $\delta_1\in\q[0,\infty\w)^\nu$ (of course this decomposition
is not unique).
Then, directly from the definition, we obtain:
\begin{equation*}
\B{X}{d}{x_0}{\delta}=\B{\delta_1 X}{\sd}{x_0}{\delta_0}=\B{\delta X}{\sd}{x_0}{1}.
\end{equation*}
Thus, studying a ball of radius $\delta$ corresponding to $\q( X,d\w)$
is the same as studying a ball of radius $1$ corresponding
to $\q( \delta X, \sd\w)$.
For this reason, taking $\K$ and $\sA$ as in Section \ref{SectionCC}
and assuming $\q( X,d\w)$ satisfies $\sD\q( \K,\sA\w)$,
we will fix $x_0\in \K$ and $\delta\in \sA$ and study balls
of radius $\approx 1$ centered at $x_0$ corresponding to the
vector fields with single-parameter formal degrees $\q( \delta X, \sd\w)$.
In what follows, it will be important that all of the implicit constants
are independent of $x_0\in \K$ and $\delta\in \sA$.

We now turn to stating a theorem about a list of $C^\infty$ vector fields
$Z_1,\ldots, Z_q$ defined on an open set $\Omega\subseteq \R^n$, with associated single parameter formal degrees
$\dt_1,\ldots, \dt_q\in \q(0,\infty\w)$.  The special case
we are interested in is the case when
$\q( Z,\dt\w) = \q( \delta X, \sd\w)$; i.e., when
$Z_j=\delta^{d_j} X_j$ and $\dt_j=\q|d_j\w|_1$.

Fix $x_0\in \Omega$ and $1\geq \xi>0$.\footnote{In our primary example,
one takes $x_0\in \K$ and $\xi$ as in $\sD\q( \K,\sA\w)$.}
Let $n_0=\dspan{Z_1\q( x_0\w), \ldots, Z_q\q( x_0\w)}$.
For $J=\q( j_1,\ldots, j_{n_0}\w)\in \sI{n_0}{q}$, let $Z_J$ denote the list of vector fields
$Z_{j_1},\ldots, Z_{j_{n_0}}$.
Fix $J_0\in \sI{n_0}{q}$ such that
\begin{equation*}
\q| \det_{n_0\times n_0} Z_{J_0}\q( x_0\w)\w|_\infty = \q|\det_{n_0\times n_0} Z\q( x_0\w) \w|_\infty,
\end{equation*}
where we have identified $Z\q( x_0\w)$ with the $n\times q$ matrix
whose columns are given by $Z_1\q( x_0\w),\ldots, Z_q\q( x_0\w)$ and
similarly for $Z_{J_0}\q( x_0\w)$.
We assume $\q(Z,\dt\w)$ satisfies $\sC\q( x_0,\xi,\Omega\w)$.
In addition we suppose that there are functions $c_{i,j}^k$ on
$\B{Z}{\dt}{x_0}{\xi}$ such that
\begin{equation*}
\q[Z_i,Z_j\w]=\sum c_{i,j}^k Z_k, \text{ on }\B{Z}{\dt}{x_0}{\xi}.
\end{equation*}
We assume that:
\begin{itemize}
\item $\CjN{Z_j}{m}{\B{Z}{\dt}{x_0}{\xi}}<\infty$ for every $m$.
\item $\sum_{\q|\alpha\w|\leq m} \CjN{ Z^\alpha c_{i,j}^k}{0}{\B{Z}{\dt}{x_0}{\xi}}<\infty,$
for every $m$ and every $i,j,k$.
\end{itemize}
We say that $C$ is an $m$-admissible constant if $C$ can be chosen to
depend only on upper bounds for the above two quantities (for that particular 
choice of $m$),
$m$,
upper and lower bounds for $\dt_1,\ldots, \dt_{q}$, an
upper bound for $n$ and $q$, and a lower bound for
$\xi$.
Note that, in our primary example $\q( Z,\dt\w)=\q( \delta X, \sd\w)$, 
$m$-admissible constants can be chosen independent of
$x_0\in \K$ and $\delta\in \sA$.
We write $A\lesssim_m B$ if $A\leq C B$, where $C$ is an $m$-admissible
constant, and we write $A\approx_m B$ if $A\lesssim_m B$ and $B\lesssim_m A$.
Finally, we say $\tau=\tau\q( \kappa\w)$ is an $m$-admissible constant
if $\tau$ can be chosen to depend on all the parameters an $m$
admissible constant may depend on, and $\tau$ may also depend on $\kappa$.
The next result is contained in Section 4 of
\cite{StreetMultiParameterCCBalls}.

\begin{thm}\label{ThmMainCCThm}
There exist $2$-admissible constants $\eta_1,\xi_1>0$ such that
if the map $\Phi:B^{n_0}\q( \eta_1\w)\rightarrow \B{Z}{\dt}{x_0}{\xi}$
is defined by
\begin{equation*}
\Phi\q( u\w) = e^{u\cdot Z_{J_0}} x_0,
\end{equation*}
we have
\begin{itemize}
\item $\Phi:B^{n_0}\q( \eta_1\w) \rightarrow  \B{Z}{\dt}{x_0}{\xi}$ is injective.
\item $\B{Z}{\dt}{x_0}{\xi_1}\subseteq \Phi\q( B^{n_0}\q( \eta_1\w)\w)$.
\end{itemize}
Furthermore, if we let $Y_j$ be the pullback of $Z_j$ under the map
$\Phi$, then we have, for $m\geq 0$,
\begin{equation}\label{EqnRescaledSmooth}
\CjN{Y_j}{m}{B^{n_0}\q( \eta_1\w)}  \lesssim_{m\vee 2} 1,
\end{equation}
\begin{equation}\label{EqnRescaledCjN}
\CjN{f}{m}{B^{n_0}\q(\eta_1\w)} \approx_{\q(m-1\w)\vee 2} \sum_{\q|\alpha\w|\leq m} \CjN{Y^\alpha f}{0}{B^{n_0}\q( \eta_1\w)}.
\end{equation}
Finally,
\begin{equation}\label{EqnRescaledSpan}
\q|\det_{n_0\times n_0} Y\q( u\w)\w|\approx_{2} 1,\quad \forall u\in B^{n_0}\q( \eta_1\w).
\end{equation}
\end{thm}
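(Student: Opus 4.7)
The plan is to use the scaling map $\Phi\q(u\w) = e^{u \cdot Z_{J_0}} x_0$ and study its pullbacks $Y_j := \Phi^{*} Z_j$. All four conclusions flow from two ingredients: uniform $C^m$ bounds on the $Y_j$, obtained from the commutator identities $\q[Z_i, Z_j\w] = \sum_k c_{i,j}^k Z_k$; and the identity $Y_{J_0,k}\q(0\w) = e_k$, which makes $Y_{J_0}\q(0\w)$ the identity matrix. From these, nondegeneracy of $\det_{n_0 \times n_0} Y\q(u\w)$ on a small admissible ball falls out, and both injectivity and the covering $\B{Z}{\dt}{x_0}{\xi_1}\subseteq \Phi\q(B^{n_0}\q(\eta_1\w)\w)$ follow by ODE arguments.

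For the first ingredient, condition $\sC\q(x_0,\xi,\Omega\w)$ ensures that $\Phi$ is well-defined on $B^{n_0}\q(\eta\w)$ for $\eta$ bounded below by an admissible constant. The key computation is that along each ray $u \mapsto tu$, the pullback $Y_j\q(tu\w)$ satisfies a first-order ODE in $t$ whose right-hand side is a nested $\mathrm{ad}$-type combination of the brackets $\q[u \cdot Z_{J_0}, Z_j\w]$; the hypotheses $\sum_{|\alpha|\leq m} \CjN{Z^\alpha c_{i,j}^k}{0}{\B{Z}{\dt}{x_0}{\xi}} < \infty$ are tailored so that each $\partial_u$ derivative hitting $Y_j$ can be converted into an application of some $Z^\alpha$ in the original variables. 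Iterating, together with the assumed $C^m$ bounds on the $Z_j$ themselves, yields \eqref{EqnRescaledSmooth}. I expect this to be the main obstacle: keeping the bookkeeping admissible while tracking the interplay between $\partial_u$ and $Z$ derivatives through the nested commutator expansion is delicate.

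For the second ingredient, differentiating $\Phi\q(u\w) = e^{u \cdot Z_{J_0}} x_0$ at $u = 0$ gives $d\Phi\q(0\w) e_k = Z_{j_k}\q(x_0\w)$, i.e.\ $Y_{J_0, k}\q(0\w) = e_k$ and $\det_{n_0 \times n_0} Y_{J_0}\q(0\w) = 1$. The $C^1$ estimate from step one then forces $\q|\det_{n_0 \times n_0} Y_{J_0}\q(u\w)\w| \geq \tfrac{1}{2}$ on $B^{n_0}\q(\eta_1\w)$ for an admissible $\eta_1$, giving the lower bound in \eqref{EqnRescaledSpan}; the upper bound is immediate from \eqref{EqnRescaledSmooth}. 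Injectivity of $\Phi$ on $B^{n_0}\q(\eta_1\w)$ then reduces, via the fundamental theorem of calculus applied along the segment between two putatively coincident points, to the invertibility of a small perturbation of the identity, provided $\eta_1$ is shrunk to an admissible constant.

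The covering is obtained by lifting: given $y\in \B{Z}{\dt}{x_0}{\xi_1}$, take a defining curve $\gamma\q(t\w) = \sum_j b_j\q(t\w) Z_j\q(\gamma\q(t\w)\w)$ from $x_0$ to $y$ with $\q|b_j\q(t\w)\w| \lesssim \xi_1^{\dt_j}$, and solve the lifted ODE $\dot u\q(t\w) = \sum_j b_j\q(t\w) Y_j\q(u\q(t\w)\w)$ starting at $u\q(0\w) = 0$. The uniform bound $\CjN{Y_j}{0}{B^{n_0}\q(\eta_1\w)} \lesssim 1$ together with the smallness of the $b_j$ keeps $\q|u\q(t\w)\w| \lesssim \xi_1^{\min_j \dt_j}$, so for $\xi_1$ a sufficiently small admissible constant the lifted curve stays in $B^{n_0}\q(\eta_1\w)$ and $\Phi\q(u\q(1\w)\w) = \gamma\q(1\w) = y$. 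Finally, \eqref{EqnRescaledCjN} is deduced by inverting the relation between the $Y$'s and the coordinate fields: the lower bound on $\det_{n_0 \times n_0} Y$ lets one write each $\partial_{u_k}$ as a bounded $C^{m-1}$ combination of the $Y_j$'s, and a short induction on $m$ yields the two-sided equivalence.
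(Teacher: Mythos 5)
A preliminary remark: the paper does not prove Theorem \ref{ThmMainCCThm} at all --- it is quoted from Section 4 of \cite{StreetMultiParameterCCBalls} --- so there is no in-paper proof to measure yours against. Your sketch does follow the broad architecture of the quantitative Frobenius theorem proved there (ODEs along rays $t\mapsto tu$ driven by the structure constants $c_{i,j}^k$, the normalization $Y_{j_k}\q(0\w)=e_k$, lifting of admissible curves for the covering, Cramer's rule for \eqref{EqnRescaledCjN}), and the covering and \eqref{EqnRescaledCjN} steps are essentially correct granted the earlier conclusions.

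As a proof, however, there are two genuine gaps, and they sit exactly where the cited reference invests most of its effort. First, the argument for \eqref{EqnRescaledSmooth} is circular as described: to convert a $\partial_u$ derivative landing on $Y_j$ into an application of $Z^\alpha$ (so as to invoke the hypothesis on $Z^\alpha c_{i,j}^k$), you must express $\partial_{u_k}$ as a controlled combination of the $Y_j$'s, i.e.\ you need \eqref{EqnRescaledSpan} and \eqref{EqnRescaledCjN} at the previous order --- the very statements under proof. Breaking this requires an honest induction on $m$ in which the ray ODE is differentiated, the resulting system is closed at each order using only lower-order information, and Gronwall is applied at each stage; you flag this as ``delicate'' but supply no mechanism. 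Second, the injectivity argument fails as stated. Reducing to ``a small perturbation of the identity'' presupposes an admissible lower bound on the smallest singular value of $d\Phi\q(0\w)=Z_{J_0}\q(x_0\w)$, but the hypotheses normalize only the \emph{ratio} of $\q|\det_{n_0\times n_0}Z_{J_0}\q(x_0\w)\w|_\infty$ to $\q|\det_{n_0\times n_0}Z\q(x_0\w)\w|_\infty$, not the size of the vector fields, which may be arbitrarily degenerate. The error term produced by the fundamental theorem of calculus along the segment is of size $O\q(\eta_1\w)\q|u_1-u_2\w|$ measured against the ambient metric, and it cannot be absorbed by $\q|Z_{J_0}\q(x_0\w)\q(u_1-u_2\w)\w|$ by shrinking $\eta_1$ admissibly; one must exploit that everything in sight is tangent to the leaf through $x_0$, which is a separate (and nontrivial) argument in \cite{StreetMultiParameterCCBalls}.
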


Note that, in light of \eqref{EqnRescaledSmooth} and \eqref{EqnRescaledSpan},
pulling back by the map $\Phi$ allows us to rescale the vector fields
$Z$ in such a way that the rescaled vector fields, $Y$, are smooth
and span the tangent space (uniformly in any relevant parameters).
We will also need the following 
technical result.

\begin{prop}\label{PropExtraCCStuff}
Suppose $\xi_2,\eta_2>0$ are given.  Then there exist $2$-admissible constants
$\eta'=\eta'\q( \xi_2\w)>0$, $\xi'=\xi'\q( \eta_2\w)>0$ such that,
\begin{equation*}
\Phi\q( B^{n_0}\q( \eta'\w)\w) \subseteq \B{Z}{\dt}{x_0}{\xi_2},
\end{equation*}
\begin{equation*}
\B{Z}{\dt}{x_0}{\xi'}\subseteq \Phi\q( B^{n_0}\q( \eta_2\w)\w).
\end{equation*}
\end{prop}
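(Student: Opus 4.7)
The plan is to treat the two containments separately, both by direct appeal to Theorem \ref{ThmMainCCThm} together with a few elementary ODE estimates on Carnot-Carath\'eodory paths.

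For the first containment, I would work directly with the explicit formula $\Phi\q(u\w) = e^{u\cdot Z_{J_0}}x_0$. Given $u\in B^{n_0}\q(\eta'\w)$, the path $\gamma\q(s\w):=\Phi\q(su\w)$ joins $x_0$ to $\Phi\q(u\w)$ inside $\Phi\q(B^{n_0}\q(\eta_1\w)\w)\subseteq \Omega$ and satisfies
\begin{equation*}
\gamma'\q(s\w)=\sum_{k=1}^{n_0}u_k Z_{j_k}\q(\gamma\q(s\w)\w)=\sum_{j=1}^q a_j\,\xi_2^{\dt_j}Z_j\q(\gamma\q(s\w)\w),
\end{equation*}
where $a_{j_k}:=u_k\xi_2^{-\dt_{j_k}}$ for $j_k\in J_0$ and $a_j:=0$ otherwise. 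Then $\Phi\q(u\w)\in \B{Z}{\dt}{x_0}{\xi_2}$ as soon as $\q(\sum_j\q|a_j\w|^2\w)^{1/2}<1$, which is guaranteed by the $2$-admissible choice $\eta'\q(\xi_2\w):=\tfrac{1}{2}\min\q(\eta_1,\q(\xi_2\wedge 1\w)^{\max_j \dt_j}\w)$, since $\max_j \dt_j$ is controlled by a $2$-admissible constant.

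For the second containment, the key idea is to lift an arbitrary CC-path at $x_0$ to a path in the $u$-coordinates via $\Phi^{-1}$, and then estimate its length using the uniform bounds on the pullback vector fields $Y_j$. Fix $\xi'\leq\xi_1$ (to be chosen) and take $y\in\B{Z}{\dt}{x_0}{\xi'}$ with a realizing path $\alpha:\q[0,1\w]\to\Omega$, so that $\alpha'\q(t\w)=\sum_j a_j\q(t\w)\q(\xi'\w)^{\dt_j}Z_j\q(\alpha\q(t\w)\w)$ and $\Lppn{\infty}{\q[0,1\w]}{\q(\sum_j\q|a_j\w|^2\w)^{1/2}}<1$. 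Reparametrizing the initial sub-path $s\mapsto\alpha\q(ts\w)$ shows that $\alpha\q(t\w)\in\B{Z}{\dt}{x_0}{\xi'}$ for every $t\in\q[0,1\w]$, so by Theorem \ref{ThmMainCCThm} the whole path lies in $\Phi\q(B^{n_0}\q(\eta_1\w)\w)$. Thus $\beta\q(t\w):=\Phi^{-1}\q(\alpha\q(t\w)\w)$ is a well-defined path in $B^{n_0}\q(\eta_1\w)$ with $\beta\q(0\w)=0$ and $\beta\q(1\w)=\Phi^{-1}\q(y\w)$; since $Y_j$ is the $\Phi$-pullback of $Z_j$, it satisfies $\beta'\q(t\w)=\sum_j a_j\q(t\w)\q(\xi'\w)^{\dt_j}Y_j\q(\beta\q(t\w)\w)$.

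Combining this with the uniform estimate $\CjN{Y_j}{0}{B^{n_0}\q(\eta_1\w)}\lesssim_2 1$ from \eqref{EqnRescaledSmooth} and Cauchy-Schwarz gives $\q|\beta'\q(t\w)\w|\lesssim_2 \max_j\q(\xi'\w)^{\dt_j}$, so $\q|\beta\q(1\w)\w|\lesssim_2\q(\xi'\w)^{\min_j\dt_j}$ once $\xi'\leq 1$. Choosing $\xi'\q(\eta_2\w)$ small enough (in a $2$-admissible manner, shrinking $\xi_1$ if need be) forces $\q|\beta\q(1\w)\w|<\eta_2$, yielding $y\in\Phi\q(B^{n_0}\q(\eta_2\w)\w)$. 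The main obstacle is the lifting step: a priori the realizing path $\alpha$ need not remain inside $\Phi\q(B^{n_0}\q(\eta_1\w)\w)$ where $\Phi^{-1}$ is known to exist. The reparametrization observation that $\alpha\q(t\w)$ itself lies in $\B{Z}{\dt}{x_0}{\xi'}$ for every $t$ is what makes the lift legitimate, after which the entire problem reduces to a one-line ODE estimate in the well-behaved $u$-coordinates.
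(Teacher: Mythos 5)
Your proof is correct, but it takes a more self-contained route than the paper, which disposes of the first containment by re-applying Theorem \ref{ThmMainCCThm} with $\xi$ replaced by $\xi\wedge\xi_2$ (the map $\Phi$ is unchanged since $J_0$ does not depend on $\xi$, so the $\eta_1$ produced by that application serves as $\eta'$), and proves the second containment by citing the proofs of Propositions 3.21 and 4.16 of \cite{StreetMultiParameterCCBalls}. Your first argument essentially unfolds what that re-application of the theorem delivers: reading the radial path $s\mapsto e^{su\cdot Z_{J_0}}x_0$ as a CC-path for the scaled list $\xi_2 Z$ and checking the $\ell^2$ bound on the coefficients; the choice $\eta'\lesssim (\xi_2\wedge 1)^{\max_j\dt_j}$ is $2$-admissible since admissible constants may depend on upper bounds for the $\dt_j$. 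Your second argument is the genuinely new content relative to the paper's two-line proof: the reparametrization observation that every point $\alpha\q(t\w)$ of a realizing path already lies in $\B{Z}{\dt}{x_0}{\xi'}\subseteq \Phi\q(B^{n_0}\q(\eta_1\w)\w)$ is exactly what legitimizes the lift $\beta=\Phi^{-1}\circ\alpha$, and the bound $\q|\beta\q(1\w)\w|\lesssim_2 \q(\xi'\w)^{\min_j\dt_j}$ via \eqref{EqnRescaledSmooth} and Cauchy--Schwarz correctly uses that $2$-admissible constants may depend on a positive lower bound for the $\dt_j$. What your approach buys is independence from the external reference for this step; what the paper's approach buys is brevity and consistency with the machinery already imported from \cite{StreetMultiParameterCCBalls}. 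The only point worth making explicit if you write this up is that $\beta$ is absolutely continuous (since $\alpha$ is Lipschitz and $\Phi$ restricts to a diffeomorphism onto the leaf by the injectivity and nondegeneracy statements \eqref{EqnRescaledSpan} of Theorem \ref{ThmMainCCThm}), so that integrating $\q|\beta'\w|$ is justified.
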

\begin{proof}
The existence of $\eta'$ can be seen by 
applying Theorem \ref{ThmMainCCThm} with $\xi$ replaced by $\xi\wedge \xi_2$.
The existence of $\xi'$ can be shown by
combining the proof
of Proposition 3.21 of \cite{StreetMultiParameterCCBalls} with
the proof of Proposition 4.16 of \cite{StreetMultiParameterCCBalls}.
\end{proof}

\begin{rmk}
With a slight abuse of notation, when we say $m$-admissible constant,
where $m<2$, we will take that to mean a $2$-admissible constant.
Using this new notation, the $\vee$ in \eqref{EqnRescaledSmooth}
and \eqref{EqnRescaledCjN} may be removed.
\end{rmk}

\begin{rmk}
One way to think about Theorem \ref{ThmMainCCThm} is from the
perspective of the classical Frobenius theorem.  Indeed,
the main assumption 
in Theorem \ref{ThmMainCCThm}
is essentially that the distribution generated by 
by $Z_1,\ldots, Z_q$ 
is involutive.
One can view the map $\Phi$ as a coordinate chart on the leaf
passing through $x_0$ generated by $Z_1,\ldots, Z_q$.
Theorem \ref{ThmMainCCThm} gives quantitative control
of this coordinate chart.  This is discussed
in more detail in \cite{StreetMultiParameterCCBalls}.
\end{rmk}

\begin{rmk}
It is not hard to see that the single-parameter formal degrees
$\dt$ do not play an essential role in the above
(see Remark 3.3 of \cite{StreetMultiParameterCCBalls}).
In fact, one could state Theorem \ref{ThmMainCCThm},
taking all the formals degrees $\dt_j=1$ and that would
be sufficient for our purposes.  Moreover, in every place
we use the single-parameter formal degrees $\dt$,
they are inessential.
We have stated the result as above, though, to allow
us to transfer seamlessly between the vector fields
$\q( X,d\w)$ and $\q( Z,\dt\w)$, without any hand-waving
about the formal degrees.
\end{rmk}

	\subsection{Control of vector fields}\label{SectionControlOfVFs}
	In this section, we introduce a notion (closely
related to Definition \ref{DefnControlEveryScale}) to discuss
when the list of vector fields with formal
degrees ``controls'' a vector field with a formal degree
$\q( X_0,d_0\w)$.
Informally, this means that if we were to add $\q(X_0,d_0\w)$
to the list $\q( X,d\w)$, we would not ``get anything new.''
In particular, under the conditions we will define, we will 
not significantly increase
the size of the balls $\B{X}{d}{x_0}{\delta}$ (we would, instead,
obtain comparable balls), and Theorem \ref{ThmMainCCThm}
will hold with the same choice of the map $\Phi$ (when Theorem
\ref{ThmMainCCThm} is applied
to $\q( \delta X,\sd\w)$).
Many of the results discussed in this section are discussed in 
more detail, and proved, in Sections 4.1 and 5.3 of
\cite{StreetMultiParameterCCBalls}, and we refer the
reader there for more details.

Following in the philosophy of Section \ref{SectionCCII}, we will
state our definitions and results for
single parameter vector fields $\q( Z,\dt\w)$ near a fixed
point $x_0\in \Omega$, where one should
think of the case $\q( Z,\dt\w) = \q( \delta X, \sd\w)$,
where
$\delta\in \sA$ and $x_0\in \K$ (and all of the results
and definitions hold uniformly in $\delta$ and $x_0$).
We maintain all the same assumptions on $\q( Z,\dt\w)$
(and associated notation) as in Section \ref{SectionCCII}.
In particular, Theorem \ref{ThmMainCCThm} applies and
we obtain $2$-admissible constants $\eta_1$ and $\xi_1$
and a map $\Phi$ as in Theorem \ref{ThmMainCCThm}.

Suppose we are given a $C^\infty$ vector field $Z_0$ with
an associated single parameter formal degree $\dt_0$.
Let $\q( Z',\dt'\w)$ denote the list of vector fields with
formal degrees
\begin{equation*}
\q( Z_0,\dt_0\w),\q( Z_1,\dt_1\w),\ldots, \q( Z_q,\dt_q\w):
\end{equation*}
the list of vector fields $\q( Z,\dt\w)$ with $\q( Z_0,\dt_0\w)$
added.
We introduce the following two conditions on $\q( Z_0,\dt_0\w)$
which turn out to be equivalent.
All parameters that follow are assumed to be strictly positive real numbers:
\begin{enumerate}
\item $\sP_1\q( \tau_1,\q\{\sigma_1^m\w\}_{m\in \N}\w)$ ($\tau_1\leq \xi_1$):
There exist $c_j\in C^0\q( \B{Z}{\dt}{x_0}{\tau_1}\w)$ such that: 
\begin{itemize}
\item $Z_0=\sum_{j=1}^q c_j Z_j,$ on $\B{Z}{\dt}{x_0}{\tau_1}$.
\item $\sum_{\q|\alpha\w|\leq m} \CjN{\q( Z\w)^\alpha c_j}{0}{\B{Z}{d}{x_0}{\tau_1}}\leq \sigma_1^m$.
\end{itemize}
\item $\sP_2\q( \eta_2,\q\{\sigma_2^m\w\}_{m\in \N}\w)$ ($\eta_2\leq \eta_1$):  $Z_0$
is tangent to the leaf passing through $x_0$ generated by $Z_1,\ldots, Z_q$,
and moreover, if we let $Y_0$ be the pullback of $Z_0$ via the map
$\Phi$ from Theorem \ref{ThmMainCCThm}, we have,
\begin{equation*}
\CjN{Y_0}{m}{B^{n_0}\q( \eta_2\w)} \leq \sigma_2^m.
\end{equation*}
\end{enumerate}

\begin{prop}\label{PropsP1equivsP2}
$\sP_1\Leftrightarrow \sP_2$ in the following sense.
\begin{itemize}
\item $\sP_1\q( \tau_1,\q\{\sigma_1^m\w\}_{m\in \N}\w)\Rightarrow$ there
exists a $2$-admissible $\eta_2=\eta_2\q( \tau_1\w)>0$ and $m$-admissible
constants $\sigma_2^m=\sigma_2^m\q( \sigma_1^m \w)$ such that
$\sP_2\q( \eta_2, \q\{\sigma_2^m\w\}_{m\in \N}\w)$ holds.
\item $\sP_2\q( \eta_2,\q\{\sigma_2^m\w\}_{m\in \N} \w)\Rightarrow$ there
exists a $2$-admissible constant $\tau_1=\tau_1\q( \eta_2\w)>0$
and $m$-admissible constants $\sigma_1^m=\sigma_1^m\q( \sigma_2^m\w)$
such that $\sP_1\q( \tau_1,\q\{\sigma_1^m\w\}_{m\in \N}\w)$ holds.
\end{itemize}
Moreover, in the case that $\sP_1\q(\tau_1,\q\{\sigma_1^m\w\}_{m\in \N} \w)$ holds,
we have the following.
\begin{itemize}
\item There is a $2$-admissible constant $\tau'=\tau'\q( \dt_0, \tau_1,\sigma_1^2\w)$, such that
\begin{equation*}
\B{Z}{\dt}{x_0}{\tau'}\subseteq \B{Z'}{\dt'}{x_0}{\tau'}\subseteq \B{Z}{\dt}{x_0}{\tau_1}.
\end{equation*}
\item The same map $\Phi$ satisfies all the conclusions
of Theorem \ref{ThmMainCCThm} with $\q( Z,\dt\w)$ replaced
by $\q( Z',\dt'\w)$.
\end{itemize}
\end{prop}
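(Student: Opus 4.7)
The plan is to prove $\sP_1\Leftrightarrow \sP_2$ by pulling back via the map $\Phi$ from Theorem \ref{ThmMainCCThm} and transferring norm information across $\Phi$ using \eqref{EqnRescaledSmooth}, \eqref{EqnRescaledCjN}, and the nondegeneracy \eqref{EqnRescaledSpan}. The strategy is essentially that $\Phi$ is a uniform quasi-isometry between the CC-ball and the Euclidean ball, so $\sP_1$ (a ``CC-ball'' statement about $Z_0$) and $\sP_2$ (a ``Euclidean'' statement about $Y_0$) should pass through it with admissible loss in constants.

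For $\sP_1\Rightarrow \sP_2$: given $Z_0=\sum_{j=1}^q c_j Z_j$ on $\B{Z}{\dt}{x_0}{\tau_1}$, first invoke Proposition \ref{PropExtraCCStuff} to obtain a $2$-admissible $\eta_2=\eta_2(\tau_1)>0$ with $\Phi(B^{n_0}(\eta_2))\subseteq \B{Z}{\dt}{x_0}{\tau_1}$. Since $Z_0$ equals a $C^\infty$-module combination of the $Z_j$'s on this ball, $Z_0$ is tangent to the leaf, and $Y_0:=\Phi^\# Z_0=\sum_j (c_j\circ\Phi)Y_j$ on $B^{n_0}(\eta_2)$. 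Now \eqref{EqnRescaledCjN} gives
\[
\CjN{c_j\circ\Phi}{m}{B^{n_0}(\eta_2)} \approx_{(m-1)\vee 2} \sum_{|\alpha|\leq m} \CjN{Y^\alpha(c_j\circ\Phi)}{0}{B^{n_0}(\eta_2)} = \sum_{|\alpha|\leq m} \CjN{(Z^\alpha c_j)\circ\Phi}{0}{B^{n_0}(\eta_2)} \leq \sigma_1^m,
\]
and combining with \eqref{EqnRescaledSmooth} yields $\CjN{Y_0}{m}{B^{n_0}(\eta_2)}\leq \sigma_2^m$ with $\sigma_2^m$ an $m$-admissible function of $\sigma_1^m$.

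For $\sP_2\Rightarrow \sP_1$: use \eqref{EqnRescaledSpan} and the explicit choice of $J_0$: on $B^{n_0}(\eta_2)$ the matrix $Y_{J_0}$ is uniformly invertible, so Cramer's rule solves $Y_0=\sum_{j\in J_0}b_j Y_j$ with $\CjN{b_j}{m}{B^{n_0}(\eta_2)}$ controlled by admissible functions of $\sigma_2^m$. Push forward: $Z_0=\sum_{j\in J_0}(b_j\circ\Phi^{-1})Z_j$ on $\Phi(B^{n_0}(\eta_2))$. Proposition \ref{PropExtraCCStuff} supplies a $2$-admissible $\tau_1=\tau_1(\eta_2)$ with $\B{Z}{\dt}{x_0}{\tau_1}\subseteq \Phi(B^{n_0}(\eta_2))$, and applying \eqref{EqnRescaledCjN} in the reverse direction produces $\sum_{|\alpha|\leq m}\CjN{Z^\alpha(b_j\circ\Phi^{-1})}{0}{\B{Z}{\dt}{x_0}{\tau_1}}\leq \sigma_1^m$, proving $\sP_1$.

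For the two additional assertions assuming $\sP_1$: the first containment $\B{Z}{\dt}{x_0}{\tau'}\subseteq \B{Z'}{\dt'}{x_0}{\tau'}$ is immediate from the definition by extending any admissible control $a=(a_1,\ldots,a_q)$ for $Z$ with $a_0\equiv 0$. For $\B{Z'}{\dt'}{x_0}{\tau'}\subseteq \B{Z}{\dt}{x_0}{\tau_1}$, substitute $Z_0=\sum c_j Z_j$ into a control ODE: the combination $a_0(\tau')^{\dt_0}Z_0+\sum_{j\geq 1}a_j(\tau')^{\dt_j}Z_j$ becomes $\sum_{j\geq 1}\big((\tau'/\tau_1)^{\dt_j}a_j + (\tau')^{\dt_0}(\tau_1)^{-\dt_j}a_0 c_j\big)\cdot \tau_1^{\dt_j}Z_j$, whose coefficients are controlled in $\ell^2$ by $(\tau'/\tau_1)^{\min\dt_j}+(\tau')^{\dt_0}\tau_1^{-\max\dt_j}\sigma_1^0$ times the original $\ell^2$-norm of $(a_0,\ldots,a_q)$, so choosing $\tau'$ small enough in a $2$-admissible fashion depending on $\dt_0,\tau_1,\sigma_1^2$ forces the new controls to have norm $<1$. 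For the last assertion, the same $J_0$ remains an index of a maximal $n_0\times n_0$ submatrix for $Z'(x_0)$ (since $Z_0(x_0)$ lies in the span of the remaining columns by $\sP_1$); the conclusions of Theorem \ref{ThmMainCCThm} for $\Phi$ with respect to $(Z',\dt')$ then split into the old conclusions for $(Z,\dt)$ (already known) and the new bound $\CjN{Y_0}{m}{B^{n_0}(\eta_1)}\lesssim_m 1$ which follows from $\sP_2$ (shrinking $\eta_1$ if necessary in a $2$-admissible way), and the containments $\B{Z'}{\dt'}{x_0}{\xi_1'}\subseteq \Phi(B^{n_0}(\eta_1))$ and injectivity of $\Phi$ on the new balls follow from the two just-proved comparisons between $(Z,\dt)$- and $(Z',\dt')$-balls.

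The main obstacle is bookkeeping: keeping precise track of which admissible constants depend on which parameters, especially the interplay between $\tau_1,\eta_2$ and $\sigma_1^m,\sigma_2^m$ across $\Phi$; the nontrivial geometric content (the injectivity, the quasi-isometry estimates, and the CC-ball substitution lemma) is already encapsulated in Theorem \ref{ThmMainCCThm} and Proposition \ref{PropExtraCCStuff}, and is worked out in detail in Sections 4.1 and 5.3 of \cite{StreetMultiParameterCCBalls}.
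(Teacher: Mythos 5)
Your argument is correct and is essentially the route the paper takes: transfer the module relation for $Z_0$ back and forth through $\Phi$ using \eqref{EqnRescaledSmooth}, \eqref{EqnRescaledSpan}, and (a restricted form of) \eqref{EqnRescaledCjN}, with Proposition \ref{PropExtraCCStuff} adjusting the radii; your $\sP_2\Rightarrow\sP_1$ direction is essentially verbatim the paper's. The only difference is that the paper outsources $\sP_1\Rightarrow\sP_2$, the ball comparison, and the determinant estimate to Propositions 4.19, 4.18, and Theorem 4.17 of \cite{StreetMultiParameterCCBalls}, whereas you sketch those same arguments directly (your ball-comparison substitution implicitly needs the standard continuity/bootstrap step to keep the curve inside $\B{Z}{\dt}{x_0}{\tau_1}$ where the $c_j$ are defined, which is exactly what the cited proposition supplies).
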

\begin{proof}
$\sP_1\Rightarrow \sP_2$ follows just as in Proposition 4.19 of
\cite{StreetMultiParameterCCBalls} (here $\sP_1$ is related
to $\sP_3$ in \cite{StreetMultiParameterCCBalls}).

We now turn to $\sP_2\Rightarrow \sP_1$.  
Take $\tau_1=\tau_1\q( \eta_2\w)$ to be a $2$-admissible constant
so small that,
\begin{equation*}
\B{Z}{\dt}{x_0}{\tau_1}\subseteq \Phi\q( B^{n_0}\q( \eta_2\w)\w),
\end{equation*}
by applying Proposition \ref{PropExtraCCStuff}.
Let
$Y_1,\ldots, Y_q$ be the pullbacks of $Z_1,\ldots, Z_q$ via
the map $\Phi$. 
Using \eqref{EqnRescaledSmooth} and \eqref{EqnRescaledSpan}, we may write,
\begin{equation}\label{EqnControlVectorPullBack}
Y_0 =\sum_{j=1}^q c_j Y_j,\text{ on }B^{n_0}\q( \eta_2\w),
\end{equation}
with,
\begin{equation}\label{EqnControlVectorPullBack2}
\sum_{\q|\alpha\w|\leq m} \CjN{\q( Y\w)^\alpha c_j}{0}{B^{n_0}\q( \eta_2\w)}\lesssim_m
\CjN{c_j}{m}{B^{n_0}\q( \eta_2\w)}\lesssim \sigma_2^m.
\end{equation}
Pushing \eqref{EqnControlVectorPullBack} forward via the map
$\Phi$ and combining it with the push-forward of \eqref{EqnControlVectorPullBack2}
yields $\sP_1$.

We now turn to the second part of the proposition:  when $\sP_1$ (or,
equivalently, $\sP_2$) holds, the above two conclusions hold.
The first follows just as in Proposition 4.18 of
\cite{StreetMultiParameterCCBalls}.
For the second, note that in Theorem \ref{ThmMainCCThm},
we chose $J_0$ such that
\begin{equation*}
\q| \det_{n_0\times n_0} Z_{J_0}\q( x_0\w) \w|_\infty = \q|\det_{n_0\times n_0} Z\q( x_0\w) \w|_\infty;
\end{equation*}
however, the theory from \cite{StreetMultiParameterCCBalls}
shows that we only need
\begin{equation*}
\q| \det_{n_0\times n_0} Z_{J_0}\q( x_0\w) \w|_\infty \gtrsim \q|\det_{n_0\times n_0} Z\q( x_0\w) \w|_\infty.
\end{equation*}
Thus to show that the same map $\Phi$ can be used for $\q( Z',\dt'\w)$,
we need only show,
\begin{equation}\label{EqnControlVectorsToShowDet}
\begin{split}
&n_0 = \dspan{Z_0\q( x_0\w),\ldots, Z_q\q( x_0\w)},\\
&\q|\det_{n_0\times n_0} Z\q( x_0\w)\w|_\infty \gtrsim \q|\det_{n_0\times n_0} Z'\q( x_0\w)\w|_\infty.
\end{split}
\end{equation}
\eqref{EqnControlVectorsToShowDet} follows from
Theorem 4.17 of \cite{StreetMultiParameterCCBalls} (the
$\sP_3^0\Rightarrow \sP_1^0$ part of Theorem 4.17).
\end{proof}

\begin{defn}
We say $\q( Z,\dt\w)$ controls $\q( Z_0,\dt_0\w)$ at the unit scale near
$x_0$ if either of the above two equivalent conditions ($\sP_1$ or $\sP_2$)
holds.
\end{defn}

\begin{defn}
Let $\q( Z,\dt\w)=\q( Z_1,\dt_1\w),\ldots, \q( Z_{q},\dt_q\w)$ and
$\q( Z',\dt'\w)=\q( Z_1',\dt_1'\w),\ldots, \q(Z_{q'}',\dt_{q'}'\w)$
be two lists of vector fields with single parameter formal
degrees as above.  We say $\q( Z,\dt\w)$ controls $\q( Z',\dt'\w)$
at the unit scale near $x_0$
if $\q( Z,\dt\w)$ controls $\q( Z_j', \dt_j'\w)$ at
the unit scale near $x_0$ for every
$1\leq j\leq q'$.  We say $\q( Z,\dt\w)$ and $\q( Z',\dt'\w)$
are equivalent at the unit scale near $x_0$ if
$\q( Z,\dt\w)$ controls $\q( Z',\dt'\w)$ at the unit scale near $x_0$
and $\q( Z',\dt'\w)$ controls $\q( Z,\dt\w)$ at the unit scale near $x_0$.
\end{defn}

\begin{prop}
If $\q( Z,\dt\w)$ and $\q( Z',\dt'\w)$ are equivalent at the unit
scale near $x_0$, and if $\Phi$ is the map obtained
when applying Theorem \ref{ThmMainCCThm} to the list $\q( Z,\dt\w)$,
then the same map $\Phi$ satisfies all the conclusions
of Theorem \ref{ThmMainCCThm} with $\q( Z',\dt'\w)$ in place
of $\q( Z,\dt\w)$ (with possibly different constants).
\end{prop}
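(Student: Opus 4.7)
The plan is to bootstrap from Proposition \ref{PropsP1equivsP2}, applied iteratively, and then exploit the symmetry of the equivalence relation to transfer each conclusion of Theorem \ref{ThmMainCCThm} from $(Z,\dt)$ to $(Z',\dt')$.

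First I would form the augmented list $(\Zt,\dtt)$ obtained by concatenating $(Z,\dt)$ and $(Z',\dt')$, and add the vector fields of $(Z',\dt')$ to $(Z,\dt)$ one at a time. Since $(Z,\dt)$ controls each $(Z'_j,\dt'_j)$ by assumption, and since control is preserved when one enlarges the controlling list (adding extra generators only gives more ways to express $Z'_j$), Proposition \ref{PropsP1equivsP2} applies at every step. The "Moreover" part of that proposition gives, inductively: (a) the dimension $n_0$ and the same map $\Phi$ continue to satisfy the conclusions of Theorem \ref{ThmMainCCThm} for the growing list, with constants depending only on $m$-admissible data; and (b) the balls for the enlarged list are sandwiched between the original balls by admissible constants. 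After finitely many steps, $\Phi$ satisfies all of Theorem \ref{ThmMainCCThm} for $(\Zt,\dtt)$.

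Now I transfer each conclusion from $(\Zt,\dtt)$ down to the sublist $(Z',\dt')$. Injectivity of $\Phi$ on $B^{n_0}(\eta_1)$ is unaffected. The smoothness estimates \eqref{EqnRescaledSmooth} for the pullbacks $Y'_j$ of $Z'_j$ are a subset of the estimates for the pullbacks of $(\Zt,\dtt)$, so they hold. For the ball containment, using Proposition \ref{PropsP1equivsP2}(b) in reverse — this time invoking that $(Z',\dt')$ controls $(Z,\dt)$, hence the pair $(Z',\dt')$ and $(\Zt,\dtt)$ are (after the iteration above) equivalent — we obtain admissible $\xi'_1$ with $\B{Z'}{\dt'}{x_0}{\xi'_1}\supseteq \B{\Zt}{\dtt}{x_0}{\xi''}\supseteq \B{Z}{\dt}{x_0}{\xi_1}$, and the chain of inclusions $\B{Z'}{\dt'}{x_0}{\xi'}\subseteq \B{\Zt}{\dtt}{x_0}{\xi_1}\subseteq \Phi(B^{n_0}(\eta_1))$ for an admissibly chosen $\xi'$.

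The main obstacle — where one must genuinely use the reverse control rather than just the sublist containment — is the determinant estimate \eqref{EqnRescaledSpan} and the norm equivalence \eqref{EqnRescaledCjN} for $Y'$. Here the idea is to pull back the $\sP_1$-relations expressing each $Z_j$ as a $C^\infty$ combination of the $Z'_k$ (which exist by the reverse control) to obtain, on $B^{n_0}(\eta'_1)$ for an admissible $\eta'_1\leq\eta_1$, an identity $Y_j=\sum_k a_{j,k}\,Y'_k$ with $\CjN{a_{j,k}}{m}{B^{n_0}(\eta'_1)}\lesssim_m 1$; this is exactly the computation carried out in the $\sP_2\Rightarrow\sP_1$ direction of Proposition \ref{PropsP1equivsP2}, read backwards. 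Combined with the already-established opposite relation $Y'_k=\sum_j b_{k,j}Y_j$ with admissible $C^m$ bounds, one concludes $\mathrm{span}\{Y'_k(u)\}=\mathrm{span}\{Y_j(u)\}$ for all $u\in B^{n_0}(\eta'_1)$, and a minor linear-algebra argument (choosing the $n_0\times n_0$ minor of $Y'(u)$ of maximal size and using $|\det Y(u)|\approx_2 1$ to lower-bound it via the coefficient matrix $(b_{k,j})$) gives $|\det_{n_0\times n_0}Y'(u)|\approx_2 1$. The norm equivalence \eqref{EqnRescaledCjN} for $Y'$ then follows from the same two-sided relations: one direction bounds $\sum \|(Y')^\alpha f\|_{C^0}$ by $\sum\|Y^\beta f\|_{C^0}$ and vice versa, using the admissible $C^m$ bounds on $a_{j,k}$ and $b_{k,j}$. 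After possibly shrinking $\eta_1$ to an admissible $\eta'_1$ to accommodate the domain on which all these relations hold, this completes the verification of Theorem \ref{ThmMainCCThm} for $(Z',\dt')$.
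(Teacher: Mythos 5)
Your argument is correct and is exactly the route the paper intends: the paper's own ``proof'' is the single sentence that this is an easy addition to Proposition \ref{PropsP1equivsP2}, with the details left to the reader, and your iteration over the augmented list, the sandwiching of balls from the ``Moreover'' clause, and the Cauchy--Binet-style transfer of the determinant bound \eqref{EqnRescaledSpan} (the one place the reverse control is genuinely used) supply precisely those details. No gaps.
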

\begin{proof}
The proof is an easy addition to Proposition \ref{PropsP1equivsP2},
and we leave the details to the reader.
\end{proof}

\begin{defn}\label{DefnControlVectorFieldEveryScale}
Let $\K\Subset\Omega$ and $\sA\subseteq\q[0,1\w]^\nu$ be as in 
Section \ref{SectionCC}, and let $\q( X,d\w)$ be a list
of $C^\infty$ vector fields with $\nu$-parameter formal degrees
as in Section \ref{SectionCC}, satisfying $\sD\q( \K, \sA\w)$.
Let $X_0$ be another $C^\infty$ vector field with an associated
formal degree $0\ne d_0\in \q[0,\infty\w)^\nu$.
We say $\q( X,d\w)$ {\it controls} $\q( X_0,d_0\w)$ if
for every $\delta\in \sA$, $x_0\in \K$, $\q( \delta X,\sd\w)$
controls $\q( \delta^{d_0} X_0, \q|d_0\w|_1\w)$ at the unit sale
near $x_0$ in such a way that the 
parameters of $\sP_1$ (equivalently $\sP_2$)
can be chosen to be independent of $x_0\in \sK$, $\delta\in \sA$.
\end{defn}

\begin{rmk}
Note that $\sD\q( \K, \sA\w)$ is equivalent to saying that
$\q( X,d\w)$ controls $\q( \q[X_i,X_j\w], d_i+d_j\w)$, for
every $1\leq i,j\leq q$.
\end{rmk}

Definition \ref{DefnControlVectorFieldEveryScale} is closely related
to Definition \ref{DefnControlEveryScale}.  Indeed, we have the following
proposition, whose proof we defer to Section \ref{SectionProofOfPropControlVectsControlCurves}.

\begin{prop}\label{PropControlVectsControlCurve}
Let $\q( X,d\w)$ be as in Definition \ref{DefnControlVectorFieldEveryScale}.
Define,
\begin{equation*}
\gamma_t\q( x\w) = e^{\sum_{0<\q|\alpha\w|\leq L} t^\alpha V_\alpha}x,
\end{equation*}
where each $V_\alpha$ is a $C^\infty$ vector field.  Then,
$\gamma$ is controlled by $\q( X,d\w)$ if and only if
$\q( V_\alpha, \deg\q(\alpha\w)\w)$ is controlled by
$\q( X,d\w)$ for every $\alpha$.
\end{prop}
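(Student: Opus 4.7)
The plan is to compare both control conditions via the Taylor coefficients of the vector field $W\q(t,x\w)$ associated to $\gamma$ as in Definition \ref{DefnControlEveryScale}. Writing $W\q(t,x\w)\sim\sum_\alpha t^\alpha X_\alpha\q(x\w)$ as in \eqref{EqnDefnXalpha}, the first step is the observation that, since $\gamma_t=\exp\q(A\q(t\w)\w)$ with $A\q(t\w)=\sum_{0<\q|\alpha\w|\leq L}t^\alpha V_\alpha$, the differential-of-exponential (Duhamel) identity applied at $\epsilon=1$ yields the triangular relation
\begin{equation*}
X_\alpha=\q|\alpha\w|V_\alpha+\q(\text{Lie polynomial in }V_\beta\text{'s with }\q|\beta\w|<\q|\alpha\w|\w),
\end{equation*}
in which each monomial $\q[V_{\beta_1},\q[\ldots,V_{\beta_k}\w]\cdots\w]$ satisfies $\sum_i\beta_i=\alpha$. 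Consequently $\deg\q(X_\alpha\w)=\deg\q(\alpha\w)$, and the relation can be inverted recursively on $\q|\alpha\w|$ to express each $V_\alpha$ as a Lie polynomial in $\q\{X_\beta:\q|\beta\w|\leq\q|\alpha\w|\w\}$ and their iterated Lie brackets.

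For the forward direction, suppose $\q(X,d\w)$ controls $\gamma$, so that there exist $c_l^{x_0,\delta}\q(t,x\w)$ verifying Definition \ref{DefnControlWEveryScale}. Applying $\tfrac{1}{\alpha!}\partial_t^\alpha|_{t=0}$ to the identity $W\q(\delta t,x\w)=\sum_l c_l^{x_0,\delta}\q(t,x\w)\,\delta^{d_l}X_l\q(x\w)$ and matching Taylor coefficients in $t$ gives
\begin{equation*}
\delta^{\deg\q(\alpha\w)}X_\alpha\q(x\w)=\sum_l\tfrac{1}{\alpha!}\q(\partial_t^\alpha c_l^{x_0,\delta}\w)\q(0,x\w)\,\delta^{d_l}X_l\q(x\w)
\end{equation*}
uniformly in $\q(x_0,\delta\w)\in\K\times\sA$, so $\q(X,d\w)$ controls $\q(X_\alpha,\deg\q(\alpha\w)\w)$ for every $\alpha$. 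Using Proposition \ref{PropsP1equivsP2} and the assumption that $\q(X,d\w)$ satisfies $\sD\q(\K,\sA\w)$, the set of vector fields controlled by $\q(X,d\w)$ is closed under smooth-coefficient linear combinations and, with correct degree bookkeeping, under Lie brackets. Inverting the triangular relation above by induction on $\q|\alpha\w|$ then shows that $\q(X,d\w)$ controls each $\q(V_\alpha,\deg\q(\alpha\w)\w)$.

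For the reverse direction, suppose that each $\q(V_\alpha,\deg\q(\alpha\w)\w)$ is controlled by $\q(X,d\w)$. Fix $\q(x_0,\delta\w)\in\K\times\sA$, apply Theorem \ref{ThmMainCCThm} to the single-parameter list $\q(\delta X,\sd\w)$, and let $\Phi=\Phi_{x_0,\delta}:B^{n_0}\q(\eta_1\w)\to \B{X}{d}{x_0}{\xi}$ be the resulting scaling map, with $Y_l$ denoting the pull-back of $\delta^{d_l}X_l$ via $\Phi$; these satisfy \eqref{EqnRescaledSmooth}--\eqref{EqnRescaledSpan} uniformly. By the $\sP_2$ characterization in Proposition \ref{PropsP1equivsP2}, the pull-back $\widetilde V_\alpha$ of $\delta^{\deg\q(\alpha\w)}V_\alpha$ via $\Phi$ is a $C^\infty$ vector field on $B^{n_0}\q(\eta_1\w)$ whose $C^m$-norms are bounded uniformly in $\q(x_0,\delta\w)$ for every $m$. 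Since $\gamma_{\delta t}=\exp\q(\sum_\alpha\delta^{\deg\q(\alpha\w)}t^\alpha V_\alpha\w)$, conjugation by $\Phi$ gives $\widetilde\gamma_{\delta t}:=\Phi^{-1}\circ\gamma_{\delta t}\circ\Phi=\exp\q(\sum_\alpha t^\alpha\widetilde V_\alpha\w)$, which as the time-$1$ flow of a uniformly $C^\infty$ family of vector fields is smooth in $\q(t,u\w)$ uniformly in $\q(x_0,\delta\w)$; the same is therefore true of the pulled-back vector field
\begin{equation*}
\widetilde W\q(t,u\w)=\tfrac{d}{d\epsilon}\bigg|_{\epsilon=1}\widetilde\gamma_{\epsilon t}\circ\widetilde\gamma_t^{-1}\q(u\w).
\end{equation*}
Because $\q|\Det{n_0}Y\q(u\w)\w|\gtrsim 1$ on $B^{n_0}\q(\eta_1\w)$ by \eqref{EqnRescaledSpan}, Cramer's rule produces $C^\infty$ functions $\widetilde c^l\q(t,u\w)$, with all derivatives bounded uniformly in $\q(x_0,\delta\w)$, such that $\widetilde W\q(t,u\w)=\sum_l\widetilde c^l\q(t,u\w)Y_l\q(u\w)$. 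Pushing forward by $\Phi$ and converting $C^m$-norms into mixed $\q(\delta X\w)^\alpha\partial_t^\beta$-norms via \eqref{EqnRescaledCjN} then exhibits $\q(X,d\w)$ as controlling $W\q(\delta t,\cdot\w)$ on $\Phi\q(B^{n_0}\q(\eta_1\w)\w)\supseteq \B{X}{d}{x_0}{\xi_1}$, with the uniform estimates required by Definition \ref{DefnControlWEveryScale}.

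The main obstacle is securing simultaneous uniformity in $\q(x_0,\delta\w)\in\K\times\sA$ of all the pulled-back objects ($\widetilde V_\alpha$, $\widetilde\gamma$, $\widetilde W$, and the coefficients $\widetilde c^l$); this is precisely the content of Theorem \ref{ThmMainCCThm} and Proposition \ref{PropsP1equivsP2}. Once the Lie-polynomial triangular structure of the $X_\alpha$'s in the first step is established, the remainder of the argument is a direct assembly of the machinery of Sections \ref{SectionCCII} and \ref{SectionControlOfVFs}.
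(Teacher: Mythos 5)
Your proof is correct, but it takes a partially different route from the paper's. The paper reduces the proposition (via Proposition \ref{PropConnectionBetweenControlEveryScaleAndUnit}) to a unit-scale lemma, pulls everything back by $\Phi$ so that $\theta_t = e^{\sum t^\alpha U_\alpha}u$ with $U_\alpha$ the pullback of $\delta^{\deg(\alpha)}V_\alpha$, and then invokes as ``well known'' the two-sided equivalence between uniform $C^m$ bounds on the $U_\alpha$ and uniform $C^m$ bounds on $\theta$. Your reverse direction is essentially the ``only if'' half of that equivalence, spelled out: smooth exponents give a uniformly smooth flow, and Cramer's rule against the spanning frame $Y_l$ recovers $\sQ_1$. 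Your forward direction replaces the ``if'' half by a different mechanism: you pass through the Taylor coefficients $X_\alpha$ of $W$ (this step is Lemma \ref{LemmaAllCoefsControled}), use the derivative-of-the-exponential identity to obtain the triangular relation $X_\alpha = |\alpha|\,V_\alpha + (\text{Lie polynomial in lower-order }V_\beta)$ with consistent degree bookkeeping, and invert by induction on $|\alpha|$. This works, but it quietly relies on the collection of vector fields controlled by $(X,d)$ being closed under Lie brackets (with degrees adding) and under uniformly bounded smooth-coefficient linear combinations; neither closure is stated as a numbered result in the paper, though both are immediate from $\sP_2$ (the pullbacks via $\Phi$ are uniformly smooth and tangent to the leaf, and brackets preserve both properties), so you should record that one-line verification. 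What your route buys is an explicit algebraic description of how the $V_\alpha$ sit inside the module of controlled fields; what the paper's route buys is brevity, at the cost of black-boxing the inversion of the exponential as ``well known.''
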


\begin{defn}\label{DefnEquivLists}
Let $\q( X,d\w)=\q( X_1,d_1\w),\ldots, \q( X_1,d_q\w)$ and $\q( X',d'\w)=\q( X_1',d_1'\w),\ldots, \q(X_{q'}',d_{q'}'\w)$ be two lists of vector fields
with $\nu$-parameter formal degrees satisfying $\sD\q( \K,\sA\w)$.
We say $\q( X,d\w)$ {\it controls} $\q( X',d'\w)$ if $\q( X,d\w)$
controls $\q( X_j', d_j'\w)$ for every $1\leq j\leq q'$.
We say that $\q( X,d\w)$ and $\q( X',d'\w)$ are {\it equivalent}
if $\q( X,d\w)$ controls $\q( X',d'\w)$ and $\q( X',d'\w)$ controls
$\q( X,d\w)$.
\end{defn}

\begin{rmk}\label{RmkEquivLists}
In Section \ref{SectionCurves}, we singled out a list of vector fields
with formal degrees $\q( X,d\w)$.  Throughout, we will use these
vector fields to study $\gamma$.  In everything that
follows, and because of the results in this section, it would
work just as well to replace $\q( X,d\w)$ with an equivalent
list $\q( X',d'\w)$ in our applications of Theorem \ref{ThmMainCCThm}.
\end{rmk}

	\subsection{More on $\q( X,d\w)$}\label{SectionMoreOnXd}
	Our assumptions in Section \ref{SectionCurves}
above might make it seem that we have made some
choices which affected the list $\q( X,d\w)$.  Specifically,
we supposed that there existed a finite set $\sF\subseteq \sV$
such that $\sF$ generated a finite list $\q( X,d\w)$ and this
list controlled $\gamma$ (here we are taking
the notation from Section \ref{SectionCurves}; in particular,
$\sV$ is given by \eqref{EqnCurvesDefnS}).  One might expect that $\q( X,d\w)$
depends on this choice of $\sF$.  However, this is not the case:
given such a $\sF$ and given $\sF'\subseteq \sV$ finite 
with $\sF\subseteq \sF'$, then any finite list $\q( X',d'\w)$
generated by $\sF'$ is equivalent to $\q( X,d\w)$.\footnote{As
already noted in Definition \ref{DefnGeneratesAFiniteList}, given
$\sF$, all choices of $\q( X,d\w)$ are equivalent.}
This follows from the following lemma, and the
theory in Section \ref{SectionControlOfVFs}.

\begin{lemma}\label{LemmaAllCoefsControled}
Suppose $\q( X,d\w)$ controls $\gamma$.  Then, if $X_{\alpha}$ is
as in \eqref{EqnDefnXalpha},
$\q( X,d\w)$ controls $\q( X_{\alpha}, \deg\q( \alpha\w)\w)$. 
\end{lemma}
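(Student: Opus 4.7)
The plan is to extract the desired control relation by matching Taylor coefficients in $t$ of the representation of $W$ afforded by the hypothesis. By Definitions \ref{DefnControlEveryScale} and \ref{DefnControlWEveryScale}, since $\q(X,d\w)$ controls $\gamma$ we are given, for every $\delta \in \sA$ and $x_0 \in \K$, functions $c_l^{x_0,\delta}$ on $\Q^N\q(\rho_1\w) \times \B{X}{d}{x_0}{\tau_1\delta}$ satisfying
\begin{equation*}
W\q(\delta t, x\w) = \sum_{l=1}^q c_l^{x_0,\delta}\q(t,x\w)\, \delta^{d_l} X_l\q(x\w),
\end{equation*}
together with uniform $C^\infty$ bounds in joint $\q(\delta X\w)$- and $\partial_t$-derivatives. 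On the other hand, the Taylor expansion \eqref{EqnDefnXalpha}, combined with the identity $\q(\delta t\w)^\alpha = \delta^{\deg\q(\alpha\w)} t^\alpha$ (which is simply the definition of $\deg\q(\alpha\w)$), gives
\begin{equation*}
\partial_t^\alpha W\q(\delta t, x\w)\big|_{t=0} = \alpha!\, \delta^{\deg\q(\alpha\w)} X_\alpha\q(x\w).
\end{equation*}

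Applying $\partial_t^\alpha|_{t=0}$ to both sides of the representation above and equating yields, on $\B{X}{d}{x_0}{\tau_1\delta}$,
\begin{equation*}
\delta^{\deg\q(\alpha\w)} X_\alpha\q(x\w) = \sum_{l=1}^q \tilde c_l^{x_0,\delta,\alpha}\q(x\w)\, \delta^{d_l} X_l\q(x\w), \qquad \tilde c_l^{x_0,\delta,\alpha}\q(x\w) := \tfrac{1}{\alpha!}\q(\partial_t^\alpha c_l^{x_0,\delta}\w)\q(0,x\w).
\end{equation*}
This is precisely the $\sP_1$ representation (after the rescaling of Section \ref{SectionCCII}, with $\q(\delta X,\sd\w)$ in place of $\q(Z,\dt\w)$) required by Definition \ref{DefnControlVectorFieldEveryScale} in order for $\q(X,d\w)$ to control $\q(X_\alpha,\deg\q(\alpha\w)\w)$. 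It remains to verify the uniform $\sP_1$ bounds on the $\tilde c_l^{x_0,\delta,\alpha}$.

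These bounds reduce to those already in hand: for every $m$,
\begin{equation*}
\sum_{\q|\beta\w|\le m}\CjN{\q(\delta X\w)^\beta \tilde c_l^{x_0,\delta,\alpha}}{0}{\B{X}{d}{x_0}{\tau_1\delta}} \le \tfrac{1}{\alpha!}\sum_{\q|\beta\w|\le m}\CjN{\q(\delta X\w)^\beta \partial_t^\alpha c_l^{x_0,\delta}}{0}{\Q^N\q(\rho_1\w) \times \B{X}{d}{x_0}{\tau_1\delta}},
\end{equation*}
and the right-hand side is finite uniformly in $x_0 \in \K$ and $\delta \in \sA$ by the second bullet of Definition \ref{DefnControlWEveryScale} applied with $m + \q|\alpha\w|$ in place of $m$. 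The argument is essentially just differentiating an identity in $t$ and reading off coefficients, so there is no substantive obstacle; the main step, if one exists, is simply tracking the uniformity of constants, which the control hypothesis supplies automatically.
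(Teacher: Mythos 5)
Your proof is correct and is exactly the argument the paper has in mind (the paper's "proof" is just the sentence "This follows easily from the definitions"): differentiate the representation $W(\delta t,x)=\sum_l c_l^{x_0,\delta}(t,x)\,\delta^{d_l}X_l(x)$ in $t$ at $t=0$, use $(\delta t)^\alpha=\delta^{\deg(\alpha)}t^\alpha$ to identify the left-hand side as $\alpha!\,\delta^{\deg(\alpha)}X_\alpha$, and read off the $\sP_1$ data with the required uniform bounds from the hypothesis applied at order $m+|\alpha|$. No issues.
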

\begin{proof}
This follows easily from the definitions.
\end{proof}
As is discussed in Section \ref{SectionControlOfVFs},
in light of Lemma \ref{LemmaAllCoefsControled},
one does not ``get anything new'' by adding $\q( X_{\alpha}, \deg\q( \alpha\w)\w)$
to the list $\q( X,d\w)$.

Because of the above, there is a more direct way to construct the
list $\q( X,d\w)$.  Fix $M$ large.  One considers the set
\begin{equation}\label{EqnDefnsF}
\sF =\bigg\{\q(X_{\alpha}, \deg\q( \alpha\w)\w) : \deg\q( \alpha\w)\text{ is non-zero in only one component}, \q|\alpha\w|\leq M\bigg\}.
\end{equation}
One then recursively assigns formal
degrees to commutators of vector fields of $\sF$ as follows:
if $X_1$ has formal degree $d_1$ and $X_2$ has formal degree $d_2$,
then $\q[X_1,X_2\w]$ is assigned formal degree $d_1+d_2$.
One then takes iterated commutators of the elements of $\sF$,
and we assume that, after taking all iterated commutators
up to order $M$, the resulting finite list of vector fields
with formal degrees
satisfies $\sD\q( \K, \sA\w)$, and controls $\gamma$.
The above discussion shows that if this is true for $M$,
it is true for any larger choice of $M$, and the resulting
finite lists of vector fields with formal
degrees are equivalent.

\section{Surfaces revisited}\label{SectionCurvesII}
In this section, we introduce the technical results necessary
to deal with our assumptions on $\gamma$
in Section \ref{SectionCurves}.
Before we enter into details, we point out the formal connection
between $\gamma$ and $W$.
Recall, from Section \ref{SectionCurves}, $W\q( t,x\w)$
is defined by,
\begin{equation}\label{EqnCurvesIIW}
W\q( t,x\w) = \frac{d}{d\epsilon}\bigg|_{\epsilon=1} \gamma_{\epsilon t}\circ \gamma_t^{-1}\q( x\w).
\end{equation}
From this we see that, given $\gamma$, we obtain a $C^\infty$ vector
field $W\q( t\w)$, depending smoothly on $t$, satisfying $W\q( 0\w) =0$.
The reverse is true as well:

\begin{prop}\label{PropGetGammaFromW}
The map $\gamma\mapsto W$ is a bijective map from smooth functions $\gamma_t$, with $\gamma_0\q( x\w) \equiv x$ (thought of as germs in the $t$ variable), to smooth
vector fields depending on $t$, $W\q( t\w)$, with $W\q( 0\w)=0$ (also thought
of as germs in the $t$ variable).
\end{prop}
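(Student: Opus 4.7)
The plan is to exhibit an explicit inverse to the map $\gamma\mapsto W$ by solving an ODE, and then check that the two compositions are the identity. The key computation, which I would carry out first, is to differentiate $\gamma_{st}(x)$ with respect to $s$ at an arbitrary point $s=\epsilon\ne 0$. Writing $s = \sigma\epsilon$ and applying the chain rule, one gets
\begin{equation*}
\frac{d}{ds}\bigg|_{s=\epsilon}\gamma_{st}(x) = \frac{1}{\epsilon}\frac{d}{d\sigma}\bigg|_{\sigma=1}\gamma_{\sigma\epsilon t}(x) = \frac{1}{\epsilon}\frac{d}{d\sigma}\bigg|_{\sigma=1}\gamma_{\sigma\epsilon t}\circ \gamma_{\epsilon t}^{-1}\q(\gamma_{\epsilon t}(x)\w) = \frac{1}{\epsilon} W\q(\epsilon t, \gamma_{\epsilon t}(x)\w).
\end{equation*}
Thus, setting $y(\epsilon) = \gamma_{\epsilon t}(x)$, we have
$y'(\epsilon) = \epsilon^{-1} W(\epsilon t, y(\epsilon))$ with $y(0) = x$, and $\gamma_t(x) = y(1)$.

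Next I would address the apparent singularity at $\epsilon=0$. Because $W(0,\cdot)\equiv 0$ and $W$ is smooth, the fundamental theorem of calculus gives $W(s,y) = \sum_{j=1}^N s_j\,W_j(s,y)$ with each $W_j(s,y) = \int_0^1 \frac{\partial W}{\partial s_j}(\lambda s,y)\,d\lambda$ smooth; consequently the vector field $(\epsilon,t,y)\mapsto \epsilon^{-1}W(\epsilon t,y) = \sum_j t_j W_j(\epsilon t,y)$ extends smoothly to $\epsilon = 0$. This legitimizes the ODE, and standard smooth-dependence-on-parameters yields a unique solution $y(\epsilon)=y(\epsilon;t,x)$ smooth jointly in $(\epsilon,t,x)$ on a neighborhood of $\{0\}\times\{0\}\times \R^n$. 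Defining the inverse map by $W\mapsto \gamma$ where $\gamma_t(x) := y(1;t,x)$, we automatically have $\gamma_0(x)\equiv x$ (the ODE with $t=0$ becomes $y'=0$).

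To finish I would verify the two round trips. Starting with $\gamma$, the computation above shows that $\gamma_{\epsilon t}(x)$ solves exactly the ODE used in the inverse construction with initial data $x$, so by uniqueness the map $\gamma\mapsto W\mapsto \gamma$ is the identity. Starting with $W$, let $\gamma$ be produced by the ODE and compute the associated $\widetilde W$: setting $z = \gamma_t^{-1}(x)$ and using the ODE at $\epsilon=1$,
\begin{equation*}
\widetilde W(t,x) = \frac{d}{d\epsilon}\bigg|_{\epsilon=1}\gamma_{\epsilon t}\q(z\w) = W\q(t,\gamma_t(z)\w) = W(t,x),
\end{equation*}
so $W\mapsto \gamma\mapsto W$ is also the identity.

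The only step I expect to require some care is the smoothness/invertibility framework: verifying that the ODE flow produces a germ $\gamma_t$ which is a local diffeomorphism in $x$ for $t$ small (so that $\gamma_t^{-1}$ makes sense in the second round trip) and that $W\mapsto\gamma$ indeed maps the space of germs of smooth $t$-dependent vector fields with $W(0)=0$ into the space of germs of smooth maps with $\gamma_0=\mathrm{id}$. The diffeomorphism property follows because $\gamma_0 = \mathrm{id}$ and $\gamma_t$ depends smoothly on $t$, so $d_x\gamma_t$ is close to the identity for $t$ near $0$; the rest is routine ODE theory.
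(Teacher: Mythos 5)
Your overall strategy is the same as the paper's: define the inverse by solving the ODE $y'(\epsilon)=\epsilon^{-1}W(\epsilon t,y(\epsilon))$, $y(0)=x$, set $\gamma_t(x)=y(1;t,x)$, and check both round trips. Your treatment of the first round trip is correct and identical to the paper's, and your handling of the apparent singularity at $\epsilon=0$ (writing $W(s,y)=\sum_j s_j W_j(s,y)$ so the right-hand side extends smoothly to $\epsilon=0$) is actually cleaner than the paper, which relegates existence and uniqueness to footnotes invoking Gronwall and the contraction mapping principle.

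However, the second round trip has a genuine gap. You write
\begin{equation*}
\widetilde W(t,x) = \frac{d}{d\epsilon}\bigg|_{\epsilon=1}\gamma_{\epsilon t}(z) = W\big(t,\gamma_t(z)\big),
\end{equation*}
justified by ``using the ODE at $\epsilon=1$.'' But with your definition $\gamma_t(x)=y(1;t,x)$, the curve being differentiated is $\epsilon\mapsto y(1;\epsilon t,z)$: you are rescaling the parameter $t$ while keeping the ODE time fixed at $1$. The ODE at time $1$ only tells you about $\partial_s y(s;t,z)\big|_{s=1}$, i.e.\ about the curve $s\mapsto y(s;t,z)$. These two derivatives coincide precisely when $y(1;\epsilon t,x)=y(\epsilon;t,x)$, and that identity is not automatic --- it is the nontrivial content of the second half of the paper's proof. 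The paper establishes it by showing that, for each fixed $\epsilon_0$, both $\epsilon\mapsto y(\epsilon_0\epsilon;t,x)$ and $\epsilon\mapsto y(\epsilon;\epsilon_0 t,x)$ satisfy the same ODE $\frac{d}{d\epsilon}u(\epsilon)=\frac{1}{\epsilon}W(\epsilon\epsilon_0 t,u(\epsilon))$ with $u(0)=x$, and then invoking uniqueness. Your proof needs this scaling lemma (or an equivalent argument) inserted before the displayed computation; without it, the claim that $W\mapsto\gamma\mapsto W$ is the identity is unsupported.
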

\begin{proof}
Let $\omega\q( \epsilon, t, x\w)$ be the (unique\footnote{It is easy
to see that the solution is unique, using that $W\q( 0\w)=0$, $W$ is smooth,
and the integral form of Gronwall's inequality.}) solution
to the ODE:
\begin{equation}\label{EqnODEForgamma}
\frac{d}{d \epsilon} \omega\q( \epsilon, t, x\w) = \frac{1}{\epsilon} W\q( \epsilon t, \omega\q( \epsilon,t,x\w)\w), \quad \omega\q( 0,t,x\w) =x.
\end{equation}
We will show that the (two-sided) inverse to the map $\gamma\mapsto W$ is given by $\gamma_t\q( x\w):= \omega\q( 1,t,x\w)$.\footnote{It is easy to see,
via the contraction mapping principle, using the fact that $W\q( 0\w)=0$
and $W\q( t\w)$ is smooth, that the solution of \eqref{EqnODEForgamma}
exists up to $\epsilon=1$ for $t$ sufficiently small.}
First we show that $\gamma_{\epsilon t}\q( x\w) = \omega\q( \epsilon,t,x\w)$, where $\omega$ corresponds to $W$ obtained by \eqref{EqnCurvesIIW} from $\gamma$.
Indeed, consider,
\begin{equation*}
\frac{d}{d\epsilon}\bigg|_{\epsilon=\epsilon_0} \gamma_{\epsilon t}\q( x\w) = \frac{d}{d\epsilon}\bigg|_{\epsilon=1} \frac{1}{\epsilon_0} \gamma_{\epsilon\q( \epsilon_0 t\w)} \q( x\w) = \frac{1}{\epsilon_0} W\q( \epsilon_0 t, \gamma_{\epsilon_0 t}\q( x\w)\w),
\end{equation*}
where in the last equality, we used the definition of $W$.
Thus $\gamma_{\epsilon t}\q( x\w)$ satisfies the ODE \eqref{EqnODEForgamma}.
Hence, $\gamma_t\q( x\w) = \omega\q( 1,t,x\w)$, and we may, therefore, reconstruct $\gamma$, given $W$.
We therefore have constructed a left inverse to the map $\gamma\mapsto W$.

Now we must show that if $W$ is as in the statement of the proposition,
and if we let $\omega$ be the solution to the ODE \eqref{EqnODEForgamma},
then,
\begin{equation}\label{EqnToShowRightInv}
\frac{d}{d\epsilon}\bigg|_{\epsilon=1} \omega\q( 1, \epsilon t, x\w) = W\q(  t, \omega\q( 1,t,x\w)\w).
\end{equation}
I.e., we must show our left inverse is also a right inverse.
\eqref{EqnToShowRightInv} will follow from \eqref{EqnODEForgamma} if we can show,
$\omega\q( 1,\epsilon t, x\w) = \omega\q( \epsilon, t, x\w)$.
This, in turn, will follow if we can show for every $\epsilon_0$,
$\omega\q( \epsilon_0 \epsilon, t, x\w) = \omega\q( \epsilon, \epsilon_0 t, x\w)$.
We already know that $\omega\q( \epsilon, \epsilon_0 t, x\w)$
satisfies the ODE \eqref{EqnODEForgamma} with $t$ replaced by
$\epsilon_0 t$.  Thus, we need only show the
same is true for $\omega\q( \epsilon_0 \epsilon, t, x\w)$.
However,
\begin{equation*}
\frac{d}{d\epsilon} \omega\q( \epsilon \epsilon_0, t, x\w) = \epsilon_0 \frac{1}{\epsilon \epsilon_0} W\q( \epsilon\epsilon_0 t, \omega\q( \epsilon\epsilon_0, t,x\w)\w) = \frac{1}{\epsilon} W\q( \epsilon \epsilon_0 t, \omega\q( \epsilon\epsilon_0,t,x\w)\w).
\end{equation*}
Since $\omega\q( 0\epsilon_0, t, x\w) =x$, by definition, this completes
the proof.
\end{proof}
The correspondence between $\gamma$ and $W$ given by the proof
of Proposition \ref{PropGetGammaFromW} will be an essential
point in what follows.

Now we turn back to the perspective used in Section \ref{SectionCCII}:
fixing $\delta$ and $x_0$ and working with $\q( \delta X,\sd\w)$ near $x_0$.
Let $\Omega\subseteq \R^n$ be open, and let $\Omega'\Subset\Omega''\Subset\Omega$
be open, relatively compact, subsets of $\Omega$.
We assume we are given a list of $C^\infty$ vector fields
with single-parameter formal degrees
$\q( Z,\dt\w)=\q( Z_1,\dt_1\w),\ldots,\q( Z_q,\dt_q\w)$,
$\dt_j\in \q( 0,\infty\w)$.
We fix $x_0\in \Omega'$ and assume that $\q( Z,\dt\w)$ satisfies
all of the assumptions
of the vector fields and formal degrees of
the same name in Section \ref{SectionCCII} (and
therefore we are given some $1\geq \xi>0$).  We also assume
that we are given a $C^\infty$ function
\begin{equation*}
\gh\q( t,x\w): \Q^N\q( \rho\w)\times \Omega''\rightarrow \Omega,
\end{equation*}
where $\rho>0$ is some fixed positive number, and $\gh\q( 0,x\w) \equiv x$.
Here $\rho>0$ is small enough that for $t\in \Q^{N}\q(\rho\w)$,
$\gh_t^{-1}$ exists.

For the primary example where we will apply the results
from this section,
take the same setup as in Section \ref{SectionProofOutline}.
Let $j,k\in \lA$ and set $j_0=j\wedge k$.  We will
take $\q( Z,\dt\w) = \q( 2^{-j_0} X, \sd\w)$ and
$\gh_t=\gamma_{2^{-j_0}t}$.
It is essential that all of the constants in this section
can be chosen to be independent of $x_0\in \K$ and
$j,k\in \lA$.
Note that our assumption on the list $\q( X,d\w)$ in
Section \ref{SectionCurves} implies that $\q( Z,\dt\w)= \q( 2^{-j_0}X,\sd\w)$
satisfies all of the assumptions of Section \ref{SectionCCII}
uniformly in the above parameters.  Thus, $m$-admissible constants
(as defined in Section \ref{SectionCCII}) can be chosen
to be independent of the above parameters.
In particular, the results of Theorem \ref{ThmMainCCThm} hold
uniformly in the above parameters.

We let $n_0, \eta_1, \xi_1>0$, and $\Phi: B^{n_0}\q( \eta_1\w) \rightarrow \B{Z}{\dt}{x_0}{\xi}$ be as in Theorem \ref{ThmMainCCThm}.
Furthermore, we let $Y_1,\ldots, Y_q$ be the pullbacks of $Z_1,\ldots, Z_q$
as in Theorem \ref{ThmMainCCThm}.
In order to work with our assumption that $\q( X,d\w)$ controls
$\gamma_t$ (see Definition \ref{DefnControlEveryScale}),
we introduce the following two conditions
on $\gh_t$, which will turn out to be equivalent.  All parameters
that follow are assumed to be strictly positive real numbers:
\begin{enumerate}
\item $\sQ_1\q( \rho_1,\tau_1,\q\{\sigma_1^m\w\}_{m\in \N}\w)$ ($\rho_1\leq \rho$, $\tau_1\leq \xi_1$):  For $x\in \Omega'$, define the vector field,
\begin{equation}\label{EqnDefnWhTheta}
\Wh\q( t,x\w) = \frac{d}{d\epsilon}\bigg|_{\epsilon=1} \gh_{\epsilon t}\circ \gh_{t}^{-1}\q( x\w).
\end{equation} 
We suppose,
\begin{itemize}
\item $\Wh\q( t,x\w) =\sum_{l=1}^q c_l\q( t,x\w) Z_l\q( x\w)$, on $\B{Z}{\dt}{x_0}{\tau_1}$,
\item $\sum_{\q|\alpha\w|+\q|\beta\w|\leq m} \CjN{Z^{\alpha} \partial_t^{\beta} c_l}{0}{\Q^N\q( \rho_1\w)\times \B{Z}{\dt}{x_0}{\tau_1}}\leq \sigma_1^m$.
\item Note that we may, without loss of generality, assume that
$c_l\q( 0,x\w)\equiv 0$, as we may replace $c_l\q( t,x\w)$ with $c_l\q( t,x\w)-c_l\q( 0,x\w)$ for every $l$ by using the fact that $\Wh\q( 0,x\w) \equiv 0$.
\end{itemize}

\item $\sQ_2\q( \rho_2, \tau_2,\q\{\sigma_2^m\w\}_{m\in \N}\w)$:
\begin{itemize}
\item $\gh\q(\Q^N\q(\rho_2 \w)\times \B{Z}{\dt}{x_0}{\tau_2} \w)\subseteq \B{Z}{\dt}{x_0}{\xi_1}$,
\item If $\eta'=\eta'\q( \tau_2\w)>0$ is a $2$-admissible constant so small\footnote{Such an $\eta'$ exists by Proposition \ref{PropExtraCCStuff}.}
that,
\begin{equation*}
\Phi\q( B^{n_0}\q( \eta'\w)\w)\subseteq \B{Z}{\dt}{x_0}{\tau_2}\subseteq \Phi\q( B^{n_0}\q( \eta_1\w)\w),
\end{equation*}
then if we define a new map,
\begin{equation*}
\theta_t\q( u\w) = \Phi^{-1}\circ \gamma_t\circ \Phi\q( u\w): \Q^{N}\q( \rho_2\w)\times B^{n_0}\q( \eta'\w)\rightarrow B^{n_0}\q( \eta_1\w),
\end{equation*}
we have,
\begin{equation*}
\CjN{\theta}{m}{\Q^N\q( \rho_2\w)\times B^{n_0}\q( \eta'\w)}\leq \sigma_2^m.
\end{equation*}
\end{itemize}
\end{enumerate}

\begin{rmk}
The particular parameters in $\sQ_1$ and $\sQ_2$ are not important.
All that is important is that, in our primary application, they
may be chosen independent of $j,k$ and $x_0$.  At times we will
keep track of the parameters to make clear the interdependence
between the various constants, but the reader may wish to ignore these
details on a first reading.
\end{rmk}

\begin{prop}
$\sQ_1\Leftrightarrow \sQ_2$ in the following sense:
\begin{itemize}
\item $\sQ_1\q( \rho_1, \tau_1, \q\{\sigma_1^m\w\}_{m\in \N}\w)\Rightarrow$
there exists a $2$-admissible constant $\rho_2=\rho_2\q( \rho_1,\tau_1,\sigma_1^1,N\w)$, and $m+1$-admissible constants $\sigma_2^m=\sigma_2^m\q( \sigma_1^{m+1},N\w)$
such that $\sQ_2\q( \rho_2, \frac{\tau_1}{2}, \q\{\sigma_2^m\w\}_{m\in \N}\w)$
holds.

\item $\sQ_2\q( \rho_2, \tau_2, \q\{\sigma_2^m\w\}_{m\in \N}\w)\Rightarrow$
there exists a $2$-admissible constant $\tau_1=\tau_1\q( \tau_2\w)>0$
and $m$-admissible constants $\sigma_1^m=\sigma_1^m\q( \sigma_2^{m+1},N\w)$,
such that $\sQ_1\q( \rho_2,\tau_1, \q\{\sigma_1^m\w\}_{m\in \N}\w)$ holds.
\end{itemize}
\end{prop}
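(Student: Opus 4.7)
The plan is to prove the two implications by exploiting the bijective correspondence $\gamma \leftrightarrow W$ established in Proposition \ref{PropGetGammaFromW} and the uniform smoothness/spanning properties of the pulled-back vector fields $Y_l$ given by Theorem \ref{ThmMainCCThm}. Throughout, I will repeatedly use the equivalence of norms \eqref{EqnRescaledCjN}, which translates Carnot--Carath\'eodory derivatives on $\B{Z}{\dt}{x_0}{\tau}$ into Euclidean derivatives on the corresponding ball $B^{n_0}(\eta)$ after pulling back by $\Phi$.

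For the direction $\sQ_1 \Rightarrow \sQ_2$, I would first choose a $2$-admissible $\eta' = \eta'(\tau_1/2)$ by Proposition \ref{PropExtraCCStuff} so that $\Phi(B^{n_0}(\eta')) \subseteq \B{Z}{\dt}{x_0}{\tau_1/2}$. Pulling back $\Wh = \sum c_l Z_l$ by $\Phi$ gives $\Wht(t,u) := (\Phi^{-1})_{*} \Wh(t,\cdot) = \sum_l \hat c_l(t,u)\, Y_l(u)$ with $\hat c_l = c_l \circ \Phi$; by \eqref{EqnRescaledCjN} and the assumed bounds in $\sQ_1$, the $\hat c_l$ are smooth on $\Q^N(\rho_1) \times B^{n_0}(\eta_1)$ with $C^m$ norms bounded by $m$-admissible multiples of $\sigma_1^m$, and the same holds for $Y_l$ by \eqref{EqnRescaledSmooth}. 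Since $\hat c_l(0,u) = 0$ we may factor $\hat c_l(t,u) = \sum_j t_j \hat c_{l,j}(t,u)$. Now use Proposition \ref{PropGetGammaFromW}: the map $\theta_t(u) = \Phi^{-1}\circ \gh_t\circ \Phi(u)$ is obtained by evaluating at $\epsilon = 1$ the solution $\tilde\omega(\epsilon,t,u)$ of
\begin{equation*}
\tfrac{d}{d\epsilon}\tilde\omega = \tfrac{1}{\epsilon} \Wht(\epsilon t, \tilde\omega) = \sum_{l,j} t_j \hat c_{l,j}(\epsilon t, \tilde\omega)\, Y_l(\tilde\omega), \qquad \tilde\omega(0,t,u) = u,
\end{equation*}
in which the $1/\epsilon$ singularity is absorbed. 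This is a smooth ODE with smooth coefficients on $[0,1] \times \Q^N(\rho_1) \times B^{n_0}(\eta_1)$, so standard ODE theory yields a $2$-admissible $\rho_2 = \rho_2(\rho_1,\tau_1,\sigma_1^1,N)$ (chosen so the solution starting in $B^{n_0}(\eta')$ cannot escape $B^{n_0}(\eta_1)$ before $\epsilon=1$) and $m+1$-admissible $C^m$ bounds $\sigma_2^m$ on $\theta$ depending only on $\sigma_1^{m+1}$; the containment $\gh(\Q^N(\rho_2) \times \B{Z}{\dt}{x_0}{\tau_1/2}) \subseteq \B{Z}{\dt}{x_0}{\xi_1}$ then follows from $\theta(\Q^N(\rho_2) \times B^{n_0}(\eta')) \subseteq B^{n_0}(\eta_1)$ and Theorem \ref{ThmMainCCThm}.

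For $\sQ_2 \Rightarrow \sQ_1$, I work in the rescaled coordinates. Define $\Wht(t,u) = \frac{d}{d\epsilon}|_{\epsilon=1}\theta_{\epsilon t}\circ \theta_t^{-1}(u)$; the $C^m$ smoothness of $\theta$ from $\sQ_2$ gives $C^m$ control of $\Wht$ on $\Q^N(\rho_2)\times B^{n_0}(\eta'')$ for some $\eta''$ slightly smaller than $\eta'$, with bounds depending on $\sigma_2^{m+1}$. By \eqref{EqnRescaledSpan}, $|\det_{n_0\times n_0}Y(u)|\approx_2 1$ on $B^{n_0}(\eta_1)$, so selecting a subset $J\in \sI{n_0}{q}$ realizing this maximum at $u=0$ (shrinking if needed so the same $J$ works on all of $B^{n_0}(\eta'')$), we may solve the linear system $\Wht = \sum_l \tilde c_l Y_l$ by setting $\tilde c_l = 0$ for $l \notin J$ and applying Cramer's rule for $l\in J$, obtaining smooth $\tilde c_l$ with $C^m$ norms bounded by $m$-admissible multiples of $\sigma_2^{m+1}$. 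Pushing forward via $\Phi$, setting $c_l(t,\Phi(u)) := \tilde c_l(t,u)$, gives $\Wh = \sum c_l Z_l$ on $\Phi(B^{n_0}(\eta''))$, and by Proposition \ref{PropExtraCCStuff} this contains a ball $\B{Z}{\dt}{x_0}{\tau_1}$ for some $2$-admissible $\tau_1 = \tau_1(\tau_2)$. Finally, \eqref{EqnRescaledCjN} converts the Euclidean bounds on $\tilde c_l$ into the Carnot--Carath\'eodory bounds demanded by $\sQ_1$.

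The principal technical obstacle lies in the $\sQ_1\Rightarrow \sQ_2$ direction, specifically in ensuring that the ODE solution $\tilde\omega$ exists up to $\epsilon = 1$ without escaping $B^{n_0}(\eta_1)$, while simultaneously tracking that all admissibility indices line up with the statement (note the shift by $1$ in the admissibility index of $\sigma_2^m$, which reflects the loss of one derivative when passing from $\Wht$ back to $\theta$ via the ODE). This forces the shrinking of the spatial radius from $\tau_1$ to $\tau_1/2$ and the dependence of $\rho_2$ on $\sigma_1^1$, since the Lipschitz constant of the ODE's right-hand side enters the escape-time estimate. Once the parameter bookkeeping is handled, both directions are essentially a combination of the pullback formula and routine ODE estimates.
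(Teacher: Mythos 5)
Your $\sQ_2\Rightarrow\sQ_1$ direction and the core of your $\sQ_1\Rightarrow\sQ_2$ direction (pulling back $\Wh$ to $\Wt=\sum_l\ct_l Y_l$, using $\ct_l(0,u)\equiv 0$ to absorb the $1/\epsilon$ singularity, and running the ODE to bound $\CjN{\theta}{m}{\cdot}$) match the paper's argument, including the admissibility bookkeeping. The gap is in the first bullet of $\sQ_2$, namely the containment
\[
\gh\q(\Q^N\q(\rho_2\w)\times \B{Z}{\dt}{x_0}{\tau_1/2}\w)\subseteq \B{Z}{\dt}{x_0}{\xi_1}.
\]
You assert this follows from $\theta\q(\Q^N\q(\rho_2\w)\times B^{n_0}\q(\eta'\w)\w)\subseteq B^{n_0}\q(\eta_1\w)$ together with Theorem \ref{ThmMainCCThm}, but it does not, for two reasons. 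First, $\eta'$ was chosen so that $\Phi\q(B^{n_0}\q(\eta'\w)\w)\subseteq \B{Z}{\dt}{x_0}{\tau_1/2}$, and this inclusion may be strict; your rescaled ODE argument therefore says nothing about $\gh_t(y)$ for $y\in \B{Z}{\dt}{x_0}{\tau_1/2}\setminus \Phi\q(B^{n_0}\q(\eta'\w)\w)$, and starting the ODE from a general preimage $u\in B^{n_0}\q(\eta_1\w)$ instead ruins the escape-time estimate. Second, even on $\Phi\q(B^{n_0}\q(\eta'\w)\w)$ your conclusion is $\gh_t\circ\Phi(u)\in\Phi\q(B^{n_0}\q(\eta_1\w)\w)$, and Theorem \ref{ThmMainCCThm} gives only $\B{Z}{\dt}{x_0}{\xi_1}\subseteq \Phi\q(B^{n_0}\q(\eta_1\w)\w)$ --- the reverse inclusion --- so membership in $\Phi\q(B^{n_0}\q(\eta_1\w)\w)$ does not place the image inside the Carnot--Carath\'eodory ball $\B{Z}{\dt}{x_0}{\xi_1}$. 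Note also that this containment must logically come first: $\theta=\Phi^{-1}\circ\gh_t\circ\Phi$ is only well defined once one knows $\gh_t$ maps into the image of $\Phi$, so your identification of $\theta$ with the pulled-back ODE flow quietly presupposes what is to be shown.

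The paper establishes the containment by a direct curve argument in the original coordinates, before any rescaling. For $y\in\B{Z}{\dt}{x_0}{\tau_1/2}$ and $t_0\in\Q^N\q(\rho_2\w)$ one considers $\kappa(s)=\gh_{st_0}(y)$, whose tangent is $\kappa'(s)=\sum_l \frac{1}{s}c_l(st_0,\kappa(s))Z_l(\kappa(s))$; since $c_l(0,\cdot)\equiv 0$, the mean value theorem bounds $\q|\frac{1}{s}c_l(st_0,\kappa(s))\w|$ by $\sup_s\q|\frac{d}{ds}c_l(st_0,\kappa(s))\w|\lesssim \q|t_0\w|\q(\sigma_1^1+(\sigma_1^1)^2\w)$, so for $\rho_2$ small the curve witnesses $\gh_{t_0}(y)\in\B{Z}{\dt}{y}{\tau_1/2}\subseteq\B{Z}{\dt}{x_0}{\tau_1}\subseteq\B{Z}{\dt}{x_0}{\xi_1}$. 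This is the same observation as your factorization $\widehat{c}_l=\sum_j t_j\widehat{c}_{l,j}$, but deployed as a Carnot--Carath\'eodory length estimate rather than inside the rescaled ODE; you need to add this step (or an equivalent) to legitimize both the first bullet of $\sQ_2$ and the definition of $\theta$ itself.
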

\begin{proof}
Suppose $\sQ_1\q( \rho_1,\tau_1,\q\{\sigma_1^m\w\}_{m\in \N}\w)$ holds.
Fix $\rho_2>0$ small, to be chosen in a moment.
Suppose
\begin{equation*}
z\in \gh\q(\Q^N\q( \rho_2\w)\times \B{Z}{\dt}{x_0}{\frac{\tau_1}{2}} \w);
\end{equation*}
and so there exists $t_0\in \Q^N\q( \rho_2\w)$ and $y\in \B{Z}{\dt}{x_0}{\frac{\tau_1}{2}}$
such that $z=\gh_{t_0}\q( y\w)$.  We wish to show that $z\in \B{Z}{\dt}{x_0}{\xi_1}$ (by appropriately choosing $\rho_2$).  To do so, it suffices
to show $z\in \B{Z}{\dt}{y}{\frac{\tau_1}{2}}$, as then we would have
$z\in \B{Z}{\dt}{x_0}{\tau_1}\subseteq \B{Z}{\dt}{x_0}{\xi_1}$.
Define a curve $\kappa:\q[0,1\w]\rightarrow \Omega$ by
\begin{equation*}
\kappa\q( s\w) = \gh_{st_0}\q( y\w).
\end{equation*}
Note that $\kappa\q( 0\w) = y$ and $\kappa\q( 1\w) = z$.  Also,
\begin{equation*}
\frac{d}{ds} \kappa\q( s\w) = \frac{1}{s} W\q( st_0, \kappa\q( s\w)\w) = \sum_{l=1}^q \frac{1}{s} c_l\q( st_0, \kappa\q( s\w)\w) Z_l\q( \kappa\q( s\w)\w).
\end{equation*}
Since\footnote{As remarked above, we may assume $c_l\q( 0,x\w)\equiv 0$.} $c_l\q( st_0, \kappa\q( s\w)\w)\big|_{s=0}=0$, the mean value theorem
shows that, for $0<s<1$,
\begin{equation*}
\begin{split}
\q|\frac{1}{s} c_l\q( st_0, \kappa\q( s\w)\w)\w| &\leq \sup_{0\leq s\leq 1} \q|\frac{d}{ds} c_l\q( st_0, \kappa\q( s\w)\w)\w|\\
&\leq \sup_{0\leq s\leq 1} \q[\q|t_0\cdot \grad_t c_l\q( s t_0, \kappa\q( s\w)\w)\w| + \sum_{l=1}^q \q|\q(\frac{1}{s} c_l\q( st_0\w) Z_l c_l \w)\q(st_0, \kappa\q( s\w) \w)\w|\w]\\
&\lesssim \q|t_0\w|\q( \sigma_1^1 + \q(\sigma_1^1\w)^2\w),
\end{split}
\end{equation*}
where we have used that $\q|\frac{1}{s}c_l\q( st_0,\cdot\w)\w|\leq \sigma_1^1$,
and in the last line, we are thinking of $c_l\q( st_0\w) Z_l c_l$ as
the vector field $c_l\q( st_0,\cdot\w) Z_l\q( \cdot\w)$ acting on the
function $c_l\q( s t_0,\cdot\w)$.

Thus, $\frac{d}{ds} \kappa\q( s\w) = \sum_{l=1}^q a_l\q( s\w) Z_l\q( \kappa\q( s\w)\w)$, where $\q|a_l\w|\lesssim \q|t_0\w|$.  By taking $\q|t_0\w|$ sufficiently
small (i.e., by taking $\rho_2$ sufficiently small), we may
insure that $z=\kappa\q( 1\w) \in \B{Z}{\dt}{y}{\frac{\tau_1}{2}}$.

Hence, it makes since to define $\eta'>0$ and $\theta_t$ as in
the definition of $\sQ_2$.  It follows directly
from the definition of $\theta$ that $\theta_0\q( u\w) \equiv u$.
Furthermore, pulling back \eqref{EqnDefnWhTheta} via
the map $\Phi$ shows that,
\begin{equation}\label{EqnDefnWtTheta}
\frac{d}{d\epsilon}\bigg|_{\epsilon=0} \theta_{\epsilon t}\circ \theta_t^{-1}\q( u\w) = \Wt\q( t, u\w),
\end{equation}
where,
\begin{equation*}
\begin{split}
\Wt\q( t,u\w) &= \sum_{l=1}^q \ct_l\q( t,u\w) Y_l\q( u\w),\\
\ct_l\q(t,u\w) &= c_l\q( t,\Phi\q( u\w)\w).
\end{split}
\end{equation*}

We claim,
\begin{equation}\label{EqnToShowCmNWt}
\CjN{\Wt}{m}{\Q^N\q( \rho_2\w)\times B^{n_0}\q( \eta'\w)} \leq C_m,
\end{equation}
where $C_m=C_m\q( \sigma_1^m\w)$ is an $m$-admissible constant.
Indeed, we already have the result for $Y_l$ by \eqref{EqnRescaledSmooth}.
Hence it suffices to bound the $C^m$ norm of $\ct_l$.  However,
applying (a slight modification of) \eqref{EqnRescaledCjN},\footnote{This
slight modification can be proved easily by combining \eqref{EqnRescaledSmooth}
and \eqref{EqnRescaledSpan}.}
\begin{equation}\label{EqnThetaEstCmNorm}
\CjN{\ct_l}{m}{\Q^N\q( \rho_2\w)\times B^{n_0}\q( \eta'\w)}\approx_{m-1} \sum_{\q|\alpha\w|+\q|\beta\w|\leq m} \CjN{Y^{\alpha} \partial_t^{\beta} \ct_l}{0}{\Q^N\q( \rho_2\w)\times B^{n_0}\q( \eta'\w) }.
\end{equation}
Note that,
\begin{equation*}
Y^{\alpha} \partial_t^\beta \ct_l\q( t,u\w) = \q(Z^\alpha \partial_t^{\beta} c_l \w)\q( t,\Phi\q( u\w)\w).
\end{equation*}
Combining this with the fact that,
\begin{equation*}
\Q^N\q( \rho_2\w) \times \Phi\q( B^{n_0}\q( \eta'\w)\w)\subseteq \Q^{N}\q( \rho_1\w) \times \B{Z}{\dt}{x_0}{\tau_1},
\end{equation*}
we have from \eqref{EqnThetaEstCmNorm},
\begin{equation*}
\CjN{\ct_l}{m}{\Q^N\q( \rho_2\w)\times B^{n_0}\q( \eta'\w) } \lesssim_{m-1} \sum_{ \q|\alpha\w|+\q|\beta\w|\leq m} \CjN{Z^{\alpha} \partial_t^{\beta} c_l}{0}{\Q^N\q( \rho_1\w)\times \B{Z}{\dt}{x_0}{\tau_1}} \leq \sigma_1^m,
\end{equation*}
completing the proof of \eqref{EqnToShowCmNWt}.

To complete the proof of $\sQ_1\Rightarrow \sQ_2$, we bound
the $C^m$ norm of $\theta$.  For
$t\in \Q^N\q( \rho_2\w)$, $u\in B^{n_0}\q( \eta'\w)$, and
$\epsilon\in \q[0,1\w]$, define a function,
\begin{equation*}
\omega\q( \epsilon, t,u\w) = \theta\q( \epsilon t, u\w).
\end{equation*}
In light of \eqref{EqnDefnWtTheta}, $\omega$ satisfies
the ODE in the $\epsilon$ variable:
\begin{equation*}
\frac{d}{d\epsilon} \omega\q( \epsilon, t, u\w) = \frac{1}{\epsilon} \Wt\q( \epsilon t, \omega\q( \epsilon, t, u\w)\w), \quad \omega\q( 0, t, u\w) =u.
\end{equation*}
Using that $\Wt\q( 0,u\w) =0$ and that $\Wt$ is smooth, standard
theorems for ODEs show that there exists $\sigma_2^m=\sigma_2^m\q( C_{m+1},m,N\w)$
such that,
\begin{equation*}
\CjN{\omega}{m}{\q[0,1\w]\times \Q^N\q( \rho_2\w)\times B^{n_0}\q( \eta'\w)}\leq \sigma_2^m.
\end{equation*}
Since $\theta\q( t,u\w) = \omega\q( 1,t,u\w)$, this completes the proof of
$\sQ_1\Rightarrow \sQ_2$.

Suppose that $\sQ_2\q( \rho_2, \tau_2, \q\{\sigma_2^m\w\}_{m\in \N}\w)$
holds.  Define,
\begin{equation*}
\Wt\q( t,u\w) = \frac{d}{d\epsilon}\bigg|_{\epsilon=1} \theta_{\epsilon t}\circ\theta_{t}^{-1}\q( u\w).
\end{equation*}
Then,
\begin{equation*}
\CjN{\Wt}{m}{\Q^N\q( \rho_2\w)\times B^{n_0}\q( \eta'\w)} \leq C_m \sigma_2^{m+1},
\end{equation*}
where $C_m$ depends only on $m$.  Using \eqref{EqnRescaledSmooth}
and \eqref{EqnRescaledSpan}, we see that,
\begin{equation}\label{EqnQ2impQ1WtForm}
\Wt\q( t,u\w) = \sum_{l=1}^q \ct_l \q( t,u\w) Y_l\q( u\w),
\end{equation}
and,
\begin{equation*}
\CjN{\ct_l}{m}{\Q^N\q( \rho_2\w)\times B^{n_0}\q( \eta'\w)} \leq C_m',
\end{equation*}
where $C_m'=C_m'\q( \sigma_2^{m+1}\w)$ is an $m$-admissible constant.
Applying, again a slight modification of, \eqref{EqnRescaledCjN},
we have,
\begin{equation}\label{EqnQ2ImpQ1Boundclt}
\sum_{\q|\alpha\w|+\q|\beta\w|\leq m} \CjN{Y^{\alpha} \partial_t^\beta \ct_l}{0}{\Q^N\q( \rho_2\w)\times B^{n_0}\q(\eta'\w)}\approx_{m-1} \CjN{\ct_l}{m}{\Q^N\q( \rho_2\w)\times B^{n_0}\q( \eta'\w)}\leq C_m'.
\end{equation}
Using Proposition \ref{PropExtraCCStuff}, let $\tau_1=\tau_1\q(\eta'\w)>0$
be a $2$-admissible constant such that,
\begin{equation*}
\B{Z}{\dt}{x_0}{\tau_1}\subseteq \Phi\q( B^{n_0}\q(\eta'\w)\w).
\end{equation*}
Pushing \eqref{EqnQ2impQ1WtForm} and \eqref{EqnQ2ImpQ1Boundclt} forward
via the map $\Phi$, and taking $\Wh$ as in the definition of
$\sQ_1$, we see for $x\in \Phi\q( B^{n_0}\q( \eta'\w)\w)$ and $c_l\q( t,x\w)=\ct_l\q( t,\Phi^{-1}\q( x\w)\w)$,
\begin{equation*}
\Wh\q(t,x\w)=\sum_{l=1}^q c_l\q( t,x\w) Z_l\q( x\w),
\end{equation*}
\begin{equation*}
\sum_{\q|\alpha\w|+\q|\beta\w|\leq m} \CjN{Z^\alpha \partial_t^\beta c_l}{0}{\Q^N\q(\rho_2\w)\times \B{Z}{\dt}{x_0}{\tau_1}}\lesssim_{m-1} C_m',
\end{equation*}
completing the proof.
\end{proof}

\begin{defn}
We say $\q( Z,\dt\w)$ controls $\gh$ at the unit scale if either
of the equivalent conditions $\sQ_1$ or $\sQ_2$ holds (for some
choice of the parameters).  If we wish to make the point
$x_0$ explicit, we will say $\q( Z,\dt\w)$ controls $\gh$ at the unit scale
near $x_0$.
\end{defn}

\begin{prop}\label{PropConnectionBetweenControlEveryScaleAndUnit}
Let $\q( X,d\w)$ and $\gamma$ be as in Section \ref{SectionCurves}.
$\q( X,d\w)$ controls $\gamma$ if and only
if $\q( \delta X, \sd\w)$ controls $\gamma_{\delta t}$ at the
unit scale near $x_0$ for every $\delta\in \sA$ and $x_0\in \K$, and the parameters
of $\sQ_1$ (equivalently $\sQ_2$) can be chosen independent
of $x_0\in \K$ and $\delta\in \sA$.
\end{prop}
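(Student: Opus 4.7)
The plan is to verify that, once one sets $\hat{\gamma}_t := \gamma_{\delta t}$ and $(Z,\dt) := (\delta X,\sd)$, the condition $\sQ_1$ of Section \ref{SectionCurvesII} for $(Z,\dt)$ controlling $\hat{\gamma}$ at the unit scale near $x_0$ (with parameters uniform in $\delta\in\sA$ and $x_0\in\K$) is literally the same statement as Definition \ref{DefnControlWEveryScale} applied to $(X,d)$ and the vector field $W$ from \eqref{EqnCurvesIIW}. So the proof is essentially unwinding definitions, and the key identity is the one between the $W$'s.

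First I would compute the analogue $\Wh$ of $W$ associated to $\hat{\gamma}$. Using the definition \eqref{EqnDefnWhTheta} and the fact that $\hat{\gamma}_{\epsilon t}\circ \hat{\gamma}_t^{-1} = \gamma_{\epsilon\delta t}\circ \gamma_{\delta t}^{-1}$, one obtains
\begin{equation*}
\Wh\q(t,x\w) \;=\; \frac{d}{d\epsilon}\bigg|_{\epsilon=1}\gamma_{\epsilon\delta t}\circ\gamma_{\delta t}^{-1}\q(x\w) \;=\; W\q(\delta t,x\w).
\end{equation*}
Next I would use the scaling identity recalled at the start of Section \ref{SectionCCII}: $\B{\delta X}{\sd}{x_0}{r} = \B{X}{d}{x_0}{r\delta}$ for any scalar $r\ge 0$. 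Substituting $Z_l=\delta^{d_l}X_l$ into the first bullet of $\sQ_1$, the representation
\[ \Wh\q(t,x\w)=\sum_{l=1}^q c_l^{x_0,\delta}\q(t,x\w)\,Z_l\q(x\w)\quad\text{on }\B{Z}{\dt}{x_0}{\tau_1} \]
becomes, via the identity above,
\[ W\q(\delta t,x\w)=\sum_{l=1}^q c_l^{x_0,\delta}\q(t,x\w)\,\delta^{d_l}X_l\q(x\w)\quad\text{on }\B{X}{d}{x_0}{\tau_1\delta}, \]
which is exactly the first bullet of Definition \ref{DefnControlWEveryScale}. Since $Z=\delta X$, the derivative norms $\CjN{Z^\alpha \partial_t^{\beta} c_l^{x_0,\delta}}{0}{\cdot}$ and $\CjN{(\delta X)^{\alpha} \partial_t^{\beta} c_l^{x_0,\delta}}{0}{\cdot}$ coincide, so the second bullet of $\sQ_1$ with $\sigma_1^m$ independent of $\delta,x_0$ matches the finiteness/uniformity requirement of Definition \ref{DefnControlWEveryScale} term-for-term.

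Combining the two directions of this translation yields the equivalence: by Definition \ref{DefnControlEveryScale}, $(X,d)$ controls $\gamma$ iff $(X,d)$ controls $W$ in the sense above, and by the identification just made this is equivalent to $(\delta X,\sd)$ satisfying $\sQ_1$ (with parameters uniform in $\delta,x_0$) for $\hat{\gamma}_t=\gamma_{\delta t}$, i.e., to $(\delta X,\sd)$ controlling $\gamma_{\delta t}$ at the unit scale near $x_0$. There is no real obstacle beyond careful bookkeeping; the only point requiring any care is to remember that $c_l^{x_0,\delta}$ is a function of the \emph{unscaled} variable $t$ on both sides, so the $t$-domain $\Q^N(\rho_1)$ is unchanged by the identification $\hat{\gamma}_t=\gamma_{\delta t}$, and the uniformity of $\rho_1$ in $\delta$ and $x_0$ in Definition \ref{DefnControlWEveryScale} exactly matches the uniformity of $\rho_1$ required of $\sQ_1$.
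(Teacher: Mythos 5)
Your proposal is correct and is essentially identical to the paper's proof: both set $\gh_t=\gamma_{\delta t}$, observe the key identity $\Wh(t,x)=W(\delta t,x)$, and then conclude by matching $\sQ_1$ term-for-term against Definition \ref{DefnControlWEveryScale}. Your version simply spells out the ball identity $\B{\delta X}{\sd}{x_0}{\tau_1}=\B{X}{d}{x_0}{\tau_1\delta}$ and the norm bookkeeping that the paper leaves implicit.
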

\begin{proof}
Let $\delta\in \sA$ and $x_0\in \K$.  Define $\gh_t= \gamma_{\delta t}$.
Note that if $W\q( t,x\w)$ is as in Definition \ref{DefnControlEveryScale}
and if $\Wh\q( t,x\w)$ is as in $\sQ_1$, then,
\begin{equation*}
\Wh\q( t,x\w) = W\q( \delta t,x\w).
\end{equation*}
Now the result follows just by comparing $\sQ_1$ and Definition \ref{DefnControlWEveryScale}.
\end{proof}

In light of Proposition \ref{PropConnectionBetweenControlEveryScaleAndUnit},
to work with our assumption that $\q( X,d\w)$ controls $\gamma$, we may
instead work ``at the unit scale.''  For the rest of this section
we work in this perspective, with the understanding
that Proposition \ref{PropConnectionBetweenControlEveryScaleAndUnit}
allows us to transfer our results from the ``unit scale'' to
``every scale.''

\begin{prop}\label{PropControlCompAndInv}
If $\q( Z,\dt\w)$ controls $\gh_{t_1}^1$ and $\gh_{t_2}^2$ at the unit scale,
then $\q( Z,\dt\w)$ controls $\gh_{t_1}^1\circ \gh_{t_2}^2$ at the unit scale.
If $\q( Z,\dt\w)$ controls $\gh_t$ at the unit scale, then $\q( Z,\dt\w)$
controls $\gh_t^{-1}$ at the unit scale.
\end{prop}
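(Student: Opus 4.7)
The plan is to verify the equivalent condition $\sQ_2$ after pulling back by the map $\Phi$ from Theorem \ref{ThmMainCCThm}. Set $\theta^i_t\q(u\w) = \Phi^{-1}\circ \gh^i_t\circ \Phi\q(u\w)$ for $i=1,2$; by $\sQ_2$ each $\theta^i$ is a smooth map from $\Q^{N_i}\q(\rho_2^i\w)\times B^{n_0}\q(\eta_i'\w)$ into $B^{n_0}\q(\eta_1\w)$, with $\theta^i_0=\mathrm{id}$ and uniform $C^m$ bounds. The question reduces to showing that this class of rescaled maps is closed under composition (with concatenated parameters) and under inversion; Proposition \ref{PropExtraCCStuff} will then translate the resulting Euclidean control back into the image-containment clause of $\sQ_2$.

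For the composition claim I would view $\gh^1_{t_1}\circ \gh^2_{t_2}$ as a new $\gh$ with combined parameter $t=\q(t_1,t_2\w)\in \R^{N_1+N_2}$; its $\Phi$-pullback is $\theta^1_{t_1}\circ \theta^2_{t_2}$. Since $\theta^2_0\q(u\w)\equiv u$, the uniform $C^m$ bounds on $\theta^2$ produce $2$-admissible constants $\eta''\leq \eta_1'\wedge \eta_2'$ and $\rho\leq \rho_2^1\wedge \rho_2^2$ so that $\theta^2_{t_2}\q(B^{n_0}\q(\eta''\w)\w)\subseteq B^{n_0}\q(\eta_1'\w)$ whenever $\q|t_2\w|<\rho$. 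Thus $\theta^1_{t_1}\circ \theta^2_{t_2}$ is defined and smooth on $\Q^{N_1+N_2}\q(\rho\w)\times B^{n_0}\q(\eta''\w)$, and its $C^m$ norm is bounded via the chain rule by a polynomial in the $C^m$ bounds of $\theta^1$ and $\theta^2$. Pushing forward through $\Phi$ gives the pullback clause of $\sQ_2$ for the composition, while the image-containment clause is obtained by picking $\tau$ via Proposition \ref{PropExtraCCStuff} so that $\B{Z}{\dt}{x_0}{\tau}\subseteq \Phi\q(B^{n_0}\q(\eta''\w)\w)$, which forces the image to sit inside $\Phi\q(B^{n_0}\q(\eta_1\w)\w)\subseteq \B{Z}{\dt}{x_0}{\xi}$.

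For the inverse assertion I would apply the same scheme with the pullback $\theta_t^{-1}$ of $\gh_t^{-1}$. The hypothesis yields $\theta_t$ smooth on $\Q^N\q(\rho_2\w)\times B^{n_0}\q(\eta'\w)$ with $\theta_0=\mathrm{id}$, so the uniform $C^2$ bound on $\theta$ forces $\q\|D_u\theta_t-I\w\|<\tfrac{1}{2}$ on a slightly smaller ball once $\q|t\w|$ is bounded by an appropriate $2$-admissible constant. The inverse function theorem then gives $\theta_t^{-1}$ as a smooth map on some fixed $B^{n_0}\q(\eta'''\w)$ for $t$ in a small polydisc, and standard implicit-function bounds control its $C^m$ norms in terms of the $C^m$ norms of $\theta$ together with $\q\|\q(D_u\theta_t\w)^{-1}\w\|$, both already controlled. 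Pushing back through $\Phi$ and once more invoking Proposition \ref{PropExtraCCStuff} yields $\sQ_2$ for $\gh_t^{-1}$.

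The main obstacle is bookkeeping: one must pick the nested shrunken radii $\rho$, $\eta''$, $\eta'''$ so that each step's domain fits inside the previous step's region of validity, and verify that every shrinkage is $2$-admissible so that the resulting $\sQ_2$ parameters remain uniform in $x_0$ and $\delta$ in the eventual application to $\q(Z,\dt\w)=\q(\delta X,\sd\w)$. No deeper tool than the chain rule and the quantitative inverse function theorem is needed, but the parameter-tracking must mesh cleanly with the admissibility scheme from Section \ref{SectionCCII}.
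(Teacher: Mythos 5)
Your proposal is correct and follows essentially the same route as the paper, which verifies both claims through $\sQ_2$ by factoring $\Phi^{-1}\circ\gh^1_{t_1}\circ\gh^2_{t_2}\circ\Phi$ as the composition of the two conjugated maps, bounding $C^m$ norms by the chain rule after shrinking parameters, and leaving the inverse case and the bookkeeping to the reader. Your write-up simply supplies the details the paper omits, most usefully the quantitative inverse function theorem step (using $\theta_0=\mathrm{id}$ to control $D_u\theta_t - I$) for $\gh_t^{-1}$.
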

\begin{proof}
Both of the above statements are more easily verified using
$\sQ_2$.  Indeed, the $C^m$ norm of,
\begin{equation*}
\Phi^{-1}\circ \gh_{t_1}^1\circ \gh_{t_2}^2\circ \Phi = 
\q(\Phi^{-1} \circ \gh_{t_1}^1\circ \Phi \w)
\circ
\q(\Phi^{-1} \circ \gh_{t_2}^2\circ \Phi \w)
\end{equation*}
can clearly be bounded in terms of the $C^m$ norms of,
\begin{equation*}
\Phi^{-1} \circ \gh_{t_1}^1\circ \Phi 
\text{ and }
\Phi^{-1} \circ \gh_{t_2}^2\circ \Phi,
\end{equation*}
provided one shrinks the parameters $\rho_2$ and $\tau_2$ appropriately
(so that the composition is defined).  A similar proof works
for $\gh_t^{-1}$.  We leave further details to the interested reader.
\end{proof}

\begin{prop}\label{PropScalingOfControl}
If $\q( Z,d\w)$ controls $\gh_t$ at the unit scale, and $c=\q( c_1,\ldots, c_N\w)\in \q[0,1\w]^N$ is a constant, then $\q( Z,\dt\w)$ controls,
\begin{equation*}
\gh^c\q( \q(t_1,\ldots, t_N\w),x\w) := \gh\q(\q( c_1t_1,\ldots, c_Nt_N\w),x \w),
\end{equation*}
at the unit scale.  Moreover, the parameters in the definition of control
at the unit scale may be chosen independent of $c$.
\end{prop}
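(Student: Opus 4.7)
The plan is to verify condition $\sQ_1$ for $\gh^c$ using the condition $\sQ_1$ assumed to hold for $\gh$. All that has to be checked is that the parameters do not deteriorate as $c$ varies over $[0,1]^N$.

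First I would compute the vector field $\Wh^c$ associated to $\gh^c$ via \eqref{EqnDefnWhTheta}. For any $\epsilon$, we have $\gh^c_{\epsilon t}(x) = \gh_{\epsilon(ct)}(x)$ where $ct = (c_1t_1,\ldots,c_Nt_N)$, and similarly $(\gh^c_t)^{-1} = \gh_{ct}^{-1}$. Differentiating at $\epsilon = 1$ gives the identity
\begin{equation*}
\Wh^c(t,x) = \frac{d}{d\epsilon}\bigg|_{\epsilon=1}\gh_{\epsilon(ct)}\circ\gh_{ct}^{-1}(x) = \Wh(ct,x).
\end{equation*}
Hence if $\sQ_1(\rho_1,\tau_1,\{\sigma_1^m\})$ holds for $\gh$ with coefficients $c_l(t,x)$, then $\gh^c$ admits the expansion $\Wh^c(t,x) = \sum_{l=1}^q c_l^c(t,x)Z_l(x)$ on $\B{Z}{\dt}{x_0}{\tau_1}$, where $c_l^c(t,x) := c_l(ct,x)$, and $c_l^c(0,x) \equiv 0$ follows from $c_l(0,x)\equiv 0$.

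The key observation is that if $t\in \Q^N(\rho_1)$ and $c\in[0,1]^N$, then $|ct|^2 = \sum_j c_j^2 t_j^2 \leq |t|^2$, so $ct\in\Q^N(\rho_1)$; the domain is preserved. Also, $\partial_t^\beta c_l^c(t,x) = c^\beta (\partial_t^\beta c_l)(ct,x)$ with $c^\beta = \prod_j c_j^{\beta_j} \in [0,1]$, and $Z^\alpha$ only differentiates in $x$, so
\begin{equation*}
\bigl|Z^\alpha\partial_t^\beta c_l^c(t,x)\bigr| = c^\beta\,\bigl|(Z^\alpha\partial_t^\beta c_l)(ct,x)\bigr| \leq \bigl|(Z^\alpha\partial_t^\beta c_l)(ct,x)\bigr|.
\end{equation*}
Taking suprema over $(t,x)\in \Q^N(\rho_1)\times \B{Z}{\dt}{x_0}{\tau_1}$, we obtain
\begin{equation*}
\sum_{|\alpha|+|\beta|\leq m}\CjN{Z^\alpha\partial_t^\beta c_l^c}{0}{\Q^N(\rho_1)\times \B{Z}{\dt}{x_0}{\tau_1}} \leq \sigma_1^m,
\end{equation*}
uniformly in $c\in[0,1]^N$. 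This verifies $\sQ_1(\rho_1,\tau_1,\{\sigma_1^m\})$ for $\gh^c$ with the same parameters as for $\gh$, completing the proof.

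I do not anticipate any real obstacle here: the monotone behavior of multiplication by $c_j\in[0,1]$ under both the $t$-derivatives and the ball $\Q^N(\rho_1)$ makes the estimate immediate, and this is arguably why $\sQ_1$ was stated in a form that separates $t$-derivatives from $Z$-derivatives. The only mild subtlety is noting that no $Z$-derivatives land on the rescaling factor $c$, so the estimates really do descend without loss.
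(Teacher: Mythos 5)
Your proof is correct and follows exactly the route the paper intends: the paper's own proof simply declares the result ``obvious from the definition of control (using either $\sQ_1$ or $\sQ_2$),'' and your argument is the honest fleshing-out of the $\sQ_1$ version, via the identity $\Wh^c\q(t,x\w)=\Wh\q(ct,x\w)$ and the observations that $ct\in\Q^N\q(\rho_1\w)$ and that $t$-differentiation only produces harmless factors $c^\beta\leq 1$. Nothing to add.
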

\begin{proof}
This is obvious from the definition of control (using either $\sQ_1$ or $\sQ_2$).
\end{proof}

So far in this section, we have covered the necessary technical
details to deal with our assumption that
$\q( X,d\w)$ controls $\gamma$.
The other part of our assumption was that the vector fields
with formal degrees $\q( X_j,d_j\w)$ were generated by vector fields
with formal degrees
of the form $\q( X_\alpha, \deg\q( \alpha\w)\w)$,
where
$t^{\alpha} X_\alpha$ was a term in the Taylor series
of $W\q( t,x\w)$, when taken in the $t$ variable, and
$\deg\q( \alpha\w)$ was nonzero in only one component.
We now turn to the various definitions and theorems necessary
to deal with this assumption.  Again, we work
at the unit scale.  We continue to take $\q( Z,\dt\w)$, $\gh_t\q( x\w)$,
and $x_0$ as above.
We assume, throughout the rest of the section,
that $\q( Z,\dt\w)$ controls $\gh$ at the unit scale (near $x_0$).

\begin{defn}\label{DefnZ1ZrMGenerates}
Let $\q\{ Z_1,\ldots, Z_r\w\}$ be a subset of $\q\{ Z_1,\ldots, Z_q\w\}$.
For $M\in \N$, we say $Z_1,\ldots, Z_r$ $M$-generates $Z_1,\ldots, Z_q$
if each $Z_j$ ($r+1\leq j\leq q$) can be written in the form,
\begin{equation*}
Z_j= \ad{Z_{l_1}}\ad{Z_{l_2}}\cdots\ad{Z_{l_m}} Z_{l_{m+1}}, \quad 0\leq m\leq M-1, \quad l\leq l_k\leq r.
\end{equation*}
\end{defn}

Define $\Wh\q( t,x\w)$ as the vector field,
\begin{equation*}
\Wh\q(t,x\w)=\frac{d}{d\epsilon}\bigg|_{\epsilon=1} \gh_{\epsilon t}\circ \gh_{t}^{-1}\q( x\w).
\end{equation*}
Let $Z_{\alpha}\q( x\w)$ be the Taylor coefficients of $W\q( t,x\w)$ when
the Taylor series is taken in the $t$ variable:
\begin{equation*}
\Wh\q( t,x\w) \sim \sum_{\q|\alpha\w|>0} t^\alpha Z_\alpha\q( x\w).
\end{equation*}

\begin{defn}
For $M_1,M_2\in \N$, we say $\gh$ $M_1,M_2$ generates $Z_1,\ldots, Z_q$
if there is a subset $\sF\subseteq \q\{Z_\alpha: \q|\alpha\w|\leq M_1\w\}\cap \q\{Z_1,\ldots, Z_q\w\}$, such that $\sF$ $M_2$-generates $Z_1,\ldots, Z_q$.
\end{defn}

Since we are assuming that $\q( Z,\dt\w)$ controls $\gh$ at the unit
scale, we may pull $\gh$ back under the map $\Phi$
as in $\sQ_2$ to obtain the $C^\infty$ map:
\begin{equation*}
\theta_t\q( u\w)=\Phi^{-1} \circ \gh_t\circ \Phi\q( u\w): \Q^N\q( \rho_2\w)\times B^{n_0}\q( \eta'\w)\rightarrow B^{n_0}\q( \eta\w),
\end{equation*}
as in $\sQ_2$.  As usual, define $Y_1,\ldots Y_q$ to be the pullbacks
of $Z_1,\ldots, Z_q$ under the map $\Phi$.  Define,
\begin{equation*}
\Wt\q( t,u\w) = \frac{d}{d\epsilon}\bigg|_{\epsilon=1} \theta_{\epsilon t}\circ \theta_{t}^{-1} \q( u\w).
\end{equation*}
Let $Y_\alpha\q( u\w)$ be the Taylor coefficients of $\Wt$ expressed
as a Taylor series in the $t$-variable:
\begin{equation*}
\Wt\q( t,u\w) \sim \sum_{\q|\alpha\w|>0} t^{\alpha} Y_{\alpha}.
\end{equation*}
It is immediate to verify that $Y_\alpha$ is the pullback of
$Z_\alpha$ under the map $\Phi$.

\begin{thm}\label{ThmM1M2Generates}
Suppose $\gh$ $M_1,M_2$ generates $Z_1,\ldots, Z_q$.  Then, the set of
vector fields
\begin{equation}\label{EqnGeneratesToShowHor}
\q\{Y_\alpha: \q|\alpha\w|\leq M_1\w\},
\end{equation}
satisfies H\"ormander's conditions of order $M_2$ on $B^{n_0}\q( \eta'\w)$.
More specifically, if we let $V_1,\ldots V_L$ denote those vector fields
in \eqref{EqnGeneratesToShowHor} along with all commutators
of those vector fields in \eqref{EqnGeneratesToShowHor} up to order $M_2$,
then,
\begin{equation*}
\q|\det_{n_0\times n_0} V\q( u\w) \w|_{\infty} \gtrsim_2 1, \quad \forall u\in B^{n_0}\q( \eta'\w),
\end{equation*}
where we have written $V$ to denote the matrix whose columns are $V_1,\ldots, V_L$.
In short, $\theta$ satisfies $\cZ$.
\end{thm}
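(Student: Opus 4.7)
The plan is to exploit the fact that, as noted just before the theorem statement, $Y_\alpha$ is precisely the pullback of $Z_\alpha$ under $\Phi$. Since pullback by a diffeomorphism is a Lie algebra homomorphism on vector fields, any iterated commutator expression that produces $Z_j$ from elements of $\{Z_\alpha : |\alpha| \leq M_1\}$ will, upon pulling back, produce $Y_j$ from the corresponding elements of $\{Y_\alpha : |\alpha| \leq M_1\}$. So the work is mostly bookkeeping: transporting the $M_1,M_2$-generation hypothesis downstairs through $\Phi$, and then invoking \eqref{EqnRescaledSpan} from Theorem \ref{ThmMainCCThm}.

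First I would unpack the hypothesis. By the definition of ``$\gh$ $M_1,M_2$-generates $Z_1,\ldots,Z_q$,'' there is a subset $\sF \subseteq \{Z_\alpha : |\alpha|\leq M_1\}\cap\{Z_1,\ldots,Z_q\}$ that $M_2$-generates $Z_1,\ldots,Z_q$ in the sense of Definition \ref{DefnZ1ZrMGenerates}; that is, each $Z_j$ can be written as an iterated commutator of length at most $M_2-1$ of vector fields in $\sF$. Applying $\Phi^{-1}_*$ to any such commutator identity, and using $\Phi^{-1}_*[A,B]=[\Phi^{-1}_*A,\Phi^{-1}_*B]$, yields an identical identity expressing each $Y_j$ as an iterated commutator of length at most $M_2-1$ of elements of $\{Y_\alpha : |\alpha|\leq M_1\}\cap\{Y_1,\ldots,Y_q\}$. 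Every such commutator has order at most $M_2$, so each $Y_j$ appears among the list $V_1,\ldots,V_L$ formed in the statement of the theorem.

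Next I would conclude by linear algebra. Since $V_1,\ldots,V_L$ contains $Y_1,\ldots,Y_q$ as a sublist, the $n\times L$ matrix $V(u)$ has the $n\times q$ matrix $Y(u)$ as a submatrix (in the column sense). Consequently, the vector $\Det{n_0} V(u)$ has $\Det{n_0} Y(u)$ as a subvector of components, giving
\[
\big|\Det{n_0} V(u)\big|_{\infty} \geq \big|\Det{n_0} Y(u)\big|_{\infty}.
\]
Now \eqref{EqnRescaledSpan} of Theorem \ref{ThmMainCCThm} says $|\Det{n_0} Y(u)| \approx_2 1$ on $B^{n_0}(\eta_1)$, and since $\eta'\leq \eta_1$ this persists on $B^{n_0}(\eta')$. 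Combining these two estimates yields the desired bound $|\Det{n_0} V(u)|_\infty \gtrsim_2 1$ for all $u\in B^{n_0}(\eta')$, which is exactly Hörmander's condition of order $M_2$ for $\{Y_\alpha : |\alpha|\leq M_1\}$ (and hence $\cZ$ for $\theta$, since the $Y_\alpha$ are the Taylor coefficients of $\Wt$).

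There is really no serious obstacle here: the only subtle point to double-check is that the iterated commutator ``depth'' tracks correctly, i.e., that $Z_j = \ad(Z_{l_1})\cdots\ad(Z_{l_m})Z_{l_{m+1}}$ with $m\leq M_2-1$ pulls back to the analogous expression in the $Y$'s and thus has total commutator order at most $M_2$, so the pulled-back $Y_j$ genuinely lies among $V_1,\ldots,V_L$ (not beyond them). Once this is verified the rest is immediate from Theorem \ref{ThmMainCCThm}.
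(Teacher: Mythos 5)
Your proof is correct and follows essentially the same route as the paper's: pull the $M_2$-generation identities back through $\Phi$ (using that pullback by a diffeomorphism respects Lie brackets), conclude that each $Y_j$ appears among $V_1,\ldots,V_L$, and then bound $\q|\det_{n_0\times n_0} V\q(u\w)\w|_\infty$ from below by $\q|\det_{n_0\times n_0} Y\q(u\w)\w|_\infty\gtrsim_2 1$ via \eqref{EqnRescaledSpan}. The extra bookkeeping you do on commutator depth is exactly the content the paper compresses into ``pulling this back under $\Phi$,'' so nothing is missing.
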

\begin{proof}
By assumption a subset of $\q\{ Z_\alpha: \q|\alpha\w|\leq M_1\w\}$
$M_2$-generates $Z_1,\ldots, Z_q$.  Pulling this back
under $\Phi$ we see that a subset of \eqref{EqnGeneratesToShowHor}
$M_2$-generates $Y_1,\ldots Y_q$.  I.e., each of $Y_1,\ldots Y_q$
appears in the list $V_1,\ldots V_L$.  Thus,
\begin{equation*}
\q|\det_{n_0\times n_0} V\q( u\w) \w|_{\infty} \geq \q|\det_{n_0\times n_0} Y\q( u\w)\w|_\infty \gtrsim_2 1, \quad \forall u\in B^{n_0}\q( \eta'\w).
\end{equation*}
In the last inequality, we have used \eqref{EqnRescaledSpan}.
\end{proof}

We now combine the above discussion to obtain the main results
of this section.
\begin{thm}\label{ThmCompSatisfycZ}
Suppose $Z_1,\ldots, Z_r$ $M_2$-generates $Z_1,\ldots, Z_q$.
Let $\gh^l\q( t,x\w):\Q^N\q( \rho\w)\times \Omega''\rightarrow \Omega$,
$l=1,\ldots, k$, be a sequence of $k$ $C^\infty$ functions,
such that each $\gh^l$ is controlled at the unit scale by $\q( Z,\dt\w)$.
Let
\begin{equation*}
\Wh_j\q(t,x\w) = \frac{d}{d\epsilon}\bigg|_{\epsilon=1} \gh^j_{\epsilon t}\circ \q(\gh^j_t \w)^{-1}\q( x\w),
\end{equation*}
and define the vector fields $V_{j,\alpha}$ to be the Taylor coefficients (in $t$) of $\Wh_j$:
\begin{equation*}
\Wh_j\q( t\w) \sim \sum_{\alpha} t^{\alpha} V_{j,\alpha}.
\end{equation*}
Suppose there is $M_1\in \N$ such that each $Z_s$ ($1\leq s\leq r$)
appears as some $V_{j,\alpha}$ for $\q|\alpha\w|\leq M_1$.
Define,
\begin{equation*}
\gh_{\q( t_1,\ldots, t_k\w)}\q( x\w) = \gh_{t_1}^1\circ \gh_{t_2}^2\circ \cdots \circ\gh_{t_k}^k\q( x\w).
\end{equation*}
Then, $\gh$ is controlled at the unit scale by $\q( Z,\dt\w)$.
Moreover if we take $\theta$ as in $\sQ_2$, then $\theta$
satisfies $\cZ$, where the constants in the definition of
$\cZ$ are $2$-admissible constants depending on the above parameters ($M_1$, $M_2$).
\end{thm}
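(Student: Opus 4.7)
The statement has two claims: (a) $\gh$ is controlled at the unit scale by $\q(Z,\dt\w)$; (b) the pullback $\theta$ of $\gh$ via the map $\Phi$ from Theorem \ref{ThmMainCCThm} satisfies $\cZ$ with constants that are $2$-admissible and depend only on $M_1, M_2$ (and the other parameters in the background).

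Claim (a) is immediate from Proposition \ref{PropControlCompAndInv}, applied inductively: at each step the composition of two maps controlled by $\q(Z,\dt\w)$ at the unit scale is again controlled by $\q(Z,\dt\w)$ at the unit scale, so after $k-1$ applications we obtain control of $\gh = \gh^1 \circ \cdots \circ \gh^k$. The parameters of $\sQ_1$ (equivalently $\sQ_2$) for $\gh$ depend only on $k$ and on the corresponding parameters for $\gh^1,\ldots,\gh^k$.

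For claim (b), the plan is to invoke Theorem \ref{ThmM1M2Generates}, which requires showing that $\gh$ itself $M_1,M_2$-generates $Z_1,\ldots,Z_q$ in the sense of Definition \ref{DefnZ1ZrMGenerates}. Write $t=\q(t_1,\ldots,t_k\w)$ with $t_l\in \R^N$, and let $V_\alpha$ denote the Taylor coefficients (in $t\in \R^{kN}$) of $\Wh\q(t,x\w)=\frac{d}{d\epsilon}\big|_{\epsilon=1}\gh_{\epsilon t}\circ \gh_t^{-1}\q(x\w)$ associated to the composed map $\gh$. The key observation is that, because each $\gh^j_0\q(x\w)\equiv x$, we have
\begin{equation*}
\gh_{\q(0,\ldots,0,\,t_l,\,0,\ldots,0\w)}\q(x\w) = \gh^l_{t_l}\q(x\w),
\end{equation*}
for every $l$, which in turn gives $\gh_{\epsilon\q(0,\ldots,t_l,\ldots,0\w)}\circ \gh_{\q(0,\ldots,t_l,\ldots,0\w)}^{-1} = \gh^l_{\epsilon t_l}\circ \q(\gh^l_{t_l}\w)^{-1}$. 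Hence setting all blocks $t_j$ with $j\neq l$ equal to zero in $\Wh\q(t,x\w)$ yields $\Wh_l\q(t_l,x\w)$. Comparing Taylor series, those $V_\alpha$ supported purely in the $l$-th block coincide with the $V_{l,\alpha}$. By hypothesis each $Z_s$ ($1\le s\le r$) appears as some $V_{l,\alpha}$ with $\q|\alpha\w|\le M_1$, so $\q\{Z_1,\ldots,Z_r\w\}$ sits among the $V_\alpha$ with $\q|\alpha\w|\le M_1$. Since $Z_1,\ldots,Z_r$ is assumed to $M_2$-generate $Z_1,\ldots,Z_q$, this subset witnesses that $\gh$ $M_1,M_2$-generates $Z_1,\ldots,Z_q$, and Theorem \ref{ThmM1M2Generates} then gives the required $\cZ$ condition on $\theta$.

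The main technical point to watch is not an analytic one but a bookkeeping one: verifying that the parameters in $\sQ_1/\sQ_2$ for $\gh$ obtained from Proposition \ref{PropControlCompAndInv} feed correctly into the admissibility hypotheses of Theorem \ref{ThmM1M2Generates}, so that the constants in the concluding $\cZ$ statement are genuinely $2$-admissible with dependence only on $M_1, M_2$ and the original data. No new ingredient beyond the results already assembled in Sections \ref{SectionCCII} and \ref{SectionCurvesII} is required.
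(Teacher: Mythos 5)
Your proposal is correct and follows essentially the same route as the paper's proof: control of $\gh$ via Proposition \ref{PropControlCompAndInv}, then reduction to Theorem \ref{ThmM1M2Generates} by observing that $\Wh\q(0,\ldots,0,t_l,0,\ldots,0,x\w)=\Wh_l\q(t_l,x\w)$, so each $Z_s$ appears among the Taylor coefficients of $\Wh$ with $\q|\alpha\w|\leq M_1$. Your write-up actually spells out the block-restriction argument in slightly more detail than the paper does.
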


\begin{cor}\label{CorCompSatisfycJ}
Take the same setup as Theorem \ref{ThmCompSatisfycZ}; and, more precisely,
assume each $\gh^l$ satisfies $\sQ_2\q( \rho_2,\tau_2,\q\{\sigma_2^m\w\}_{m\in \N}\w)$.
Then, $\theta$ satisfies $\cJ$ (at $0$), where the constants in the definition
of $\cJ$ are $2$-admissible constants depending on the above
parameters ($M_1$, $M_2$, $k$, $\rho_2$, $\tau_2$, $\q\{\sigma_2^m\w\}_{m\in \N}$).
\end{cor}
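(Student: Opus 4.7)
The plan is to deduce Corollary \ref{CorCompSatisfycJ} from Theorem \ref{ThmCompSatisfycZ} together with the uniform equivalence of the curvature conditions, Theorem \ref{ThmUnifEquivCond}. Theorem \ref{ThmCompSatisfycZ} already gives that $\theta$ satisfies $\cZ$ at $0$ with $2$-admissible constants depending on $M_1$ and $M_2$, so the only remaining task is to upgrade $\cZ$ to $\cJ$ while keeping track of the parameter dependence.

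First, I would fix the parameters $M_1$, $M_2$, $k$, $\rho_2$, $\tau_2$, $\{\sigma_2^m\}_{m\in \N}$, as well as bounds on $\q(Z,\dt\w)$ encoding admissibility, and form the set $\sS$ of \emph{all} maps $\theta$ that can arise in the setup of Theorem \ref{ThmCompSatisfycZ} with these parameters. Concretely, $\sS$ consists of all $\theta=\Phi^{-1}\circ\gh_{t_1}^1\circ\cdots\circ\gh_{t_k}^k\circ \Phi$ as $\q(Z,\dt\w)$, $x_0$, and the $\gh^l$ vary subject to the stated bounds. By $\sQ_2$ applied to each $\gh^l$, the single-factor maps $\Phi^{-1}\circ \gh^l_t\circ \Phi$ are bounded in $C^m$ for every $m$, uniformly in the data. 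Composing $k$ of them (possibly after shrinking $\rho_2$ and $\tau_2$ in the standard way so that the iterated composition makes sense into $B^{n_0}\q(\eta_1\w)$) yields a set of maps $\theta$ bounded in $C^m\q(\Q^{Nk}\q(\rho'\w)\times B^{n_0}\q(\eta''\w);\R^{n_0}\w)$ for every $m$, where $\rho'$ and $\eta''$ are $2$-admissible. Thus $\sS$ is a bounded subset of $C^\infty$ in the sense required by Theorem \ref{ThmUnifEquivCond}.

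Next, by Theorem \ref{ThmCompSatisfycZ}, every $\theta\in \sS$ satisfies $\cZ$ at $0$ with constants chosen uniformly in the data; equivalently, $\sS$ satisfies $\cZu$. Applying Theorem \ref{ThmUnifEquivCond} then gives $\cJu$ for $\sS$, which is precisely the statement that every $\theta\in \sS$ satisfies $\cJ$ at $0$ with constants depending only on the fixed parameters, i.e.\ $2$-admissible constants depending on $M_1, M_2, k, \rho_2, \tau_2, \q\{\sigma_2^m\w\}_{m\in \N}$. This is the conclusion of the corollary.

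The only real subtlety is verifying that the family $\sS$ is genuinely bounded in $C^\infty$ uniformly in all relevant parameters, and the main obstacle is housekeeping: checking that the domain of definition of $\theta$ after $k$-fold composition can be chosen uniformly and that the $C^m$-bounds from $\sQ_2$ pass through composition with the map $\Phi$ without introducing hidden dependencies. Both of these are standard consequences of the $\sQ_1\Leftrightarrow \sQ_2$ proposition and of \eqref{EqnRescaledSmooth}--\eqref{EqnRescaledCjN}, which together guarantee that pulling back by $\Phi$ preserves $C^\infty$ bounds up to $m$-admissible constants. Once the family $\sS$ is set up correctly, the corollary is a direct application of the compactness/contradiction argument already packaged in Theorem \ref{ThmUnifEquivCond}.
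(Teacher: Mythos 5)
Your proposal is correct and follows essentially the same route as the paper: let the $\gh^l$ (and the other data) vary over all choices with the fixed parameters, observe via $\sQ_2$ that the resulting family of $\theta$'s is a bounded subset of $C^\infty$ satisfying $\cZu$ by Theorem \ref{ThmCompSatisfycZ}, and then invoke Theorem \ref{ThmUnifEquivCond} ($\cZu\Rightarrow\cJu$ for bounded subsets of $C^\infty$). The paper's proof is just a more compressed version of exactly this argument.
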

\begin{proof}[Proof of Corollary \ref{CorCompSatisfycJ} given Theorem \ref{ThmCompSatisfycZ}]
Letting $\gh^1,\ldots, \gh^k$ vary over all choices satisfying
the hypotheses of Corollary \ref{CorCompSatisfycJ} (for a fixed
choice of the parameters), Theorem \ref{ThmCompSatisfycZ}
shows that the set of all such $\theta$, so obtained,
satisfies $\cZu$.  Since $\q\{\sigma_2^m\w\}_{m\in \N}$
is fixed, the set of all such $\theta$ forms a bounded
subset of $C^\infty$.  The corollary
now follows by applying Theorem \ref{ThmUnifEquivCond}
which states that $\cZu\Rightarrow \cJu$ for bounded
subsets of $C^\infty$.
\end{proof}

\begin{proof}[Proof of Theorem \ref{ThmCompSatisfycZ}]
That $\gh$ is controlled at the unit scale by $\q( Z,\dt\w)$
follows from Proposition \ref{PropControlCompAndInv}.
Define,
\begin{equation*}
\Wh\q( t\w) = \frac{d}{d\epsilon}\bigg|_{\epsilon=1} \gh_{\epsilon t}\circ \gh_t^{-1} \sim \sum_{\q|\alpha\w|>0} t^\alpha Z_\alpha.
\end{equation*}
In light of Theorem \ref{ThmM1M2Generates}, we need only show
that each $Z_1,\ldots, Z_r$ appears as some $Z_\alpha$
with $\q|\alpha\w|\leq M_1$.
Since,
\begin{equation*}
\gh_{\q( t_1,\ldots, t_k\w)} = \gh_{t_1}^1\circ \gh_{t_2}^2\circ\cdots \circ \gh_{t_k}^k,
\end{equation*}
it is easy to see that,
\begin{equation*}
\Wh\q( 0,\cdots, 0, t_j, 0,\cdots 0,x\w) = \Wh_j\q( t_j,x\w).
\end{equation*}
Since each $Z_1,\ldots, Z_r$ appears as a Taylor coefficient
of some $\Wh_j$ (with $\q|\alpha\w|\leq M_1$), the same is true for $\Wh$.
This completes the proof.
\end{proof}

Though we will work in slightly greater generality in the sequel,
it is worthwhile to take a moment and see the main
consequence of our assumptions on $\gamma$ in
Theorem \ref{ThmMainThmSecondPass}, in light of
Corollary \ref{CorCompSatisfycJ}.

\begin{prop}\label{PropHowAssumpsAreUsed}
Let $\q( X, d\w)$ and $\gamma$ be as in Theorem \ref{ThmMainThmSecondPass};
i.e., $\q( X,d\w)$ and $\gamma$ satisfy all of the assumptions
of Section \ref{SectionCurves}.  Fix $x_0\in \K$ and
$\delta_1,\delta_2\in \sA$.  Let $\delta_0=\delta_1\vee \delta_2$ ($\delta_0\in \sA$ by our assumption on $\sA$).
Define, $\gh$ as either,
\begin{equation}\label{EqnFirstGtDefn}
\gh_{\q( t_1,t_2,t_3,t_4\w)}\q( x\w) = \gamma_{\delta_1 t_4}^{-1}\circ \gamma_{\delta_2 t_3}\circ \gamma_{\delta_2 t_2}^{-1} \circ \gamma_{\delta_1 t_1}\q( x\w),
\end{equation}
or,
\begin{equation}\label{EqnSecondGtDefn}
\gh_{\q( t_1,t_2,t_3,t_4\w)}\q( x\w) = \gamma_{\delta_1 t_4}\circ \gamma_{\delta_2 t_3}^{-1}\circ \gamma_{\delta_2 t_2} \circ \gamma_{\delta_1 t_1}^{-1}\q( x\w).
\end{equation}
Then, $\q( \delta_0 X, \sd\w)$ controls $\gh$ at the unit scale
near $x_0$ (and the parameters in the definition of control
may be chosen independent of $\delta_1$, $\delta_2$, and $x_0$).  Moreover, if we define $\theta_t\q( u\w)$
as in $\sQ_2$, then $\theta$ satisfies $\cJ$ (at $0$) uniformly
in $\delta_1$, $\delta_2$, and $x_0$.
More specifically, if we let $\sS$ be the set of all such $\theta$
(as $\delta_1$, $\delta_2$, and $x_0$ vary) then
$\sS$ satisfies $\cJu$.
\end{prop}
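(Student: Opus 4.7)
The plan is to apply Corollary~\ref{CorCompSatisfycJ} with $\q(Z,\dt\w)=\q(\delta_0 X,\sd\w)$ near $x_0$, and with the four factors appearing in \eqref{EqnFirstGtDefn} (respectively \eqref{EqnSecondGtDefn}) playing the role of $\gh^1,\ldots,\gh^4$. Two things must be verified, uniformly in $\delta_1,\delta_2\in\sA$ and $x_0\in\K$: (a) each factor is controlled at the unit scale by $\q(\delta_0 X,\sd\w)$ near $x_0$, with the parameters of $\sQ_2$ chosen uniformly; and (b) the hypothesis of Theorem~\ref{ThmCompSatisfycZ} is met, i.e.\ a finite subset of $\q(\delta_0 X,\sd\w)$ that $M_2$-generates the full list appears, element by element, as a Taylor coefficient of some $\Wh_j$ at order $\le M_1$.

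For (a), the assumption on $\gamma$ from Section~\ref{SectionCurves} combined with Proposition~\ref{PropConnectionBetweenControlEveryScaleAndUnit} says that $\q(\delta_0 X,\sd\w)$ controls $t\mapsto \gamma_{\delta_0 t}$ at the unit scale near $x_0$, uniformly in $\delta_0\in\sA$ and $x_0\in\K$. Since $\delta_0=\delta_1\vee\delta_2$, each $c_{i,k}:=\delta_i^{e_k}/\delta_0^{e_k}$ lies in $[0,1]$ (with the convention $0/0:=0$), and $\delta_i t=\delta_0\q(c_{i,1}t_1,\ldots,c_{i,N}t_N\w)$. Proposition~\ref{PropScalingOfControl} therefore gives that $\q(\delta_0 X,\sd\w)$ controls each $\gamma_{\delta_i t}$ at the unit scale, uniformly in all parameters; Proposition~\ref{PropControlCompAndInv} then handles inverses and the four-fold composition.

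For (b), recall from Section~\ref{SectionCurves} (and the discussion in Section~\ref{SectionMoreOnXd}) that the list $\q(X,d\w)$ is produced from a finite set $\sF\subseteq\sV$ of vector fields of the form $\q(X_\alpha,\deg(\alpha)\w)$ with $\deg(\alpha)$ a pure power, via iterated commutators; consequently $\q(\delta_0 X,\sd\w)$ is $M_2$-generated (for some $M_2$) by $\q\{\q(\delta_0^{\deg(\alpha)}X_\alpha,\q|\deg(\alpha)\w|_1\w):\q(X_\alpha,\deg(\alpha)\w)\in\sF\w\}$. For a factor $\gh^j_t=\gamma_{\delta_i t}$, the associated vector field is
\begin{equation*}
\Wh_j\q(t,x\w)=\frac{d}{d\epsilon}\bigg|_{\epsilon=1}\gamma_{\epsilon\delta_i t}\circ\gamma_{\delta_i t}^{-1}\q(x\w)=W\q(\delta_i t,x\w)\sim\sum_\alpha \delta_i^{\deg(\alpha)}t^\alpha X_\alpha,
\end{equation*}
so the Taylor coefficient of $t^\alpha$ in $\Wh_j$ is $\delta_i^{\deg(\alpha)}X_\alpha$. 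If $\deg(\alpha)$ is nonzero only in component $\mu$, then $\delta_i^{\deg(\alpha)}=\delta_{i,\mu}^{\deg(\alpha)_\mu}$, and since $\delta_0=\delta_1\vee\delta_2$ there exists $i\in\{1,2\}$ with $\delta_{i,\mu}=\delta_{0,\mu}$, giving $\delta_i^{\deg(\alpha)}X_\alpha=\delta_0^{\deg(\alpha)}X_\alpha$. Both $\gamma_{\delta_1\cdot}$ and $\gamma_{\delta_2\cdot}$ occur among the $\gh^j$ in \eqref{EqnFirstGtDefn} and \eqref{EqnSecondGtDefn}, so every generator of $\q(\delta_0 X,\sd\w)$ is realized as a Taylor coefficient of some $\Wh_j$ at a uniformly bounded order, which is precisely the hypothesis of Theorem~\ref{ThmCompSatisfycZ}.

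Combining (a) and (b), Corollary~\ref{CorCompSatisfycJ} yields $\cJ$ at $0$ for the resulting $\theta$ with $2$-admissible constants; letting $\q(\delta_1,\delta_2,x_0\w)$ vary and noting that the family of such $\theta$ is a bounded subset of $C^\infty$ (from (a)), one concludes that $\sS$ satisfies $\cJu$. The main obstacle is the matching in (b): it is precisely the restriction to pure powers in \eqref{EqnCurvesDefnS} that makes it possible to select, for each generator, a single $i\in\{1,2\}$ with $\delta_i^{\deg(\alpha)}=\delta_0^{\deg(\alpha)}$, since a non-pure $\deg(\alpha)$ with support in both $\mu_1\neq\mu_2$ would require $\delta_{i,\mu_1}=\delta_{0,\mu_1}$ and $\delta_{i,\mu_2}=\delta_{0,\mu_2}$ simultaneously for the same $i$, which generally fails. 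This is the structural reason behind the pure/non-pure dichotomy emphasized in Section~\ref{SectionSpecialCase}.
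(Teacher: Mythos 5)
Your proposal is correct and follows essentially the same route as the paper: part (a) is the paper's chain of Propositions \ref{PropConnectionBetweenControlEveryScaleAndUnit}, \ref{PropScalingOfControl}, and \ref{PropControlCompAndInv}, and part (b) is precisely the content of Lemma \ref{LemmaEachZjAppearsIngammaScaled}, after which Corollary \ref{CorCompSatisfycJ} finishes the argument. Your closing observation about why the pure-power restriction is what makes the matching of $\delta_i^{\deg(\alpha)}$ with $\delta_0^{\deg(\alpha)}$ possible is exactly the point the paper records in Remark \ref{RmkPureIsUsed}.
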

\begin{proof}\renewcommand{\qedsymbol}{}
Let $\q( Z,\dt\w)= \q( \delta_0 X, \sd\w)$.
By Proposition \ref{PropConnectionBetweenControlEveryScaleAndUnit},
$\gamma_{\delta_0 t}$ is controlled at the unit scale by $\q( Z,d\w)$.
Proposition \ref{PropScalingOfControl} then shows
that $\gamma_{\delta_1 t}$ and $\gamma_{\delta_2 t}$ are controlled
by $\q( Z,\dt\w)$ at the unit scale.
Proposition \ref{PropControlCompAndInv} applies to show
that $\gamma_{\delta_1 t}^{-1}$, $\gamma_{\delta_2 t}^{-1}$, and $\gh$ are all controlled at the unit scale by $\q( Z,\dt\w)$ (uniformly
in the relevant parameters).
The proof will then be completed by combining
Corollary \ref{CorCompSatisfycJ} with the following lemma.
\end{proof}

\begin{lemma}\label{LemmaEachZjAppearsIngammaScaled}
Taking the same setup as in Proposition \ref{PropHowAssumpsAreUsed},
and letting $\q( Z,\dt\w)=\q( \delta_0 X,\sd\w)$.  There is a subset
$Z_1,\ldots, Z_r$ of $Z_1,\ldots, Z_q$ such that $Z_1,\ldots, Z_r$
$M_2$-generates $Z_1,\ldots, Z_q$.  Moreover, each $Z_1,\ldots, Z_r$
appears as a Taylor coefficient (in $t$) of either
\begin{equation*}
\Wh_{\delta_1}\q( t,x\w)=\frac{d}{d\epsilon}\bigg|_{\epsilon=1} \gamma_{\delta_1\q( \epsilon t\w)}\circ\gamma_{\delta_1 t}^{-1}\q( x\w)
\text{ or }
\Wh_{\delta_2}\q( t,x\w)=\frac{d}{d\epsilon}\bigg|_{\epsilon=1} \gamma_{\delta_2\q( \epsilon t\w)}\circ\gamma_{\delta_2 t}^{-1}\q( x\w),
\end{equation*}
and this Taylor coefficient may be chosen to be the coefficient of $t^{\alpha}$,
where $\q|\alpha\w|\leq M_1$.  Here, $M_1,M_2\in \N$ may be chosen
independent of $\delta_1$, $\delta_2$, and $x_0$.
\end{lemma}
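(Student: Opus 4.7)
My plan is to trace through the construction of the finite list $(X,d)$ from Section \ref{SectionCurves} and verify the two claims by direct computation. By hypothesis, there is a finite set $\sF \subseteq \sV$ generating the finite list $(X,d) = (X_1,d_1),\ldots,(X_q,d_q)$, where $\sV = \{(X_\alpha,\deg(\alpha)) : \deg(\alpha) \text{ is nonzero in exactly one component}\}$. After reordering we may take $\sF = \{(X_1,d_1),\ldots,(X_r,d_r)\}$, and by the definition of $\sV$ there exist multi-indices $\alpha_1,\ldots,\alpha_r$ with $X_j = X_{\alpha_j}$, $d_j = \deg(\alpha_j)$, and $\deg(\alpha_j)$ nonzero in exactly one component, say $\mu_j$. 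For $r < j \leq q$, the vector field $X_j$ is an iterated commutator $X_j = \ad{X_{l_1}}\cdots\ad{X_{l_{m}}} X_{l_{m+1}}$ with $1 \leq l_k \leq r$ and $m$ at most some fixed bound $M_2 - 1$, and $d_j = d_{l_1}+\cdots+d_{l_{m+1}}$. I set $M_1 := \max_{1 \leq j \leq r} |\alpha_j|$. Both $M_1$ and $M_2$ depend only on the fixed presentation of $(X,d)$, hence are independent of $\delta_1$, $\delta_2$, and $x_0$.

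The first substantive step is to compute the Taylor series of $\Wh_{\delta_\ell}$ for $\ell=1,2$. A direct change of variables in the defining formula yields $\Wh_{\delta_\ell}(t,x) = W(\delta_\ell t, x)$, and expanding in $t$ gives
\[
\Wh_{\delta_\ell}(t,x) \sim \sum_\alpha t^\alpha \, \delta_\ell^{\deg(\alpha)} X_\alpha,
\]
using the identity $(\delta_\ell t)^\alpha = \delta_\ell^{\deg(\alpha)} t^\alpha$ for the dilations $e$. Next, I fix $1 \leq j \leq r$ and show that $Z_j = \delta_0^{d_j} X_{\alpha_j}$ appears as the $t^{\alpha_j}$ coefficient of either $\Wh_{\delta_1}$ or $\Wh_{\delta_2}$; this is the crux of the argument and is where the pure-power hypothesis is used. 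Since $d_j = \deg(\alpha_j)$ has only the single nonzero component $\mu_j$, the scalar $\delta_0^{d_j}$ depends only on $\delta_{0,\mu_j}$. Because $\delta_0 = \delta_1 \vee \delta_2$, one has $\delta_{0,\mu_j} \in \{\delta_{1,\mu_j}, \delta_{2,\mu_j}\}$, so $\delta_0^{d_j}$ coincides with either $\delta_1^{\deg(\alpha_j)}$ or $\delta_2^{\deg(\alpha_j)}$; accordingly, $Z_j$ is the $t^{\alpha_j}$ Taylor coefficient of $\Wh_{\delta_1}$ or of $\Wh_{\delta_2}$, and $|\alpha_j| \leq M_1$ by construction.

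For the $M_2$-generating claim, I would expand each $Z_j$ with $r < j \leq q$ by distributing $\delta_0^{d_j} = \delta_0^{d_{l_1}}\cdots\delta_0^{d_{l_{m+1}}}$ across the commutator representation:
\[
Z_j = \delta_0^{d_j} X_j = \ad{\delta_0^{d_{l_1}} X_{l_1}} \cdots \ad{\delta_0^{d_{l_m}} X_{l_m}} \bigl(\delta_0^{d_{l_{m+1}}} X_{l_{m+1}}\bigr) = \ad{Z_{l_1}} \cdots \ad{Z_{l_m}} Z_{l_{m+1}},
\]
with $m \leq M_2 - 1$ and $1 \leq l_k \leq r$; this is exactly the form required by Definition \ref{DefnZ1ZrMGenerates}. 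There is no serious obstacle: the lemma is essentially bookkeeping, and the only genuine content is the observation that purity of $\deg(\alpha_j)$ combined with $\delta_0 = \delta_1 \vee \delta_2$ forces $\delta_0^{d_j}$ to equal $\delta_1^{d_j}$ or $\delta_2^{d_j}$ exactly (and not merely up to some bounded prefactor), which is precisely why the ``pure power'' assumption is built into Section \ref{SectionCurves}.
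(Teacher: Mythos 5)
Your proposal is correct and follows essentially the same route as the paper: the same identification $\Wh_{\delta_\ell}(t,x)=W(\delta_\ell t,x)\sim\sum t^\alpha\delta_\ell^{\deg(\alpha)}X_\alpha$, the same key observation that purity of $d_j$ together with $\delta_0=\delta_1\vee\delta_2$ forces $\delta_0^{d_j}\in\{\delta_1^{d_j},\delta_2^{d_j}\}$ exactly, and the same distribution of $\delta_0^{d_j}$ across the iterated commutators to get $M_2$-generation. No gaps.
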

\begin{proof}
First we recall the definition of $\q( X,d\w)$.  
We wrote,
\begin{equation*}
W\q( t,x\w) = \frac{d}{d\epsilon}\bigg|_{\epsilon=1} \gamma_{\epsilon t}^{-1}\circ \gamma_{t}\q( x\w) \sim \sum_{\q|\alpha\w|>0} t^{\alpha} X_\alpha.
\end{equation*}
We took a finite subset of 
$$\q\{\q( X_\alpha, \deg\q( \alpha\w)\w): \deg\q( \alpha\w) \text{ is nonzero in only one component}\w\},$$
denote this finite set by $\q( X_1,d_1\w),\ldots, \q( X_r,d_r\w)$,
and assume each $\q( X_1,d_1\w),\ldots, \q( X_r,d_r\w)$ appears
as $\q( X_\alpha, \deg\q( \alpha\w)\w)$ for some $\q|\alpha\w|\leq M_1$.
Take $Z_1,\ldots, Z_r$ to be $\delta_0^{d_1}X_1,\ldots, \delta_0^{d_r}X_r$.

We assumed that $\q( X_1,d_1\w),\ldots, \q(X_r,d_r\w)$ generates the finite
list $\q( X_1,d_1\w), \ldots, \q( X_q,d_q\w)$.  That is,
each $\q( X_j,d_j\w)$ can be written in the form,
\begin{equation}\label{EqnTheXjGenerate}
\begin{split}
X_j &= \ad{X_{l_1}}\ad{X_{l_2}}\cdots \ad{X_{l_{m-1}}} X_{l_m},\\
d_j &= d_{l_1}+d_{l_1}+\cdots+d_{l_m},
\end{split}
\end{equation}
where $1\leq l_k\leq r$ for each $k$, for some $1\leq m=m\q( j\w)$.  We choose
$M_2\in \N$ so that $m\leq M_2$ for every $j$.
Note that if we take $\q( Z,\dt\w)= \q( \delta_0 X,\sd\w)$
then \eqref{EqnTheXjGenerate} shows,
\begin{equation*}
Z_j = \ad{Z_{l_1}}\ad{Z_{l_2}}\cdots \ad{Z_{l_{m-1}}} Z_{l_m},
\end{equation*}
for the same choice of $l_1,\ldots, l_m$.  Hence,
$Z_1,\ldots, Z_r$ $M_2$-generates $Z_1,\ldots, Z_q$.

To complete the proof, we must show that each $Z_j$ ($1\leq j\leq r$)
appears as a Taylor coefficient of either $W_{\delta_1}$ or $W_{\delta_2}$.
Fix $j$ ($1\leq j\leq r$), and suppose that $d_j$ is nonzero in
only the $\mu$ component (by assumption we know it is nonzero
in only one component).  Since $\delta_0=\delta_1\vee \delta_2$,
$\delta_0^\mu\in \q\{\delta_1^\mu, \delta_2^\mu\w\}$.  We suppose
that $\delta_0^\mu=\delta_1^\mu$, the other case is similar.
We will show $Z_j$ is a Taylor coefficient of
$W_{\delta_1}$.  Indeed,
\begin{equation*}
W_{\delta_1}\q( t,x\w) = W\q( \delta_1 t,x\w) \sim \sum_{\q|\alpha\w|>0} t^{\alpha} \delta_1^{\deg\q( \alpha\w)} X_\alpha.
\end{equation*}
Take $\q|\alpha\w|\leq M_1$ such that $\q( X_j,d_j\w) = \q( X_\alpha, \deg\q(\alpha\w)\w)$.  Since $d_j$ is nonzero in only the $\mu$ component, we have,
\begin{equation*}
\delta_1^{\deg\q( \alpha\w)} X_\alpha = \delta_1^{d_j} X_j = \delta_0^{d_j} X_j = Z_j,
\end{equation*}
completing the proof.
\end{proof}

\begin{rmk}\label{RmkPureIsUsed}
Lemma \ref{LemmaEachZjAppearsIngammaScaled} is the only place
where we use that $\q( X_1,d_1\w),\ldots, \q( X_r,d_r\w)$
were chosen with each $d_j$ nonzero in only one component;
i.e., that $\q( X,d\w)$ was generated by ``pure-powers.''
Note, though, that this is essential to
Lemma \ref{LemmaEachZjAppearsIngammaScaled}.
\end{rmk}

	\subsection{Multi-parameter curvature conditions}\label{SectionMultiParamCurvature}
	Throughout the paper, we work mostly with the curvature
conditions $\cZ$ and $\cJ$.  Indeed, using the notation
in Proposition \ref{PropHowAssumpsAreUsed}, the essence
of Section \ref{SectionCurvesII} was that $\theta$
satisfies $\cJ$ uniformly in any relevant parameters (and
this followed by showing that it satisfied $\cZ$ uniformly
in any relevant parameters).
While $\cZ$ seems to be the best suited to theoretical
applications, $\cG$ is perhaps more intuitive.
The reader may, therefore, wonder if there is an analog
to Theorem \ref{ThmEquivConds} in our setup.
Indeed, there is, and this section is devoted to discussing
this fact.  The results in this section will not be used
in the sequel (except briefly in Section \ref{SectionAdjoint}), and the reader may safely skip this section
on a first reading.

The following curvature conditions, we think of as being
``scale invariant.''  We, therefore, denote them
by $\cGs$, $\cYs$, and $\cZs$ 
(we discuss $\cJ$ in Remark \ref{RmkcJs}).
The condition $\cZs$ is precisely the condition
assumed on $\gamma$ in Section \ref{SectionCurves}.
We restate it here.  In what follows,
$\gamma\q( t,x\w): \Q^N\q( \rho\w)\times \Omega''\rightarrow \Omega$
is a $C^\infty$ function with $\gamma\q( 0,x\w)\equiv x$, where $\Omega''$ and $\Omega$ are
as in Section \ref{SectionCurves}.
\begin{itemize}
\item $\cZs$:  Define,
\begin{equation*}
W\q( t\w) = \frac{d}{d\epsilon}\bigg|_{\epsilon=1} \gamma_{\epsilon t}\circ \gamma_t^{-1}\q( x\w).
\end{equation*}
Write $W\q( t\w) \sim \sum_{\alpha} t^\alpha X_\alpha$,
and define 
$$\sV=\q\{\q(X_\alpha,\deg\q( \alpha\w) \w): \deg\q(\alpha\w) \text{ is nonzero in only one component}\w\}.$$
$\cZs$ states that there is a finite subset $\sF\subseteq \sV$ such
that $\sF$ generates a finite list and this finite list controls $\gamma$.
Note that both ``generates a finite list'' and ``controls'' in the above
depend implicitly on the set $\sA\subseteq \q[0,1\w]$, the chosen compact set $\K\Subset \Omega''$, and the multi-parameter dilations $e$.

\item $\cGs$:  Write $\gamma_t\sim \exp\q(\sum_{\q|\alpha\w|>0} t^\alpha \Xh_\alpha \w)$.  $\cGs$ is the same as $\cZs$ except $\sV$ is replaced by
$$\sV=\q\{\q(\Xh_\alpha,\deg\q( \alpha\w) \w): \deg\q(\alpha\w) \text{ is nonzero in only one component}\w\}.$$

\item $\cYs$: For each $j$, $1\leq j\leq N$, define,
\begin{equation*}
W_j\q( t\w) = d\gamma\q( t,\gamma_t^{-1}\q( x\w) \w)\q(\frac{\partial}{\partial t_j}\w)=\frac{d}{d{s_j}}\bigg|_{s=0} \gamma_{t+s}\circ \gamma_t^{-1}\q( x\w).
\end{equation*}
Write $W_j\q( t\w) \sim \sum_{\alpha} t^{\alpha} X_{\alpha,j}$.
$\cYs$ is the same as $\cZs$ except $\sV$ is replaced by
$$\sV=\q\{\q(X_{\alpha,j},\deg\q( \alpha\w)+e_j \w): \deg\q(\alpha\w)+e_j \text{ is nonzero in only one component}\w\},$$
where $e$ denotes the chosen multi-parameter dilations.
\end{itemize}

\begin{prop}\label{PropCurvatureEquivScaleInv}
$\cGs\Leftrightarrow \cYs\Leftrightarrow\cZs$.
\end{prop}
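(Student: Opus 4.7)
The plan is to adapt the proof of Theorem \ref{ThmEquivConds} (specifically the $\cY \Leftrightarrow \cZ$ part) to the multi-parameter setting with formal degrees, and to handle $\cGs \Leftrightarrow \cZs$ via a Baker--Campbell--Hausdorff-type computation. All three conditions share a common structural form: each posits a finite subset of a certain set $\sV_\star$ of pure-power vector fields with formal degrees generating a finite list controlling $\gamma$. By Remark \ref{RmkEquivLists} together with Propositions \ref{PropsP1equivsP2} and \ref{PropControlVectsControlCurve}, it suffices to show that the finite lists arising from $\sV_Z$, $\sV_Y$, and $\sV_G$ are pairwise equivalent in the sense of Definition \ref{DefnEquivLists}.

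For $\cZs \Leftrightarrow \cYs$, I would use two algebraic identities. Let $f_j$ denote the multi-index that is $1$ in the $j$-th coordinate and $0$ elsewhere. First, $W(t,x) = \sum_{j=1}^N t_j W_j(t,x)$ (immediate from the chain rule for $\gamma_{\epsilon t}$) gives at the Taylor level $X_\beta = \sum_{j:\beta_j>0} X_{\beta-f_j,j}/(\beta-f_j)!$, exhibiting each pure-power $X_\beta$ as a linear combination of pure-power $X_{\alpha,j}$'s of the same formal degree $\deg(\beta)$. Second, the integrability condition $\partial_{t_k}W_j - \partial_{t_j}W_k = [W_j,W_k]$ gives $X_{\alpha+f_k,j} - X_{\alpha+f_j,k} = \sum_{\gamma+\delta=\alpha}\binom{\alpha}{\gamma}[X_{\gamma,j},X_{\delta,k}]$, so the inductive argument in the proof of Theorem \ref{ThmEquivConds} expresses each $X_{\beta,j}$ as an iterated commutator of $X_\alpha$'s of total formal degree $\deg(\beta)+e_j$. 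The crucial observation is that the pure-power structure is preserved throughout: since every $e_l$ lies in $[0,\infty)^\nu$, there can be no cancellation among them, so if $\deg(\beta)+e_j$ is nonzero only in component $\mu$ then $e_j$ and every $e_l$ with $\beta_l>0$ must themselves be pure in $\mu$, confining the induction to the pure-in-$\mu$ subfamily.

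For $\cZs \Leftrightarrow \cGs$, I would use the relationship between $\gamma_t(x)\sim\exp(A(t))x$ with $A(t)=\sum_{|\alpha|>0} t^\alpha\Xh_\alpha$ and $W(t,x)=\frac{d}{d\epsilon}\big|_{\epsilon=1}\exp(A(\epsilon t))\exp(-A(t))\cdot x$. The standard derivative-of-exponential identity yields $W(t)=\sum_{n\geq 0}\frac{1}{(n+1)!}\ad{A(t)}^n A'(t)$, where $A'(t)=\sum_\alpha|\alpha|\,t^\alpha\Xh_\alpha$. Extracting the coefficient of $t^\alpha$ gives $X_\alpha = |\alpha|\Xh_\alpha + \sum c_n\bigl[\Xh_{\beta_1},[\ldots,[\Xh_{\beta_n},\Xh_\gamma]\ldots]\bigr]$ summed over $n\geq 1$ and $\beta_1+\cdots+\beta_n+\gamma=\alpha$. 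This relation is strictly triangular in $|\alpha|$ and hence invertible, and the same non-cancellation argument shows that if $\deg(\alpha)$ is pure in $\mu$ then every $\Xh_{\beta_i}$ and $\Xh_\gamma$ appearing is also pure in $\mu$ (since the indices $l$ with positive entries in any $\beta_i$ or $\gamma$ must satisfy $\alpha_l>0$, whence $e_l$ is pure in $\mu$). Hence pure-power $\Xh_\alpha$'s and pure-power $X_\alpha$'s generate equivalent finite lists.

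The main obstacle will be the careful bookkeeping of pure-power formal degrees through the inductive step: one must verify at each stage that the intermediate commutators that appear remain within the correct pure-in-$\mu$ subfamily, and one must ensure that the various constants appearing in the control-at-every-scale condition are uniform in $\delta\in\sA$ and $x_0\in\K$. Once the algebraic equivalence of finite lists is established, the geometric conclusion (that equivalent lists simultaneously satisfy the ``controls $\gamma$'' property) follows immediately from Propositions \ref{PropsP1equivsP2} and \ref{PropControlVectsControlCurve} and the discussion of Section \ref{SectionMoreOnXd}.
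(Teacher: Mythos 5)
Your proposal is correct and follows essentially the same route as the paper's (sketched) proof: the paper relates the $\Xh_\alpha$ to the $X_\alpha$ and $X_{\alpha,j}$ via the Campbell--Hausdorff correspondence as in Proposition 9.6 of \cite{ChristNagelSteinWaingerSingularAndMaximalRadonTransforms}, and your derivative-of-the-exponential identity together with the Taylor-coefficient identities from the proof of Theorem \ref{ThmEquivConds} is exactly that correspondence made explicit. Your key observation --- that because the $e_j$ lie in $[0,\infty)^\nu$ no cancellation can occur, so purity of a degree forces purity of every constituent degree and the correspondence respects the pure-power subfamilies with matching formal degrees --- is precisely the ``form of the correspondence'' the paper invokes.
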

\begin{proof}[Proof sketch]
Using the Campbell-Hausdorff formula, the vector fields
$\Xh_\alpha$ can be written in terms of the
vector fields $X_\alpha$ (respectively, $X_{\alpha,j}$).
This can be seen just as in Proposition 9.6
of \cite{ChristNagelSteinWaingerSingularAndMaximalRadonTransforms}.
It is not hard to see, using the form of this correspondence
(which we have not made precise), that
$\cZs$ and $\cYs$ are both equivalent to $\cGs$.
\end{proof}

\begin{rmk}\label{RmkcJs}
Under the equivalent assumptions $\cZs$, $\cGs$, and $\cYs$,
the assumptions of Proposition \ref{PropHowAssumpsAreUsed} hold.
The conclusion of Proposition \ref{PropHowAssumpsAreUsed}
can be seen as $\cJs$.  
Note that each of $\cZs$, $\cGs$, and $\cYs$ comes with
an intrinsic family of vector fields from which we can construct
the scaling map $\Phi_{x_0,\delta}$ (for each $\delta\in \sA$, $x_0\in \K$), which we use to make sense of
$\cJs$.  There is no such intrinsic family of vector fields
corresponding to $\cJs$, and we therefore cannot discuss any sort
of reverse implication in this sense.  Of course,
if we fix a choice of the list of vector fields $\q(X,d\w)$,
then all four conditions are (in a sense) equivalent,
since the pullback of them under $\Phi$ corresponds
to $\cZu$, $\cGu$, $\cYu$, and $\cJu$, respectively (which
we know to be equivalent).
More specifically, if we define $\theta_t^{x_0,\delta} = \Phi_{x_0,\delta}^{-1} \circ \gamma_{\delta t}\circ \Phi_{x_0,\delta}$
then $\q\{\theta^{x_0,\delta}: x_0\in \K, \delta\in \sA\w\}$
satisfies  $\cZu$, $\cGu$, and $\cYu$, respectively,
if and only if $\gamma$ satisfies 
 $\cZs$, $\cGs$, and $\cYs$ respectively.
We then say that $\gamma$ satisfies $\cJs$ if
$\q\{\theta^{x_0,\delta}: x_0\in \K, \delta\in \sA\w\}$
satisfies $\cJu$.
This idea can also be used to provide another proof
of Proposition \ref{PropCurvatureEquivScaleInv}.
\end{rmk}

	\subsection{Proof of Proposition \ref{PropControlVectsControlCurve}}\label{SectionProofOfPropControlVectsControlCurves}
	In this section, we exhibit the proof
of Proposition \ref{PropControlVectsControlCurve}.
Let $\q( Z,\dt\w)$ be vector fields with single-parameter
formal degrees satisfying the assumptions in
Section \ref{SectionCCII} for a fixed $x_0\in \Omega$; i.e., the
same setup as in Section \ref{SectionCurvesII}.
For each multi-index $\alpha$ with $\q|\alpha\w|\leq L$,
let $V_\alpha$ be a $C^\infty$ vector field.
Though it will not play an essential role in this section,
assign to $V_\alpha$ the formal degree $\q|\deg\q( \alpha\w)\w|_1$.
Let $\gh_t$ be 
given by,
\begin{equation*}
\gh_t\q( x\w) = e^{\sum_{0<\q|\alpha\w|\leq L} t^{\alpha} V_\alpha}x.
\end{equation*}
Proposition \ref{PropControlVectsControlCurve} will follow immediately
by applying the following lemma with $\q( Z,\dt\w) = \q( \delta X,\sd\w)$,
with $\delta\in \sA$, and $x_0\in K$, and taking $\gh_t=\gamma_{\delta t}$.

\begin{lemma}
$\q( Z,\dt\w)$ controls $\gh$ at the unit scale near $x_0$ if and
only if $\q( Z,\dt\w)$ controls $\q( V_\alpha, \q|\deg\q( \alpha\w)\w|_1\w)$
at the unit scale near $x_0$, in the sense that if $\gh$ satisfies
$\sQ_2$ then $\q( V_\alpha, \q|\deg\q( \alpha\w)\w|_1\w)$ satisfies
$\sP_2$ and the parameters of $\sP_2$ can be chosen to be $2$-admissible
constants depending also on the parameters of $\sQ_2$.  Conversely,
if $\q( V_\alpha, \q|\deg\q( \alpha\w)\w|_1\w)$ satisfies $\sP_2$
for each $\alpha$, then $\gh$ satisfies $\sQ_2$ and the parameters
for $\sQ_2$ can be chosen to be $2$ admissible constants
depending also on the parameters for $\sP_2$.
\end{lemma}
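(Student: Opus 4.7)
The plan is to observe that the pullback of vector fields under $\Phi$ commutes with the exponential map: for any vector field $V$ tangent to the image of $\Phi$, one has $\Phi^{-1}\circ e^V\circ \Phi=e^{\Phi^*V}$. Setting $Y_{V_\alpha}=\Phi^*V_\alpha$ whenever this makes sense, this identity yields
\begin{equation*}
\theta_t(u):=\Phi^{-1}\circ \gh_t\circ\Phi(u)=e^{\sum_{0<|\alpha|\leq L}t^\alpha Y_{V_\alpha}}u.
\end{equation*}
Thus the lemma reduces to showing that the flow $\theta_t$ is smooth in $(t,u)$ with bounded $C^m$ norms if and only if each individual $Y_{V_\alpha}$ is smooth in $u$ with bounded $C^m$ norms, and moreover the parameters translate quantitatively between the two regimes.

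For the reverse direction, assume each $(V_\alpha,|\deg(\alpha)|_1)$ satisfies $\sP_2$, so each $Y_{V_\alpha}$ has bounded $C^m$ norms on $B^{n_0}(\eta_2)$. Then the time-dependent vector field $B(t,u)=\sum_{|\alpha|\leq L} t^\alpha Y_{V_\alpha}(u)$ has bounded $C^m$ norms on $\Q^N(\rho)\times B^{n_0}(\eta_2)$ for any $\rho>0$. Since $B(0,u)\equiv 0$, standard ODE theory (smooth dependence on parameters and initial data, applied to the flow ODE $\partial_s\Theta=B(t,\Theta)$) shows that the time-$1$ flow $\theta_t=e^{B(t)}$ is smooth in $(t,u)$ with $C^m$ bounds depending only on the $C^{m+1}$ bounds of the $Y_{V_\alpha}$'s, provided $t$ is small enough (admissibly in $\eta_2$) that the flow stays inside $B^{n_0}(\eta_1)$. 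Pushing this forward via $\Phi$ and using Proposition \ref{PropExtraCCStuff} to produce appropriate $\rho_2$ and $\tau_2$ yields $\sQ_2$ for $\gh$, with all parameters admissible in terms of those of $\sP_2$.

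For the forward direction, assume $\gh$ satisfies $\sQ_2$, so $\theta_t$ has bounded $C^m$ norms on $\Q^N(\rho_2)\times B^{n_0}(\eta')$. I would extract each $Y_{V_\alpha}$ recursively via a triangular system in Taylor coefficients of $\theta_t$. Expanding the exponential formula above and using the chain rule, one sees by induction on $|\alpha|$ that
\begin{equation*}
\partial_t^\alpha\big|_{t=0}\theta_t(u)=\alpha!\,Y_{V_\alpha}(u)+P_\alpha\bigl(u;\{Y_{V_\beta}\}_{|\beta|<|\alpha|}\bigr),
\end{equation*}
where $P_\alpha$ is a universal polynomial expression in $Y_{V_\beta}$'s for $|\beta|<|\alpha|$ and their derivatives. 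Solving this triangular system recursively expresses each $Y_{V_\alpha}(u)$ as a polynomial in $\{\partial_t^\beta|_{t=0}\theta_t\}_{|\beta|\leq|\alpha|}$, so the $C^m$ bounds on $\theta_t$ (in the joint $(t,u)$ variables) transfer to $C^m$ bounds on each $Y_{V_\alpha}$ on $B^{n_0}(\eta')$, establishing $\sP_2$. Tangentness of $V_\alpha$ to the leaf follows because $Y_{V_\alpha}$ is then a genuine vector field on $B^{n_0}(\eta')$ whose $\Phi$-pushforward equals $V_\alpha$ on a neighborhood of the leaf.

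The main obstacle will be the bookkeeping in the forward direction: verifying that the recursion for extracting $Y_{V_\alpha}$ is indeed triangular (introducing only strictly lower-order $Y_{V_\beta}$'s) and that the resulting expressions yield $m$-admissible bounds. This requires carefully expanding $e^{B(t)}u$ in a power series in $t$ and tracking which multi-indices $\alpha$ contribute to each $\partial_t^\alpha|_{t=0}\theta_t$; the Campbell-Hausdorff type combinatorics is the only nontrivial point, the remainder being routine ODE and interpolation estimates.
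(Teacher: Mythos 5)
Your proposal is correct and follows essentially the same route as the paper: pull back via $\Phi$, use that conjugation by $\Phi$ commutes with the exponential so that $\theta_t(u)=e^{\sum t^\alpha U_\alpha}u$ with $U_\alpha$ the pullback of $V_\alpha$, and reduce the lemma to the equivalence of uniform $C^m$ bounds on $\theta$ with uniform $C^m$ bounds on the $U_\alpha$. The paper simply declares that last equivalence ``well known,'' whereas you supply the standard ODE argument for one direction and the triangular Taylor-coefficient recursion for the other; both fillings are sound.
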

\begin{proof}
Let $U_\alpha$ be the pullback of $V_\alpha$ via the map $\Phi$, and
let $\theta_t\q( u\w) = \Phi^{-1}\circ \gh_t\circ \Phi\q( u\w)$.
Note that,
\begin{equation*}
\theta_t\q( u\w) = e^{\sum_{0<\q|\alpha\w|\leq L} t^\alpha U_\alpha}u.
\end{equation*}
Then, the statement of the lemma can be restated as saying,
for some $\rho_2,\eta_2>0$,
\begin{equation*}
\CjN{U_\alpha}{m}{B^{n_0}\q( \eta_2\w)}\lesssim 1, \quad \forall \alpha,\quad \forall m,
\end{equation*}
if and only if,
\begin{equation*}
\CjN{\theta}{m}{B^N\q(\rho_2\w)\times B^{n_0}\q( \eta_2\w)}\lesssim 1, \quad \forall m.
\end{equation*}
This is well known.
\end{proof}

	\subsection{$T^{*}$ is of the same form as $T$}\label{SectionAdjoint}
	In this section, we prove the following proposition.
\begin{prop}\label{PropAdjoint}
If $T$ satisfies all of the assumptions of Theorem \ref{ThmMainThmSecondPass},
then so does $T^{*}$, where $T^{*}$ is the $L^2$ adjoint of $T$.
\end{prop}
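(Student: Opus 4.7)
The plan is a direct computation of $T^{*}$ followed by verification that the resulting operator fits the hypotheses of Theorem \ref{ThmMainThmSecondPass}. For fixed $t\in\Q^N\q(a\w)$ with $a$ sufficiently small, $\gamma_t$ is a diffeomorphism onto its image and $\q|\det d\gamma_t^{-1}\w|$ is a smooth positive function of $\q(t,y\w)$ (positivity since $\gamma_t$ is close to the identity). Applying the change of variables $y=\gamma_t\q(x\w)$ inside $\q\langle Tf,g\w\rangle$ produces
\begin{equation*}
T^{*}g\q(y\w)=\tilde{\psi}_1\q(y\w)\int g\q(\tilde{\gamma}_t\q(y\w)\w)\,\tilde{\psi}_2\q(\tilde{\gamma}_t\q(y\w)\w)\,\tilde{\kappa}\q(t,y\w)\,\tilde{K}\q(t\w)\,dt,
\end{equation*}
where $\tilde{\gamma}_t\q(y\w):=\gamma_t^{-1}\q(y\w)$, $\tilde{\psi}_1:=\overline{\psi_2}$, $\tilde{\psi}_2:=\overline{\psi_1}$, $\tilde{K}:=\overline{K}$, and $\tilde{\kappa}\q(t,y\w):=\overline{\kappa\q(t,\gamma_t^{-1}\q(y\w)\w)}\cdot\q|\det d\gamma_t^{-1}\q(y\w)\w|$. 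The new cutoffs $\tilde{\psi}_1,\tilde{\psi}_2$ are again in $C_0^\infty\q(\R^n\w)$ and supported in the interior of $\K$, and $\tilde{\kappa}$ is $C^\infty$ on $\Q^N\q(a\w)\times\Omega''$ once $a$ is small enough.

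Next, $\tilde{K}\in\sK\q(N,e,a,\sA\w)$ because the class $\sK$ is visibly closed under complex conjugation: conjugating \eqref{EqnDefsK} term-by-term replaces each $\vsig_j$ by $\overline{\vsig_j}$, which remains a bounded subset of $C_0^\infty\q(\Q^N\q(a\w)\w)$ and satisfies the (linear) cancellation conditions \eqref{EqnDefnsKCancel}.

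The heart of the proof is showing that $\tilde{\gamma}$ satisfies the assumptions of Section \ref{SectionCurves}; I will use the very same list $\q(X,d\w)$ that witnesses the assumptions for $\gamma$. For the control part: by Proposition \ref{PropConnectionBetweenControlEveryScaleAndUnit}, $\q(X,d\w)$ controls $\gamma$ if and only if $\q(\delta X,\sd\w)$ controls $\gamma_{\delta t}$ at the unit scale near each $x_0\in\K$, uniformly in $\delta\in\sA$. The inverse part of Proposition \ref{PropControlCompAndInv} then yields that $\q(\delta X,\sd\w)$ controls $\gamma_{\delta t}^{-1}=\tilde{\gamma}_{\delta t}$ as well, and reapplying Proposition \ref{PropConnectionBetweenControlEveryScaleAndUnit} shows that $\q(X,d\w)$ controls $\tilde{\gamma}$. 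For the finite-type part---that the pure-power Taylor coefficients of $\tilde{W}\q(t,x\w):=\frac{d}{d\epsilon}\big|_{\epsilon=1}\tilde{\gamma}_{\epsilon t}\circ\tilde{\gamma}_t^{-1}\q(x\w)$ generate a list equivalent to $\q(X,d\w)$---I would pass via Proposition \ref{PropCurvatureEquivScaleInv} to the exponential representation $\cGs$. If $\gamma_t\sim\exp\q(\sum_\alpha t^\alpha \Xh_\alpha\w)$, then $\gamma_t^{-1}\sim\exp\q(\sum_\alpha t^\alpha \Xh'_\alpha\w)$, where Campbell--Hausdorff expresses each $\Xh'_\alpha$ as a polynomial in iterated brackets of $\Xh_\beta$ with $\sum\beta=\alpha$ (and conversely). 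When $\deg\q(\alpha\w)$ is nonzero only in the $\mu$-th parameter, the componentwise nonnegativity of the dilations $e_j$ forces each $\beta$ appearing in such a decomposition to satisfy the same condition; hence the pure-power families $\q\{\Xh_\alpha\w\}$ and $\q\{\Xh'_\alpha\w\}$ determine each other by iterated brackets within the same $\mu$-slot. By Remark \ref{RmkEquivLists}, any finite list generated by the pure-power $\Xh'_\alpha$ is equivalent to $\q(X,d\w)$, so the latter serves equally well as the witnessing list for $\tilde{\gamma}$.

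The main obstacle is this last step: carefully checking that the pure-power structure is preserved under the BCH expansion and keeping track of how many commutators are needed so that one recovers a finite list equivalent (in the sense of Definition \ref{DefnEquivLists}) to $\q(X,d\w)$. Once that algebraic bookkeeping is in hand, the remaining verifications---the change of variables, conjugation-closure of $\sK$, and the control argument via Propositions \ref{PropControlCompAndInv} and \ref{PropConnectionBetweenControlEveryScaleAndUnit}---are routine.
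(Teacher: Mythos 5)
Your proposal is correct and follows essentially the same route as the paper: compute $T^{*}$ by the change of variables $y=\gamma_t\q(x\w)$, observe it has the form \eqref{EqnMainThmSecond} with $\gamma_t$ replaced by $\gamma_t^{-1}$, and then verify that $\gamma_t^{-1}$ satisfies the hypotheses via $\cGs$ (Proposition \ref{PropCurvatureEquivScaleInv}) for the finite-type part and Propositions \ref{PropConnectionBetweenControlEveryScaleAndUnit} and \ref{PropControlCompAndInv} for the control part. The one place you diverge is the step you flag as the main obstacle: there is no Campbell--Hausdorff bookkeeping to do. Lemma 9.3 of \cite{ChristNagelSteinWaingerSingularAndMaximalRadonTransforms} gives $\gamma_t^{-1}\q(x\w)\sim \exp\q(-\sum_{\q|\alpha\w|>0}t^{\alpha}\Xh_\alpha\w)x$, i.e.\ $\Xh'_\alpha=-\Xh_\alpha$ exactly (one is inverting a single exponential, not composing two), so the identical list $\q(X,d\w)$ witnesses $\cGs$ for $\gamma_t^{-1}$ with no further argument. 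Your BCH argument would also go through---the nonnegativity of the components of the $e_j$ does force a pure-power multi-index to decompose only into pure-power pieces in the same slot---but it is superfluous. The remaining points (conjugation-closure of $\sK$, smoothness and positivity of the Jacobian factor for $a$ small) are fine and match the paper.
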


Recall, $T$ is of the form,
\begin{equation*}
T\q( f\w) \q( x\w) = \psi_1\q( x\w) \int f\q( \gamma_t\q( x\w)\w) \psi_2\q( \gamma_t\q( x\w)\w) \kappa\q( t,x\w) K\q( t\w)\: dt;
\end{equation*}
see Theorem \ref{ThmMainThmSecondPass} for further details.
Since we are assuming $K\q( t\w)$ has small support (and we are allowed
to choose how small, depending on $\gamma$), we may without loss
of generality assume that $\det \frac{\partial \gamma}{\partial x}\q( t,x\w)\geq \frac{1}{2}$ on the support of $K$ (since $\det \frac{\partial \gamma}{\partial x}\q( 0,x\w)=1$).
Note that,
\begin{equation*}
\begin{split}
T^{*} \q( f\w) \q( x\w) 
&= \overline{\psi_2\q( x\w)} \int \overline{\psi_1\q( \gamma_t^{-1}\q( x\w)\w)\kappa\q( t,\gamma_t^{-1}\q(x\w)\w)} f\q( \gamma_t^{-1}\q( x\w)\w)\q(\det \frac{\partial\gamma}{\partial x}\q( t,\gamma_t^{-1}\q(x\w)\w) \w)^{-1} K\q( t\w) \: dt\\
&= \overline{\psi_2\q( x\w)} \int \overline{\psi_1\q( \gamma_t^{-1}\q( x\w)\w)} \kapt\q( t,\gamma_t^{-1}\q(x\w)\w) f\q( \gamma_t^{-1}\q( x\w)\w) K\q( t\w) \: dt,
\end{split}
\end{equation*}
where $\kapt\q( t,x\w) = \overline{\kappa\q( t,\gamma_t^{-1}\q(x\w)\w)} \q(\det \frac{\partial\gamma}{\partial x}\q( t,\gamma_t^{-1}\q(x\w)\w) \w)^{-1}$.
From here it is immediate to see that Proposition \ref{PropAdjoint}
follows from the following lemma.

\begin{lemma}
$\gamma_t$ satisfies all of the hypotheses of Theorem \ref{ThmMainThmSecondPass}
if and only if $\gamma_t^{-1}$ does.
\end{lemma}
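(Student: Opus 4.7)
The plan is to route the argument through the scale-invariant curvature condition $\cGs$ of Section \ref{SectionMultiParamCurvature}, which by Proposition \ref{PropCurvatureEquivScaleInv} is equivalent to $\cZs$ (precisely the hypothesis of Section \ref{SectionCurves}). Since $\gamma\mapsto \gamma^{-1}$ is an involution, only one direction need be shown. I would start by assuming $\gamma$ satisfies the hypotheses, writing $\gamma_t\sim \exp\q(\sum_{\q|\alpha\w|>0} t^\alpha \Xh_\alpha\w)$ via $\cGs$, and fixing a finite subset $\sF$ of
$\sV=\q\{\q(\Xh_\alpha,\deg\q(\alpha\w)\w): \deg\q(\alpha\w) \text{ is nonzero in one component}\w\}$
generating a finite list $\q(X,d\w)$ that controls $\gamma$.

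The next step would be to observe that the exponential Taylor coefficients of $\gamma_t^{-1}$ are simply the negatives of those of $\gamma_t$. For each $M$, setting $A_M\q(t\w)=\sum_{0<\q|\alpha\w|<M} t^\alpha \Xh_\alpha$, we have $\gamma_t\q(x\w)=\exp\q(A_M\q(t\w)\w)x+O\q(\q|t\w|^M\w)$. Since the time-$1$ map of the flow of the single vector field $A_M\q(t\w)$ is inverted by negating the generator, $\gamma_t^{-1}\q(x\w)=\exp\q(-A_M\q(t\w)\w)x+O\q(\q|t\w|^M\w)$, and hence $\gamma_t^{-1}\sim \exp\q(\sum t^\alpha \q(-\Xh_\alpha\w)\w)$. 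The analog $\sV'$ of $\sV$ for $\gamma^{-1}$ is therefore $\sV$ with signs flipped. Taking $\sF'\subseteq \sV'$ to be the negated version of $\sF$, the same iterated commutators that generate $\q(X,d\w)$ from $\sF$ produce $\q(\q(-1\w)^m X_j,d_j\w)$ when applied to $\sF'$ (where $m$ is the commutator depth of $X_j$), so $\sF'$ generates a list $\q(X',d'\w)$ equivalent to $\q(X,d\w)$ in the sense of Definition \ref{DefnEquivLists}.

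Finally I would verify that $\q(X,d\w)$ controls $\gamma^{-1}$. By Proposition \ref{PropConnectionBetweenControlEveryScaleAndUnit}, for every $\delta\in \sA$ and $x_0\in \K$ the list $\q(\delta X,\sd\w)$ controls $\gamma_{\delta t}$ at the unit scale near $x_0$, uniformly in the parameters. Proposition \ref{PropControlCompAndInv} then yields that $\q(\delta X,\sd\w)$ also controls $\gamma_{\delta t}^{-1}=\q(\gamma^{-1}\w)_{\delta t}$ at the unit scale, uniformly; another application of Proposition \ref{PropConnectionBetweenControlEveryScaleAndUnit} returns the every-scale statement that $\q(X,d\w)$ controls $\gamma^{-1}$. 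Combined with the preceding paragraph, this establishes $\cGs$ for $\gamma^{-1}$, whence $\cZs$ for $\gamma^{-1}$ by Proposition \ref{PropCurvatureEquivScaleInv}, completing the proof.

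The main obstacle is finding the right reformulation: the Taylor coefficients of $W$ used in $\cZs$ do not transform transparently under $\gamma\mapsto \gamma^{-1}$, whereas the exponential coordinates used in $\cGs$ simply change sign. A proof working directly with $W$ would require expanding $W'\q(t,x\w)=-\q(d\gamma_t\q(x\w)\w)^{-1}W\q(t,\gamma_t\q(x\w)\w)$ in a Taylor series and matching pure-power coefficients modulo the module generated by $\q(X,d\w)$---feasible, but considerably more painful than invoking Proposition \ref{PropCurvatureEquivScaleInv}.
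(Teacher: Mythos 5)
Your proposal is correct and follows essentially the same route as the paper: both reduce to $\cGs$ via Proposition \ref{PropCurvatureEquivScaleInv}, use that the exponential Taylor coefficients of $\gamma_t^{-1}$ are the negatives of those of $\gamma_t$ (the paper cites Lemma 9.3 of \cite{ChristNagelSteinWaingerSingularAndMaximalRadonTransforms} for this), and then obtain control of $\gamma^{-1}$ by $\q(X,d\w)$ from Propositions \ref{PropConnectionBetweenControlEveryScaleAndUnit} and \ref{PropControlCompAndInv}. Your extra remark about the signs $\q(\q(-1\w)^m X_j, d_j\w)$ in the generated list is a point the paper glosses over but is handled exactly as you say.
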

\begin{proof}
We show that $\gamma_t$ satisfies $\cGs$ if and only if $\gamma_t^{-1}$
satisfies $\cGs$, and the result will follow from 
Proposition \ref{PropCurvatureEquivScaleInv}.
Since the result is symmetric in $\gamma_t$ and $\gamma_t^{-1}$ we show
only one direction.

Suppose $\gamma_t$ satisfies $\cGs$.  Thus we assume that our
list of vector fields $\q(X,d\w)$ is generated by vector fields
of the form $\q( \Xh_\alpha, \deg\q( \alpha\w)\w)$, where,
$\gamma_t\q( x\w) \sim \exp\q(\sum_{\q|\alpha\w|>0} t^{\alpha} \Xh_\alpha \w)x$.
However, Lemma 9.3 of
\cite{ChristNagelSteinWaingerSingularAndMaximalRadonTransforms}
shows,
$\gamma_t^{-1}\q( x\w) \sim \exp\q(-\sum_{\q|\alpha\w|>0} t^{\alpha} \Xh_\alpha \w)x$.
Hence we may use the same list of vector fields to show
that $\gamma_t^{-1}$ satisfies $\cGs$.
The result will now follow once we show $\q( X,d\w)$ controls
$\gamma_t^{-1}$.  Rephrasing this,
it suffices to show that $\gh_t^{-1}\q( x\w):= \gamma_{\delta t}^{-1}\q( x\w)$
is controlled by $\q( Z,\dt\w):=\q( \delta X,\sd\w)$ at the unit scale near $x_0$, uniformly for $x_0\in \K$ and $\delta\in \sA$.
However, our assumption on $\gamma$ can be rephrased as saying that
$\gh_t$ is controlled by $\q( Z,\dt\w)$ at the unit scale
near $x_0$, uniformly for $x_0\in \K$ and $\delta\in \sA$.
The result now follows from Proposition \ref{PropControlCompAndInv}.
\end{proof}

\section{The space $L^1_\delta$}\label{SectionL1delta}
In this section, we review the results from 
Sections 7 and 13 of
\cite{ChristNagelSteinWaingerSingularAndMaximalRadonTransforms},
concerning the space $L^1_\delta$ (which is defined below).
The situation we are interested in is as follows.
$\Psi$ will be a $C^\infty$ mapping on the closure $\Qb$
of a finite ball $\Q$ in $\R^d$ mapping to $\R^n$, with $d\geq n$.
Consider a measure $\psi\q( \tau\w) \: d\tau$ in $\R^d$,
where $\psi\in C^1_0\q( \Q\w)$.
We define a measure $\mu$ on $\R^n$ by the integration formula,
\begin{equation*}
\int_{\R^n} f\q( y\w) \:d\mu\q( y\w) = \int_{\Qb} f\q( \Psi\q( \tau\w)\w)\psi\q( \tau\w) \: d\tau;
\end{equation*}
that is, $\mu$ is the transported measure $d\mu=\Psi_{*} \q(\psi d\tau \w)$.

The goal of this section is to discuss the fact that, under appropriate
conditions, $\mu$ is absolutely continuous with respect to Lebesgue
measure and the Radon-Nikodym derivative $h\q( y\w)=\frac{d\mu\q( y\w)}{dy}$
possesses the following level of smoothness.

\begin{defn}
For $0<\delta\leq 1$, $L^1_\delta\q( \R^n\w)$ is the Banach space of
all functions $f\in L^1\q( \R^n\w)$ such that,
\begin{equation}\label{EqnL1deltaN}
\int_{\R^n} \q|h\q( y-z\w) -h\q( y\w)\w|\: dy\leq C\q|z\w|^\delta, \quad \forall z\in \R^n.
\end{equation}
The norm on $L^1_\delta$ is defined to be $\LpN{1}{h}$ plus the smallest
$C$ for which \eqref{EqnL1deltaN} holds.
\end{defn}

\begin{prop}[Proposition 7.2 of \cite{ChristNagelSteinWaingerSingularAndMaximalRadonTransforms}]\label{PropTransportL1delta}
Suppose that, for some multi-index $\alpha$,
\begin{equation}\label{EqnCondForTransport}
\q|\q( \frac{\partial}{\partial\tau}\w)^\alpha \det_{n\times n} \frac{\partial \Psi}{\partial \tau} \q( \tau\w)\w|_\infty \ne 0, \quad\forall \tau\in \Qb.
\end{equation}
Then the transported measure $d\mu = \Psi_{*} \q( \psi d\tau\w)$ is absolutely
continuous with respect to Lebesgue measure, and its Radon-Nikodym
derivative $h$ belongs to $L^1_\delta$ for all $\delta<\q( 2\q|\alpha\w|\w)^{-1}$.
Moreover, the $L^1_\delta$ norm of $h$ can be bounded in terms of
an upper bound for the $C^{\q|\alpha\w|+2}\q( \Qb\w)$ norm of $\Psi$, a lower
bound for the left hand side of \eqref{EqnCondForTransport} in $\Qb$,
an upper bound for the $C^1$ norm of $\psi$, the number $\delta$, and 
an upper bound for $\q|\alpha\w|$.
\end{prop}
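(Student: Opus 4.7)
The plan is to follow the strategy of Christ, Nagel, Stein, and Wainger (whose Section 7 is cited), trading off an estimate on a ``good'' piece, where the Jacobian minor is bounded below, against an estimate on a complementary ``bad'' piece whose size is controlled by a van der Corput type sublevel set inequality, and then optimizing over the cutoff scale.

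First, by compactness of $\Qb$ and a smooth partition of unity, I would reduce to the case where one fixed choice of $n$ coordinates $\tau_{j_1},\ldots,\tau_{j_n}$ produces a minor $J(\tau)$ of $\partial \Psi/\partial \tau$ satisfying $\q|\partial_\tau^\alpha J(\tau)\w|\geq c>0$ throughout $\Qb$. The key single-variable input is the classical sublevel set lemma: iterated in the direction of $\alpha$, it gives
\begin{equation*}
\q|\q\{\tau\in \Qb : \q|J(\tau)\w|<\lambda \w\}\w| \lesssim \lambda^{1/\q|\alpha\w|}.
\end{equation*}

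For each $\lambda>0$ I would then split $\psi = \psi^\sharp_\lambda + \psi^\flat_\lambda$ with a smooth cutoff, $\psi^\flat_\lambda$ supported where $\q|J\w|<2\lambda$ and $\psi^\sharp_\lambda$ supported where $\q|J\w|\geq \lambda$, and study the corresponding pushforwards $h^\sharp_\lambda,h^\flat_\lambda$ separately. For the bad piece the naive bound suffices:
\begin{equation*}
\LpN{1}{h^\flat_\lambda}\leq \LpN{\infty}{\psi}\cdot\q|\q\{\q|J\w|<2\lambda\w\}\w|\lesssim \lambda^{1/\q|\alpha\w|},
\end{equation*}
so $\LpN{1}{h^\flat_\lambda(\cdot-z)-h^\flat_\lambda}\lesssim \lambda^{1/\q|\alpha\w|}$ by the triangle inequality. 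For the good piece, on $\q\{\q|J\w|\geq \lambda\w\}$ the map $\Psi$ is, after further localization so that $J$ corresponds to a fixed set of coordinates, a submersion to which the implicit function theorem applies with quantitative control: the inverse of the straightening map is smooth with derivatives controlled by inverse powers of $\lambda$. Using the coarea formula to write $h^\sharp_\lambda$ as a fiber integral of $\psi/\q|J\w|$ and differentiating once yields an $L^1$ bound $\LpN{1}{\grad h^\sharp_\lambda}\lesssim \lambda^{-C}$ for a constant $C$ depending on $\q|\alpha\w|$, and hence
\begin{equation*}
\LpN{1}{h^\sharp_\lambda(\cdot-z)-h^\sharp_\lambda}\lesssim \q|z\w|\lambda^{-C}.
\end{equation*}
Balancing the two contributions by choosing $\lambda$ as a suitable power of $\q|z\w|$ then produces an estimate of the form $\LpN{1}{h(\cdot-z)-h}\lesssim \q|z\w|^{\delta}$ for $\delta$ in the claimed range. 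Tracking the dependencies through this optimization also yields the stated dependence of the $L^1_\delta$ norm on $\CjN{\Psi}{\q|\alpha\w|+2}{\Qb}$, the lower bound for $\q|\partial^\alpha J\w|$, and the $C^1$ norm of $\psi$.

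The main obstacle is getting the sharp exponent $(2\q|\alpha\w|)^{-1}$. A single-pass argument with one derivative on the good piece produces too many powers of $\lambda^{-1}$ (each derivative of $1/J$ costs an extra $\lambda^{-1}$), and the resulting optimization gives a strictly smaller exponent. To reach $1/(2\q|\alpha\w|)$ I expect one must either interpolate between an $L^1$ bound and an $L^\infty$-Sobolev type bound at higher order, or decompose dyadically into shells $\q\{2^{-k-1}\leq \q|J\w|<2^{-k}\w\}$, applying the above two-piece estimate on each shell and summing geometrically; this is the step where the detailed bookkeeping of the $\q|\alpha\w|$-dependence must be carried out.
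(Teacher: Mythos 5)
First, a point of reference: the paper does not prove this proposition at all --- it is imported verbatim from Section 7 of \cite{ChristNagelSteinWaingerSingularAndMaximalRadonTransforms} --- so the only meaningful comparison is with the argument there, whose overall shape (a sublevel-set estimate for a minor $J$ of $\partial\Psi/\partial\tau$, a good/bad splitting according to the size of $J$, and a trade-off between total mass on the bad piece and smoothness on the good piece) your proposal reproduces correctly, including the reduction by compactness to a single fixed minor.

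The genuine gap is the one you flag yourself: as written, the argument does not reach the stated exponent. The true cost of one $y$-derivative of the good density in $L^1$ is $\lambda^{-2}$ (one factor $\lambda^{-1}$ from $D\Phi^{-1}$ via Cramer's rule, one from $|J|^{-1}\nabla J$ after the change of variables $dy=|J|\,d\tau'$ absorbs one power of $J$); balancing $\lambda^{1/|\alpha|}$ against $|z|\lambda^{-2}$ gives only $\delta\leq(2|\alpha|+1)^{-1}$, strictly short of every $\delta<(2|\alpha|)^{-1}$. Incidentally, the exponent $C$ in your $\|\nabla h^{\sharp}_\lambda\|_{L^1}\lesssim\lambda^{-C}$ does not depend on $|\alpha|$; only the sublevel-set exponent does. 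The missing idea is to use the sublevel-set estimate a second time, on the \emph{good} set, to control the integral of $|J|^{-2}$ there rather than bounding $|J|^{-2}\leq\lambda^{-2}$ pointwise. By the layer-cake formula and $|\{|J|\leq s\}|\lesssim s^{1/|\alpha|}$ one gets $\int_{\{|J|\geq\lambda\}}|J|^{-2}\,d\tau\lesssim\lambda^{-2+1/|\alpha|}$, hence $\|\nabla h^{\sharp}_\lambda\|_{L^1}\lesssim\lambda^{-2+1/|\alpha|}$; optimizing now puts the crossover at $\lambda=|z|^{1/2}$ and yields $\int|h(y-z)-h(y)|\,dy\lesssim|z|^{1/(2|\alpha|)}$, which covers all $\delta<(2|\alpha|)^{-1}$. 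Your alternative of summing over dyadic shells $\{|J|\sim 2^{-m}\}$ is the discrete version of the same computation ($\|h_m\|_{L^1}\lesssim 2^{-m/|\alpha|}$, $\|\nabla h_m\|_{L^1}\lesssim 2^{m(2-1/|\alpha|)}$, crossover at $2^{-m}\sim|z|^{1/2}$) and works equally well. Until one of these is actually carried out, the proposal proves a strictly weaker statement than the one claimed.
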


Fix constants $C_1,C_2<\infty$ and let $\zeta\in \q( 0,1\w]$.  Consider a non-negative
measure $\Xi$ on $\R^{2n}$ with the following properties.
\begin{itemize}
\item ${\mathrm{supp}} \:\Xi \subseteq \q\{\q( y,z\w) : \q|y\w|,\q|z\w|\leq C_1, \: \q|y-z\w|\leq C_1 \zeta\w\}$.
\item There exist bounded, nonnegative, measurable functions $m_1,m_2$ such that for every
$f\in C^0\q( \R^n\w)$,
\begin{equation*}
\begin{split}
\int\int f\q( y\w) \: d\Xi\q( y,z\w) &= \int f\q( y\w) m_1\q( y\w) \: dy,\\
\int\int f\q( z\w) \: d\Xi\q( y,z\w) &= \int f\q( z\w) m_2\q( z\w) \: dz;\\
\end{split}
\end{equation*}
with,
\begin{equation*}
m_1\q( y\w), m_2\q( z\w) \leq C_2.
\end{equation*}
\end{itemize}

\begin{prop}[See Proposition 13.1 of \cite{ChristNagelSteinWaingerSingularAndMaximalRadonTransforms}]\label{PropIntL1delta}
Suppose $h\in L^1_\delta\q( \R^n\w)$ and $\Xi$ is a measure as
described above.  Then, there exist $\delta',A\in \q(0,\infty\w)$ such that,
\begin{equation*}
\int\q| h\q( y\w) - h\q( z\w) \w|\: d\Xi\q( y,z\w)\leq A\zeta^{\delta'} \q\|h\w\|_{L^1_\delta};
\end{equation*}
where $\delta'$ depends only on $\delta$ and $n$, and $A$ depends only
on $\delta$, $n$, and upper bounds for the constants $C_1,C_2$.
\end{prop}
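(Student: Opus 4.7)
The approach is a standard smooth-rough splitting of $h$ at a scale $\epsilon>0$ that will be optimized against $\zeta$ at the end. Fix a nonnegative $\phi\in C_0^\infty(\R^n)$ with $\int\phi=1$ and let $\phi_\epsilon(x)=\epsilon^{-n}\phi(x/\epsilon)$. Write
\begin{equation*}
h = h_1 + h_2, \quad h_1 := h*\phi_\epsilon, \quad h_2 := h - h*\phi_\epsilon,
\end{equation*}
so that $\int |h(y)-h(z)|\:d\Xi(y,z)$ splits into two pieces, one for $h_1$ and one for $h_2$, which we will bound separately.

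For the \emph{rough} piece $h_2$, we have the standard estimate
\begin{equation*}
\LpN{1}{h_2} \leq \int \phi_\epsilon(w)\LpN{1}{h(\cdot-w)-h(\cdot)}\:dw \leq C\epsilon^\delta \q\|h\w\|_{L^1_\delta},
\end{equation*}
using the definition of the $L^1_\delta$ norm and the fact that $\phi_\epsilon$ is supported where $|w|\lesssim \epsilon$. The bounded marginal hypothesis on $\Xi$ then gives
\begin{equation*}
\int|h_2(y)-h_2(z)|\:d\Xi \leq \int|h_2(y)|m_1(y)\:dy + \int|h_2(z)|m_2(z)\:dz \leq 2C_2 C\epsilon^\delta \q\|h\w\|_{L^1_\delta}.
\end{equation*}

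For the \emph{smooth} piece $h_1$, I would use $\LpN{\infty}{\grad h_1} \leq \LpN{\infty}{\grad \phi_\epsilon}\LpN{1}{h} \leq C\epsilon^{-n-1}\q\|h\w\|_{L^1_\delta}$ (the constant $C$ depending only on $\phi$ and $n$). Since $\Xi$ is supported where $|y-z|\leq C_1\zeta$, and since the total mass of $\Xi$ is $\LpN{1}{m_1}\leq C_2|B^n(C_1)|$, the mean value theorem gives
\begin{equation*}
\int|h_1(y)-h_1(z)|\:d\Xi \leq C_1\zeta\,\LpN{\infty}{\grad h_1}\,\Xi(\R^{2n}) \leq C'\zeta\epsilon^{-n-1}\q\|h\w\|_{L^1_\delta},
\end{equation*}
where $C'$ depends only on $C_1$, $C_2$, $n$, and $\phi$.

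Combining the two estimates yields $\int|h(y)-h(z)|\:d\Xi \lesssim (\epsilon^\delta + \zeta\epsilon^{-n-1})\q\|h\w\|_{L^1_\delta}$, and the two terms are balanced by choosing $\epsilon = \zeta^{1/(n+1+\delta)}$, which gives the bound $A\zeta^{\delta'}\q\|h\w\|_{L^1_\delta}$ with $\delta'=\delta/(n+1+\delta)$ and $A$ depending only on $\delta$, $n$, $C_1$, $C_2$ (through the choice of $\phi$). There is no real obstacle here; the proof is essentially a one-scale Littlewood--Paley-type argument, and the only subtlety is recognizing that the bounded-marginals hypothesis is exactly what is needed to pass from the $L^1$ control on $h_2$ to control of its ``double'' integral against $\Xi$, while the support condition $|y-z|\leq C_1\zeta$ is exactly what is needed to exploit the pointwise Lipschitz estimate on $h_1$.
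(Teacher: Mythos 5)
Your proof is correct and is essentially the argument of Proposition 13.1 of \cite{ChristNagelSteinWaingerSingularAndMaximalRadonTransforms}, to which the paper simply defers: a mollification of $h$ at scale $\epsilon$, the $L^1_\delta$ modulus of continuity for the rough part combined with the bounded-marginals hypothesis, the gradient bound plus the support condition $\q|y-z\w|\leq C_1\zeta$ for the smooth part, and optimization in $\epsilon$. The only point you gloss over is extending the marginal identities from continuous $f$ to the merely integrable function $\q|h_2\w|$, but since the $y$- and $z$-pushforwards of the finite measure $\Xi$ are exactly $m_1\,dy$ and $m_2\,dz$, this is routine.
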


Proposition \ref{PropIntL1delta} is slightly different than
Proposition 13.1 of \cite{ChristNagelSteinWaingerSingularAndMaximalRadonTransforms}.
Superficially, we have renamed $2^{-\nu}$ in \cite{ChristNagelSteinWaingerSingularAndMaximalRadonTransforms}
to be $\zeta$ here, and taken $j$ and $x$ in \cite{ChristNagelSteinWaingerSingularAndMaximalRadonTransforms} to be $0$.
Furthermore, we have removed any aspect of the ``lifting'' procedure
from \cite{ChristNagelSteinWaingerSingularAndMaximalRadonTransforms}
and we have replaced the quasi-distance $d$ with Euclidean
distance.
Finally, we need not assume $C_1$ small as is done in \cite{ChristNagelSteinWaingerSingularAndMaximalRadonTransforms} (though it would suffice for our
purposes to take $C_1$ small).
In any case, the proof in \cite{ChristNagelSteinWaingerSingularAndMaximalRadonTransforms}
immediately gives Proposition \ref{PropIntL1delta} as we have
stated it.

\section{A general $L^2$ theorem}\label{SectionGenThm}
In this section, we state and prove a general $L^2$ theorem,
which we will use to complete the proof
of Theorem \ref{ThmMainThmSecondPass}.
The goal is to develop a theorem which will imply
\begin{equation}\label{EqnToShowFromGeneral}
\LpOpN{2}{\q(T_j^{*}T_kT_k^{*}T_j\w)^2},
\LpOpN{2}{\q(T_jT_k^{*}T_kT_j^{*}\w)^2}\lesssim 2^{-\epsilon\q|j-k\w|},
\end{equation}
for some $\epsilon>0$,
where $T_j$ and $T_k$ are as in Section \ref{SectionProofOutline}.
We state this result in slightly greater generality than we presently
need, as the more general result will play an essential
role in \cite{SteinStreetMultiParameterSingRadonLp,StreetMultiParameterSingRadonAnal};
in addition, this more general version only requires a small amount
of extra work.
Throughout this section we take $\K\Subset\Omega'\Subset\Omega''\Subset\Omega$,
as in Section \ref{SectionCurves}.
$a>0$ will be a small number to be chosen later, depending on the particular
operators we study.  We will see, though, that $a>0$
may be chosen independent of $j,k$ in \eqref{EqnToShowFromGeneral}.

The setting is as follows.  We are given operators
$S_1,\ldots, S_L$, $R_1$ and $R_2$, and a real number $\zeta\in \q(0,1\w]$.
We will present conditions on these operators
such that there exists $\epsilon>0$ with,
\begin{equation*}
\LpOpN{2}{S_1\cdots S_L \q(R_1-R_2\w)}\lesssim \zeta^{\epsilon}.
\end{equation*}
In Section \ref{SectionProof} we will show that the assumptions
of this section hold uniformly when applied to \eqref{EqnToShowFromGeneral}, in an appropriate sense,
and this will allow us to establish \eqref{EqnToShowFromGeneral},
and complete the proof of Theorem \ref{ThmMainThmSecondPass}.

To describe our assumptions we suppose we are given $C^\infty$
vector fields $Z_1,\ldots, Z_q$ on $\Omega$ with {\it single}-parameter
formal degrees $\dt_1,\ldots, \dt_q$.\footnote{The single-parameter
degrees do not play an essential role in this section, and are only present
to facilitate our applications of the results in this section.}
We assume that $\q( Z,\dt\w)$ satisfies all of the assumptions 
of Theorem \ref{ThmMainCCThm}, uniformly for $x_0\in \K$, for some
fixed $\xi>0$.
Furthermore, we assume that $r$ of the vector fields, $Z_1,\ldots, Z_r$,
$M$-generate $Z_1,\ldots, Z_q$, for some $M>0$ (see
Definition \ref{DefnZ1ZrMGenerates}).

We now turn to defining the operators $S_j$.  We assume, for each $j$,
we are given a $C^\infty$ function $\gh_j:\Q^{N_j}\q( \rho\w)\times \Omega''\rightarrow \Omega$ satisfying $\gh_j\q(0,x\w)\equiv x$.
We assume that each $\gh_j$ is controlled at the unit scale by $\q( Z,\dt\w)$.\footnote{By this we mean that they are controlled by $\q( Z,\dt\w)$ at the unit scale near $x_0$ for every $x_0\in \K$, uniformly in $x_0$.}
As usual, we restrict our attention to $\rho>0$ small, so that
$\gh_{j,t}^{-1}$ makes sense wherever we use it.
We suppose we are given $\psi_{j,1}, \psi_{j,2}\in C_0^{\infty}\q( \R^n\w)$
supported on the interior of $\K$ and $\kappa_j\in C^{\infty}\q(\overline{\Q^{N_j}\q( a\w)}\times \overline{\Omega'} \w)$.  Finally, we suppose we are given
$\vsig_j\in C_0^\infty\q( \Q^{N_j}\q( a\w)\w)$. 
We define,
\begin{equation*}
S_j f\q( x\w) = \psi_{j,1}\q( x\w) \int f\q( \gh_{j,t}\q( x\w)\w) \psi_{j,2}\q( \gh_{j,t}\q( x\w)\w) \kappa_j\q( t,x\w) \vsig_j\q( t\w) \: dt.
\end{equation*}

Note that, under the above assumptions,
\begin{equation}\label{EqnSjBound}
\LpOpN{\infty}{S_j}, \LpOpN{1}{S_j}\lesssim 1;
\end{equation}
and furthermore $S_j^{*}$ is of the same form as $S_j$ with
$\gh_{j,t}$ replaced by $\gh_{j,t}^{-1}$ (c.f. Section \ref{SectionAdjoint}).

\begin{defn}\label{DefnOpControlUnitScale}
If $S_j$ is of the above form, we say that $S_j$ is controlled by
$\q( Z,\dt\w)$ at the unit scale.
\end{defn}

\begin{rmk}\label{RmkControlAdjointAndComp}
Since $\gh_{j,t}$ is controlled at the unit scale if and only
if $\gh_{j,t}^{-1}$ is (Proposition \ref{PropControlCompAndInv})
$S_j$ is controlled at the unit scale if and only if $S_j^{*}$
is.  Furthermore, if $\gh_{j_1,t_1}$ and $\gh_{j_2,t_2}$ are 
controlled at the unit scale, then so is $\gh_{j_1,t_1}\circ \gh_{j_2,t_2}$ (Proposition \ref{PropControlCompAndInv}),
and therefore, if $S_{j_1}$ and $S_{j_2}$ are controlled at the unit
scale, then so is $S_{j_1}S_{j_2}$.
\end{rmk}

We assume, further, that for each $l$, $1\leq l\leq r$, there
is a $j$ ($1\leq j\leq L$), and a multi-index $\alpha$ (with
$\q|\alpha\w|\leq B$, where $B\in \N$ is some fixed constant
which our results are allowed to depend on),
such that,
\begin{equation*}
Z_l\q( x\w) = \frac{1}{\alpha!}\frac{\partial}{\partial t}^\alpha\bigg|_{t=0} \frac{d}{d\epsilon}\bigg|_{\epsilon=1} \gh_{j,\epsilon t}\circ \gh_{j,t}^{-1}\q( x\w).
\end{equation*}
This concludes our assumptions on $S_1,\ldots, S_L$.

\begin{rmk}\label{RmkWhyTheSAssump}
Note that $\gh_1,\ldots, \gh_L$ were chosen so that
\begin{equation}\label{EqnGenL2Defgh}
\gh_{\q( t_1,\ldots, t_L,s_1,\ldots,s_L\w)}\q( x\w) = \gh_{L,t_L}\circ\gh_{L-1,t_{L-1}}\circ\cdots \circ \gh_{1,t_1}\circ \gh_{1,s_1}^{-1}\circ \gh_{2,s_2}^{-1}\circ\cdots\circ \gh_{L,s_L}^{-1}\q( x\w).
\end{equation}
would satisfy the hypotheses (and therefore the conclusions) of
Theorem \ref{ThmCompSatisfycZ} and Corollary \ref{CorCompSatisfycJ}.

\end{rmk}

We now turn to the operators $R_1$ and $R_2$.  It is here where the number
$\zeta$ plays a role.
We assume we are given a $C^\infty$ function $\gt_{t,s}$ (with $\gt_{0,0}\q(x\w)\equiv x$),
which is controlled by $\q( Z,\dt\w)$ at the unit scale:\footnote{In most of our
applications, $\gt_{t,s}$ will be of the form $\gh_{ s\cdot t}$, where
$\gh_t$ is controlled by $\q( Z,\dt\w)$ at the unit scale and $s \cdot t$
scales each coordinate of $t$ by a non-negative integer power of $s$.}
\begin{equation*}
\gt\q(t,s,x\w):\Q^{\Nt}\q( \rho\w)\times \q[-1,1\w] \times \Omega''\rightarrow \Omega, \quad \gt_{0,0}\q( x\w) \equiv x.
\end{equation*}
\begin{rmk}
Here, we are thinking of $\q( t,s\w)$ as playing the role
of the $t$ variable in the definition of control.
\end{rmk}
We suppose we are given 
$\kapt\q( t,s,x\w)\in C^\infty\q( \overline{\Q^{\Nt}\q( a\w)}\times \q[ -1,1\w]\times \Omega'' \w)$, 
$\vsigt\q(t\w) \in L^1\q( \Q^N\q( a\w)\w)$,
and $\psit_1,\psit_2\in C_0^\infty\q( \R^n\w)$
supported on the interior of $\K$.
We define, for $\xi\in \q[-1,1\w]$,
\begin{equation*}
R^\xi f\q( x\w)= \psit_1\q( x\w) \int f\q(\gt_{t,\xi}\q( x\w) \w)\psit_2\q( \gt_{t,\xi}\q( x\w)\w)
\kapt\q( t,\xi,x\w)\vsigt\q( t\w)\: dt.
\end{equation*}
Note that we have,
\begin{equation}\label{EqnRBounded}
\LpOpN{1}{R^{\xi}},\LpOpN{\infty}{R^{\xi}}\lesssim 1.
\end{equation}
We set $R_1= R^{\zeta}$ and $R_2=R^{0}$.


\begin{thm}\label{ThmGenL2Thm}
In the above setup, if $a>0$ is chosen sufficiently small, we have,
\begin{equation*}
\LpOpN{2}{S_1\cdots S_L \q(R_1-R_2\w)}\leq C \zeta^{\epsilon},
\end{equation*}
for some $\epsilon>0$.
\end{thm}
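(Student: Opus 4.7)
The plan is to combine the $TT^*$ method with the scaling map $\Phi$ of Section \ref{SectionCCII} and the $L^1_\delta$ machinery of Section \ref{SectionL1delta}. Let $T = S_1 \cdots S_L (R_1 - R_2)$. The trivial bounds \eqref{EqnSjBound} and \eqref{EqnRBounded} yield $\|T\|_{L^p \to L^p} \lesssim 1$ for $p \in \{1, \infty\}$. Self-adjointness of $TT^*$ gives $\|T\|_{L^2 \to L^2}^{2M} = \|(TT^*)^M\|_{L^2 \to L^2}$, and Riesz--Thorin interpolation with the trivial $L^1$ bound reduces matters to establishing
\begin{equation*}
\|(TT^*)^M\|_{L^\infty \to L^\infty} \lesssim \zeta^{\epsilon_0}
\end{equation*}
for some integer $M$ and some $\epsilon_0 > 0$, which then produces $\|T\|_{L^2 \to L^2} \lesssim \zeta^{\epsilon_0/(4M)}$.

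To bound $(TT^*)^M f(x_0)$ pointwise, fix $x_0 \in \K$ and apply Theorem \ref{ThmMainCCThm} to $(Z,\dt)$ at $x_0$, obtaining the scaling map $\Phi = \Phi_{x_0} \colon B^{n_0}(\eta_1) \to \B{Z}{\dt}{x_0}{\xi}$. Conjugation of each $S_j$ and $R^\xi$ by $\Phi^{\#}$ yields operators whose associated maps $\Phi^{-1} \circ \gh_j \circ \Phi$ and $\Phi^{-1} \circ \gt_{\cdot, \xi} \circ \Phi$ are uniformly $C^\infty$ on a fixed box (this is condition $\sQ_2$), independent of $x_0 \in \K$ and $\xi \in [-1, 1]$. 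In these rescaled coordinates the task becomes an estimate of the form $|(\widetilde T \widetilde T^*)^M g(0)| \lesssim \zeta^{\epsilon_0} \|g\|_\infty$ uniformly in $x_0$.

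Next, expand $(\widetilde T \widetilde T^*)^M = \bigl(\widetilde S (\widetilde R^\zeta - \widetilde R^0)(\widetilde R^\zeta - \widetilde R^0)^* \widetilde S^*\bigr)^M$ by the binomial theorem at each of the $2M$ difference factors, producing a signed sum of $4^M$ operators $\widetilde A_{\vec\xi}$ indexed by $\vec\xi \in \{0, \zeta\}^{2M}$. Each $\widetilde A_{\vec\xi} g(0)$ is the integral of $g$ against a transported measure under a composition map $\widetilde\Psi_{\vec\xi}$ of the pulled-back $\widetilde{\gh}_j$'s, $\widetilde{\gt}_{\cdot, \xi_k}$'s, and their inverses. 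By the composition and inversion stability of control (Proposition \ref{PropControlCompAndInv}), Remark \ref{RmkWhyTheSAssump}, and Corollary \ref{CorCompSatisfycJ}, each $\widetilde\Psi_{\vec\xi}$ satisfies $\cJ$ at the origin uniformly in $\vec\xi$ and $x_0 \in \K$ (the generation hypothesis that each $Z_l$, $1\le l \le r$, appears as a Taylor coefficient of some $\gh_j$ is used precisely here). Proposition \ref{PropTransportL1delta} then yields uniform $L^1_\delta$ bounds on the transported densities $h_{\vec\xi}$ for some $\delta > 0$.

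To extract the $\zeta^{\epsilon_0}$ gain, pair terms in the expansion that differ in a single coordinate $\xi_k$ (one being $\zeta$, the other $0$), and realize the corresponding difference of kernels as $\int (g(y) - g(z))\, d\Xi(y, z)$ where $\Xi$ is a measure on $\R^{2n}$ supported on pairs with $|y - z| \lesssim \zeta$ (using that $\widetilde{\gt}_{\cdot, \zeta} - \widetilde{\gt}_{\cdot, 0} = O(\zeta)$ pointwise). Proposition \ref{PropIntL1delta} then gives a gain of $\zeta^{\delta'}$ for each such pair, and telescoping over the $2M$ coordinates produces the desired $L^\infty$ bound. The main obstacle is verifying the hypothesis of Proposition \ref{PropIntL1delta} that the marginals of $\Xi$ be uniformly bounded functions: this requires $M$ to be chosen large enough (roughly $M \geq n + 1$) so that after many compositions of the pulled-back flows the Jacobian of the composed map is bounded away from zero at the origin, equivalently so that the multi-index $\alpha$ in Proposition \ref{PropTransportL1delta} may be taken to be $0$ for the marginal densities; this stronger nondegeneracy is a consequence of the generation hypothesis together with the iterative enrichment of the Taylor data coming from repeated compositions.
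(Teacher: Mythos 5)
Your overall skeleton (reduce via $TT^{*}$ and interpolation to a pointwise $L^\infty$ bound at a fixed $x_0\in\K$, rescale by $\Phi$, use the generation hypothesis with Corollary \ref{CorCompSatisfycJ} to get condition $\cJ$ for the composed map, transport the measure via Proposition \ref{PropTransportL1delta}, and extract the gain in $\zeta$ from the $s$-difference via Proposition \ref{PropIntL1delta}) is the paper's. But the step where you extract the gain has a genuine gap. You write the difference of a paired term as $\int\q(g\q(y\w)-g\q(z\w)\w)\,d\Xi\q(y,z\w)$, with $\Xi$ the joint pushforward of the parameter measure under the two full composition maps, and then invoke Proposition \ref{PropIntL1delta}. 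That proposition bounds $\int\q|h\q(y\w)-h\q(z\w)\w|\,d\Xi$ by $A\zeta^{\delta'}\q\|h\w\|_{L^1_\delta}$: it requires the function being differenced to lie in $L^1_\delta$. Your $g$ is merely bounded measurable, so the proposition gives nothing. The essential point of the argument is to move the difference \emph{off} of $g$ and \emph{onto} the transported density: one first pushes forward through the nondegenerate block to obtain $h\in L^1_\delta$ on $B^{n_0}\q(\eta_1\w)$, then changes variables in the one remaining factor $\thetat_{t,\xi}$ so that the $\xi$-difference becomes $h\circ\thetat_{t,\zeta}^{-1}-h\circ\thetat_{t,0}^{-1}$ (up to an $O\q(\zeta\w)$ Jacobian error), and only then applies Proposition \ref{PropIntL1delta} with $\Xi$ the pushforward of Lebesgue measure under $v\mapsto\q(\thetat_{t,\zeta}^{-1}\q(v\w),\thetat_{t,0}^{-1}\q(v\w)\w)$; with that $\Xi$ the bounded-marginal hypothesis is immediate because the two maps are uniformly smooth diffeomorphisms near the identity. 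This is also why the paper does not retain all $2M$ difference factors: it peels off every $R^{*}$ and all but one $R$ using the trivial operator bounds, reducing to $\q(S^{*}S\w)^n R$ with the single difference adjacent (outermost) to the nondegenerate block.

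Relatedly, your proposed repair of the bounded-marginal hypothesis --- choose $M\geq n+1$ so that the multi-index $\alpha$ in Proposition \ref{PropTransportL1delta} may be taken to be $0$ --- is false. At $\tau=0$ every factor is the identity, and the columns of $\frac{\partial\Gamma}{\partial\tau}\q(0,x_0\w)$ are determined by the lowest-order Taylor data of the constituent maps; these can all vanish (e.g.\ $\gamma_t\q(x\w)=x+O\q(\q|t\w|^2\w)$), and composing more factors only contributes more vanishing columns. Condition $\cJ$ guarantees that some \emph{derivative} of the Jacobian determinant is nonzero, never the determinant itself, so the transported density lies in $L^1_\delta$ but is in general unbounded no matter how large $M$ is. A secondary issue: if you do expand all the differences, you must pair along a single fixed, sufficiently ``late'' coordinate $\xi_k$, since the inner sub-composition preceding the differing factor must itself contain enough copies of the generators to satisfy $\cJ$; pairing along $\xi_1$ leaves only one copy of $\gh_{L,t_L}\circ\cdots\circ\gh_{1,t_1}$ inside, which need not be nondegenerate. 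One cannot ``telescope over the $2M$ coordinates'' to accumulate gains, and one need not: a single coordinate suffices.
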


\begin{rmk}\label{RmkWhatConstsDependOn}
It is important that $a$, $C$, and $\epsilon$ may be chosen independent
of any relevant parameters.  
We say merely, that $a$, $C$, and $\epsilon$ can be chosen to
depend only on
the norms of the various functions used to define $S_j$, $R_1$, and $R_2$,
on the parameters $B$ and $M$, on $L$, on the various
dimensions, on the parameters in the definition of control (when using $\sQ_2$),
and on anything that $2$-admissible constants were allowed
to depend on as in Section \ref{SectionCCII}.
The reader wishing to, should have no trouble keeping
track of the various dependencies in our argument.
\end{rmk}

The rest of this section is devoted to the proof
of Theorem \ref{ThmGenL2Thm}.  The reader not immediately interested in the details may safely skip
to Section \ref{SectionProof} to see how Theorem \ref{ThmGenL2Thm}
is used.

\begin{rmk}
Note that in the equivalence between $\sQ_1$ and $\sQ_2$, we used that
we were able to shrink the $t$ variable.  For instance, to
define $\theta$ in the definition of $\sQ_2$, we restricted
attention to $\q|t\w|<\eta'$, where $\eta'\approx 1$.
Thus, we must be able to restrict attention
to $s\leq c$ where $c\gtrsim 1$ when working with $\gt_{t,s}$.  Since,
in the definition $R_1$ and $R_2$ we fix $s$ to be $\zeta$ and $0$,
respectively, it suffices to note that we may restrict attention
to $\zeta\leq c$ in Theorem \ref{ThmGenL2Thm}.  This is clear,
since when $1\geq \zeta\geq c$, Theorem \ref{ThmGenL2Thm}
is trivial.  Henceforth, we assume $\zeta$ is small enough
that all our definitions make sense.
\end{rmk}

Define $S=S_1\cdots S_L$ and $R=R_1-R_2$.  In what follows, $\epsilon>0$ will
be a positive number that may change from line to line.
We wish to show,
\begin{equation*}
\LpOpN{2}{SR}\lesssim \zeta^{\epsilon},
\end{equation*}
and it suffices to show,
\begin{equation*}
\LpOpN{2}{R^{*}S^{*}SR}\lesssim \zeta^{\epsilon}.
\end{equation*}
Using \eqref{EqnRBounded}, we have $\LpOpN{2}{R^{*}}\lesssim 1$,
and therefore it suffices to show,
\begin{equation*}
\LpOpN{2}{S^{*}SR}\lesssim \zeta^{\epsilon}.
\end{equation*}
Continuing in this manner, it suffices to show,
\begin{equation}\label{EqnToShowDiadic}
\LpOpN{2}{\q( S^{*} S\w)^{2^m} R}\lesssim \zeta^{\epsilon},
\end{equation}
for some $m>0$.  Since $\LpOpN{2}{S},\LpOpN{2}{S^*}\lesssim 1$ by
\eqref{EqnSjBound}, it suffices to show,
\begin{equation}\label{EqnToShowL2}
\LpOpN{2}{\q( S^{*}S\w)^n R}\lesssim \zeta^{\epsilon};
\end{equation}
where we have just taken $m$ so large $2^m\geq n$ and applied
\eqref{EqnToShowDiadic}.

Combining \eqref{EqnSjBound} and \eqref{EqnRBounded} we see,
\begin{equation*}
\LpOpN{1}{\q( S^{*}S\w)^n R}\lesssim 1,
\end{equation*}
and so interpolation shows that to prove \eqref{EqnToShowL2}
we need only show,
\begin{equation}\label{EqnTwoShowLinf}
\LpOpN{\infty}{\q( S^{*}S\w)^n R}\lesssim \zeta^{\epsilon}.
\end{equation}
Let $f$ be a bounded measurable function.  Rephrasing \eqref{EqnTwoShowLinf},
we wish to show,
\begin{equation}\label{EqnToShowx0}
\q| \q( S^{*}S\w)^n R f\q( x_0\w)\w|\lesssim \zeta^\epsilon\LpN{\infty}{f},
\end{equation}
for every $x_0\in \K$ (where we have used that fact that $S^{*}g$ is supported
in $\K$ for every $g$).
We now fix $x_0$ and prove \eqref{EqnToShowx0}.  All implicit constants
in what follows can be chosen to be independent of $x_0\in \K$.

Define, $\gh_t$ by \eqref{EqnGenL2Defgh}.  That is,
\begin{equation*}
\gh_{\q( t_1,\ldots, t_L,s_1,\ldots,s_L\w)}\q( x\w) = \gh_{L,t_L}\circ\gh_{L-1,t_{L-1}}\circ\cdots \circ \gh_{1,t_1}\circ \gh_{1,s_1}^{-1}\circ \gh_{2,s_2}^{-1}\circ\cdots\circ \gh_{L,s_L}^{-1}\q( x\w).
\end{equation*}
Thus, $\gh$ is controlled at the unit scale by $\q(Z,\dt\w)$, and
$S^{*}S$ is given by,
\begin{equation*}
S^{*}S f\q( x\w) = \psi_1\q( x\w) \int f\q( \gh_{t}\q( x\w)\w) \psi_2\q(\gh_{t}\q( x\w) \w) \kappa\q(t,x\w) \vsig\q(t\w)\: dt,
\end{equation*}
where $\vsig\in C_0^1\q( \Q^{N}\q( a'\w)\w)$, $a'>0$ is a small number depending
on $a$ (from here on out, $a'>0$ will be a small number (depending on $a>0$)
that may change from line to line), $N=\sum_{j=1}^L 2 N_j$, $\kappa\in C^\infty$, and
$\psi_1,\psi_2\in C_0^\infty$ are supported in the interior of $\K$.
Define, for $\tau=\q( t_1,\ldots, t_n\w)$ ($t_j\in \Q^N\q( a'\w)$),
\begin{equation*}
\Gamma_\tau\q( x\w) = \gh_{t_1}\circ \gh_{t_2}\circ\cdots \circ\gh_{t_n}\q( x\w).
\end{equation*}
So that,
\begin{equation*}
\q( S^{*}S\w)^n f\q( x\w) = \psi_1\q( x\w) \int f\q( \Gamma_{\tau}\q( x\w)\w) \psi_2\q( \Gamma_\tau\q( x\w)\w) \kappa\q( \tau,x\w) \vsig\q( \tau\w)\: d\tau,
\end{equation*}
where the various functions have changed but are of the same basic form as before.
Thus,
\begin{equation*}
\begin{split}
\q( S^{*} S \w)^n R f\q( x\w) = 
&\psi_1\q( x\w) \int 
f\q(\gt_{t,\zeta}\circ \Gamma_\tau \q( x\w) \w) \psit_2\q( \gt_{t,\zeta}\circ \Gamma_\tau\q( x\w) \w)
\kappa\q( t,\zeta,\tau,x\w) \vsig\q( \tau\w)\vsigt\q( t\w)\: d\tau\: d t\\
&-\psi_1\q( x\w) \int 
f\q(\gt_{t,0}\circ \Gamma_\tau \q( x\w) \w) \psit_2\q( \gt_{t,0}\circ \Gamma_\tau\q( x\w) \w)
\kappa\q( t,0,\tau,x\w) \vsig\q( \tau\w)\vsigt\q( t\w)\: d\tau\: d t.
\end{split}
\end{equation*}
Here, $\kappa$ and $\psi_1,\psit_2$ are $C^\infty$, with
$\psi_1$ and $\psit_2$ supported on the interior of $\K$.
Now think of $x_0\in \K$ as fixed.  We wish to establish
\eqref{EqnToShowx0}.  The dependence of
$\kappa$ on $x_0$ is unimportant so we suppress it.  Given 
a bounded measurable function $f$, we wish to
study the integral,
\begin{equation*}
I\q( f\w) = \int  f\q(\gt_{t,\zeta}\circ \Gamma_\tau \q( x_0\w) \w) 
\kappa\q( t,\zeta,\tau\w) \vsig\q( \tau\w)\vsigt\q( t\w)\: d\tau\: d t
- \int  f\q(\gt_{t,0}\circ \Gamma_\tau \q( x_0\w) \w) 
\kappa\q( t,0,\tau\w) \vsig\q( \tau\w)\vsigt\q( t\w)\: d\tau\: d t.
\end{equation*}
Note that $\psi_1\q( x_0\w) I\q( f \psit_2\w)=\q( S^{*}S\w)^n R f\q( x_0\w)$.
Theorem \ref{ThmGenL2Thm} now follows immediately from the
following proposition,
\begin{prop}\label{PropBoundI}
For $a>0$ sufficiently small, there exists $\epsilon>0$ and $C$ such that,
\begin{equation*}
\q|I\q( f\w)\w|\leq C \zeta^{\epsilon}  \sup_{z\in \B{Z}{\dt}{x_0}{\xi}} \q|f\q(z\w)\w|;
\end{equation*}
where $a,\epsilon,$ and $C$ may only depend on the parameters
the constants of the same names were allowed to depend on
in Theorem \ref{ThmGenL2Thm}; see Remark \ref{RmkWhatConstsDependOn}.
\end{prop}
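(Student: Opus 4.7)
The plan is to decompose
$$I(f) = I_1(f) + I_2(f),$$
where
\begin{align*}
I_1(f) &= \int f\q(\gt_{t,\zeta}\circ\Gamma_\tau\q(x_0\w)\w)\,\q[\kappa\q(t,\zeta,\tau\w) - \kappa\q(t,0,\tau\w)\w]\,\vsig\q(\tau\w)\vsigt\q(t\w)\,d\tau\,dt, \\
I_2(f) &= \int \q[f\q(\gt_{t,\zeta}\circ\Gamma_\tau\q(x_0\w)\w) - f\q(\gt_{t,0}\circ\Gamma_\tau\q(x_0\w)\w)\w]\,\kappa\q(t,0,\tau\w)\,\vsig\q(\tau\w)\vsigt\q(t\w)\,d\tau\,dt.
\end{align*}
The first term isolates the perturbation in the weight and the second isolates the perturbation in the transport map. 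Smoothness of $\kappa$ in $\xi$ yields $\q|\kappa\q(t,\zeta,\tau\w) - \kappa\q(t,0,\tau\w)\w|\lesssim \zeta$ uniformly, and unit-scale control of $\gt$ places the image inside $\B{Z}{\dt}{x_0}{\xi}$ after shrinking $a$, so $\q|I_1(f)\w|\lesssim \zeta \sup_{\B{Z}{\dt}{x_0}{\xi}}\q|f\w|$ follows at once.

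For $I_2$, I would introduce the signed measure $\Xi$ on $\R^{2n}$ obtained as the pushforward of $\kappa\q(t,0,\tau\w)\vsig\q(\tau\w)\vsigt\q(t\w)\,d\tau\,dt$ under the map $\Psi(\tau,t) := \q(\gt_{t,\zeta}\circ\Gamma_\tau\q(x_0\w),\,\gt_{t,0}\circ\Gamma_\tau\q(x_0\w)\w)$. Smoothness of $\gt$ in the $\xi$ variable, together with the uniform $C^m$ bounds from $\sQ_2$, shows that the two components of $\Psi$ differ by at most $C\zeta$, so $\Xi$ is supported in $\q\{(y,z):\q|y-z\w|\leq C\zeta\w\}\cap \q(\B{Z}{\dt}{x_0}{\xi}\w)^2$. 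To invoke Proposition \ref{PropIntL1delta} we must verify that the marginals $m_1,m_2$ of $\q|\Xi\w|$ are bounded and in $L^1_\delta$; this is where the structural hypotheses on $S_1,\ldots,S_L$ enter. Because every $Z_l$ ($1\leq l\leq r$) appears as some Taylor coefficient $\tfrac{1}{\alpha!}\partial_t^\alpha\Wh_j\q(0,\cdot\w)$ with $\q|\alpha\w|\leq B$, and $Z_1,\ldots,Z_r$ $M$-generate $Z_1,\ldots,Z_q$, Corollary \ref{CorCompSatisfycJ} applies to the composed map defining $\Psi$ for each fixed $\xi\in\q\{0,\zeta\w\}$, yielding $\cJ$ at the origin uniformly in $\xi$, in $x_0\in\K$, and in all implicit parameters.

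Continuity of the relevant derivative together with the uniform $C^m$ control of $\Psi$ then allows one to shrink $a$, depending only on the admissible parameters of Remark \ref{RmkWhatConstsDependOn}, so that the pointwise nonvanishing condition \eqref{EqnCondForTransport} of Proposition \ref{PropTransportL1delta} holds throughout the closed cube on which $(\tau,t)$ range. Proposition \ref{PropTransportL1delta} then produces uniform $L^1_\delta$ bounds on $m_1$ and $m_2$ (signedness of $\kappa$ is absorbed by decomposing $\kappa=\kappa^+-\kappa^-$ or by passing to $\q|\Xi\w|$, whose marginals remain in $L^1_\delta$).

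The final step passes from $L^1_\delta$ regularity of the marginals to an $L^\infty$ bound on $f$ via mollification. Let $\phi_\epsilon$ be a standard approximate identity and decompose $f = f*\phi_\epsilon + \q(f - f*\phi_\epsilon\w)$. The smooth piece $f*\phi_\epsilon$ is $C\epsilon^{-1}\sup\q|f\w|$-Lipschitz, so
$$\Bigl|\int \q[\q(f*\phi_\epsilon\w)(y) - \q(f*\phi_\epsilon\w)(z)\w]\,d\Xi(y,z)\Bigr| \lesssim \frac{\zeta}{\epsilon}\sup_{\B{Z}{\dt}{x_0}{\xi}}\q|f\w|.$$
For the residual piece, dualizing the convolution onto each marginal gives
$$\int \q(f - f*\phi_\epsilon\w)(y)\,d\Xi(y,z) = \int f(y)\q(m_1(y) - \q(m_1*\check\phi_\epsilon\w)(y)\w)\,dy,$$
and the $L^1_\delta$ modulus of continuity of $m_1$ bounds this by $C\epsilon^\delta\sup_{\B{Z}{\dt}{x_0}{\xi}}\q|f\w|$, similarly for $m_2$. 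Optimizing $\epsilon = \zeta^{1/(1+\delta)}$ yields $\q|I_2(f)\w|\lesssim \zeta^{\delta/(1+\delta)}\sup_{\B{Z}{\dt}{x_0}{\xi}}\q|f\w|$, which combined with the $I_1$ bound completes the proposition. The main obstacle will be the uniformity bookkeeping: verifying that the admissible-constant machinery from Sections \ref{SectionCCII} and \ref{SectionCurvesII} propagates cleanly through the $n$-fold composition $\Gamma_\tau$ and the further composition with $\gt_{t,\xi}$, and that the continuity argument promoting $\cJ$ at the origin to the bulk Jacobian condition on $\Qb$ can be carried out with all constants respecting the dependencies tabulated in Remark \ref{RmkWhatConstsDependOn}.
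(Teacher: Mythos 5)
Your overall architecture parallels the paper's: isolate the $O(\zeta)$ perturbation of the weight, reduce to the perturbation of the transport map, obtain $L^1_\delta$ regularity of a transported density from the Jacobian condition supplied by Corollary \ref{CorCompSatisfycJ} via Proposition \ref{PropTransportL1delta}, and combine it with the $\zeta$-closeness of the two maps. But there are two genuine gaps. The first and most serious: you never pull back by the scaling map $\Phi$ of Theorem \ref{ThmMainCCThm}. The vector fields $Z_1,\ldots,Z_q$ are not assumed to span the tangent space of $\R^n$; they span only the tangent space to the leaf through $x_0$, of dimension $n_0\leq n$. Consequently the map $(\tau,t)\mapsto \gt_{t,\xi}\circ\Gamma_\tau(x_0)$ takes values in that leaf; when $n_0<n$ every $n\times n$ minor of its differential vanishes identically, hypothesis \eqref{EqnCondForTransport} of Proposition \ref{PropTransportL1delta} fails, and your marginals $m_1,m_2$ are singular measures rather than bounded $L^1_\delta$ densities. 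Moreover Corollary \ref{CorCompSatisfycJ} asserts $\cJ$ for the conjugated map $\theta=\Phi^{-1}\circ\gh\circ\Phi$ --- an $n_0\times n_0$ determinant condition on $B^{n_0}(\eta')$ --- not for $\gh$ in ambient coordinates. The paper therefore first reduces to $f=g\circ\Phi$ (legitimate because $I(f)$ depends only on the values of $f$ on $\B{Z}{\dt}{x_0}{\xi_1}$) and runs the entire transport argument on $B^{n_0}(\eta_1)$. Your mollification of $f$ also requires this reduction: $f*\phi_\epsilon$ samples $f$ at points off the leaf, which are not controlled by $\sup_{z\in\B{Z}{\dt}{x_0}{\xi}}|f(z)|$.

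The second gap: you push forward the joint measure $\kappa\,\vsig(\tau)\vsigt(t)\,d\tau\,dt$, but $\vsigt$ is only assumed to lie in $L^1$ (in the application of Section \ref{SectionProof} it is a bump rescaled by $\zeta^{-1}$, so its $C^1$ norm is not uniformly bounded). Proposition \ref{PropTransportL1delta} requires a $C^1$ weight, so it cannot be applied on the joint parameter space; the paper transports only in $\tau$ (where $\kappa_1\vsig$ is $C^1$), obtaining a single $h\in L^1_\delta$, and keeps $\int|\vsigt(t)|\,dt\lesssim 1$ outside. You would need to condition on $t$ and use a Minkowski-type integral to salvage $m_1,m_2$. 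Granting these repairs, your endgame --- mollify $f$, use the Lipschitz bound on $f*\phi_\epsilon$ against $|y-z|\lesssim\zeta$ on $\supp{\Xi}$, dualize the rough part onto the $L^1_\delta$ marginals, and optimize in $\epsilon$ --- is a correct alternative to the paper's, which instead changes variables to move the difference onto the density $h$ itself and then invokes Proposition \ref{PropIntL1delta}.
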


We devote the remainder of this section to the proof
of Proposition \ref{PropBoundI}.
First note that it suffices to consider only $\kappa$
of the form,
\begin{equation*}
\kappa\q( t,\zeta,\tau\w)=\kappa_1\q( \tau\w)\kappa_0\q( t,\zeta\w),
\end{equation*}
since every $\kappa$ may be written as a rapidly converging
sum of such terms.  (This reduction is unnecessary, but simplifies
notation in what follows.)

Proposition \ref{PropControlCompAndInv} shows that
$\q( Z,\dt\w)$ controls $\gt_{t,s}\circ \Gamma_\tau$
at the unit scale near $x_0$,
if $a>0$ is sufficiently small (and therefore $t,s$ and $\tau$ are
sufficiently small), then
$\gt_{t,s}\circ \Gamma_\tau\q(x_0\w)\in \B{Z}{\dt}{x_0}{\xi_1}$ (see $\sQ_2$).  Hence, $I$ only depends on
the values of $f$ on $\B{Z}{\dt}{x_0}{\xi_1}$.
We may, therefore, restrict our attention to $f$ of the form
$g\circ \Phi$, where $g$ is a function on $B^{n_0}\q( \eta_1\w)$;
as the result would follow by taking $g=f\circ\Phi^{-1}$.
$\Phi$ here is, as usual, the scaling map from Theorem \ref{ThmMainCCThm},
and depends on the particular point $x_0$.  Note, in particular,
$\Phi\q( 0\w) =x_0$.

Define,
\begin{equation*}
\thetah_t = \Phi^{-1}\circ \gh_t\circ \Phi,
\end{equation*}
\begin{equation*}
\Theta_\tau = \Phi^{-1}\circ \Gamma_\tau\circ \Phi = \thetah_{t_1}\circ\thetah_{t_2}\circ\cdots\circ\thetah_{t_n},\quad \tau=\q( t_1,\ldots, t_n\w),
\end{equation*}
\begin{equation*}
\thetat_{t,s} = \Phi^{-1}\circ \gt_{t,s}\circ \Phi,
\end{equation*}
We see,
\begin{equation}\label{EqnIofG}
\begin{split}
I\q( g\circ \Phi\w) = 
&\int  g\q(\thetat_{t,\zeta}\circ \Theta_\tau \q( 0\w) \w) 
\kappa_0\q( t,\zeta\w) \kappa_1\q( \tau\w) \vsig\q( \tau\w)\vsigt\q( t\w)\: d\tau\: dt\\
&-\int  g\q(\thetat_{t,0}\circ \Theta_\tau \q( 0\w) \w) 
\kappa_0\q( t,0\w) \kappa_1\q( \tau\w) \vsig\q( \tau\w)\vsigt\q(t\w)\: d\tau\: dt.
\end{split}
\end{equation}
Note that, since $\kappa_0$ is $C^\infty$, we have $\kappa_0\q( t,\zeta\w) = \kappa_0 \q( t,0\w)+ O\q( \zeta\w)$ uniformly in $t$.  Combining this
with \eqref{EqnIofG}, it is easy to see,
\begin{equation}\label{EqnIofGWithError}
\begin{split}
I\q( g\circ \Phi\w) = 
&\int  \q[g\q(\thetat_{t,\zeta}\circ \Theta_\tau \q( 0\w) \w) 
-g\q(\thetat_{t,0}\circ \Theta_\tau \q( 0\w) \w)\w] 
\kappa_0\q( t,0\w) \kappa_1\q( \tau\w) \vsig\q( \tau\w)\vsigt\q( t\w)\: d\tau\: dt\\
&+ O \q( \zeta \sup_{v\in B^{n_0}\q( \eta_1\w)} \q|g\q( v\w)\w|\w).
\end{split}
\end{equation}
Note that the error term in \eqref{EqnIofGWithError} is of the
desired form.  Thus, let $\Ih\q( g\w)$ be the first term
on the right hand side of \eqref{EqnIofGWithError}.  It
suffices to bound $\q|\Ih\q( g\w)\w|$.

Note that, in light of Remark \ref{RmkWhyTheSAssump},
Corollary \ref{CorCompSatisfycJ} shows that
$\thetah$ satisfies $\cJ$ uniformly in any relevant parameters,
i.e.,
there exists a multi-index $\beta$ (with $\q|\beta\w|\lesssim 1$)
such that,\footnote{Actually,
when applied to $\thetah$, $\cJ$ only requires we take a composition of
$n_0$ terms, instead of the $n$ terms used to define $\Theta$.  Since
$n\geq n_0$ the result still holds.}
\begin{equation*}
\q| \q(\frac{\partial}{\partial \tau} \w)^\beta \det_{n_0\times n_0}\frac{\partial \Theta}{\partial \tau} \q( \tau, 0\w)\bigg|_{\tau=0}  \w|\gtrsim 1.
\end{equation*}

Since,
\begin{equation*}
\q\|\Theta\w\|_{C^{\q|\beta\w|+1}}\lesssim 1, \text{ by } \sQ_2,
\end{equation*}
we see that if we take $a>0$ sufficiently small,
\begin{equation*}
\q| \q(\frac{\partial}{\partial \tau} \w)^\beta \det_{n_0\times n_0}\frac{\partial \Theta}{\partial \tau} \q( \tau, 0\w) \w|\gtrsim 1,
\end{equation*}
for all $\tau$ in the support of $\vsig$.

Applying Proposition \ref{PropTransportL1delta},
with $\Psi\q( \tau\w) = \Theta_\tau\q( 0\w)$ and
$\psi\q( \tau\w) = \kappa_1\q( \tau\w) \vsig\q( \tau\w)$,
we see that there exists $\delta\gtrsim 1$
and $h\in L^1_\delta\q( \R^n\w)$ (with $\q\|h\w\|_{L^1_\delta}\lesssim 1$)
such that,\footnote{Of course, $h$ is supported on the range $\Theta_\tau\q( 0\w)$, and therefore supported in $B^{n_0}\q( \eta'\w)$.}
\begin{equation*}
\Ih\q( g\w) = \int \q[ g\q(\thetat_{t,\zeta}\q( u\w)\w)-g\q( \thetat_{t,0}\q( u\w)\w)\w]
h\q( u\w) \kappa_0\q( t,0\w) \vsigt\q( t\w)\: du\: dt.
\end{equation*}
Applying two changes of variables, we have,
\begin{equation*}
\begin{split}
\Ih\q( g\w) = &
\int g\q( v \w) \q(\det \frac{\partial\thetat_{t,\zeta}^{-1}}{\partial v}\q(v\w)\w) h\q(\thetat_{t,\zeta}^{-1}\q( v\w) \w) \kappa_0\q( t,0\w) \vsigt\q( t\w)\: dv\: d t\\
&-\int g\q( v \w) \q(\det \frac{\partial\thetat_{t,0}^{-1}}{\partial v}\q(v\w)\w) h\q(\thetat_{t,0}^{-1}\q( v\w) \w) \kappa_0\q(t,0\w) \vsigt\q(t\w)\: dv\: dt\\
\end{split}
\end{equation*}
Using that $\thetat\q( t,s,u\w)$ is $C^\infty$ (uniformly in any relevant
parameters, by $\sQ_2$), we have
\begin{equation*}
 \q(\det \frac{\partial\thetat_{t,\zeta}^{-1}}{\partial v}\q(v\w)\w)  = \q(\det \frac{\partial\thetat_{t,0}^{-1}}{\partial v}\q(v\w)\w) + O\q( \zeta\w),
\end{equation*}
and therefore,
\begin{equation*}
\begin{split}
\Ih\q( g\w) = &\int g\q( v\w) \q(\det \frac{\partial\thetat_{t,0}^{-1}}{\partial v}\q(v\w)\w) \q(  h\q(\thetat_{t,\zeta}^{-1}\q( v\w) \w) -  h\q(\thetat_{t,0}^{-1}\q( v\w) \w)   \w) \kappa_0\q( t,0\w) \vsigt\q(t\w)\: dv\: dt\\
&+O\q( \zeta\w) \sup_{v\in B^{n_0}\q( \eta'\w)} \q|g\q( v\w)\w|.
\end{split}
\end{equation*}
Using $\int\q|\kappa_0\q(t,0\w)\vsigt\q(t\w)\w|\lesssim 1$, we have,
\begin{equation*}
\q|\Ih\q( g\w)\w| \lesssim \q\{\sup_{\q|t\w|\leq a}\q[ \int_{B^{n_0}\q( \eta'\w) } \q|h\q(\thetat_{1,t_1}^{-1}\q( v\w) \w) -  h\q(\thetat_{2,t_2}^{-1}\q( v\w) \w)   \w|   \: dv\w] + \zeta\w\} \sup_{v\in B^{n_0}\q( \eta'\w)} \q| g\q( v\w)\w|
\end{equation*}
The proof will now be completed by showing, for every $\q|t\w|\leq a$ (think of $t$ as fixed),
\begin{equation}\label{EqnToShowEndOfGenProof}
\int \q| h\q(\thetat_{t,\zeta}^{-1}\q( v\w) \w) -  h\q(\thetat_{t,0}^{-1}\q( v\w) \w)   \w| \: dv \lesssim \zeta^\epsilon,
\end{equation}
for some $\epsilon>0$.
Define a measure $\Xi$ by,
\begin{equation*}
\int k\q( y,z\w) \: d\Xi\q( y,z\w) = \int_{B^{n_0}\q( \eta'\w)} k\q( \thetat_{t,\zeta}^{-1}\q( v\w),\thetat_{t,0}^{-1}\q( v\w)    \w) \: dv;
\end{equation*}
so that the left hand side of \eqref{EqnToShowEndOfGenProof} becomes,
\begin{equation*}
\int \q| h\q( y\w) - h\q( z\w)\w|\:d \Xi\q( y,z\w).
\end{equation*}
Since $\theta_{t,\zeta}^{-1}$ depends smoothly on $\zeta$,
\begin{equation*}
\q| \thetat_{t,\zeta}^{-1}\q( v\w) -\thetat_{t,0}^{-1}\q( v\w)  \w|\lesssim \zeta.
\end{equation*}
Hence $\Xi$ is supported on those $\q(y,z\w)$ such that $\q|y-z\w|\lesssim \zeta$.
Applying Proposition \ref{PropIntL1delta}, we see,
\begin{equation*}
\int \q|h\q( y\w) -h\q( z\w)\w| d\Xi\q( y,z\w)\lesssim \zeta^{\epsilon},
\end{equation*}
for some $\epsilon\gtrsim 1$.  This establishes \eqref{EqnToShowEndOfGenProof}
and completes the proof.

\section{Completion of the proof}\label{SectionProof}
In this section, we complete the proof of Theorem \ref{ThmMainThmSecondPass},
using Theorem \ref{ThmGenL2Thm}.
We use the same notation as in Section \ref{SectionProofOutline},
and we therefore take $T_j$ (for $j\in \lA$) as given by
\eqref{EqnDefnTj}.
The goal is to show that the operator,
\begin{equation*}
\sum_{j\in \lA} T_j
\end{equation*}
converges in the strong operator topology as bounded operators
on $L^2$.  This follows from the Cotlar-Stein lemma and the
following result,
\begin{prop}\label{PropToShowEndOfProof}
\begin{equation*}
\LpOpN{2}{T_k^{*}T_j}, \LpOpN{2}{T_j T_k^{*}}\lesssim 2^{-\epsilon\q|j-k\w|},
\end{equation*}
for some $\epsilon>0$.
\end{prop}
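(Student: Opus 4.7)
The plan is to reduce Proposition \ref{PropToShowEndOfProof} to Theorem \ref{ThmGenL2Thm}, following the informal outline in Section \ref{SectionProofOutline}. I focus on $\LpOpN{2}{T_k^* T_j}$; the bound on $\LpOpN{2}{T_j T_k^*}$ is symmetric, using Proposition \ref{PropAdjoint} to rewrite each $T_l^*$ in the same form as $T_l$. First I would use the $C^*$ identity
\[
\LpOpN{2}{T_k^* T_j}^{2M} = \LpOpN{2}{(T_j^* T_k T_k^* T_j)^M}, \qquad M \in \N,
\]
taken with $M = n+1$. Since $\LpOpN{1}{T_l}, \LpOpN{\infty}{T_l} \lesssim 1$ uniformly in $l \in \lA$ (a consequence of $\LpN{1}{\dil{\vsig_l}{2^l}} \lesssim 1$ and the smoothness of $\gamma_t$), the operator $(T_j^* T_k T_k^* T_j)^{n+1}$ is uniformly bounded on $L^1$. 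Interpolation with a matching $L^\infty$ bound then reduces the task to the pointwise estimate
\[
\q|(T_j^* T_k T_k^* T_j)^{n+1} f(x_0)\w| \lesssim 2^{-\epsilon|j-k|} \LpN{\infty}{f}, \qquad x_0 \in \K,
\]
with implicit constants independent of $j, k, x_0$.

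Now fix $x_0 \in \K$, set $j_0 = j \wedge k$, $j' = j - j_0$, $k' = k - j_0$, so $j' \wedge k' = 0$. I apply Theorem \ref{ThmMainCCThm} to $(Z, \dt) = (2^{-j_0} X, \sd)$ at $x_0$; its hypotheses hold uniformly in $j_0 \in \lA$ and $x_0 \in \K$ thanks to $\sD(\K, \sA)$, yielding the scaling map $\Phi$. Propositions \ref{PropConnectionBetweenControlEveryScaleAndUnit}, \ref{PropControlCompAndInv}, and \ref{PropScalingOfControl} ensure that each of $\gamma_{2^{-j}t}$, $\gamma_{2^{-k}t}$ and their inverses is controlled by $(Z, \dt)$ at the unit scale near $x_0$, with constants independent of $j, k, x_0$. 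Because $j' \wedge k' = 0$ and $|j'| + |k'| \gtrsim |j-k|$, after possibly swapping the roles of $j$ and $k$ there is an index $\mu$ with $j'_\mu \gtrsim |j-k|_\infty$ and $k'_\mu = 0$; set $\zeta := 2^{-j'_\mu}$, so $\zeta \lesssim 2^{-|j-k|_\infty/\nu}$.

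The heart of the argument is to express an appropriate factorization of $(T_j^* T_k T_k^* T_j)^{n+1}$ in the form $S_1 \cdots S_L(R_1 - R_2)$ demanded by Theorem \ref{ThmGenL2Thm}, with $R_1 = R^{\zeta}$, $R_2 = R^{0}$, and $\zeta$ as above. Nearly all of the factors become the $S_l$'s: each is one of the rescaled operators built from $T_j, T_k, T_j^*, T_k^*$, which is controlled by $(Z, \dt)$ at the unit scale (Definition \ref{DefnOpControlUnitScale}) by Remark \ref{RmkControlAdjointAndComp}; moreover Proposition \ref{PropHowAssumpsAreUsed} together with Lemma \ref{LemmaEachZjAppearsIngammaScaled} guarantees that the Taylor coefficients of the associated $\Wh$'s supply all the generators $Z_1, \ldots, Z_r$ uniformly in $j, k, x_0$. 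For the remaining factor (say the rightmost $T_j$), I would exploit the cancellation condition \eqref{EqnDefnsKCancel} on $\vsig_j$ in the variables $t_1^{[\mu]_{j, C, \sA}}$: subtracting from the kernel its value obtained by freezing these variables at $0$ rewrites it as a single-parameter difference $\kapt(t, \zeta, x) - \kapt(t, 0, x)$ coupled to $\gt_{t, \zeta} - \gt_{t, 0}$, with all dependence on $\xi \in [-1,1]$ smooth uniformly in $j, k, x_0$.

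The main obstacle is precisely this bookkeeping: selecting $\mu$ and organizing the $2(n+1)$-fold composition so that (i) the cancellation of $\vsig_j$ in the $t_1^{[\mu]}$-variables produces a clean difference controlled by the single small parameter $\zeta$ that captures the full $|j-k|_\infty$-decay, and (ii) the uniformity of the control parameters over $j, k, x_0$ survives the pullback under $\Phi$. Once this decomposition is in place, Theorem \ref{ThmGenL2Thm} yields
\[
\LpOpN{2}{S_1 \cdots S_L(R_1 - R_2)} \lesssim \zeta^{\epsilon} \lesssim 2^{-\epsilon'|j-k|_\infty},
\]
which, combined with $|j-k|_\infty \gtrsim |j-k|/\sqrt{\nu}$, completes the proof.
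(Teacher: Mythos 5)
Your proposal follows essentially the same route as the paper's proof in Section \ref{SectionProof}: rescale by $2^{-j_0}$ with $j_0=j\wedge k$, treat all but one factor of the iterated product as the operators $S_l$ (each controlled at the unit scale by $(2^{-j_0}X,\sd)$, with the generators supplied by Lemma \ref{LemmaEachZjAppearsIngammaScaled}), use the cancellation of the remaining $\vsig$ to produce the difference $R_1-R_2$ with $\zeta\approx 2^{-c|j-k|_\infty}$, and invoke Theorem \ref{ThmGenL2Thm} (which already contains the interpolation and pointwise reduction you carry out separately, and the further raising to the $n$th power, so squaring the outer product suffices). The two bookkeeping points you flag are exactly where the paper takes care: for the general class $\sK$ the cancellation is guaranteed only in the variables $t_1^{[\mu_1]_{j,C,\sA}}$ for a $\preceq_{j,C,\sA}$-maximal $\mu_1$ lying above your $\mu_0$ (one then checks via \eqref{EqnEndOfProofjmuminuskmu} that these coordinates still contract by a factor $2^{c|j-k|_\infty}$), and when $k_{\mu_0}>j_{\mu_0}$ the difference must be taken in $T_k$ rather than $T_j$, so instead of literally swapping $j$ and $k$ one first peels off the rightmost $T_j$ using $\LpOpN{2}{T_j}\lesssim 1$ and applies the theorem to $T_j^{*}T_kT_k^{*}T_jT_j^{*}T_k$.
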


This section is devoted to the proof of Proposition \ref{PropToShowEndOfProof}.
The proof is essentially the same for the two terms, and so we only exhibit
the proof for $\LpOpN{2}{T_k^{*} T_j}$.  Throughout, $\epsilon>0$ may
change from line to line, but will always be independent of $j,k$.
We will show,
\begin{equation}\label{EqnToShowEndOfProof}
\LpOpN{2}{\q(T_j^{*} T_k T_k^{*} T_j \w)^2}\lesssim 2^{-\epsilon \q|j-k\w|},
\end{equation}
and the proof will be complete (with $\epsilon$ replaced by $\epsilon/4$).

\begin{rmk}
In Section \ref{SectionProofOutline}, instead of squaring $T_j^{*}T_kT_k^{*}T_j$,
we raised it to the $n+1$ power.  This will be implicit in this section,
via our application of Theorem \ref{ThmGenL2Thm}.  Actually, the proof
of Theorem \ref{ThmGenL2Thm} will (essentially) raise $T_j^{*}T_kT_k^{*}T_j$
to a power larger than $n+1$. However, the proof could be
easily modified (in this special case) to raise it only to the $n+1$ power.  This is not important
for our purposes, though.
\end{rmk}

We set $j_0=j\wedge k\in \lA$.  Let $\q( X,d\w)$ be the list of vector fields
from Section \ref{SectionCurves}, and define
$\q( Z,\dt\w) = \q( 2^{-j_0} X,\sd\w)$.  We will be using
this $\q( Z,\dt\w)$ in our application of Theorem \ref{ThmGenL2Thm}.
Take $\mu_0$ so that,
\begin{equation*}
\q| j_{\mu_0}-k_{\mu_0}\w| = \q|j-k\w|_\infty.
\end{equation*}
We assume first $j_{\mu_0}\geq k_{\mu_0}$ and remark on changes necessary
to deal with the 
reverse situation at the end of the section.
We assume $j_{\mu_0}>k_{\mu_0}$ as otherwise $\q|j-k\w|=0$
and \eqref{EqnToShowEndOfProof} is trivial (it is easy
to see that $\LpOpN{2}{T_j}, \LpOpN{2}{T_k}\lesssim 1$).

First, we introduce the operators $S_l$
which we will use in our application of Theorem \ref{ThmGenL2Thm}.
We set,
\begin{equation*}
S_1\cdots S_7 = T_j^{*} T_k T_k^{*} T_j T_j^{*} T_k T_k^{*}.
\end{equation*}
Thus our goal is to show,
\begin{equation*}
\LpOpN{2}{S_1\cdots S_7 T_j}\lesssim 2^{-\epsilon\q|j-k\w|}.
\end{equation*}
First we claim that each $S_j$ is controlled at the unit
scale by $\q( Z,\dt\w)$ (see Definition \ref{DefnOpControlUnitScale}).
We prove just the result for $T_j$, the same proof works for $T_k$,
and the claim follows since $S_l$ is controlled at the unit scale
if and only if $S_l^{*}$ is (see Remark \ref{RmkControlAdjointAndComp}).
By a simple change of variables in \eqref{EqnDefnTj}, we obtain,
\begin{equation}\label{EqnTChangeVar}
T_j f\q( x\w) = \psi_1\q( x\w) \int f\q( \gamma_{2^{-j} t}\q( x\w)\w) \psi_2\q( \gamma_{2^{-j}t}\q( x\w)\w) \kappa\q( 2^{-j} t, x\w) \vsig_j\q( t\w) \: dt.
\end{equation}
Proposition \ref{PropConnectionBetweenControlEveryScaleAndUnit}
shows that $\q( 2^{-j_0} X,\sd\w)$ controls $\gamma_{2^{-j_0}t}$ at the
unit scale.
Since $j_0\leq j$ coordinatewise, Proposition \ref{PropScalingOfControl}
shows that $\q( 2^{-j_0} X,\sd\w)$ controls $\gamma_{2^{-j}t}$
at the unit scale.  That $T_j$ is controlled at the unit
scale by $\q( 2^{-j_0} X,\sd\w)$ now follows immediately.

We now turn to the other part of our assumption on the $S_l$:  that there
are vector fields $Z_1,\ldots, Z_r$ which generate $Z_1,\ldots, Z_q$
that appear at the Taylor coefficients of
$$\frac{d}{d\epsilon}\bigg|_{\epsilon=1} \gh_{l,\epsilon t}\circ \gh_{l, t}^{-1},$$
where $\gh_l$ is the function defining $S_l$, for some $l$.
For this purpose, we need only use $S_2=T_k$ and $S_4=T_j$.
In light of \eqref{EqnTChangeVar}, we see
$\gh_{2,t}=\gamma_{2^{-k}t}$, $\gh_{4,t}=\gamma_{2^{-j}t}$.
Now the result follows from Lemma \ref{LemmaEachZjAppearsIngammaScaled},
where we have taken $2^{-j_0}=\delta_0$, $2^{-j}=\delta_1$, and
$2^{-k}=\delta_2$.  Thus, $S_1,\ldots, S_7$ satisfy
all the hypotheses of Theorem \ref{ThmGenL2Thm}.

Before we define $R_1$ and $R_2$, we need to make some preliminary remarks.
First, for the reader only interested in the class of kernels $\sKt$,
decompose $t\in \R^N$ as $t=\q( t_1,t_2\w)$, where $t_1=t^\mu$ (as
defined in Section \ref{SectionClasssKt}).
Note, by the definition of $t^\mu$, when $t$ is scaled (i.e.,
when we consider $2^{j-j_0}t$)
all of the coordinates of $t_1$ are scaled by (at least) a factor
of $2^{c\q( j_{\mu_0}-k_{\mu_0}\w)}=2^{c\q|j-k\w|_\infty}$,
where $c$ is some constant depending on the dilations $e$.
The reader uninterested
in $\sK$ may skip the rest of this paragraph.
Since $j_{\mu_0}>k_{\mu_0}$,
it follows that $\mu_0$ is not $j,\sA$-minimal (see
Section \ref{SectionClasssK}).
Letting $C\geq 1$ be as in Definition \ref{DefnsK},
take $\mu_1$ such that $\mu_0\preceq_{j,C,\sA} \mu_1$
and there does not exist $\mu_2$ with $\mu_1\prec_{j,C,\sA} \mu_2$.
By definition, for every $\mu\in \q[\mu_1\w]_{j,C,\sA}$, we have,
\begin{equation}\label{EqnEndOfProofjmuminuskmu}
j_{\mu}-k_{\mu}\gtrsim j_{\mu_0}-k_{\mu_0} \geq \q|j-k\w|_{\infty}\gtrsim \q|j-k\w|.
\end{equation}
For $t\in \R^N$, decompose $t=\q( t_1, t_2\w)$,
where $t_1=t_1^{\q[\mu_1\w]_{j,C,\sA}}$ and $t_2=t_2^{\q[\mu_1\w]_{j,C,\sA}}$
(as in Section \ref{SectionClasssK}).
By the definition of $t_1$, when $t$ is scaled (i.e., when
we consider $2^{j-j_0}t$)
all of the coordinates of $t_1$ are scaled by (at least)
a factor of $2^{c'\q( j_\mu-k_\mu\w)}$ for some
$\mu\in \q[\mu_1\w]_{j,C,\sA}$ (where $c'$ depends on the dilations $e$). 
By \eqref{EqnEndOfProofjmuminuskmu},
each coordinate of $t_1$ is scaled by at least
a factor of $2^{c\q( j_{\mu_0}-k_{\mu_0}\w)}=2^{c\q|j-k\w|_\infty}$.

Note by the definition of $t=\q( t_1,t_2\w)$ in the previous
paragraph, we have (by our assumptions on $K$),
\begin{equation*}
\int \vsig_j\q( t\w) \: dt_1 =0.
\end{equation*}
In addition, we have from the discussion in the previous paragraph,
if $\q| 2^{j-j_0} t\w|\leq 1$, then $2^{c\q|j-k\w|_\infty}\q|t_1\w|+\q|t_2\w|\leq a$.
I.e., $\dil{\vsig_j}{2^{j-j_0}}\q( t\w)$ is supported
on $2^{c\q|j-k\w|_\infty}\q|t_1\w|+\q|t_2\w|\leq a$.
Letting $\zeta=2^{-c\q|j-k\w|_\infty}$, we see that we may rewrite,
\begin{equation*}
\dil{\vsig_j}{2^{j-j_0}}\q( t_1,t_2\w) = \frac{d \zeta^{-1}t_1}{dt_1} \vsigt\q( \zeta^{-1}t_1,t_2\w),
\end{equation*}
where $\zeta^{-1} t_1$ given by the usual multiplication,
$\frac{d \zeta^{-1} t_1}{dt_1}$ is the Radon-Nikodym derivative
of this multiplication (and therefore is equal to $\zeta^{-m}$ where
$m$ is the dimension of the $t_1$ space),
and $\vsigt\in L^1\q( \Q^N\q( a\w)\w)$, uniformly in any relevant parameters.

Now consider,
\begin{equation*}
\begin{split}
T_j f\q( x\w) = 
&\psi_1\q( x\w)\int f\q(\gamma_{2^{-j_0}\q(t_1,t_2\w)}\q( x\w) \w)\psi_2\q( \gamma_{2^{-j_0}\q(t_1,t_2\w)}\q( x\w)\w) \kappa\q( 2^{-j_0}\q(t_1,t_2\w), x\w) \dil{\vsig_j}{2^{j-j_0}}\q( t_1,t_2\w) \: dt_1\: dt_2\\
&=\psi_1\q( x\w)\int \bigg[ f\q(\gamma_{2^{-j_0}\q(t_1,t_2\w)}\q( x\w) \w)\psi_2\q( \gamma_{2^{-j_0}\q(t_1,t_2\w)}\q( x\w)\w) \kappa\q( 2^{-j_0}\q(t_1,t_2\w), x\w) \\
&\quad -f\q(\gamma_{2^{-j_0}\q(0,t_2\w)}\q( x\w) \w)\psi_2\q( \gamma_{2^{-j_0}\q(0,t_2\w)}\q( x\w)\w) \kappa\q( 2^{-j_0}\q(0,t_2\w), x\w) \bigg]
\dil{\vsig_j}{2^{j-j_0}}\q( t_1,t_2\w) \: dt_1\: dt_2\\
&=\psi_1\q( x\w)\int \bigg[ f\q(\gamma_{2^{-j_0}\q(\zeta t_1,t_2\w)}\q( x\w) \w)\psi_2\q( \gamma_{2^{-j_0}\q(\zeta t_1,t_2\w)}\q( x\w)\w) \kappa\q( 2^{-j_0}\q(\zeta t_1,t_2\w), x\w) \\
&\quad -f\q(\gamma_{2^{-j_0}\q(0 t_1,t_2\w)}\q( x\w) \w)\psi_2\q( \gamma_{2^{-j_0}\q(0 t_1,t_2\w)}\q( x\w)\w) \kappa\q( 2^{-j_0}\q(0 t_1,t_2\w), x\w) \bigg]
\vsigt\q( t_1,t_2\w) \: dt_1\: dt_2.
\end{split}
\end{equation*}
This shows that $T_j$ is of the form $R_1-R_2$; indeed,
we merely take $\gt_{t_1,t_2,s} = \gamma_{2^{-j_0}\q(s t_1,t_2 \w)}$
and $\kapt\q(t_1,t_2,s,x \w)= \kappa\q(2^{-j_0} \q(s t_1,t_2 \w),x\w)$.
It is easy to verify that these choices satisfy the relevant
hypotheses,\footnote{It is immediate to verify that
$\gamma_{2^{-j_0}\q( st_1,t_2\w)}$ satisfies $\sQ_2$,
since $\gamma_{2^{-j_0}\q( t_1,t_2\w)}$ does.}
and we leave the details to the interested reader.

Theorem \ref{ThmGenL2Thm} applies to show,
\begin{equation*}
\LpOpN{2}{S_1\cdots S_7 \q( R_1-R_2\w)}\lesssim \zeta^{-\epsilon} = 2^{-\epsilon'\q|j-k\w|_\infty} \lesssim 2^{-\epsilon''\q|j-k\w|},
\end{equation*}
for some $\epsilon, \epsilon',\epsilon''>0$.
This verifies \eqref{EqnToShowEndOfProof} and completes the proof,
in the case when $j_{\mu_0}\geq k_{\mu_0}$.

When $k_{\mu_0}\geq j_{\mu_0}$ the proof is similar.
To bound,
\begin{equation*}
\LpOpN{2}{\q(T_j^{*} T_k T_k^{*} T_j \w)^2},
\end{equation*}
we use the fact that $\LpOpN{2}{T_j}, \LpOpN{2}{T_k}\lesssim 1$, and
instead bound,
\begin{equation*}
\LpOpN{2}{T_j^{*}T_k T_k^{*} T_j T_j^{*} T_k}.
\end{equation*}
We take, $S_1\cdots S_5 = T_j^{*}T_k T_k^{*} T_j T_j^{*}$
and we define $R_1-R_2=T_k$ in exactly
the same way as we did for $T_j$ above.
The rest of the proof follows in exactly the same manner as above.
This completes the proof of Proposition \ref{PropToShowEndOfProof}
and therefore of Theorem \ref{ThmMainThmSecondPass}.

\section{Kernels revisited}\label{SectionKernelsII}
In this section, we further discuss the class of kernels
$\sK\q( N,e,a,\sA\w)$ defined in Section \ref{SectionKernels}.
None of the results here are used elsewhere in the paper, but
we hope they will give the reader a better understanding
of the class of kernels we use.

\begin{lemma}\label{LemmaSumsConvg}
Every sum of the form \eqref{EqnDefsK} converges in the sense of
distributions.
\end{lemma}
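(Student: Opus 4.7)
The plan is to establish absolute convergence: for each $\phi\in C_0^\infty\q(\R^N\w)$, show $\sum_{j\in\lA}\q|\langle \dil{\vsig_j}{2^j},\phi\rangle\w|<\infty$, from which convergence in $\mathcal{D}'$ follows. A change of variables gives $\langle \dil{\vsig_j}{2^j},\phi\rangle=\int \vsig_j\q(t\w)\phi\q(2^{-j}t\w)\,dt$, so each term is uniformly bounded because $\q\{\vsig_j\w\}\subset C_0^\infty\q(\Q^N\q(a\w)\w)$ is a bounded set. The substantive task is to extract geometric decay in $|j|$. For each $\mu$ set $j_\mu^{\min}:=\min_{k\in\lA} k_\mu$, a finite nonnegative constant; then $\mu$ is $j,\sA$-minimal precisely when $j_\mu=j_\mu^{\min}$, and $\sum_\mu j_\mu^{\min}$ is a fixed number. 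The key estimate I aim to prove is
\[ \q|\langle \dil{\vsig_j}{2^j},\phi\rangle\w|\leq C_\phi\,2^{-\epsilon\max_{\mu^*}\q(j_{\mu^*}-j_{\mu^*}^{\min}\w)}, \]
where the maximum runs over $\mu^*$ that are both non-$j,\sA$-minimal and maximal for $\preceq_{j,C,\sA}$---equivalently, over the equivalence classes $\q[\mu^*\w]_{j,C,\sA}$ for which \eqref{EqnDefnsKCancel} is imposed. When no such $\mu^*$ exists, $j$ is forced into a finite subset of $\lA$ and convergence there is immediate.

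To prove the key estimate, fix a maximal non-minimal $\mu^*$. By Fubini and \eqref{EqnDefnsKCancel}, $\int \vsig_j\q(t\w)\,g\q(t_2^{\q[\mu^*\w]_{j,C,\sA}}\w)\,dt=0$ for every continuous $g$, so
\[ \langle \dil{\vsig_j}{2^j},\phi\rangle=\int \vsig_j\q(t\w)\q[\phi\q(2^{-j}t\w)-\phi\q(2^{-j}t\w)\big|_{t_1^{\q[\mu^*\w]_{j,C,\sA}}=0}\w]\,dt. \]
By the fundamental theorem of calculus and $|t_k|\leq a$ on $\supp{\vsig_j}$, the bracketed expression is bounded by $C\,a\,\CjN{\phi}{1}{\R^N}\cdot\max_{k\,:\,t_k\in t_1^{\q[\mu^*\w]_{j,C,\sA}}} 2^{-j\cdot e_k}$. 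For each such $k$, the definition of $t_1^{\q[\mu^*\w]_{j,C,\sA}}$ supplies some $\mu'\in\q[\mu^*\w]_{j,C,\sA}$ with $e_k^{\mu'}>0$, so $j\cdot e_k\geq e_k^{\mu'}\,j_{\mu'}\gtrsim j_{\mu'}$. Using $\mu^*\preceq_{j,C,\sA}\mu'$ with any $k'\in\lA$ realizing $k'_{\mu^*}=j_{\mu^*}^{\min}$ gives $j_{\mu^*}-j_{\mu^*}^{\min}\leq C\q(j_{\mu'}-k'_{\mu'}\w)\leq C\,j_{\mu'}$, so $j\cdot e_k\geq\epsilon\q(j_{\mu^*}-j_{\mu^*}^{\min}\w)$ for some $\epsilon>0$ independent of $j$, and the key estimate follows.

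The combinatorial step converts this local gain into a gain in $|j|$. For any non-minimal coordinate $\mu$ one can select a maximal $\mu^*$ with $\mu\preceq_{j,C,\sA}\mu^*$; applying the defining relation of $\preceq$ to a witness $k\in\lA$ of non-minimality of $\mu$ shows $\mu^*$ is itself non-minimal, and evaluating at the same $k'$ as above gives $j_\mu-j_\mu^{\min}\leq C\q(j_{\mu^*}-j_{\mu^*}^{\min}\w)$. Summing over $\mu$ and majorizing each term by the global maximum,
\[ \max_{\mu^*}\q(j_{\mu^*}-j_{\mu^*}^{\min}\w)\geq \frac{1}{C\nu}\q(|j|-\sum_\mu j_\mu^{\min}\w). \]
Substituting into the key estimate yields $\q|\langle \dil{\vsig_j}{2^j},\phi\rangle\w|\lesssim 2^{-\epsilon|j|}$ for a new $\epsilon>0$ once $|j|$ is large, and summing the geometric series over $j\in\lA\subseteq\N^\nu$ gives absolute convergence.

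The main obstacle is the bookkeeping around $\preceq_{j,C,\sA}$: one must verify uniformly in $j$ that non-minimality of an arbitrary $\mu$ propagates to its maximal representative $\mu^*$, and simultaneously that cancellation at $\mu^*$ produces a gain comparable to $j_{\mu^*}-j_{\mu^*}^{\min}$ rather than merely $j_{\mu'}$ for some $\mu'\in\q[\mu^*\w]_{j,C,\sA}$. Both comparisons reduce to the same elementary inequality coming from the definition of $\preceq_{j,C,\sA}$ evaluated at the extremal element $k'\in\lA$ with $k'_{\mu^*}=j_{\mu^*}^{\min}$; once that is in place, one only needs cancellation from a single well-chosen maximal class, avoiding any inclusion-exclusion manipulations among the overlapping coordinate sets $t_1^{\q[\mu^*\w]_{j,C,\sA}}$ for different $\mu^*$.
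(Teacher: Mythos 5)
Your proof is correct and follows essentially the same route as the paper's: both isolate a maximal non-$j,\sA$-minimal equivalence class, apply the cancellation \eqref{EqnDefnsKCancel} there together with the $C^1$ bound on the test function and the smallness of the rescaled coordinates $t_1^{[\mu^*]_{j,C,\sA}}$, and then use the defining inequality of $\preceq_{j,C,\sA}$ evaluated at a suitable element of $\lA$ to convert that gain into $2^{-\epsilon|j|}$. The only differences are bookkeeping --- you measure non-minimality against per-coordinate minima $j_\mu^{\min}$ and sum over all coordinates, while the paper fixes a single reference $k\in\lA$ and starts from the coordinate maximizing $j_\mu-k_\mu$ --- plus one harmless slip in the combinatorial step: to get $j_\mu-j_\mu^{\min}\leq C(j_{\mu^*}-j_{\mu^*}^{\min})$ you should evaluate $\mu\preceq_{j,C,\sA}\mu^*$ at a witness $k'$ with $k'_\mu=j_\mu^{\min}$ (or else absorb an $O(1)$ additive error), not at the one chosen for $\mu^*$.
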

\begin{proof}
Let $C$, $\sA$, $a$, and $\q\{\vsig_j\w\}$ be as in
Definition \ref{DefnsK}.
Fix $f\in C_0^\infty\q( \R^N\w)$.  It suffices to show
\begin{equation*}
\int \dil{\vsig_j}{2^j}\q( t\w) f\q( t\w) \: dt = O\q( 2^{-\epsilon\q|j\w|} \w),
\end{equation*}
for some $\epsilon>0$.
Fix $k=\q(k_1,\ldots, k_\nu\w)\in \sA$.  Note that there is an $\epsilon_0>0$ sufficiently 
small, such that for all but finitely many $j=\q( j_1,\ldots, j_\nu\w)\in \lA$,
there is a $\mu_0=\mu_0\q( j\w)$ such that
\begin{equation}\label{EqnConvgInDistEqn1}
\q|j-k\w|\leq \epsilon_0 \q( j_{\mu_0} - k_{\mu_0}\w).
\end{equation}
We ignore those finitely many $j$ such that \eqref{EqnConvgInDistEqn1} does
not hold, and we also assume $j\ne k$.
Fixing $j$ and $\mu_0$ such that \eqref{EqnConvgInDistEqn1} holds,
let $\mu_1$ be such that $\mu_0\preceq_{j,C,\sA} \mu_1$ and such that
there does not exist $\mu$ with $\mu_1 \prec_{j,C,\sA} \mu$.
Note that there exists $\epsilon_1>0$ (independent of $j$) such that
\begin{equation}\label{EqnConvgInDistEqn2}
0<\q|j-k\w| \leq \epsilon_0 \q( j_{\mu_0}-k_{\mu_0} \w)\leq \epsilon_1 \q( j_\mu - k_\mu \w), \text{ for every }\mu\in \q[\mu_1\w]_{j,C,\sA}. 
\end{equation}
In fact, $\epsilon_1=\frac{\epsilon_0}{C^2}$ will do.
In light of \eqref{EqnConvgInDistEqn2}, we see that $\mu_1$ is not
$j,\sA$-minimal (since $\mu_1\in \q[\mu_1\w]_{j,C,\sA}$).  Thus, we have,
\begin{equation*}
\int \dil{\vsig_j}{2^j}\q( t\w) \: dt_1^{\q[\mu_1\w]_{j,C,\sA} }=0.
\end{equation*}

Furthermore, on the support of $\dil{\vsig}{2^j}\q(t\w)$,
there is a $\mu\in \q[\mu_1\w]_{j,C,\sA}$ 
and $\epsilon_2,\epsilon_3>0$ such that
\begin{equation*}
\q|t_1^{\q[\mu_1\w]_{j,C,\sA}}\w|_\infty \leq a 2^{-\epsilon_2 \q( j_\mu-k_\mu\w)} \leq a2^{-\epsilon_3\q|j-k\w|}\lesssim 2^{-\epsilon_3 \q|j\w|},
\end{equation*}
where in the last inequality we have used that $k$ is fixed.
Thus, we see
\begin{equation*}
\begin{split}
\q|\int \dil{\vsig}{2^j}\q( t\w) f\q( t\w)\: dt\w| &= \q|\int \dil{\vsig}{2^j}\q( t\w) \q[f\q( t_1^{\q[\mu_1\w]_{j,C,\sA} }, t_2^{\q[\mu_1\w]_{j,C,\sA} } \w)-
f\q( 0, t_2^{\q[\mu_1\w]_{j,C,\sA} } \w)\w]\w|\: dt\\
&\lesssim 2^{-\epsilon_3 \q|j\w|} \int \q|\dil{\vsig}{2^j}\q( t\w) \w|\: dt\\
&= 2^{-\epsilon_3 \q|j\w|} \int \q|\vsig\q( t\w)\w| \: dt\\
&\lesssim 2^{-\epsilon_3\q|j\w|},
\end{split}
\end{equation*}
completing the proof.
\end{proof}

Next we turn to investigating the case when $\sA=\q[0,1\w]^\nu$.
In this case, we have the following: 
\begin{lemma}\label{LemmaCancelInFullCase}
Taking the same notation as in Definition \ref{DefnsK}, with
$\sA=\q[0,1\w]^\nu$, the cancellation condition \eqref{EqnDefnsKCancel}
holds if and only if
\begin{equation}
\int \vsig_j\q( t\w) \: dt^\mu = 0, \text{ unless } j_\mu=0,
\end{equation}
where $t^\mu$ is as in the start of Section \ref{SectionClasssKt}.
\end{lemma}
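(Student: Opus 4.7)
The plan is to unpack the definitions of $j,\sA$-minimality, of $\preceq_{j,C,\sA}$, and of $[\mu]_{j,C,\sA}$ when $\sA=[0,1]^\nu$, and observe that the cancellation condition \eqref{EqnDefnsKCancel} collapses to the simpler form stated in the lemma, with the same $C\geq 1$ serving in both directions. Since $\sA=[0,1]^\nu$ gives $\lA=\N^\nu$, a direct unwinding shows that $\mu$ fails to be $j,\sA$-minimal exactly when $j_\mu>0$; this takes care of one of the three ingredients in Definition \ref{DefnsK}.

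The heart of the argument is the computation of $[\mu]_{j,C,\sA}$ when $j_\mu>0$. I would show that $[\mu]_{j,C,\sA}=\{\mu\}$ in this case by the following witness argument: if $\mu_1\neq\mu_2$ and $j_{\mu_1}>0$, choose $k\in\N^\nu=\lA$ with $k_{\mu_1}=0$ and $k_{\mu_2}=j_{\mu_2}+1$; then the hypothesis $j_{\mu_1}>k_{\mu_1}$ of the defining implication holds, but $j_{\mu_1}-k_{\mu_1}=j_{\mu_1}>0>-C=C(j_{\mu_2}-k_{\mu_2})$, so $\mu_1\not\preceq_{j,C,\sA}\mu_2$. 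In particular, $t_1^{[\mu]_{j,C,\sA}}=t^\mu$ whenever $j_\mu>0$. The same witness argument also shows that the third ingredient in Definition \ref{DefnsK}, namely that there be no $\mu'$ with $\mu\prec_{j,C,\sA}\mu'$, is automatic once $j_\mu>0$: any such $\mu'$ would have to equal $\mu$ by what we just showed, making the relation non-strict.

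Combining these three points, \eqref{EqnDefnsKCancel} becomes precisely the assertion that $\int\vsig_j(t)\,dt^\mu=0$ for every $\mu$ with $j_\mu>0$, which is the statement of the lemma; since the equivalence holds for every $C\geq 1$, the choice of $C$ in Definition \ref{DefnsK} is immaterial in both directions. I do not foresee any real obstacle; the only mildly delicate point is verifying that $\lA$ is large enough to contain the witness $k$, and this is exactly where the hypothesis $\sA=[0,1]^\nu$ (hence $\lA=\N^\nu$, so $k_{\mu_2}$ may be taken arbitrarily large) enters the argument.
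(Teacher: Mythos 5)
Your proof is correct and follows essentially the same route as the paper's: identify $j,\sA$-minimality of $\mu$ with $j_\mu=0$, and show that for $j_\mu\ne 0$ no $\mu'\ne\mu$ satisfies $\mu\preceq_{j,C,\sA}\mu'$, so that $[\mu]_{j,C,\sA}=\{\mu\}$ and \eqref{EqnDefnsKCancel} reduces to the stated condition. The paper leaves these two facts as "easy to see," whereas you supply the explicit witness $k$ (with $k_{\mu_1}=0$, $k_{\mu_2}=j_{\mu_2}+1$) and note that the equivalence is uniform in $C$; both additions are accurate.
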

\begin{proof}
This follows from a straightforward application of the definitions.
Indeed, in this case $\lA=\N^\nu$.  The result follows from noting
that if $j=\q( j_1,\ldots, j_\nu\w)\in \N^\nu$, then $\mu$
is $j,[0,1]^\nu$-minimal if and only if $j_\mu=0$.  Furthermore,
if $j_\mu\ne 0$, then it is easy to see that there is no
$\mu'\ne \mu$ such that $\mu \preceq_{j,C,\sA} \mu'$, for any $C\geq 1$.
\end{proof}

\begin{prop}
$\delta_0\in \sK\q( N,e,a,\q[0,1\w]^\nu\w)$.
\end{prop}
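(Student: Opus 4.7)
I would construct $\delta_0$ explicitly by a multi-parameter telescoping of an approximate identity. Choose $\phi \in C_0^\infty(\Q^N(a'))$ with $\int \phi = 1$, where $a'>0$ is small enough that each of the $2^\nu$ dilates $\dil{\phi}{2^{-\chi_S}}$ (writing $\chi_S \in \{0,1\}^\nu$ for the indicator vector of $S \subseteq \{1,\ldots,\nu\}$) is still supported in $\Q^N(a)$; this is possible because the enlargement factor is bounded by the fixed constant $2^{\max_k |e_k|_1}$ coming from the dilation data. Set $\Phi_j := \dil{\phi}{2^j}$ for $j \in \N^\nu = \lA$. A change of variables $s = 2^j t$ combined with the hypothesis that $e_k \ne 0$ for every $k$ gives $\Phi_J \to \delta_0$ in the sense of distributions as $J \to \infty$ in every component. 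For each $\mu$ define the discrete difference $(\partial_\mu F)_j := F_j - F_{j-\mathbf{e}_\mu}$ when $j_\mu \geq 1$ and $(\partial_\mu F)_j := F_j$ when $j_\mu = 0$, where $\mathbf{e}_\mu$ is the $\mu$-th standard basis vector in $\R^\nu$. Iterating one-dimensional telescoping coordinatewise yields
\[
\sum_{0 \leq j \leq J} \prod_{\mu=1}^{\nu} (\partial_\mu \Phi)_j = \Phi_J \quad \text{for every } J \in \N^\nu.
\]

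Expanding gives $\prod_{\mu} (\partial_\mu \Phi)_j = \sum_{S \subseteq \{\mu:\, j_\mu \geq 1\}} (-1)^{|S|} \Phi_{j - \chi_S}$, and the identity $\Phi_{j - \chi_S} = \dil{\phi_S}{2^j}$ with $\phi_S := \dil{\phi}{2^{-\chi_S}}$ rewrites this as $\dil{\vsig_j}{2^j}$, where
\[
\vsig_j := \sum_{S \subseteq \{\mu:\, j_\mu \geq 1\}} (-1)^{|S|} \phi_S .
\]
Only the $2^\nu$ functions $\phi_S$ ever appear, so $\{\vsig_j\}_{j \in \N^\nu}$ takes finitely many values and is a bounded subset of $C_0^\infty(\Q^N(a))$. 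For the cancellation required by Definition \ref{DefnsK}, which under $\sA = [0,1]^\nu$ collapses by Lemma \ref{LemmaCancelInFullCase} to $\int \vsig_j\, dt^\mu = 0$ whenever $j_\mu \geq 1$, a change of variables $u = 2^{-\chi_S} t$ produces
\[
\int \phi_S(t)\, dt^\mu = 2^{-\chi_S \cdot B^\mu}\, \Psi\bigl((2^{-\chi_S \cdot e_k} t_k)_{k:\, e_k^\mu = 0}\bigr),
\]
where $B^\mu := \sum_{k:\, e_k^\mu = 0} e_k$ and $\Psi$ denotes $\int \phi(u)\, du^\mu$ regarded as a function of the remaining variables $(u_k)_{k:\, e_k^\mu = 0}$. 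Since $e_k^\mu = 0$ for every $k$ appearing in both the prefactor and the evaluation point, the expression depends on $S$ only through $S \setminus \{\mu\}$. Pairing $S \leftrightarrow S \,\Delta\, \{\mu\}$ (both legitimate subsets of $\{\mu':\, j_{\mu'} \geq 1\}$, since $\mu$ lies in that set when $j_\mu \geq 1$) produces equal integrals with opposite signs, so $\int \vsig_j\, dt^\mu = 0$.

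Since the $\vsig_j$ satisfy the cancellation of Definition \ref{DefnsK}, Lemma \ref{LemmaSumsConvg} applies and $\sum_{j \in \N^\nu} \dil{\vsig_j}{2^j}$ converges absolutely in the sense of distributions; passing to the limit $J \to \infty$ in the telescoping identity then identifies this sum with $\delta_0$, proving $\delta_0 \in \sK(N, e, a, [0,1]^\nu)$. The only genuine subtlety is the support bookkeeping---making sure every $\dil{\phi}{2^{-\chi_S}}$ stays inside $\Q^N(a)$---but this is handled once and for all at the start by shrinking $a'$ by the fixed finite factor above; everything else is one-dimensional telescoping packaged with an inclusion--exclusion sign cancellation.
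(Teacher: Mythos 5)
Your proposal is correct and is essentially the paper's own proof: the same inclusion--exclusion differencing of a single bump $\phi$ with $\int\phi=1$ (your $\sum_{S}(-1)^{|S|}\Phi_{j-\chi_S}$ is the paper's $\sum_{p}(-1)^{\sum p_\mu}\dil{\eta}{2^{j-p}}$), the same pairing over the $\mu$-th bit to get $\int\vsig_j\,dt^\mu=0$, and the same appeal to Lemma \ref{LemmaSumsConvg} plus identification of the box partial sums with $\dil{\phi}{2^J}\to\delta_0$. The only cosmetic differences are that you verify the partial-sum identity by iterated one-dimensional telescoping rather than by counting binomial coefficients, and you make explicit that $\{\vsig_j\}$ takes only the finitely many values $\sum_S(-1)^{|S|}\phi_S$.
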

\begin{proof}
Fix $\eta\in C_0^\infty\q(\Q^N\q( a' \w)\w)$ with $\int \eta =1$, where $a'>0$ is a small number
to be chosen in a moment.

For $j\in \N^\nu$, define $\vsig_j$ by
\begin{equation}\label{EqnSumForD0}
\dil{\vsig_j}{2^j} = \sum_{\substack{p\in \q\{0,1\w\}^\nu\\ j-p\in \N^\nu}} \q(-1\w)^{\sum_\mu p_\mu} \dil{\eta}{2^{j-p}}.
\end{equation}
It is clear that, for $a'>0$ sufficiently small, $\q\{\vsig_j\w\}\subset C_0^\infty\q( \Q^N\q( a\w)\w)$ is a bounded set.

Fix $j\in \N^\nu$ and $\mu$ such that $j_\mu\ne 0$.  We wish to show
\begin{equation*}
\int \vsig_j\q( t\w) \: dt^\mu=0,
\end{equation*}
and to do so, it suffices to show
\begin{equation}\label{EqnD0IntVanish}
\int \dil{\vsig_j}{2^j} \q( t\w)\: dt^\mu =0.
\end{equation}
Consider the sum \eqref{EqnSumForD0}.  For each choice of $\mu'\ne \mu$ and
$p_{\mu'}\in \q\{0,1\w\}$ (with $j_{\mu'}-p_{\mu'}\geq 0$), the sum \eqref{EqnSumForD0} contains two terms:  one for $p_\mu =1$ and one for $p_\mu=0$.
These two terms have opposite signs, and therefore the sum of their
integrals against $dt^\mu$ is 0.  \eqref{EqnD0IntVanish} follows.

To complete the proof, we must show
$$ \sum_{j\in \N^\nu} \dil{\vsig_j}{2^j} = \delta_0.$$
In light of Lemma \ref{LemmaSumsConvg}, it suffices to show,
for each $m>0$,
\begin{equation}\label{EqnToShowLimitDist}
\sum_{\q|j\w|_\infty \leq m} \dil{\vsig_j}{2^j} =\dil{\eta}{2^{m},\ldots, 2^m},
\end{equation}
since $\dil{\eta}{2^m,\ldots, 2^m}\rightarrow \delta_0$ in distribution as 
$m\rightarrow \infty$.

To see \eqref{EqnToShowLimitDist}, fix $j\in \N^\nu$ with at least one coordinate
less than $m$ and the rest less than or equal to $m$.  Let $\nu_0$
denote the number of coordinates of $j$ that are equal to $m$.
It is easy to see that the coefficient of $\dil{\eta}{2^j}$
in the sum \eqref{EqnToShowLimitDist} is equal to
\begin{equation*}
\sum_{\mu=0}^{\nu-\nu_0} \q( -1\w)^\mu \binom{\nu-\nu_0}{\mu} =0,
\end{equation*}
whereas the coefficient of $\dil{\eta}{2^m,\ldots, 2^m}$ is 1.
This completes the proof of \eqref{EqnToShowLimitDist} and therefore
the proposition.
\end{proof}

Lemma \ref{LemmaCancelInFullCase} will allow us to see product
kernels as the special case
of $\sK\q( N,e,a,\q[0,1\w]^\nu\w)$ when each $e_j$ is nonzero
in precisely one coordinate.  We begin by introducing
the notion of a product kernel.
Our main reference is
\cite{NagelRicciSteinSingularIntegralsWithFlagKernels} and we refer
the reader there for more details.  The definition for product kernels
is recursive on the number of products, $\nu$, with $\nu=1$ corresponding
to the classical Calder\'on-Zygmund kernels.
To define product kernels, suppose we are given a decomposition
of $\R^N$, $\R^N=\R^{N_1}\times \cdots \times \R^{N_\nu}$, into
$\nu$ homogeneous subspaces with given single-parameter dilations
on each subspace.  That is, for each $\mu$ we are given
$e_1^\mu,\ldots, e^{\mu}_{N_{\mu}}\in \q(0,\infty\w)$.  For $\delta\geq 0$
and $t=\q( t_1,\ldots, t_{N_\mu}\w)\in \R^{N_\mu}$, we define
$\delta t=\q(\delta^{e_1^\mu}t_1,\ldots, \delta^{e_{N_\mu}^\mu} t_\mu\w)$.
This, therefore, defines $\nu$ parameter dilations on $\R^N$:  if
$\delta=\q( \delta_1,\ldots, \delta_\nu\w)\in \q[0,\infty\w)^\nu$ and
$t=\q( t^1,\ldots, t^\nu\w)\in \R^{N_1}\times \cdots \times \R^{N_\nu}=\R^N$,
we define
\begin{equation}\label{EqnMultiParamDil}
\delta t = \q(\delta_1t^1,\ldots, \delta_\nu t^\nu \w).
\end{equation}
\begin{defn}[See Definition 2.1.1 of \cite{NagelRicciSteinSingularIntegralsWithFlagKernels}]\label{DefnProdKer}
A product kernel on $\R^N$ relative to the above decomposition is a distribution
$K$ on $\R^N$ which coincides with a $C^\infty$ function away from
the coordinate subspaces $t_\mu=0$ and which satisfies:
\begin{enumerate}
\item (Differential inequalities)  For each multi-index $\alpha=\q(\alpha_1,\ldots, \alpha_\nu \w)$,\footnote{We write $\alpha_\mu=\q(\alpha_\mu^{1},\ldots,\alpha_\mu^{N_\mu} \w)$.}
 there is a constant $C_\alpha$ so that
\begin{equation*}
\q|\partial_{t^1}^{\alpha_1}\cdots \partial_{t^\nu}^{\alpha_\nu} K\q( t\w)\w| \leq C_\alpha \q|t^1\w|^{-\sum \q(1+\alpha_1^j\w)e^1_j} \cdots \q|t^\nu\w|^{-\sum \q(1+\alpha_\nu^j\w)e^\nu_j  },
\end{equation*}
away from the coordinate subspaces.
\item (Cancellation conditions)  These are defined recursively on $\nu$,
the case $\nu=1$ corresponds to the usual Calder\'on-Zygmund operators.
\begin{enumerate}
\item For $\nu=1$, given any $C^\infty$ function $\phi$ supported on the unit ball
with $C^1$-norm bounded by $1$, and any $R>0$,
\begin{equation*}
\int K\q( t\w) \phi\q( Rt\w)\: dt
\end{equation*}
is bounded independently of $\phi$ and $R$.
\item For $\nu>1$, given any $\mu$, $1\leq \mu\leq \nu$, any $C^\infty$ function
$\phi$ supported on the unit ball of $\R^{N_\mu}$
with $C^1$ norm bounded by $1$, and any $R>0$, the distribution
\begin{equation*}
K_{\phi,R}\q( t^1,\ldots, t^{\mu-1}, t^{\mu+1},\ldots,t^{\nu}\w) = \int K\q( t\w) \phi\q( R t^\mu\w) \: d t^\mu
\end{equation*}
is a product kernel on the lower dimensional space which is a product of the
$\R^{N_{\mu'}}$ for $\mu'\ne \mu$, uniformly in $\phi$ and $R$.
\end{enumerate}
\end{enumerate}
\end{defn}

Let $\eh$ denote the multi-parameter dilations induced by
\eqref{EqnMultiParamDil}.  Then we have the following result:
\begin{prop}\label{PropCharacterizeProdKer}
Suppose $K$ is a distribution supported in $\Q^{N}\q( a\w)$.  Then,
$K$ is a product kernel as in Definition \ref{DefnProdKer} if and
only if $K\in \sK\q(N,\eh, a, \q[0,1\w]^\nu\w)$.
\end{prop}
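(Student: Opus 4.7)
The plan is to prove the two implications separately. Because $\sA = \q[0,1\w]^\nu$ and because each $e_k$ is nonzero in exactly one coordinate, which I will denote $\mu(k)$, Lemma \ref{LemmaCancelInFullCase} reduces the $\sK$-cancellation condition to the simpler requirement that $\int \vsig_j\q(t\w)\: dt^\mu = 0$ whenever $j_\mu \neq 0$; here $t^\mu$ coincides with the $\R^{N_\mu}$-coordinate of $t$ in the product decomposition.

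For the forward direction ($K \in \sK \Rightarrow K$ is a product kernel), write $K = \sum_j \dil{\vsig_j}{2^j}$ and first verify the differential inequalities of Definition \ref{DefnProdKer}. Observe that $\dil{\vsig_j}{2^j}$ is supported where $\q|t^\mu\w|$ is of size at most a constant multiple of $2^{-j_\mu}$ (with respect to the single-parameter homogeneous norm on $\R^{N_\mu}$) for every $\mu$, with $L^\infty$ bound $\lesssim \prod_\mu 2^{j_\mu Q_\mu}$, where $Q_\mu = \sum_{k:\mu\q(k\w)=\mu} e^\mu_k$ is the homogeneous dimension of $\R^{N_\mu}$. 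At a point $t$ off all coordinate subspaces, only those $j$ with $j_\mu \lesssim \log_2\q(1/\q|t^\mu\w|\w)$ contribute, and summing the resulting geometric series in each coordinate separately yields $\q|K\q(t\w)\w| \lesssim \prod_\mu \q|t^\mu\w|^{-Q_\mu}$. Applying $\partial_{t^1}^{\alpha_1}\cdots \partial_{t^\nu}^{\alpha_\nu}$ produces the expected additional homogeneous factors. Next, I would verify the recursive cancellation conditions by induction on $\nu$. Given $\phi \in C_0^\infty\q(\R^{N_\mu}\w)$ with $\q\|\phi\w\|_{C^1} \leq 1$ and $R>0$, the partial integral $K_{\phi,R} = \int K\q(t\w)\phi\q(Rt^\mu\w)\: dt^\mu$ decomposes as $\sum_j \int \dil{\vsig_j}{2^j}\q(t\w)\phi\q(Rt^\mu\w)\: dt^\mu$. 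When $j_\mu \neq 0$, the cancellation lets me replace $\phi\q(Rt^\mu\w)$ by $\phi\q(Rt^\mu\w) - \phi\q(0\w)$; a mean-value estimate combined with the support bound $\q|t^\mu\w|\lesssim 2^{-j_\mu}$ produces a factor $\min\q(1, R2^{-j_\mu}\w)^{\epsilon}$ making the sum in $j_\mu$ converge absolutely. The result is of the form $\sum_{j^{\q(-\mu\w)}} \dil{\widetilde{\vsig}_{j^{\q(-\mu\w)}}}{2^{j^{\q(-\mu\w)}}}$ on the lower-dimensional product space, with uniformly bounded $\widetilde{\vsig}$, to which the inductive hypothesis applies.

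For the reverse direction, given a product kernel $K$ supported in $\Q^N\q(a\w)$, I would construct the decomposition via a multi-parameter Littlewood-Paley device. For each $\mu$ fix an approximate identity $\eta^\mu \in C_0^\infty\q(\R^{N_\mu}\w)$ with $\int \eta^\mu = 1$ supported in a small ball, and set $\Delta_{j_\mu}^\mu f = f * \q(\dil{\eta^\mu}{2^{j_\mu}} - \dil{\eta^\mu}{2^{j_\mu-1}}\w)$ for $j_\mu \geq 1$ and $\Delta_0^\mu f = f * \eta^\mu$, where convolution is in the $t^\mu$ variable alone. These operators commute across different $\mu$ and telescope to $\sum_{j\in \N^\nu}\prod_\mu \Delta_{j_\mu}^\mu = I$ on tempered distributions. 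Setting $K_j = \q(\prod_\mu \Delta_{j_\mu}^\mu\w) K$ and defining $\vsig_j$ by $K_j = \dil{\vsig_j}{2^j}$, the product-kernel size estimates combined with the mean-zero property of $\dil{\eta^\mu}{2^{j_\mu}} - \dil{\eta^\mu}{2^{j_\mu-1}}$ for $j_\mu \geq 1$ give uniform $C^m$ bounds for $\vsig_j$ on any fixed ball, and the mean-zero property of the convolution kernels transfers directly to $\int \vsig_j\: dt^\mu = 0$ whenever $j_\mu > 0$. Finally, I would multiply each $\vsig_j$ by a fixed cutoff supported in $\Q^N\q(a\w)$; the discarded tails decay rapidly in each $2^{j_\mu}\q|t^\mu\w|$ and can be reabsorbed using that $K$ itself is supported in $\Q^N\q(a\w)$.

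The main obstacle is the support condition in the reverse direction: a Littlewood-Paley decomposition naturally produces pieces with Schwartz-type tails rather than compactly supported ones, so arranging $\vsig_j \in C_0^\infty\q(\Q^N\q(a\w)\w)$ while simultaneously preserving the uniform $C^m$-boundedness and the cancellation $\int \vsig_j\: dt^\mu = 0$ requires a careful truncation argument that exploits the compact support of $K$. This step essentially reproduces, in our multi-parameter setting, the characterization of product kernels carried out in Section 2 of \cite{NagelRicciSteinSingularIntegralsWithFlagKernels}, and the reference there can be invoked directly after checking that its hypotheses are met.
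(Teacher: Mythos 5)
Your proposal is correct in substance, but it does far more work than the paper, whose entire proof is the single sentence ``This follows easily from Corollary 2.2.2 of \cite{NagelRicciSteinSingularIntegralsWithFlagKernels}.'' What you have written is essentially a reconstruction of the argument behind that corollary (the Nagel--Ricci--Stein characterization of product kernels as sums of normalized dilated bump functions with cancellation), together with the observation from Lemma \ref{LemmaCancelInFullCase} that the $\sK$-cancellation reduces to $\int \vsig_j\, dt^\mu=0$ for $j_\mu\ne 0$ when $\sA=\q[0,1\w]^\nu$ and each $e_k$ is nonzero in one slot --- and you correctly note at the end that one may simply invoke the reference, which is exactly what the paper does. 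So the two routes coincide; yours just makes the cited machinery explicit, which buys self-containedness at the cost of having to re-verify the NRS estimates.

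Two small imprecisions in your sketch, neither fatal since the fallback citation covers them. First, in the forward direction the gain you extract from cancellation, $\min\q(1, R2^{-j_\mu}\w)^{\epsilon}$, is not summable over the range $R2^{-j_\mu}\ge 1$; for those $j_\mu$ the needed smallness comes instead from the mismatch of supports --- $\phi\q(Rt^\mu\w)$ lives on $\q|t^\mu\w|\lesssim R^{-1}$, much smaller than the $2^{-j_\mu}$-scale support of the bump --- giving a factor like $\q(R^{-1}2^{j_\mu}\w)^{Q_\mu}$, so the correct gain is $\min\q(R2^{-j_\mu}, \q(R2^{-j_\mu}\w)^{-Q_\mu}\w)^{\epsilon}$. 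Second, in the reverse direction the uniform $C^m$ bounds on $\vsig_j$ near the coordinate subspaces come from the cancellation conditions of $K$ itself (paired against the Littlewood--Paley bumps), while the mean-zero property of the differences $\dil{\eta^\mu}{2^{j_\mu}}-\dil{\eta^\mu}{2^{j_\mu-1}}$ is what yields $\int\vsig_j\,dt^\mu=0$; your write-up conflates these two roles.
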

\begin{proof}
This follows easily from Corollary 2.2.2 of \cite{NagelRicciSteinSingularIntegralsWithFlagKernels}.
\end{proof}

We now turn to the notion of flag kernels.  Flag kernels provide an
easy to understand situation where one uses a choice of $\sA$
other than $\q[0,1\w]^{\nu}$.
For flag kernels, we take the same setup as above (flag kernels are 
special cases of product kernels).  Here we think of the factors
$\R^{N_1}\times \cdots \times \R^{N_\nu}$ as being ordered (for the
product kernels, the order of the factors did not matter).
Again, we write $t=\q( t^1,\ldots, t^\nu\w) \in \R^{N_1}\times \cdots\times \R^{N_\nu}$.
\begin{defn}[See Definition 2.3.2 of \cite{NagelRicciSteinSingularIntegralsWithFlagKernels}]\label{DefnFlagKer}
A flag kernel is a distribution $K$ on $\R^N$ which coincides with
a $C^\infty$ function away from $t^\nu=0$ and satisfies
\begin{enumerate}
\item (Differential inequalities)  For each multi-index
$\alpha=\q( \alpha_1,\ldots, \alpha_\nu\w)$\footnote{Again,
we write $\alpha_\mu=\q(\alpha_\mu^1,\ldots, \alpha_\mu^{N_\mu} \w)$.}
there is a constant $C_\alpha$ such that for $t^\nu\ne 0$, we have
\begin{equation*}
\begin{split}
\q|\partial_{t^1}^{\alpha_1} \cdots \partial_{t^\nu}^{\alpha_\nu} K\q( t\w)\w|\leq &C_\alpha \q(\q|t^1\w|+\cdots+\q|t^\nu\w|\w)^{-\sum\q(1+\alpha_1^j\w)e_j^1}\\
&\cdots \q(\q|t^{\nu-1}\w|+\q|t^\nu\w|\w)^{-\sum\q(1+\alpha_{\nu-1}^j\w)e_{j}^{\nu-1}}\q|t^\nu\w|^{-\sum\q(1+\alpha_{\nu}^j\w)e_{j}^{\nu}}.
\end{split}
\end{equation*}
\item (Cancellation conditions)  These are defined recursively on $\nu$.
\begin{enumerate}
\item For $\nu=1$, given any $C^\infty$ function $\phi$ supported on
the unit ball, with $C^1$ norm bounded by $1$, and any $R>0$,
\begin{equation*}
\int K\q( t\w) \phi\q( Rt\w)\: dt
\end{equation*}
is bounded uniformly for $\phi$ and $R$.
\item For $\nu>1$, given any $\mu$, $1\leq \mu\leq \nu$, any $C^\infty$ function
$\phi$ supported on the unit ball of $\R^{N_\mu}$ with $C^1$ norm
bounded by $1$, and any $R>0$, the distribution
\begin{equation*}
K_{\phi,R}\q( t^1,\ldots, t^{\mu-1},t^{\mu+1},\ldots, t^\nu\w) = \int K\q( t\w) \phi\q( Rt^\mu\w) \: dt^\mu
\end{equation*}
is a flag kernel on $\R^{N_1}\times \cdots\times \R^{N_{\mu-1}}\times \R^{N_{\mu+1}}\times \cdots \times \R^{N_\nu}$, uniformly in $\phi$ and $R$.
\end{enumerate}
\end{enumerate}
\end{defn}

We state the following result without proof.  It is a result of Nagel, Ricci,
Stein, and Wainger.
\begin{prop}
Let $\sA=\q\{\q(\delta_1,\ldots,\delta_\nu \w)\in \q[0,1\w]^\nu: \delta_1\geq \delta_2\geq \cdots\geq \delta_\nu\w\}$.
Suppose $K$ is a distribution supported on $\Q^N\q( a\w)$.  Then
$K$ is a flag kernel as above if and only if $K$ 
is an element of $\sK\q( N, \eh, a, \sA\w)$.
Here, $\eh$ are the multi-parameter dilations defined earlier.
\end{prop}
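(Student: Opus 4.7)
The proof is analogous to that of Proposition~\ref{PropCharacterizeProdKer}, which identifies product kernels with $\sK\q(N,\eh,a,\q[0,1\w]^\nu\w)$. I would begin by simplifying what membership in $\sK\q(N,\eh,a,\sA\w)$ means for this particular $\sA$. Note that $\lA = \q\{j\in \N^\nu : j_1 \leq j_2 \leq \cdots \leq j_\nu\w\}$. Taking $k\in\lA$ of the form $\q(0,\ldots,0,M,M,\ldots,M\w)$ with zeros in the first $\mu_1$ positions and $M$ arbitrarily large thereafter shows that $\mu_1 \not\preceq_{j,C,\sA} \mu_2$ whenever $\mu_1 < \mu_2$ and $j_{\mu_1} > 0$; taking $k$ constantly equal to $j_{\mu_2}$ shows the same for $\mu_1 > \mu_2$. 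Hence the equivalence classes $\q[\mu\w]_{j,C,\sA}$ are singletons, no relation $\mu \prec_{j,C,\sA} \mu'$ with $\mu' \ne \mu$ ever holds, and $\mu$ is $j,\sA$-minimal iff $j_\mu = 0$. Consequently the cancellation condition \eqref{EqnDefnsKCancel} reduces to $\int \vsig_j\q(t\w)\, dt^\mu = 0$ whenever $j_\mu > 0$, exactly as in Lemma~\ref{LemmaCancelInFullCase}, except that the sum in \eqref{EqnDefsK} is now restricted to $j\in \lA$.

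For the implication $K\in \sK\q(N,\eh,a,\sA\w) \Rightarrow K$ is a flag kernel, the verification is direct. The differential inequalities of Definition~\ref{DefnFlagKer}(1) follow from termwise differentiation of $K = \sum_{j\in\lA} \dil{\vsig_j}{2^j}$: the support of $\vsig_j$ fixes the scale of $\dil{\vsig_j}{2^j}$ in $t^\mu$ at $2^{-j_\mu}$, and the ordering $j_1 \leq \cdots \leq j_\nu$ forces only those $j$ with $2^{-j_{\mu'}} \gtrsim \q|t^{\mu'}\w|$ for each $\mu'$ to contribute, giving after summation denominators of the form $\q|t^\mu\w| + \q|t^{\mu+1}\w| + \cdots + \q|t^\nu\w|$ raised to the required negative power. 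The flag cancellation conditions, Definition~\ref{DefnFlagKer}(2), are verified by induction on $\nu$: integrating $K$ against $\phi\q(Rt^\mu\w)$ one splits $\lA$ into those $j$ with $2^{-j_\mu}R \leq 1$ and those with $2^{-j_\mu}R > 1$; for the former, the cancellation $\int \vsig_j\, dt^\mu = 0$ (valid when $j_\mu > 0$) together with the $C^1$ bound on $\phi$ produces the needed decay, and for the latter one pairs the bump against $\phi$ directly. The resulting distribution in the remaining $t^{\mu'}$ variables is again an element of $\sK$, on the $\q(\nu-1\w)$-parameter product space, with $\sA$ again of the flag form, and the inductive hypothesis finishes.

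The converse direction, flag kernel $\Rightarrow K\in \sK\q(N,\eh,a,\sA\w)$, is the technical core. I would use a Littlewood--Paley-type decomposition adapted to the flag structure. Fixing $\eta^\mu\in C_0^\infty\q(\R^{N_\mu}\w)$ with $\int \eta^\mu = 1$ and taking tensor-product dyadic differences in each variable produces a formal decomposition $K = \sum_{j \in \N^\nu} \dil{\vsig_j}{2^j}$ in which each $\vsig_j$ automatically has mean zero in $t^\mu$ whenever $j_\mu > 0$ (from the differencing) and uniformly bounded $C^\infty$ seminorms (from the differential inequalities of the flag kernel). The main obstacle is that the sum is indexed by $\N^\nu$ rather than $\lA$: for $j\in\N^\nu\setminus \lA$, some coordinate satisfies $j_\mu > j_{\mu+1}$, and one must absorb these ``out-of-order'' pieces into a sum over $\lA$. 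This is precisely where the flag cancellation conditions (as opposed to merely the pointwise differential inequalities) play an essential role: when $j_\mu > j_{\mu+1}$, the support of the $t^\mu$ factor of $\vsig_j$ is much smaller than that of the $t^{\mu+1}$ factor, and the flag cancellation allows one to integrate out the $t^\mu$ variable and reassign the resulting mass into a coarser bump with scale vector lying in $\lA$, preserving the uniform $C_0^\infty$ control. Carrying out this resummation carefully, while keeping track of supports so that each $\vsig_j\in C_0^\infty\q(\Q^N\q(a\w)\w)$, is the crux of the argument, and it parallels the corresponding constructions in \cite{NagelRicciSteinSingularIntegralsWithFlagKernels}.
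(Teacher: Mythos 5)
The paper states this proposition \emph{without proof}, attributing it to Nagel, Ricci, Stein, and Wainger, so there is no internal argument to compare yours against; I can only assess your sketch on its own terms. The strategy --- first simplify the cancellation condition of Definition \ref{DefnsK} for this $\sA$, then run both directions as in Proposition \ref{PropCharacterizeProdKer} --- is reasonable, but your simplification of the cancellation condition is incorrect, and the error is fatal to the converse direction. For $j\in\lA$ (so $j_1\leq\cdots\leq j_\nu$) and $\mu'<\mu$ with $j_{\mu'}=j_\mu>0$, one in fact has $\mu\prec_{j,C,\sA}\mu'$: every $k\in\lA$ satisfies $k_{\mu'}\leq k_\mu$, hence $j_\mu-k_\mu\leq j_{\mu'}-k_{\mu'}$, which gives $\mu\preceq_{j,1,\sA}\mu'$; while $\mu'\preceq_{j,C,\sA}\mu$ fails (take $k\in\lA$ with $k_{\mu'}=0$ and $k_\mu=j_\mu$). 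Your test element $k=(j_{\mu_2},\ldots,j_{\mu_2})$ only rules out $\mu_1\preceq_{j,C,\sA}\mu_2$ when $j_{\mu_1}>j_{\mu_2}$ strictly; it says nothing in the tied case. Consequently the correct reduction of \eqref{EqnDefnsKCancel} is: $\int\vsig_j\,dt^\mu=0$ is required precisely when $j_\mu>0$ and $\mu$ is the \emph{smallest} index attaining the value $j_\mu$; no cancellation in $t^\mu$ is demanded when some earlier index is tied with $\mu$. This is strictly weaker than the condition of Lemma \ref{LemmaCancelInFullCase} restricted to $j\in\lA$, and that weakening is the whole point of the flag class.

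The error matters. Take $\nu=2$ and $K=K_1(t^1)\otimes\delta_0(t^2)$, with $K_1$ a truncated Calder\'on-Zygmund kernel that is not a finite measure. One checks directly from Definition \ref{DefnFlagKer} that $K$ is a flag kernel supported near $0$. But $K$ admits no decomposition $\sum_{j\in\lA}\dil{\vsig_j}{2^j}$ with your full cancellation: pairing such a sum against $g(t^1)\otimes 1$ annihilates every term with $j_2>0$ (by cancellation in $t^2$), and $j\in\lA$ with $j\neq(0,0)$ forces $j_2>0$; hence one would get $|\langle K,g\otimes 1\rangle|\lesssim\|g\|_{L^\infty}$, which is false for $\langle K_1,g\rangle$. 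So the converse direction of your argument targets a false statement, and the forward direction establishes only that a proper subclass of $\sK(N,\eh,a,\sA)$ consists of flag kernels. The inconsistency already surfaces inside your own sketch: the ``out-of-order'' pieces you propose to integrate out and reassign to coarser scales in $\lA$ are exactly the terms that end up with cancellation in only some of the variables --- the terms Definition \ref{DefnsK} permits and your reduced condition forbids. A correct proof must work with the genuine weak cancellation in both directions; in particular, in the forward direction, at a tied block $j_{\mu-1}=j_\mu$ you cannot integrate by parts in $t^\mu$ alone and must instead combine the cancellation in the first variable of the block with the support constraints. That is where the substance of the Nagel--Ricci--Stein--Wainger argument lies.
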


We now present an interesting example of kernels, which are
neither product kernels nor flag kernels, but are still
of the form $\sK\q( N,e,a,\q[0,1\w]^\nu\w)$.
We work in the case $N=3$ and we think of $\R^3$ as being
identified with the three-dimensional Heisenberg group,
$\Ho$ (though our example can be easily generalized to any
stratified Lie group).
If we write $\q( x,y,t\w) \in \R^3=\Ho$ as coordinates, then
the group law of $\Ho$ is given by
\begin{equation}\label{EqnHeisGroupLaw}
\q( x,y,t\w) \q( x',y',t'\w) = \q( x+x', y+y', t+t'+ 2\q(yx'-xy'\w)\w).
\end{equation}
We take $\nu=2$ and consider the two parameter dilations given by
$e_1=\q( 1,0\w)$, $e_2=\q( 0,1\w)$, and $e_3=\q( 1,1\w)$.\footnote{The
resulting kernels are not product kernels, since $e_3$ is nonzero
in more than one component.  These dilations are sometimes referred
to as Zygmund dilations--see \cite{FeffermanPipherMultiparameterOperatorsAnSharpWeightedInequalities}.}
That is, we take the two parameter dilations given by,
\begin{equation}\label{EqnHeisTwoParamDilations}
\q( \delta_1,\delta_2\w) \q( x,y,t\w) = \q(\delta_1 x,\delta_2 y, \delta_1\delta_2 t \w).
\end{equation}
It is easy to see that these dilations are automorphisms of the Heisenberg
group.
It follows from our theory that if $K\in \sK\q( 3,e,a, \q[0,1\w]^2\w)$,
then the operator given by
$f\mapsto f*K$ is bounded on $L^2$ where the convolution is taken
in the sense of $\Ho$.  We will see in Section \ref{SectionExampOnHeis}, though, that when the convolution
is taken in the sense of the usual Euclidean structure on $\R^3$,
then the operator may not be bounded on $L^2$.

\section{Examples}\label{SectionExamples}
In this section, we present a number of examples to help
elucidate the various aspects of Theorem \ref{ThmMainThmSecondPass}.

%
%
%
%
%
%
%

	\subsection{Exponentials of vector fields}\label{SectionExExpOfVect}
	Let $\sA\subseteq\q[0,1\w]^\nu$, $\K\Subset \Omega$, and $e$
be as in the statement of Theorem \ref{ThmMainThmSecondPass}.
For each multi-index, $\alpha$, $0<\q|\alpha\w|\leq M$, 
let $X_\alpha$ be a $C^\infty$ vector field on $\Omega$.
Define a function,
\begin{equation}\label{EqnExFiniteSumFormOfGamma}
\gamma_t\q( x\w) = e^{\sum_{0<\q|\alpha\w|\leq M} t^\alpha X_\alpha}x.
\end{equation}
We say $\gamma$ satisfies $\cV$, if the following conditions
are satisfied:
\begin{itemize}
\item $\q\{\q( X_\alpha, \deg\q( \alpha\w)\w) : \deg\q( \alpha\w) \text{ is nonzero in precisely one component}\w\}$ generates a finite list $\q( X,d\w)$.
See Definition \ref{DefnGeneratesAFiniteList}.
\item The above obtained list $\q( X,d\w)$ controls $\q( X_\alpha,\deg\q( \alpha\w)\w)$ for every $\alpha$ with $\deg\q( \alpha\w)$ nonzero in more
than one component (and thus for every $\alpha$).
\end{itemize}

\begin{prop}\label{PropGammaSatisV}
$\gamma$ satisfies $\cV$ if and only if $\gamma$ satisfies the
assumptions of Theorem \ref{ThmMainThmSecondPass}.
\end{prop}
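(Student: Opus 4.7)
The plan is to reduce this to the scale-invariant curvature conditions $\cZs$ and $\cGs$ from Section~\ref{SectionMultiParamCurvature}. By construction the assumptions of Theorem~\ref{ThmMainThmSecondPass} are precisely the condition $\cZs$ (see Section~\ref{SectionCurves} and Remark~\ref{RmkcJs}), and the equivalence $\cZs \Leftrightarrow \cGs$ is the content of Proposition~\ref{PropCurvatureEquivScaleInv}. Hence it is enough to show that $\cV$ is equivalent to $\cGs$ when $\gamma$ has the form \eqref{EqnExFiniteSumFormOfGamma}.

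For such $\gamma$, the first step is the observation that the asymptotic representation $\gamma_t(x) \sim \exp(\sum_\alpha t^\alpha \Xh_\alpha)x$ appearing in $\cGs$ can be realized as an \emph{exact} equality by taking $\Xh_\alpha = X_\alpha$ for $0 < |\alpha| \leq M$ and $\Xh_\alpha = 0$ for $|\alpha| > M$; this is simply the tautology that a finite exponential is its own trivial asymptotic exponential. Under this identification, the set of pure-power $(\Xh_\alpha, \deg(\alpha))$ used to form $\sV$ in $\cGs$ coincides verbatim with the set of pure-power $(X_\alpha, \deg(\alpha))$ used in $\cV$. Consequently the first clauses of $\cV$ and $\cGs$ — that a finite subset of these pure-power vector fields generate a finite list $(X,d)$ — are literally the same condition.

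To compare the remaining clauses, I would invoke Proposition~\ref{PropControlVectsControlCurve}: for $\gamma$ of the form \eqref{EqnExFiniteSumFormOfGamma}, the condition that $(X,d)$ control $\gamma$ is equivalent to the condition that $(X,d)$ control $(X_\alpha, \deg(\alpha))$ for every $\alpha$ appearing in the finite sum. Since the pure-power $X_\alpha$ are already among the generators of $(X,d)$ and so are trivially controlled, this in turn reduces to requiring control of only the non-pure-power $(X_\alpha, \deg(\alpha))$, which is precisely the second clause of $\cV$. Thus the second requirements of $\cGs$ and $\cV$ also agree, yielding $\cV \Leftrightarrow \cGs$ for $\gamma$ of the given form.

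Chaining $\cV \Leftrightarrow \cGs \Leftrightarrow \cZs$ gives the proposition. There is no serious obstacle anywhere in this argument: the identification $\Xh_\alpha = X_\alpha$ in the first step is essentially a tautology, and everything else is a routine unwinding of definitions resting on the already-established Propositions~\ref{PropCurvatureEquivScaleInv} and~\ref{PropControlVectsControlCurve}. If anything requires mild care, it is confirming that the choice $\Xh_\alpha = X_\alpha$ is legitimate in the statement of $\cGs$, which as noted above is immediate.
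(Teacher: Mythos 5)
Your argument is correct and is exactly the route the paper takes: the paper's entire proof reads ``This is a combination of Propositions \ref{PropCurvatureEquivScaleInv} and \ref{PropControlVectsControlCurve},'' and your write-up simply spells out that combination (noting that the assumptions of Theorem \ref{ThmMainThmSecondPass} are $\cZs$, identifying $\Xh_\alpha$ with $X_\alpha$ for a finite exponential, and chaining $\cV\Leftrightarrow\cGs\Leftrightarrow\cZs$). The only claim stated slightly too strongly is that the first clauses of $\cV$ and $\cGs$ are ``literally the same'' --- $\cGs$ quantifies over finite subsets $\sF\subseteq\sV$ whereas $\cV$ uses the full (finite) pure-power set --- but this discrepancy is harmless and is disposed of by Lemma \ref{LemmaAllCoefsControled} and the discussion in Section \ref{SectionMoreOnXd}.
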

\begin{proof}
This is a combination of Propositions \ref{PropCurvatureEquivScaleInv} and \ref{PropControlVectsControlCurve}.
\end{proof}

	\subsection{Nilpotent Lie groups}\label{SectionExampNilpotent}
	In this example, we investigate a special case of the previous
example.
In this example we take $\sA=\q[0,1\w]^\nu$, and take $\K\Subset \Omega$
and $e$ as in the previous example.  Recall, we call $\alpha$
a {\it pure power} if $\deg\q( \alpha\w)$ is nonzero in precisely
one component.  Fix $M$ large and for each $\alpha$, $0<\q|\alpha\w|\leq M$, which is a pure power,
let $X_\alpha$ be a $C^\infty$ vector field.  For the non-pure powers,
let $X_\alpha=0$.  Assume furthermore, that the iterated commutators
of the $X_\alpha$ are eventually $0$.  Then, if $\gamma$
is given by \eqref{EqnExFiniteSumFormOfGamma}, $\gamma$
trivially satisfies $\cV$ and therefore satisfies
the assumptions of Theorem \ref{ThmMainThmSecondPass}.

To give a concrete example of the above, let $X_1,\ldots, X_l$
be left invariant vector fields on a nilpotent Lie group.
Consider the operator given by,
\begin{equation}\label{EqnExNilOp}
f\mapsto \psi\q( x\w) \int f\q( e^{t_1 X_1+t_2X_2+\cdots+t_lX_l}x\w)\: K\q( t_1,\ldots, t_l\w)\: dt,
\end{equation}
where $K\q( t_1,\ldots, t_l\w)$ is a product kernel relative to the
decomposition of $\R^l=\R\times \R\times \cdots\times \R$ (and $K$
is supported sufficiently close to $0$).  Then, if $\psi\in C_0^\infty$
has small support,
the operator given by \eqref{EqnExNilOp} is automatically bounded
on $L^2$.

Furthermore, by a simple scaling argument, if the Lie group has an appropriate
family of dilations under which the vector fields are homogeneous (for instance if it is a stratified Lie group),
then one may replace $\psi$ with $1$ on the right hand side of
\eqref{EqnExNilOp} and remove the restriction that $K$ have small (or
even compact) support.\footnote{By this we mean we can take any product kernel
$K$ as in Definition \ref{DefnProdKer}, without restricting our attention
to those kernels with small support.  This is the same as considering
kernels in $\sK\q(N,e,a,\q[0,\infty\w)^\nu\w)$ where $e$ is
chosen as in Proposition \ref{PropCharacterizeProdKer} and
$a>0$ is any real number.  In this
case, $\sK$ is the same as $\sKt$.}

Note that we have not involved terms like $t_1t_2 X$ in the exponential.
These would not be pure powers, and so $\cV$ would not hold vacuously:
one would need an additional condition on the vector fields to ensure
that $\cV$ holds.
In Section \ref{SectionExTransInv} we will discuss a translation invariant operator
in Euclidean space of the form \eqref{EqnExNilOp}, except with
non-pure powers as well, where $\cV$ does not hold, and moreover 
which is {\it not} bounded on $L^2$.

\begin{rmk}
We will see in \cite{StreetMultiParameterSingRadonAnal} that the vector fields
$X_1,\ldots, X_l$ can be replaced with {\it any} real analytic vector fields;
not just left invariant vector fields on a nilpotent
Lie group.  In particular, they could be replaced by left invariant vector fields
on any Lie group.
\end{rmk}

	\subsection{The $L^2$ result of Christ, Nagel, Stein, and Wainger}\label{SectionEXCNSW}
	In this section, we show that the $L^2$ result
of \cite{ChristNagelSteinWaingerSingularAndMaximalRadonTransforms}
(the $p=2$ part of Theorem \ref{ThmCNSWBound}) is a special case
of the main result of this paper (Theorem \ref{ThmMainThmSecondPass}).\footnote{Similar remarks hold for every $1<p<\infty$, and this will be shown in
\cite{SteinStreetMultiParameterSingRadonLp}.}
Thus, we are considering the single-parameter case $\nu=1$.
We take $\sA=\q[0,1\w]$ and fix single parameter dilations $e=\q( e_1,\ldots, e_N\w)$, $e_j\in \q( 0,\infty\w)$, and $\K\Subset \Omega$ as
in Theorem \ref{ThmMainThmSecondPass}.  In
\cite{ChristNagelSteinWaingerSingularAndMaximalRadonTransforms},
$e_j$ is taken to be $1$ for every $j$, but this is not
essential to their work.

The condition assumed in
\cite{ChristNagelSteinWaingerSingularAndMaximalRadonTransforms}
on $\gamma$
was that if $X_\alpha$ was given as in \eqref{EqnDefnXalpha},
then $\q\{X_\alpha\w\}$ satisfies H\"ormander's condition
at every point $x\in \K$.
See $\cZ$ in Section \ref{SectionCNSW}.

\begin{prop}
Under the above hypothesis, $\cZ$, $\gamma$
satisfies the hypotheses of Theorem \ref{ThmMainThmSecondPass}.
\end{prop}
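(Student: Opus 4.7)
Since $\nu=1$, the condition ``$\deg\q(\alpha\w)$ is nonzero in only one component'' is vacuous, so the set $\sV$ from Section \ref{SectionCurves} is simply $\q\{\q(X_\alpha,\deg\q(\alpha\w)\w):\alpha\neq 0\w\}$. The plan is to produce a finite $\sF\subseteq\sV$ which generates (in the sense of Definition \ref{DefnGeneratesAFiniteList}) a finite list $\q(X,d\w)=\q(X_1,d_1\w),\ldots,\q(X_q,d_q\w)$ satisfying $\sD\q(\K,\q[0,1\w]\w)$ and controlling $\gamma$.

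To construct $\q(X,d\w)$ I would first extract structural integers from the H\"ormander hypothesis: by $\cZ$, lower semicontinuity of matrix rank, and the compactness of $\K$, there exist $M_0,r_0\in\N$ such that iterated commutators of $\q\{X_\alpha:0<\q|\alpha\w|\leq M_0\w\}$ of order at most $r_0$ span $T_x\R^n$ at every $x\in\K$ with a uniform lower bound on the associated Cramer determinant. Write $e_{\min}=\min_j e_j$ and $e_{\max}=\max_j e_j$, set $D_{\mathrm{gen}}=r_0 M_0 e_{\max}$, and fix an integer $M\geq\max\q(M_0,r_0,\lceil D_{\mathrm{gen}}/e_{\min}\rceil\w)$. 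Take $\sF=\q\{\q(X_\alpha,\deg\q(\alpha\w)\w):0<\q|\alpha\w|\leq M\w\}$, assign formal degrees to iterated commutators by the recipe of Section \ref{SectionMoreOnXd}, and let $\q(X_1,d_1\w),\ldots,\q(X_q,d_q\w)$ enumerate all iterated commutators of elements of $\sF$ of order at most $M$; this list contains the spanning generators. Condition $\sD\q(\K,\q[0,1\w]\w)$ then follows exactly as in the proof of Lemma \ref{LemmaSpecialCaseEquivFiniteGen}: either $d_i+d_j$ is within the list's degree bound so $\q(\q[X_i,X_j\w],d_i+d_j\w)$ is in the list, or one uses that the spanning generators generate the $C^\infty$-module of vector fields on a neighborhood of $\K$ to write $\q[X_i,X_j\w]=\sum_{l} c_{i,j}^l X_l$ with the sum restricted to $d_l\leq D_{\mathrm{gen}}\leq d_i+d_j$, and setting $c_{i,j}^{k,\delta,x}=\delta^{d_i+d_j-d_k}c_{i,j}^k\q(x\w)$ (zero when $d_k>d_i+d_j$) yields the uniformly smooth representation required by Definition \ref{DefnsD}.

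The main obstacle is to verify that $\q(X,d\w)$ controls $\gamma$, which via Proposition \ref{PropConnectionBetweenControlEveryScaleAndUnit} reduces to verifying $\sQ_1$ uniformly for $\delta\in\q[0,1\w]$ and $x_0\in\K$. I would Taylor-expand $W\q(\delta t,x\w)=\sum_{0<\q|\alpha\w|\leq M}\delta^{\deg\q(\alpha\w)}t^\alpha X_\alpha\q(x\w)/\alpha!+R_M\q(\delta t,x\w)$ with integral remainder $R_M\q(\delta t,x\w)=\sum_{\q|\alpha\w|=M+1}\delta^{\deg\q(\alpha\w)}t^\alpha\phi_\alpha\q(\delta t,x\w)$, each $\phi_\alpha$ a smooth vector field. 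For $\q|\alpha\w|\leq M$ the coefficient $X_\alpha$ coincides with some entry $X_{l\q(\alpha\w)}$ of the list with $d_{l\q(\alpha\w)}=\deg\q(\alpha\w)$, so these terms are immediately of the form $c_l\q(t,x\w)\delta^{d_l}X_l\q(x\w)$ with $c_l$ polynomial in $t$. For the remainder the choice $M\geq\lceil D_{\mathrm{gen}}/e_{\min}\rceil$ guarantees $\deg\q(\alpha\w)\geq\q(M+1\w)e_{\min}\geq D_{\mathrm{gen}}$ for every $\q|\alpha\w|=M+1$; consequently the spanning generators (all of degree $\leq D_{\mathrm{gen}}$) remain contained in $\q\{X_l:d_l\leq\deg\q(\alpha\w)\w\}$ and continue to span $T_x\R^n$ uniformly, so Cramer's rule provides smooth $\tilde b_{\alpha,l}\q(\delta t,x\w)$ with $\phi_\alpha=\sum_{l:d_l\leq\deg\q(\alpha\w)}\tilde b_{\alpha,l}X_l$ and each factor $\delta^{\deg\q(\alpha\w)-d_l}$ stays bounded on $\q[0,1\w]$. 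The delicate point --- and in my view the main obstacle --- is precisely this interplay between the H\"ormander span degree bound $D_{\mathrm{gen}}$ and the Taylor cutoff $M$: the argument requires the quantitative choice of $M$ above rather than merely ``$M$ large,'' since without the comparison $\q(M+1\w)e_{\min}\geq D_{\mathrm{gen}}$ one cannot absorb the surplus $\delta$-powers into bounded smooth coefficients.
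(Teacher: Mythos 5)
Your proposal is correct and follows essentially the same route as the paper's proof: build the finite list from low-order commutators of the $X_\alpha$, verify $\sD\q(\K,\q[0,1\w]\w)$ via the dichotomy ``the commutator is already in the list, or else its degree exceeds that of the spanning generators so the surplus power of $\delta$ can be absorbed into the coefficients,'' and verify control by Taylor-expanding $W\q(\delta t,x\w)$ with the cutoff $M$ chosen so that $M\min_j e_j$ dominates the maximal degree of the spanning generators — exactly the paper's condition $M\min_{1\leq j\leq N}e_j\geq\max_{1\leq l\leq L}d_l$. The only differences are cosmetic (you index the list by commutator order rather than by formal degree, and use an explicit integral remainder in place of $O\q(\q|\delta t\w|^M\w)$); just note that membership in your list is governed by commutator order, not degree, so the first branch of your $\sD$ dichotomy should read ``the order of $\q[X_i,X_j\w]$ is at most $M$'' rather than ``$d_i+d_j$ is within the list's degree bound.''
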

\begin{proof}
Let $\q\{\q(X_1,d_1\w),\ldots, \q( X_r,d_r\w)\w\}$ be a finite
subset of $\q\{\q(X_\alpha, \deg\q( \alpha\w) \w)\w\}$
such that $X_1,\ldots, X_r$ satisfy H\"ormander's condition.
Assume that $X_1,\ldots, X_r$ satisfy H\"ormander's condition
of order $m$.  That is, $X_1,\ldots, X_r$ along with their
commutators up to order $m$ span the tangent space
at each point of $\K$.

We recursively define single-parameter formal degrees on the commutators
of $\q( X_\alpha, \deg\q( \alpha\w)\w)$ in the usual way:  if $Y$
has formal degree $d_1$ and $Z$
has formal degree $d_2$, we assign to $\q[Y,Z\w]$ the formal degree $d_1+d_2$.
Enumerate the list of all commutators of $X_1,\ldots, X_r$ up to
order $m$ along with their above defined formal degrees,
$$\q( X_1,d_1\w),\ldots, \q( X_L,d_L\w);$$
so that $X_1,\ldots, X_L$ span the tangent space at each point.
Let $\q( X_1,d_1\w),\ldots, \q( X_q, d_q\w)$
be an enumeration of all the vector fields that can be written
as a commutator of the $X_\alpha$ and such that their above defined
formal degree is less
than or equal to $\max_{1\leq l\leq L} d_l$.\footnote{We maintain
the notation that $\q( X_1,d_1\w),\ldots, \q( X_L,d_L\w)$ are the
first $L$ of these vector fields with formal degrees.}
It is easy to see that there are only a finite number
of such vector fields.  Note, we are using in an essential
way that the degrees are numbers, unlike in the multi-parameter
case, where they are vectors.

We claim that, for $1\leq i,j\leq q$,
\begin{equation}\label{EqnToShowNSWInteg}
\q[X_i,X_j\w]=\sum_{d_k\leq d_i+d_j} c_{i,j}^k X_k, \quad c_{i,j}^k\in C^\infty.
\end{equation}
Indeed, if $d_i+d_j\leq \max_{1\leq l\leq L} d_l$, then $\q( \q[X_i,X_j\w], d_i+d_j\w)$ is already in the list $\q( X_1,d_1\w),\ldots, \q( X_q,d_q\w)$ by
definition.  On the other hand, if $d_i+d_j> \max_{1\leq l\leq L} d_l$,
we use the fact that,
\begin{equation*}
\q[X_i,X_j\w]=\sum_{k=1}^L c_{i,j}^k X_k,
\end{equation*}
since $X_1,\ldots, X_L$ span the tangent space at each point.
It was shown in Section \ref{SectionSpecialCase} that the list
$\q( X,d\w)= \q( X_1,d_1\w),\ldots, \q( X_q,d_q\w)$
satisfies $\sD\q( \K, \q[0,1\w]\w)$ if it satisfies \eqref{EqnToShowNSWInteg}.

Hence, to complete the proof of the proposition, we need only
show that the list $\q( X,d\w)$ controls $\gamma$.
Let $W\q( t,x\w)$ be as in Definition \ref{DefnControlEveryScale}, so that
the vector fields $X_\alpha\q(x\w)$ are the Taylor coefficients of $W\q(t,x\w)$,
when the Taylor series is taken in the $t$ variable.
Then we have,
\begin{equation}\label{EqnToShowWControlApprox}
W\q( \delta t, x\w) = \sum_{\q|\alpha\w|\leq M} t^{\alpha} \delta^{\deg\q( \alpha\w)} X_\alpha + O\q( \q|\delta t\w|^M\w).
\end{equation}
Note, each term,
\begin{equation}\label{EqnToShowWControlX}
\delta^{\deg\q( \alpha\w)} X_\alpha = \sum_{k=1}^q c_k^{\delta} X_k, \quad c_k^\delta\in C^\infty \text{ uniformly in }\delta.
\end{equation}
Indeed, if $\deg\q( \alpha\w) \leq \max_{1\leq l\leq L}d_l$, then
$\q( X_\alpha, \deg\q( \alpha\w)\w)$ appears in the list
$\q( X_1,d_1\w),\ldots, \q( X_q,d_q\w)$, by definition.
On the other hand, if $\deg\q( \alpha\w)>\max_{1\leq l\leq L}d_l$,
then we use the fact that $X_1,\ldots, X_L$ span the tangent space,
and write,
\begin{equation}\label{EqnXalphaAsXlSum}
X_\alpha = \sum_{l=1}^L c_l X_l, \quad c_l\in C^\infty.
\end{equation}
Multiplying \eqref{EqnXalphaAsXlSum} by $\delta^{\deg\q( \alpha\w)}$
immediately yields \eqref{EqnToShowWControlX}.

Thus, each term of the sum on the right hand side of \eqref{EqnToShowWControlApprox}
has the desired form for the definition of control.  To complete the proof,
we need only show that the term $O\q( \q|\delta t\w|^M\w)$ has
the desired form, provided $M$ is sufficiently large.
Take $M$ so large that,
\begin{equation*}
M\min_{1\leq j \leq N} e_j \geq \max_{1\leq l\leq L} d_l.
\end{equation*}
Then, 
$$\q|\delta t\w|^M=O\q( \delta^{\max_{1\leq l\leq L} d_l } \w).$$
Using one last time that $X_1,\ldots, X_L$ span the tangent space,
we see that we may write
the $O\q( \q|\delta t\w|^M\w)$ term in the form,
\begin{equation*}
\sum_{k=1}^L c_k^{\delta}\q( t,x\w) \delta^{\max_{1\leq l\leq L} d_l }  X_k = \sum_{k=1}^L \ct_k^\delta \delta^{d_k} X_k,
\end{equation*}
with $\ct_k^\delta\in C^\infty$ uniformly in $\delta$.
This completes the proof that $\gamma$ is controlled by $\q( X,d\w)$,
and therefore the proof of the proposition.
\end{proof}

\begin{rmk}
As remarked in Section \ref{SectionSpecialCase}, even in the single-parameter
case, Theorem \ref{ThmMainThmSecondPass} is more general than 
the $L^2$ result from
\cite{ChristNagelSteinWaingerSingularAndMaximalRadonTransforms}.
Indeed, the result in this paper also applies in some situations
when $\gamma$ lies in the leaves of a (possibly singular)
foliation.  To apply the methods of
\cite{ChristNagelSteinWaingerSingularAndMaximalRadonTransforms}
directly to a foliated manifold, one would need the foliation
to be non-singular.  See Section \ref{SectionSpecialCase}
for more details.
\end{rmk}

	\subsection{How the assumptions can fail}\label{SectionExAssumpFail}
	It is perhaps instructive to understand examples of $\gamma$
where our assumptions do not hold.  In this section,
we exhibit $\gamma$ which exemplify the various ways
the assumptions can fail.  We make no claim about whether
or not the associated operators are bounded on
$L^2$, and only claim that our theorem does not apply.

In the single parameter case ($\nu=1$, $\sA=\q[0,1\w]$),
there are three ways in which $\gamma$ might fail
to satisfy our assumptions.
\begin{enumerate}
\item  The vector fields $X_{\alpha}$ (see \eqref{EqnDefnXalpha}) might
fail to generate a finite list.  In the single-parameter 
case, this is essentially equivalent to failing
to generate a locally finitely generated involutive distribution (see
Lemma \ref{LemmaSpecialCaseEquivFiniteGen}).
For instance, 
let $X_1=\partial_x$, $X_2=e^{-\frac{1}{x^2}}\partial_y$,
then the function
$\gamma\q( t,\q(x,y\w)\w):\R\times \R^2\rightarrow \R$ associated\footnote{See
Proposition \ref{PropGetGammaFromW} for the bijective correspondence
between $\gamma$ and the vector field $W$.}
 to
the vector field $W$ given by
\begin{equation*}
W\q( t,\q(x,y\w)\w)  = tX_1+t^2 X_2,
\end{equation*}
has this property.
\item If the $X_{\alpha}$ generate a locally finitely generated involutive
distribution, this distribution foliates the ambient space into leaves.
Our assumptions require that $\gamma_t\q(x\w)$ lies in the leaf of this foliation passing through $x$ for every $t$.
This is not always the case.  For instance, $\gamma_t\q(x\w):\R\times \R\rightarrow \R$ given by,
\begin{equation*}
\gamma\q( t,x\w): x\mapsto x-e^{-\frac{1}{t^2}},
\end{equation*}
does not lie in the appropriate leaf.
In this case, all the $X_{\alpha}$ are $0$ (and
so each leaf is merely a point).
\item Even if the $X_{\alpha}$ generate a locally finitely generated
involutive distribution, and $\gamma_t\q(x\w)$ lies in the appropriate leaf, it may still be that $\gamma$
is not controlled by the list $\q( X,d\w)$.
We work on $\R$.  Define the vector field $W\q( t,x\w)$ by,
\begin{equation*}
W\q( t,x\w) = t e^{-\frac{1}{x^2}} \partial_x+e^{-\frac{1}{t^2}} x\partial_x.
\end{equation*}
Let $\gamma_t$ be function associated to this $W$ (via Proposition \ref{PropGetGammaFromW}).
Note that,
\begin{equation*}
\gamma_t\q( x\w)\text{ is } 
\begin{cases}
\text{negative,}&\text{if }x\text{ is negative,}\\
\text{zero,}&\text{if }x\text{ is zero,}\\
\text{positive,}&\text{if }x\text{ is positive.}
\end{cases}
\end{equation*}
In this case, there is only $1$ $X_{\alpha}$, namely 
$X_{1}=e^{-\frac{1}{x^2}}\partial_x$.  Thus, the leaves
of the foliation are
\begin{equation*}
\q( -\infty, 0\w), \quad \q\{0\w\}, \quad \q( 0,\infty\w).
\end{equation*}
Hence, the proof will be complete if we can show that
$\gamma$ is not controlled by $\q( e^{-\frac{1}{x^2}}\partial_x, 1\w)$.
If $\gamma$ were controlled, it would imply, in particular, that there
exists a $t_0\ne 0$ such that for every $x$ near $0$, we have,
\begin{equation*}
e^{-\frac{1}{t_0^2}} x = c\q( x\w) e^{-\frac{1}{x^2}},
\end{equation*}
with $c\q( x\w)$ bounded uniformly as $x\rightarrow 0$.  This is clearly
impossible.
\end{enumerate}

Note that in each of the above examples, we used functions that
vanished to infinite order at a point.  One might wonder if such
a phenomenon is essential in creating an example where our
assumptions fail in the single parameter case.  Indeed, it is
essential.  It turns out that, when restricting attention
to real analytic $\gamma$, our assumptions
are automatically satisfied in the single parameter case.  This is taken
up in \cite{StreetMultiParameterSingRadonAnal}.

We now turn to our multi-parameter assumptions.  Here, there is a new phenomenon
which appears, even when considering real analytic $\gamma$:  one
may have that the vector fields corresponding to the non-pure powers
are not controlled by the pure powers.  For instance,
consider the operator on $\R$ given by,
\begin{equation}\label{EqnExMultiParamEx}
f\mapsto \psi\q( x\w) \int f\q( x-st\w) \: K\q( s,t\w)\: ds\: dt,
\end{equation}
where $K$ is a product kernel relative to the decomposition
$\q( s,t\w) \in \R\times \R$ (and $K$ is supported near $0$).
Here, there is only one vector field $\partial_x$ given
degree $\q( 1,1\w)$ (we are using the standard two-parameter dilations on 
$\R\times\R$).  Thus, this example does not satisfy our assumptions.
In fact, the methods in Section \ref{SectionExTransInv} can be used to show that there is a $K$ such that \eqref{EqnExMultiParamEx}
is not bounded on $L^2$.

	\subsection{A translation invariant example}\label{SectionExTransInv}
	In this section, we investigate some translation invariant
operators on $\R$, in the two-parameter setting.
In particular, we will investigate operators on $L^2\q( \R\w)$ given by,
\begin{equation}\label{EqnExTransInv}
T_K f\q( x\w) := \int f\q( x-p\q( s,t\w) \w) K\q( s,t\w)\: ds \: dt,
\end{equation}
where $p$ is a polynomial without a constant term and
$K$ is a product kernel on $\R^2$.  That is we are taking the
two-parameter dilations $e=\q( \q(1,0\w),\q(0,1\w)\w)$, so that
$\q( \delta_1,\delta_2\w) \q( s,t\w) = \q( \delta_1 s, \delta_2 t\w)$
and we are considering $K\in \sK\q( 2, e, a_0,\q[0,1\w]^2\w)$
for some small $a_0>0$.

\begin{rmk}
Note we have not included a cutoff function in \eqref{EqnExTransInv}.
Since $K$ has small support, the value of $T_K f\q( x\w)$ only depends
on the values of $f$ near $x$.  It is not hard to see, from this and
the translation invariance of $T_K$,
that $T_K$ is bounded on $L^2$ if and only if $\psi T_K$ is bounded on $L^2$
for some $C_0^\infty$ cut-off function $\psi$ with $\psi\equiv 1$
on a neighborhood of $0$.
\end{rmk}

\begin{rmk}
We have restricted to one-dimensional, two-parameter operators
for simplicity.  The same methods in this section apply to higher dimensional
higher parameter operators.
\end{rmk}

\begin{rmk}
Results like the ones in this section date back to
\cite{NagelWaingerL2BoundednessOfHilbertTransformsMultiParameterGroup}.
Thus, the results in this section are partially expository, though
our perspective is somewhat different.
\end{rmk}

Write
\begin{equation*}
p\q( s,t\w) =\sum_{\q|\alpha\w|\geq 1} c_\alpha \q( s,t\w)^\alpha.
\end{equation*}
Let $a>0$ be the lowest $a$ such that $c_{\q( a,0\w)}\ne 0$, and
$b$ be the lowest $b$ such that $c_{\q( 0,b\w)}\ne 0$.  For simplicity,
we assume $a,b<\infty$, but this it not necessary for what follows.

\begin{thm}\label{ThmExTransInvThm}
$T_K$ is bounded on $L^2$ for every $K$ with sufficiently small support if and only if whenever $c_{\q( e,f\w)}\ne 0$,
we have that $\q(e,f\w)$ lies on or above the line passing through
$\q( a,0\w)$ and $\q( 0,b\w)$.
\end{thm}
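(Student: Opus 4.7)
The plan is to verify, in the forward direction, that $\gamma_{(s,t)}(x) = x - p(s,t)$ meets the hypotheses of Section \ref{SectionCurves}, so that Theorem \ref{ThmMainThmSecondPass} applies; in the reverse direction, to exhibit a bad product kernel via translation invariance and rescaling.

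First I would compute the Taylor expansion of $W(s,t,x) = \frac{d}{d\epsilon}\big|_{\epsilon=1}\gamma_{\epsilon(s,t)}\circ\gamma_{(s,t)}^{-1}(x)$. Since $\gamma_{(s,t)}^{-1}(x) = x + p(s,t)$, a direct calculation gives $W(s,t,x) = -(s p_s + t p_t)(s,t)\,\partial_x$, so that $X_{(e,f)} = -(e+f)c_{(e,f)}\partial_x$ with $\deg((e,f)) = (e,f)$. Every vector field appearing is a scalar multiple of $\partial_x$, so any candidate generating list trivially satisfies $\sD(\K,\sA)$ (brackets vanish), and by Proposition \ref{PropControlVectsControlCurve} the only real question is whether each $(X_{(e,f)},(e,f))$ with $c_{(e,f)}\ne 0$ is controlled by the pure-power list $((\partial_x,(a,0)),(\partial_x,(0,b)))$. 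This reduces to producing, for each such $(e,f)$, coefficients $c_1^\delta,c_2^\delta$ bounded uniformly for $\delta\in[0,1]^2$ with
\[
\delta_1^e\delta_2^f = c_1^\delta\,\delta_1^a + c_2^\delta\,\delta_2^b.
\]
If $e/a+f/b\geq 1$, I pick $\lambda\in[1-f/b,\,e/a]\cap[0,1]$ (nonempty precisely by this inequality) and use
\[
\delta_1^e\delta_2^f \;\leq\; \delta_1^{\lambda a}\delta_2^{(1-\lambda)b} \;=\; (\delta_1^a)^\lambda(\delta_2^b)^{1-\lambda} \;\leq\; \lambda\delta_1^a + (1-\lambda)\delta_2^b;
\]
a smooth partition of unity in $\delta$-space splitting $\{\delta_1^a\geq\delta_2^b\}$ from its complement then yields $c_i^\delta\in C^\infty$ uniformly in $\delta$. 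Theorem \ref{ThmMainThmSecondPass} thus gives $L^2$-boundedness of $T_K$ for every product kernel $K$ of sufficiently small support, settling the sufficiency.

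For necessity, suppose $(e_0,f_0)$ is strictly below the Newton line with $c_{(e_0,f_0)}\neq 0$, and pick $\rho_1,\rho_2>0$ with $\rho_1 a > \rho_1 e_0 + \rho_2 f_0$ and $\rho_2 b > \rho_1 e_0 + \rho_2 f_0$ (possible exactly by the strict-below condition). I would rescale a model product kernel $K_0 = \phi_1(s)/s \otimes \phi_2(t)/t$ (with odd bumps $\phi_i$) to $K_j(s,t) = 2^{j(\rho_1+\rho_2)} K_0(2^{j\rho_1}s,2^{j\rho_2}t)$, a sequence of product kernels with uniform product-kernel norm and arbitrarily small support. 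By translation invariance $T_{K_j}$ is the Fourier multiplier $m_{K_j}(\xi) = \int e^{-i\xi p(s,t)} K_j(s,t)\,ds\,dt$; changing variables $s\mapsto 2^{-j\rho_1}s$, $t\mapsto 2^{-j\rho_2}t$ and testing at $\xi_j \sim 2^{j(\rho_1 e_0+\rho_2 f_0)}$ one sees that the $(e_0,f_0)$-monomial dominates the phase while every other monomial of $p$ contributes a phase tending to $0$. A one-parameter van der Corput/stationary-phase estimate for the resulting rescaled oscillatory integral, with $\phi_i$ chosen so that the intrinsic cancellation of $K_0$ does not annihilate the leading term, then forces $|m_{K_j}(\xi_j)|\to\infty$, contradicting (via the closed-graph-type uniform boundedness on the product-kernel class) the assumed $L^2$ boundedness of all $T_K$.

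The main obstacle is the necessity: the bumps $\phi_1,\phi_2$ must be chosen so that the built-in cancellation of the product kernel does not accidentally wipe out the $(e_0,f_0)$-leading term of the rescaled phase, and then the two-variable oscillatory-integral analysis must cleanly isolate that term from the higher-order monomials of $p$---essentially a Newton-diagram argument in the spirit of Carbery--Wainger--Wright referenced in the acknowledgements.
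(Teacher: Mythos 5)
Your ``if'' direction is correct and is essentially the paper's: via Proposition \ref{PropGammaSatisV} (i.e.\ Propositions \ref{PropCurvatureEquivScaleInv} and \ref{PropControlVectsControlCurve}) everything reduces to checking that $\q(\partial_x,\q(a,0\w)\w)$ and $\q(\partial_x,\q(0,b\w)\w)$ control $\q(\partial_x,\q(e,f\w)\w)$ precisely when $e/a+f/b\geq 1$, and your weighted AM--GM computation supplies the detail that the paper delegates to a citation. The commutator conditions are vacuous since every vector field is a multiple of $\partial_x$ on $\R^1$.

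The necessity argument has a genuine gap. Write $m_{K_j}\q(\xi\w)=\int e^{-i\xi p\q(2^{-j\rho_1}s,\,2^{-j\rho_2}t\w)}K_0\q(s,t\w)\,ds\,dt$. At your test frequency $\xi_j\sim 2^{j\q(\rho_1e_0+\rho_2f_0\w)}$ the phase is $\sum_{\q(g,h\w)}c_{\q(g,h\w)}2^{j\q(\rho_1\q(e_0-g\w)+\rho_2\q(f_0-h\w)\w)}s^gt^h$. Even granting that $\q(e_0,f_0\w)$ uniquely minimizes $\rho_1g+\rho_2h$ over the support of $p$ (your two inequalities only constrain $\q(a,0\w)$ and $\q(0,b\w)$; you would need $\q(e_0,f_0\w)$ to be a vertex of the Newton polygon and $\q(\rho_1,\rho_2\w)$ interior to its normal cone), the phase then converges in $C^\infty$ of the compact support of $K_0$ to the \emph{fixed, bounded} phase $c_{\q(e_0,f_0\w)}s^{e_0}t^{f_0}$. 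Pairing the fixed compactly supported distribution $K_0$ against test functions converging in $C^\infty$ gives a convergent limit, so $m_{K_j}\q(\xi_j\w)$ tends to the finite number $\langle K_0, e^{-ic_{\q(e_0,f_0\w)}s^{e_0}t^{f_0}}\rangle$ rather than to infinity. By tying the frequency to the rescaling so that the leading monomial has unit size, you have normalized away exactly the mechanism that produces unboundedness; and van der Corput in any case only furnishes upper bounds, never divergence. (There is also a parity problem: if $\phi_i$ is odd then $\phi_i\q(s\w)/s$ is smooth, $K_0\in L^1$, and $T_{K_0}$ is trivially bounded; while if $e_0$ is even an odd kernel in $s$ annihilates the leading term.)

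The divergence must instead come from an accumulation over $\sim M$ distinct scale-pairs lying on the critical line $\q\{\rho_1g+\rho_2h=\rho_1e_0+\rho_2f_0\w\}$ in scale space. This is what the paper's kernel $K_{\tau,M}=\sum_{0\leq j\leq M,\;k=-je/f}\dil{\vsig}{2^j\tau^{1/a}}\otimes\dil{\vsig}{2^k\tau^{1/b}}$ accomplishes: it is a product kernel uniformly in $\tau$ and $M$, with support shrinking as $\tau\to\infty$; sending $\tau\to\infty$ first kills every monomial off the critical line, and by the minimality of $e/a+f/b$ and then of $e$ each of the $M+1$ remaining single-scale terms contributes the \emph{same} nonzero constant to the multiplier at $\tau^{m_0}$, so the multiplier grows like $M$. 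Your one-kernel family could in principle be salvaged by testing at $\xi=\Lambda\xi_j$ and proving that $\langle K_0,e^{-i\Lambda c s^{e_0}t^{f_0}}\rangle$ grows like $\log\Lambda$ (the Ricci--Stein logarithmic pile-up), but that lower bound is exactly the multi-scale computation your proposal does not carry out, and it is sensitive to the parities just mentioned. As written, the necessity direction does not establish unboundedness.
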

\begin{proof}
The if direction follows from Proposition \ref{PropGammaSatisV}.
Indeed, if we set $\gamma_{\q( s,t\w)}\q( x\w) = x-p\q( s,t\w)$,
then,
\begin{equation*}
\gamma_{\q( s,t\w)}\q( x\w) = e^{-p\q( s,t\w) \partial_x}x.
\end{equation*}
To see that Proposition \ref{PropGammaSatisV} applies, one merely
needs to note that $\q( \partial_x, \q( a,0\w)\w)$ and $\q( \partial_x, \q( 0,b\w)\w)$ control $\q( \partial_x,\q(e,f\w)\w)$ if and only if $\q( e,f\w)$
lies on or above the line passing through $\q( a,0\w)$ and $\q( 0,b\w)$.
This is easy to see directly, and is also covered in Example 5.16
of \cite{StreetMultiParameterCCBalls}.

For the only if direction, we will show if there exists $c_{\q( e,f\w)}\ne 0$
and $\frac{1}{a} e+ \frac{1}{b}f <1$,
then there exists a product kernel $K$
with small support such that $T_K$ is unbounded on $L^2$.
Let $e,f$ be such that $c_{\q( e,f\w)}\ne 0$ and $\frac{1}{a}e+\frac{1}{b}f$
is minimal among all such $\q( e,f\w)$.  Furthermore, we assume
that $e$ is minimal among these choices.  Note that, by assumption,
$0\ne e,f$.

First we wish to reduce the question.  Note that product kernels
are a Fr\'echet space (see, for instance, Definition \ref{DefnProdKer}).
Hence, by closed graph theorem, if $T_K$ were bounded for each choice
of $K$, then the map $K\mapsto T$ would be continuous as a map
from the space of product kernels to the space of bounded operators
on $L^2$ (given the uniform operator topology).
Hence, to prove the theorem, it suffices to construct a sequence $\q\{K_j\w\}$
of product kernels such that,
\begin{itemize}
\item $\q\{K_j\w\}$ is a bounded set in the space 
of product kernels.
\item The $K_j$ all have arbitrarily small support. 
\item $\LpOpN{2}{T_{K_j}}\rightarrow \infty$ as $j\rightarrow\infty$.
\end{itemize}

Take $\vsig\in C_0^\infty\q( \q( 0,1\w) \w)$ such that $\int \vsig=0$, and
\begin{equation}\label{EqnExConstvsig}
\int \exp\q({ic_{\q( e,f\w)} s^e t^f}\w) \vsig\q( s\w) \vsig\q( t\w) \: ds\: dt\ne 0.
\end{equation}
Note that such a $\vsig$ clearly exists.  Indeed, take $\eta\in C_0^\infty\q( 0,1\w)$ such that,
\begin{equation*}
\int \eta\q( t\w)\: dt  =0,\quad \int \eta\q( s\w) s^e \: ds \ne 0,\quad \int \eta\q( t\w) t^f\: dt \ne 0. 
\end{equation*}
We let $\dil{\eta}{r}\q( t\w) = r\eta\q( rt\w)$.  Then, we have,
\begin{equation*}
\begin{split}
&\int \exp\q({ic_{e,f}s^et^f}\w) \dil{\eta}{r}\q( s\w) \dil{\eta}{r}\q( t\w) \: ds \:dt = \\
&\quad i\q(\int \eta\q( s\w) s^e \: ds\w) \q(\int \eta\q( t\w) t^f\: dt\w) c_{\q( e,f\w)} r^{-e-f} + O\q( r^{-e-f-1}\w),\text{ as } r\rightarrow \infty.
\end{split}
\end{equation*}
Taking $\vsig=\dil{\eta}{r}$ for $r$ sufficiently large, we obtain
\eqref{EqnExConstvsig}.

For $M$ large, and $\tau>>M$ define,
\begin{equation*}
K_{\tau,M} \q( s,t\w) := \sum_{\substack{0\leq j\leq M\\k=-j\frac{e}{f}}} \dil{\vsig}{2^j \tau^{\frac{1}{a}}}\q( s\w) \dil{\vsig}{2^k\tau^{\frac{1}{b} }}\q( t\w).
\end{equation*}
Here, $j$ is an integer, but $k$ need not be.
Note that $K_{\tau,M}$ is a product kernel\footnote{For the
$K_{\tau,M}$ to be product kernels, it is not necessary that $2^{j} \tau^{\frac{1}{a}}$
be of the form $2^l$ for some $l$ (and similarly for $2^k\tau^{\frac{1}{b}}$).}
uniformly in $\tau$ and $M$.
Furthermore, for $M$ fixed and $\tau>>M$, $K_{\tau,M}$ has small support.
We will show,
\begin{equation}\label{EqnExTransBoundToShow}
\limsup_{M\rightarrow\infty} \limsup_{\tau\rightarrow \infty} \LpOpN{2}{T_{K_{\tau,M}}}=\infty,
\end{equation}
completing the proof.\footnote{Actually, the methods below imply
$\lim_{M\rightarrow\infty} \liminf_{\tau\rightarrow\infty} \LpOpN{2}{T_{K_{\tau,M}}}=\infty,$ but this stronger statement is irrelevant for what
we wish to prove.} 

Let $m_0=\frac{1}{a}e+\frac{1}{b}f$.  To prove \eqref{EqnExTransBoundToShow},
it suffices to show,
\begin{equation*}
\lim_{M\rightarrow\infty} \lim_{\tau\rightarrow\infty} \q|\int \exp\q({ip\q( s,t\w)\tau^{m_0}}\w) K_{\tau,M}\q(s,t\w)\: ds\: dt \w| =\infty,
\end{equation*}
since this is just the absolute value of the multiplier of $T_{K_{\tau,M}}$ evaluated at $\tau^{m_0}$.

Note that,
\begin{equation}\label{EqnExTransInvAftertau}
\begin{split}
\int &\exp\q( {ip\q( s,t\w) \tau^{m_0}} \w)K_{\tau, M}\q( s,t\w)\:ds\:dt\\
&= \sum_{\substack{0\leq j\leq M\\k=-j\frac{e}{f}}} \int \exp\q({ip\q( s,t\w) \tau^{m_0}}\w) \dil{\vsig}{2^{j} \tau^{\frac{1}{a}}}\q(s\w) \dil{\vsig}{2^{k}\tau^{\frac{1}{b}}}\q(t\w) \: ds \: dt\\
&= \sum_{\substack{0\leq j\leq M\\k=-j\frac{e}{f}}} \int \exp\q({i p\q(\tau^{-\frac{1}{a}} s, \tau^{-\frac{1}{b}}t \w)\tau^{m_0}}\w) \dil{\vsig}{2^j}\q( s\w) \dil{\vsig}{2^k}\q( t\w) \: ds\: dt\\
&  \xrightarrow[\tau\rightarrow\infty]{} \sum_{\substack{0\leq j \leq M\\k=-j\frac{e}{f}}} \int \exp\q({i\sum_{\frac{1}{a}g +\frac{1}{b}h =m_0} c_{\q( g,h\w)} \q( s,t\w)^{\q( g,h\w)}}\w) \dil{\vsig}{2^j}\q( s\w) \dil{\vsig}{2^k}\q( t\w) \: ds\: dt\\
&=\sum_{0\leq j\leq M} \int \exp\q({i \sum_{\frac{1}{a}g +\frac{1}{b}h =m_0} c_{\q( g,h\w)} \q( s,t\w)^{\q( g,h\w)} 2^{j\q(h\frac{e}{f}-g\w)}}\w) \vsig\q( s\w) \vsig\q( t\w) \: ds\: dt.
\end{split}
\end{equation}
Let $\q(\sum_{j=0}^M \cdot\w)$ denote the final equation in \eqref{EqnExTransInvAftertau}. 
We will complete the proof by showing,
\begin{equation*}
\lim_{M\rightarrow\infty} \q|\sum_{j=0}^M \cdot\w| =\infty.
\end{equation*}
To do so, it suffices to show,
\begin{equation*}
\lim_{j\rightarrow\infty} \int \exp\q({i \sum_{\frac{1}{a}g +\frac{1}{b}h =m_0} c_{\q( g,h\w)} \q( s,t\w)^{\q( g,h\w)} 2^{j\q(h\frac{e}{f}-g\w)}}\w) \vsig\q( s\w) \vsig\q( t\w) \: ds\: dt\ne 0,
\end{equation*}
i.e., this limit exists and is nonzero. 
Note, when $\q(g,h\w)=\q( e,f\w)$, we have $h\frac{e}{f}-g=0$.  Otherwise,
if $c_{\q( g,h\w)}$ is nonzero and $\frac{1}{a}g +\frac{1}{b}h =m_0$,
we have $h\frac{e}{f}-g<0$, by the minimality of our choice for $e$.
Thus,
\begin{equation*}
\begin{split}
&\lim_{j\rightarrow\infty} \int \exp\q({i \sum_{\frac{1}{a}g +\frac{1}{b}h =m_0} c_{\q( g,h\w)} \q( s,t\w)^{\q( g,h\w)} 2^{j\q(h\frac{e}{f}-g\w)}}\w) \vsig\q( s\w) \vsig\q( t\w) \: ds\: dt\\
&\quad = \int \exp\q( i c_{\q( e,f\w)} \q( s,t\w)^{\q( e,f\w)} \w) \vsig\q( s\w) \vsig\q( t\w)\\
&\quad \ne 0,
\end{split}
\end{equation*}
by our choice of $\vsig$; this completes the proof.
\end{proof}

\begin{rmk}
If we had not restricted our attention to $K$ with small support, we
could have reversed the argument (taking, instead, $\tau$ small) and
shown that
there exists a product kernel $K$ such that $T_K$ is unbounded
if there is a $c_{\q( e,f\w)}\ne 0$ with $\q(e,f\w)$ lying {\it above}
the line passing through $\q( a',0\w)$ and $\q( 0,b'\w)$, where
$a'$ is the maximal $a'$ such that $c_{\q( a',0\w)}\ne 0$, and
similarly for $b'$.
\end{rmk}

\begin{rmk}
Theorem \ref{ThmExTransInvThm} exemplifies the need to distinguish
between pure powers and non-pure powers.  In this case,
the pure powers are those $\alpha\in \q\{\q( \at, 0\w), \q( 0,\bt\w)\w\}$
such that $c_\alpha \ne 0$, while the non-pure powers are remaining
$\alpha$ such that $c_\alpha\ne 0$.
\end{rmk}

\begin{rmk}
The reader familiar with 
\cite{CarberyWaingerWrightSingularIntegralsAndTheNewtonDiagram}
will note that the necessary and sufficient conditions for the boundedness
of $T_K$, where $K\q( s,t\w) = \frac{1}{st}$ are much more complicated than
the conditions in Theorem \ref{ThmExTransInvThm}.  
However, there are similarities between the results in
\cite{CarberyWaingerWrightSingularIntegralsAndTheNewtonDiagram}
and Theorem \ref{ThmExTransInvThm}.  Indeed, 
the assumptions on $p$ in Theorem \ref{ThmExTransInvThm} can
be restated as saying that the vertices of the Newton
diagram of $p$ lie on the coordinate axes.
\end{rmk}

We now turn to contrasting this with the flag kernel setting.
We take all the same notation as above, but set,
\begin{equation*}
\sA=\q\{\q( \delta_1,\delta_2\w)\in \q[0,1\w]^2: \delta_1\leq \delta_2\w\}.
\end{equation*}
We will be considering operators $T_K$ such that $K\in \sK\q( 2,e,a_0,\sA\w)$
for some small $a_0>0$.

We take $p$ and $0<a,b<\infty$ as above.  We assume that $b\leq a$.
We have,
\begin{thm}\label{ThmExTransInvThmFlag}
In the above setting, $T_K$ is bounded on $L^2$ for every $K\in \sK\q( 2,e,a_0,\sA\w)$ (with $a_0>0$ sufficiently small) if and only if for every 
$c_\alpha\ne 0$, $\alpha$ lies on or above the line passing through
$\q( b,0\w)$ and $\q( 0,b\w)$.
\end{thm}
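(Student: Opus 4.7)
The proof will parallel Theorem \ref{ThmExTransInvThm}, split into sufficiency (``if'') and necessity (``only if''), with modifications reflecting the flag structure $\sA = \{\delta_1 \leq \delta_2\}$.

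For the sufficient direction I plan to invoke Proposition \ref{PropGammaSatisV}. Writing $\gamma_{(s,t)}(x) = x - p(s, t) = e^{-p(s, t)\partial_x}x$, the pure-power vector fields are (up to nonzero constants) $(\partial_x, (a', 0))$ for $a' \geq a$ with $c_{(a', 0)} \neq 0$ and $(\partial_x, (0, b'))$ for $b' \geq b$ with $c_{(0, b')} \neq 0$. These trivially generate a finite list ($\partial_x$ with various formal degrees; all brackets vanish), so only control of the non-pure terms needs verification. Given $(\partial_x, (\alpha_1, \alpha_2))$ with $c_{(\alpha_1, \alpha_2)} \neq 0$ and $\alpha_1 + \alpha_2 \geq b$, the key observation is that on the flag region $\delta_1 \leq \delta_2 \leq 1$,
\[
\delta_1^{\alpha_1}\delta_2^{\alpha_2} = \delta_2^b \cdot \bigl(\delta_1^{\alpha_1}\delta_2^{\alpha_2 - b}\bigr), \qquad \delta_1^{\alpha_1}\delta_2^{\alpha_2 - b} \leq \delta_2^{\alpha_1 + \alpha_2 - b} \leq 1.
\]
Hence $\delta^{(\alpha_1,\alpha_2)} \partial_x$ is a uniformly bounded smooth multiple of $\delta^{(0,b)} \partial_x$ on $\sA$, so the list is controlled and Proposition \ref{PropGammaSatisV} applies.

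For the necessary direction, assume some $c_{(e, f)} \neq 0$ with $e + f < b$. Since $b \leq a$, any pure power has exponent sum at least $b$, so $(e, f)$ must be non-pure, i.e.\ $e, f \geq 1$. Among all such counterexamples choose $(e, f)$ minimizing first $m_0 := e/a + f/b$ and then $e$, exactly as in Theorem \ref{ThmExTransInvThm}. I then plan to adapt that theorem's bad-kernel construction by setting
\[
K_{\tau, M}(s, t) = \sum_{j \in J_{\tau, M}} \dil{\vsig}{2^j\tau^{1/a}}(s) \dil{\vsig}{2^{-je/f}\tau^{1/b}}(t),
\]
for $\vsig \in C_0^\infty((0, 1))$ with $\int \vsig = 0$ and $\int e^{ic_{(e,f)}s^et^f}\vsig(s)\vsig(t)\,ds\,dt \neq 0$. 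The associated scales $(\delta_1, \delta_2) = (2^{-j}\tau^{-1/a}, 2^{je/f}\tau^{-1/b})$ satisfy the flag condition $\delta_1 \leq \delta_2$ precisely when $j \geq j_\tau := \tfrac{f}{e+f}\bigl(\tfrac{1}{b} - \tfrac{1}{a}\bigr)\log_2 \tau$, so I set $J_{\tau, M} = [j_\tau, M]$ (non-integer indices are permitted just as in the footnote to Theorem \ref{ThmExTransInvThm}, and the cancellation $\int \vsig = 0$ in both variables ensures $K_{\tau, M}$ lies in $\sK(2, e, a_0, \sA)$ uniformly in $\tau, M$). When $a = b$ we have $j_\tau \equiv 0$ and the proof of Theorem \ref{ThmExTransInvThm} transfers verbatim: evaluating the multiplier at $\xi = \tau^{m_0}$ and letting $\tau \to \infty$ (then $M \to \infty$), the $(e, f)$-term is stationary, the competing terms with $g/a + h/b = m_0$ decay in $j$ by the minimality of $e$, and all other terms decay in $\tau$, yielding a divergent sum of nonzero constants.

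The hard part is the $a > b$ case, where $j_\tau$ grows with $\tau$ and so the two limits can no longer be taken independently. My plan is to couple them by taking $\tau = 2^{cM}$ for a sufficiently small $c > 0$, chosen so that $|J_{\tau, M}| = M - j_\tau(\tau) \to \infty$. Inside this window the phase of each integrand converges uniformly to $c_{(e, f)} s^e t^f$, and hence each summand of the multiplier tends to the same nonzero constant, forcing $\sup_\xi |m_{K_{\tau, M}}(\xi)| \to \infty$. The main technical obstacle is verifying the uniform decay of all ``off-diagonal'' Taylor terms: for $(g, h)$ with $g/a + h/b > m_0$ the coefficient of $s^g t^h$ in the phase is $\tau^{m_0 - g/a - h/b} 2^{j(eh/f - g)}$, and ensuring this is $o(1)$ uniformly over $j \in J_{\tau, M}$ as $\tau = 2^{cM} \to \infty$ places an upper bound on $c$. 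I expect this upper bound to be compatible with the lower bound needed to keep $|J_{\tau, M}| \to \infty$, since both involve the same linear quantities in $(e, f), (a, b)$; checking that compatibility is the main calculation, but it is elementary once the construction is set up correctly.
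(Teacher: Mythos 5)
Your ``if'' direction is correct and is exactly the paper's argument: Proposition \ref{PropGammaSatisV} together with the observation that $\q(\partial_x,\q(0,b\w)\w)$ controls $\q(\partial_x,\alpha\w)$ over $\sA$ precisely when $\alpha$ lies on or above the line $\alpha_1+\alpha_2=b$; your inequality $\delta_1^{\alpha_1}\delta_2^{\alpha_2-b}\le\delta_2^{\alpha_1+\alpha_2-b}\le 1$ is the whole point.

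The ``only if'' direction has two genuine problems, and both stem from the same decision: you kept the $\tau^{1/a}$ rescaling in the $s$-variable. First, your minimization is over the wrong set. In Theorem \ref{ThmExTransInvThm} the bad set $\q\{g/a+h/b<1\w\}$ is a sublevel set of the functional $g/a+h/b$ being minimized, so after choosing $\q(e,f\w)$ every other nonzero coefficient satisfies $g/a+h/b\ge m_0$ and no phase term blows up. In the flag setting the bad set is $\q\{g+h<b\w\}$, which is \emph{not} a sublevel set of $g/a+h/b$ when $a>b$; minimizing only over bad terms leaves open the possibility of a good term with $g/a+h/b<m_0$, whose rescaled coefficient $\tau^{m_0-g/a-h/b}$ diverges and destroys the stationary-phase limit. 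Concretely, take $a=20$, $b=5$, $p\q(s,t\w)=s^{20}+t^5+st^3+s^4t$: the only bad term is $\q(1,3\w)$ with $m_0=13/20$, but the good term $\q(4,1\w)$ has $4/20+1/5=2/5<13/20$. Second, the coupled limit $\tau=2^{cM}$ you propose to handle $j_\tau\to\infty$ requires $c$ to exceed finitely many lower bounds (to kill the terms with $g/a+h/b>m_0$ whose $j$-exponent $he/f-g$ is positive, since these now grow along $j\le M$) while staying below the upper bound that keeps $\q|J_{\tau,M}\w|\to\infty$; you have not verified compatibility, and it is not obvious.

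The paper avoids all of this with a one-line change: replace $a$ by $b$ throughout, i.e.\ rescale $s$ by $\tau^{1/b}$ as well, evaluate the multiplier at $\tau^{m_0}$ with $m_0=\q(e+f\w)/b$, and minimize $e+f$ (then $e$) over the bad terms. Then the scales $\q(2^{-j}\tau^{-1/b},2^{je/f}\tau^{-1/b}\w)$ lie in $\sA$ for every $j\ge 0$, so $j_\tau\equiv 0$, the kernels are uniformly flag kernels, and the two limits decouple exactly as in Theorem \ref{ThmExTransInvThm}; moreover the bad set is now exactly the sublevel set $\q\{\q(g+h\w)/b<1\w\}$ of the functional being minimized, so every good term decays in $\tau$ and the first problem disappears as well.
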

\begin{proof}
The if direction follows from Proposition \ref{PropGammaSatisV}.
Indeed, one needs only verify the straightforward fact that
$\q( \partial_x,\q( 0,b\w)\w)$ controls $\q(\partial_x,\alpha\w)$
if and only if $\alpha$ lies on or above the line passing through
$\q( 0,b\w)$ and $\q( b,0\w)$.  Recall, the definition of
control depends on the set $\sA$.

The only if follows just as in the proof of Theorem \ref{ThmExTransInvThm},
replacing $a$ with $b$ throughout.  It is easy to see, that when $a$
is replaced by $b$, the kernels $K_{\tau,M}$ are uniformly
flag kernels.
\end{proof}

\begin{rmk}
We will see, in \cite{SteinStreetMultiParameterSingRadonLp}, that
the operators in Theorems \ref{ThmExTransInvThm} and
\ref{ThmExTransInvThmFlag} which are bounded on $L^2$
are also bounded on $L^q$ ($1<q<\infty$).
\end{rmk}

	\subsection{Flag kernels versus product kernels}\label{SectionFlagVsProduct}
	As mentioned earlier, if we are considering operators associated
to product kernels,
\begin{equation*}
f\mapsto \psi\q( x\w) \int f\q( \gamma_t\q( x\w) \w) K\q( t\w) \: dt,
\end{equation*}
then Theorem \ref{ThmMainThmSecondPass} holds for a larger
class $\gamma$ when $K$ is assumed to be a flag
kernel (a special case of product kernels).  An example can be seen
by contrasting Theorems \ref{ThmExTransInvThm} and \ref{ThmExTransInvThmFlag}.

In this section, we exhibit another, simpler example of $\gamma$ where
our assumptions hold for flag kernels, but not product kernels.
Unlike Theorems \ref{ThmExTransInvThm} and \ref{ThmExTransInvThmFlag}, we make no claim about the unboundedness of operators
to which our theorem does not apply.

We consider product (and flag) kernels on $\R^2$ supported near the origin.
We take the standard two-parameter dilations on $\R^2$:
\begin{equation*}
\q( \delta_1,\delta_2\w) \q( s,t\w) = \q( \delta_1 s,\delta_2 t\w).
\end{equation*}
Thus, $e=\q( \q(1,0\w),\q(0,1\w)\w)$.
Let $\sA=\q\{\q( \delta_1,\delta_2\w)\in \q[0,1\w]^2 : \delta_1\leq \delta_2\w\}$.
Let $X_{\q( 1,0\w)}=\partial_x$, 
$X_{\q( 2,0\w)} = e^{-\frac{1}{x^2}}\partial_y$, and 
$X_{\q( 0,1\w)} = \partial_y$.
Define,
\begin{equation*}
\gamma_{\q(s,t\w)}\q( x,y\w) = e^{sX_{\q( 1,0\w)} + s^2 X_{\q( 2,0\w)} +tX_{\q( 0,1\w)}}\q( x,y\w).
\end{equation*}
Then, for $K\in \sK\q( 2, e, a, \sA\w)$ (for $a>0$ sufficiently small)
Theorem \ref{ThmMainThmSecondPass} (see also Proposition \ref{PropGammaSatisV})
applies to show that operators of the form
\begin{equation*}
f\mapsto \psi\q( x,y\w) \int f\q( \gamma_{\q( s,t\w)}\q( x,y\w)\w) K\q( s,t\w)\: ds\: dt
\end{equation*}
are bounded on $L^2$, where $\psi$ is a cut off function supported near
$0\in \R^2$.

However, it is easy to see (again via Proposition \ref{PropGammaSatisV})
that the assumptions of Theorem \ref{ThmMainThmSecondPass}
are not satisfied for the full class of product kernels: $\sK\q( 2,e,a, \q[0,1\w]^2\w)$.

\begin{rmk}
The example in this section used functions that vanish to infinite order.
However, we see by contrasting Theorems \ref{ThmExTransInvThm} and \ref{ThmExTransInvThmFlag},
that even when $\gamma_t\q( x\w)$ is real analytic, 
it could be that operator corresponding to each flag kernel
is bounded on $L^2$, while there are operators corresponding
to product kernels which are not bounded on $L^2$.
\end{rmk}

	\subsection{A multi-parameter difficulty}\label{SectionMultiParamDifficult}
	One of the crucial notions that we used was
when a list of vector fields
with formal degrees $\q( X_1,d_1\w),\ldots, \q( X_r,d_r\w)$
``generated a finite list,'' see Definition \ref{DefnGeneratesAFiniteList}.
In the single parameter case ($\nu=1$, $\sA=\q[0,1\w]$), we saw in Lemma
\ref{LemmaSpecialCaseEquivFiniteGen}
that this notion is essentially equivalent to the set $\q\{X_1,\ldots, X_r\w\}$
generating an involutive distribution which is locally finitely generated as
a $C^\infty$ module.
The point of this section is to exhibit an example showing
that the obvious multi-parameter generalization of the above does not hold.

More specifically, let $\nu=2$ and $\sA=\q[0,1\w]^2$.  Suppose
we are given two lists of vector fields with two-parameter formal degrees,
$\q( X_1,d_1\w),\ldots, \q( X_r,d_r\w)$, $\q( Y_1,d_1'\w),\ldots, \q(Y_{r'},d_{r'}'\w)$.
We assume that $d_1,\ldots,d_r$ are all $0$ in the second component,
while $d_1',\ldots, d_{r'}'$ are all $0$ in the first component.
Suppose we wish to show that,
\begin{equation}\label{EqnExToShowFiniteList}
\q( X_1,d_1\w),\ldots, \q( X_r,d_r\w),\q( Y_1,d_1'\w),\ldots, \q(Y_{r'},d_{r'}'\w)
\end{equation}
generates a finite list.
In analogy with the single-parameter case, one might hope that the following
(necessary) condition would be sufficient to prove that \eqref{EqnExToShowFiniteList}
generates a finite list:
\begin{itemize}
\item The involutive distribution generated by $\q\{ X_1,\ldots, X_r\w\}$ is
locally finitely generated as a $C^\infty$ module.
\item The involutive distribution generated by $\q\{ Y_1,\ldots, Y_{r'}\w\}$ is
locally finitely generated as a $C^\infty$ module.
\item The involutive distribution generated by $\q\{ X_1,\ldots, X_r, Y_1,\ldots, Y_{r'}\w\}$ is locally finitely generated as a $C^\infty$ module.
\end{itemize}
This is not the case.  

We exhibit an example where the above hold, but which
does not generate a finite list.  Our vector fields are,
\begin{equation*}
\q(\partial_x, \q( 1,0\w) \w), \q(e^{-\frac{1}{x^2}} \partial_y, \q( 0,1\w)\w), \q( \partial_y, \q( 0,2\w) \w).
\end{equation*}
Let $g\q( x\w) = e^{-\frac{1}{x^2}}$, and let $\dil{g}{m}$ denote the
$m$-th derivative of $g$.  If the above vector fields generated
a
finite list on a neighborhood of $0$, then, in particular there would
exist an $m$ such that
for every $1\geq \delta>0$,
\begin{equation}\label{EqnExFiniteListImp}
\delta \dil{g}{m}\q( x\w) = \sum_{j<m} \delta c_j^\delta\q( x\w) \dil{g}{j}\q( x\w)  + \delta^2 c^{\delta}\q( x\w),
\end{equation}
where $c_j^\delta$ and $c^\delta$ are bounded uniformly in $\delta>0$ and $x>0$.  To
see this we have taken the special case $\q( 1,\delta\w)\in \q[0,1\w]^2$.
\eqref{EqnExFiniteListImp} is clearly impossible.

\begin{rmk}
Once again, we have used functions which vanish to infinite order
at a point.  This is essential, in the sense that the above
phenomenon does not occur in the real analytic category.  We will
see this fact in \cite{StreetMultiParameterSingRadonAnal}.
\end{rmk}

	\subsection{An example on the Heisenberg group}\label{SectionExampOnHeis}
	In this section, we discuss in more detail the example
given at the end of
Section \ref{SectionKernelsII}.
The operators considered in this section
are less singular than those in 
Section \ref{SectionExampNilpotent}.
We work on the three dimensional Heisenberg group\footnote{The methods
discussed here generalize to any stratified nilpotent Lie group.}
$\Ho$.  As a manifold $\Ho$ is diffeomorphic to $\R^3$.
We write $\q( x,y,t\w)$ for coordinates of $\Ho$.
The group law on $\Ho$ is given by
\eqref{EqnHeisGroupLaw}
and we define two parameter dilations on
$\Ho$ by \eqref{EqnHeisTwoParamDilations}.
The Lie algebra of $\Ho$ is three dimensional, and one may take
as a basis,
$X=\partial_x+2y\partial_t$, $Y=\partial_y-2x\partial_t$, $T=\partial_t$.

Given a bounded subset $\q\{\vsig_j\w\}_{j\in \Z^2}\subset C_0^\infty\q( \Q^3\q( 1\w)\w)$ satisfying,
\begin{equation*}
\int \vsig_j\q( x,y,t\w) \: dx\: dt=0, \quad \int \vsig_j\q( x,y,t\w)\: dy\: dt, \forall j,
\end{equation*}
we define a distribution by,
\begin{equation*}
K\q( x,y,t\w) =\sum_{\q( j_1,j_2\w) \in \Z^2} 2^{2j_1+2j_2} \vsig_j\q( 2^{j_1} x, 2^{j_2} y, 2^{j_1+j_2}t\w) = \sum_{j\in \Z^2} \dil{\vsig_j}{2^j}\q( x,y,t\w).
\end{equation*}

\begin{thm}\label{ThmBoundOfToy}
The operator given by,
\begin{equation*}
f\mapsto f*K
\end{equation*}
is bounded $L^2\q( \Ho\w) \rightarrow L^2\q( \Ho\w)$.
\end{thm}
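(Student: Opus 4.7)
The plan is to identify $f \mapsto f*K$ (with $\Ho$-convolution) as a multi-parameter singular Radon transform fitting the framework of Theorem~\ref{ThmMainThmSecondPass}, and then use the dilation and translation invariance of $\Ho$ to derive the global, cutoff-free statement of Theorem~\ref{ThmBoundOfToy}. Writing $\gamma_s\q( g\w) := g \cdot s^{-1}$, one has $\q( f*K\w)\q( g\w) = \int f\q( \gamma_s\q( g\w)\w) K\q( s\w)\, ds$. I would work with the Zygmund dilations $e = \q( \q( 1,0\w),\q( 0,1\w),\q( 1,1\w)\w)$ on $\R^3$, which coincide with the Heisenberg dilations $D_\delta\q( x,y,t\w) = \q( \delta_1 x,\delta_2 y,\delta_1\delta_2 t\w)$, and with $\sA = \q[0,1\w]^2$; then $\lA = \N^2$ and the cancellation conditions on the $\vsig_j$ imposed in the statement are precisely those defining $\sKt\q( 3,e,1,\q[0,1\w]^2\w)$.

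First I would verify the hypotheses of Section~\ref{SectionCurves}. Using $\gamma_s^{-1}\q( g\w) = g \cdot s$ together with the direct computation $s \cdot \q( \epsilon s\w)^{-1} = \q( \q( 1-\epsilon\w)s_1, \q( 1-\epsilon\w)s_2, \q( 1-\epsilon\w)s_3\w)$ (the Heisenberg cross terms cancel because both factors are built from the same $\q( s_1,s_2\w)$), I obtain
\begin{equation*}
W\q( s,g\w) = \tfrac{d}{d\epsilon}\Big|_{\epsilon=1}\gamma_{\epsilon s}\circ \gamma_s^{-1}\q( g\w) = -\q( s_1 X + s_2 Y + s_3 T\w)\q( g\w),
\end{equation*}
where $X,Y,T$ are the standard left-invariant vector fields on $\Ho$. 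The pure-power Taylor coefficients of $W$ are $-X$ of degree $\q( 1,0\w)$ and $-Y$ of degree $\q( 0,1\w)$, while the only non-pure-power coefficient is $-T$ of degree $\q( 1,1\w)$. Since $\q[ X,Y\w] = -4T$ and higher brackets on $\Ho$ vanish, the pure powers generate the finite list $\q( X,\q( 1,0\w)\w), \q( Y,\q( 0,1\w)\w), \q( T,\q( 1,1\w)\w)$, and $\sD\q( \K,\q[0,1\w]^2\w)$ holds trivially, uniformly in $\delta$, for any $\K\Subset \Ho$. The non-pure-power $\q( T,\q( 1,1\w)\w)$ is controlled since it already sits in the list, and $\gamma$ is controlled because $W\q( \delta s,g\w) = -\delta_1 s_1 X - \delta_2 s_2 Y - \delta_1\delta_2 s_3 T$ is exactly of the required form with coefficients $\q( -s_1,-s_2,-s_3\w)$ independent of $\delta$.

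With these hypotheses in place, Theorem~\ref{ThmMainThmSecondPass} produces, for any cutoff $\psi$ supported in the interior of any fixed $\K\Subset \Ho$, an $a > 0$ such that $f \mapsto \psi\q( g\w)\int f\q( \gamma_s\q( g\w)\w)\psi\q( \gamma_s\q( g\w)\w) K'\q( s\w)\, ds$ is $L^2$-bounded for every $K' \in \sK\q( 3,e,a,\q[0,1\w]^2\w)$. To reach Theorem~\ref{ThmBoundOfToy} I would apply Cotlar--Stein to $T_j f = f*\dil{\vsig_j}{2^j}$. The crucial observation is that conjugation by the $L^2$-isometric normalization of the Heisenberg dilation $D_{2^{j_0}}$ (an automorphism of $\Ho$ commuting with convolution) identifies convolution with $\dil{\vsig_j}{2^j}$ with convolution with $\dil{\vsig_j}{2^{j+j_0}}$; thus the almost-orthogonality quantity for $\q( j,k\w)\in \Z^2\times \Z^2$ equals that for $\q( j - j\wedge k,\, k - j\wedge k\w)\in \N^2\times \N^2$, a pair with trivial componentwise meet. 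In that shifted regime each $\dil{\vsig_{j'}}{2^{j'}}$ has support in $\Q^3\q(1\w)$, so one may insert a fixed cutoff supported on a sufficiently large $\K$; the bound $\LpOpN{2}{T_{j'}^* T_{k'}} + \LpOpN{2}{T_{j'} T_{k'}^*} \lesssim 2^{-\epsilon|j-k|}$ is then exactly Proposition~\ref{PropToShowEndOfProof} applied to our $\gamma$, with constants uniform in $j', k'$ by Remark~\ref{RmkWhatConstsDependOn}. Left $\Ho$-translation invariance then removes the cutoff in the standard way, and Cotlar--Stein gives strong-operator convergence of $\sum_{j\in \Z^2} T_j$ together with a uniform $L^2$ bound.

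The main obstacle is the final step: passing from the localized, cutoff-dependent bound of Theorem~\ref{ThmMainThmSecondPass} to global Cotlar--Stein estimates uniform over $\Z^2$. This hinges on the exact compatibility of the Zygmund dilations on $\R^3$ with the Heisenberg automorphisms $D_\delta$, together with the translation invariance of $\gamma_s\q( g\w) = g\cdot s^{-1}$; these features are what guarantee that the admissible constants of Remark~\ref{RmkWhatConstsDependOn} are independent of the relevant shifts and dilations and thus that the index reduction $\q( j,k\w)\leadsto \q( j-j\wedge k, k-j\wedge k\w)$ preserves the estimate.
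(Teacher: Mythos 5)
Your proposal is correct and follows essentially the same route as the paper: you verify that $\gamma_s(g)=g\cdot s^{-1}$ satisfies the hypotheses of Theorem \ref{ThmMainThmSecondPass} via $W(s,g)=-(s_1X+s_2Y+s_3T)$ with the non-pure power $T=-\tfrac14[X,Y]$ controlled by the pure powers, and then use the compatibility of the Zygmund dilations with the Heisenberg automorphisms together with left-translation invariance to remove the localization and small-support restrictions. This is precisely the reduction the paper sketches (and leaves to the reader), with your write-up supplying more of the details.
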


It is not hard to see that the operator in Theorem \ref{ThmBoundOfToy}
is essentially of the form covered by Theorem \ref{ThmMainThmSecondPass}.
The differences are:  we have not restricted to $K$ with small support,
and we have not localized the operator.  However, using the dilation
invariance of the class of operators in Theorem \ref{ThmBoundOfToy}, it is easy to see that Theorem \ref{ThmBoundOfToy}
follows from the result for $K$ with small support and $T$ localized.
Alternatively, our entire proof goes through, essentially unchanged
for the operators in Theorem \ref{ThmBoundOfToy}:  one does not
need to use the small support of $K$ or the localizing functions
in this case.  We leave the details to the reader.

At this point we wish to discuss the proof
of Theorem \ref{ThmMainThmSecondPass} in the context of
Theorem \ref{ThmBoundOfToy}.
Indeed, define the operator,
\begin{equation*}
T_j f = f*\dil{\vsig_j}{2^j}, \quad j\in \Z^2.
\end{equation*}
It is not hard to see that $T_j^{*}T_j$ is essentially of the same
form as $T_j$.  Thus, at first glance, one might think
that a $T^{*}T$ iteration type argument would not be useful
in proving Theorem \ref{ThmBoundOfToy}.  On the contrary, however,
a $T^{*}T$ iteration argument was essential to our proof.
The idea is that when $j,k\in \Z^2$ and $j_1>k_1$ and $j_2<k_2$,
the operator $\q(T_j^{*}T_k \w)^{*} T_j^{*}T_k$
has a less singular Schwartz kernel than $T_j^{*}T_k$.
This is the essence of our proof.

We close this section with one final remark.
If the convolution in Theorem \ref{ThmBoundOfToy}
is replaced by the Euclidean convolution on $\R^3$,
then the operator may be unbounded on $L^2$.\footnote{The closely
related fact that the double Hilbert transform on the surface $z=xy$
in $\R^3$ is not bounded on $L^2$ was noted by Ricci and
Stein \cite{RicciSteinMultiparameterSingularIntegralsAndMaximalFunctions}.}
This further accents the need to distinguish between
pure powers and non-pure powers.  The point is,
because of the dilations \eqref{EqnHeisTwoParamDilations},
$x$ and $y$ are pure powers, while $t$ is a non-pure power.
In the Heisenberg group case, the vector fields
corresponding to the pure powers
are $\q( X,\q( 1,0\w)\w)$, $\q( Y, \q( 0,1\w)\w)$.
The vector field corresponding to the non-pure power
is $\q( T,\q( 1,1\w)\w)= \q(\frac{1}{4} \q[X,Y\w], \q( 1,0\w)+\q( 0,1\w) \w)$
and therefore is controlled by the commutator of $X$ and $Y$.
In the Euclidean case, the vector fields corresponding
to the pure powers are $\q( \partial_x, \q( 1,0\w)\w)$
and $\q( \partial_y, \q( 0,1\w)\w)$; while the vector
field corresponding to the non-pure power
is $\q( \partial_t, \q( 1,1\w)\w)$ which is not controlled
by the previous two.

Finally, to see the operator might not be bounded on $L^2$ if the usual
Euclidean convolution is used, we need only show
that there is a $K$ as above whose Fourier transform is unbounded.
To see this, let $\phi\in C_0^{\infty}\q( \q( -1,1\w)\w)$
be such that $\widehat{\phi}\q(0\w) =1$.
Let $\psi\in C_0^{\infty}\q( \q( -1,1\w)\w)$ be
such that $\int \psi = 0$, $\widehat{\psi}\q( 1\w)> 0$.
Define,
\begin{equation*}
\vsig_{j_1,j_2}\q(x,y,t\w) = 
\begin{cases}
\phi\q( x\w) \phi\q( y\w) \psi\q( t\w) & \text{ if } j_1=-j_2,\\
0 & \text{ otherwise.}
\end{cases}
\end{equation*}
It is easy to see that these $\vsig_j$
satisfy the hypotheses of Theorem \ref{ThmBoundOfToy}.
We have,
\begin{equation*}
\widehat{K}\q( 0,0,1\w) = \sum_{\substack{\q(j_1,j_2\w)\in \Z^2\\ j_1=-j_2}} \widehat{\psi}\q( 1\w) =\infty,
\end{equation*}
completing the proof.

\bibliographystyle{amsalpha}

\bibliography{radon}

\center{\it{University of Wisconsin-Madison, Department of Mathematics, 480 Lincoln Dr., Madison, WI, 53706}}

\center{\it{street@math.wisc.edu}}

\center{MCS2010: Primary 42B20, Secondary 42B25}

\center{Keywords: Calder\'on-Zygmund theory, singular integrals, singular radon transforms, product kernels, flag kernels.}

\end{document}